\titleformat{\chapter}[display]
{\normalfont\bfseries\filcenter}
{\LARGE\thechapter}
{1ex}
{\titlerule[2pt]
\vspace{2ex}%
\LARGE\scshape}
[\vspace{1ex}%
{\titlerule[2pt]}]
\titleformat*{\section}{\scshape\Large\bfseries}
\titleformat*{\subsection}{\scshape\large\bfseries}
\newcommand\red{\textcolor{red}}
\numberwithin{equation}{chapter}
\newenvironment{myenumerate}{%
\renewcommand{\theenumi}{(\roman{enumi})}%
\renewcommand{\labelenumi}{\theenumi}%
\begin{list}{\labelenumi}
	{%
	\setlength{\itemsep}{0.4em}%
	\setlength{\topsep}{0.5em}%
	\setlength\leftmargin{1.8em}%
	\setlength\labelwidth{2.05em}%
	\setlength{\labelsep}{0.4em}%
	\usecounter{enumi}%
	}%
	}%
{\end{list}
}
\renewenvironment{enumerate}{
\begin{myenumerate}}%
{\end{myenumerate}}
\newcommand\Item[1][]{%
  \ifx\relax#1\relax  \item \else \item[#1] \fi
  \abovedisplayskip=0pt\abovedisplayshortskip=0pt~\vspace*{-\baselineskip}}
\newcommand*{\myfnsymbolsingle}[1]{%
  \ensuremath{%
    \ifcase#1
    \or 
      \dagger
    \else 
      \@ctrerr  
    \fi
  }%
}   
\newalphalph{\myfnsymbolmult}[mult]{\myfnsymbolsingle}{}
\theoremstyle{plain}
\newtheorem{theorem}{Theorem}[chapter]
\newtheorem{lemma}[theorem]{Lemma}
\newtheorem{proposition}[theorem]{Proposition}
\newtheorem{corollary}[theorem]{Corollary}
\theoremstyle{definition}
\newtheorem{definition}[theorem]{Definition}
\newtheorem{remark}[theorem]{Remark}
\newtheorem{example}[theorem]{Example}
\DeclarePairedDelimiter\abs{\lvert}{\rvert} 
\let\oldabs\abs
\def\abs{\@ifstar{\oldabs}{\oldabs*}}
\DeclarePairedDelimiterX{\norm}[1]{\lVert}{\rVert}{#1} 
\let\oldnorm\norm
\def\norm{\@ifstar{\oldnorm}{\oldnorm*}}
\DeclarePairedDelimiterX{\ceil}[1]{\lceil}{\rceil}{#1} 
\let\oldceil\ceil
\def\ceil{\@ifstar{\oldceil}{\oldceil*}}
\DeclarePairedDelimiterX{\floor}[1]{\lfloor}{\rfloor}{#1} 
\let\oldfloor\floor
\def\floor{\@ifstar{\oldfloor}{\oldfloor*}}
\newcommand{\e}{\mathrm{e}} 
\newcommand{\Exp}{\mathrm{Exp}} 
\newcommand{\Geom}{\mathrm{Geom}} 
\DeclareMathOperator{\sgn}{sgn} 
\DeclareMathOperator{\shape}{sh} 
\DeclareMathOperator{\type}{Type} 
\DeclareMathOperator{\gtype}{gType} 
\newcommand{\Sym}{\vmathbb{S}} 
\newcommand{\sym}{\mathrm{sym}}
\newcommand{\diff}{\mathop{}\!\mathrm{d}}
\renewcommand{\P}{\vmathbb{P}} 
\newcommand{\E}{\vmathbb{E}} 
\newcommand{\1}{\mathds{1}} 
\renewcommand{\i}{\mathrm{i}} 
\newcommand{\GL}{\mathrm{GL}} 
\newcommand{\gl}{\mathfrak{gl}} 
\newcommand{\SL}{\mathrm{SL}} 
\newcommand{\SO}{\mathrm{SO}} 
\renewcommand{\O}{\mathrm{O}} 
\newcommand{\so}{\mathfrak{so}} 
\newcommand{\Sp}{\mathrm{Sp}} 
\newcommand{\schur}{\mathrm{s}} 
\renewcommand{\sp}{\mathrm{sp}} 
\DeclareMathOperator{\Pf}{Pf} 
\newcommand{\Ai}{\mathrm{Ai}} 
\newcommand{\cont}{\mathrm{cont}} 
\renewcommand{\emptyset}{\varnothing}
\newcommand{\N}{\vmathbb{N}} 
\newcommand{\Z}{\vmathbb{Z}} 
\newcommand{\R}{\vmathbb{R}} 
\newcommand{\C}{\vmathbb{C}} 
\renewcommand{\H}{\vmathbb{H}} 
\newcommand{\arrays}{\R^{\mathcal{I}}}
\renewcommand{\epsilon}{\varepsilon}
\renewcommand{\rho}{\varrho}
\renewcommand{\phi}{\varphi}
\DeclareMathSymbol{\widehatsym}{\mathord}{largesymbols}{"62}
\renewcommand{\hat}{\widehat}
\renewcommand{\tilde}{\widetilde}
\lbrace\begin{array}{@{}l@{}}}%
\newsavebox{\mybox}\newsavebox{\mysim}
\newcommand{\asymptotic}[1]{%
  \savebox{\mybox}{\hbox{\kern3pt$\scriptstyle#1$\kern3pt}}%
  \savebox{\mysim}{\hbox{$\sim$}}%
  \mathbin{\overset{#1}{\kern\z@\resizebox{\wd\mybox}{\ht\mysim}{$\sim$}}}%
}
\newcommand{\HRule}{\rule{\linewidth}{0.2mm}}
\newcommand{\RSK}{\mathrm{RSK}}
\newcommand{\gRSK}{\mathrm{gRSK}}
\newcommand{\Flat}{}
\newcommand{\hFlat}{\mathrm{half}}
\newcommand{\rFlat}{\mathrm{res}}
\newcommand{\sFlat}{\mathrm{sym}}
\newcommand{\antisym}{\antisymTriangle[0.13]\text{-}\mathrm{sym}}
\newcommand{\doublesym}{\doublesymTriangle[0.13]\text{-}\mathrm{sym}}
\newcommand{\fZ}{Z^{\Flat}} 
\newcommand{\hZ}{Z^{\hFlat}} 
\newcommand{\rZ}{Z^{\rFlat}} 
\newcommand{\sZ}{Z^{\sFlat}} 
\newcommand{\fPi}{\Pi^{\Flat}} 
\newcommand{\hPi}{\Pi^{\hFlat}} 
\newcommand{\rPi}{\Pi^{\rFlat}} 
\newcommand{\fI}{\mathcal{I}^{\Flat}} 
\newcommand{\hI}{\mathcal{I}^{\hFlat}} 
\newcommand{\rI}{\mathcal{I}^{\rFlat}} 
\newcommand{\fG}{\Gamma^{\Flat}} 
\newcommand{\hG}{\Gamma^{\hFlat}} 
\newcommand{\rG}{\Gamma^{\rFlat}} 
\newcommand{\fPhi}{\Phi^{\Flat}} 
\newcommand{\hPhi}{\Phi^{\hFlat}} 
\newcommand{\rPhi}{\Phi^{\rFlat}} 
\newcommand{\fT}{\mathrm{T}^{\Flat}} 
\newcommand{\hT}{\mathrm{T}^{\hFlat}} 
\newcommand{\rT}{\mathrm{T}^{\rFlat}} 
\newcommand{\fTau}{\tau^{\Flat}} 
\newcommand{\hTau}{\tau^{\hFlat}} 
\newcommand{\rTau}{\tau^{\rFlat}} 
\newcommand{\sTau}{\tau^{\sFlat}}
\newcommand{\antisymTau}{\tau^{\antisym}}
\newcommand{\doublesymTau}{\tau^{\doublesym}}
\newcommand{\fc}{c^{\Flat}}
\newcommand{\hc}{c^{\hFlat}}
\newcommand{\rc}{c^{\rFlat}}
\newcommand{\fk}{k^{\Flat}}
\newcommand{\hk}{k^{\hFlat}}
\newcommand{\rk}{k^{\rFlat}}
\newcommand{\fK}{K^{\Flat}} 
\newcommand{\hK}{K^{\hFlat}} 
\newcommand{\fKone}{K^{(1)}}
\newcommand{\fKtwo}{K^{(2)}}
\newcommand{\fKthree}{K^{(3)}}
\newcommand{\fKfour}{K^{(4)}}
\newcommand{\fKonetilde}{\tilde{K}^{(1)}}
\newcommand{\fKtwotilde}{\tilde{K}^{(2)}}
\newcommand{\fKthreetilde}{\tilde{K}^{(3)}}
\newcommand{\fKfourtilde}{\tilde{K}^{(4)}}
\newcommand{\hKone}{K^{\hFlat(1)}}
\newcommand{\hKtwo}{K^{\hFlat(2)}}
\newcommand{\hKthree}{K^{\hFlat(3)}}
\newcommand{\hKonetilde}{\tilde{K}^{\hFlat(1)}}
\newcommand{\hKtwotilde}{\tilde{K}^{\hFlat(2)}}
\newcommand{\hKthreetilde}{\tilde{K}^{\hFlat(3)}}
\newcommand{\fKtilde}{\tilde{K}^{\Flat}} 
\newcommand{\hKtilde}{\tilde{K}^{\hFlat}} 
\newcommand{\fHbar}{\underline{H}^{\Flat}} 
\newcommand{\hHbar}{\underline{H}^{\hFlat}} 
\newcommand{\fH}{H^{\Flat}} 
\newcommand{\hH}{H^{\hFlat}} 
\newcommand\llrighttriangle[1][1]{%
\begin{tikzpicture}[scale=#1]
\draw (0,0) -- (0,1) -- (1,0) -- (0,0);
\end{tikzpicture}
}
\newcommand\antisymTriangle[1][1]{%
\begin{tikzpicture}[scale=#1]
\draw (0,0) -- (1,0) -- (1,1) -- (0,1) -- (0,0) -- cycle;
\draw (0,0) -- (1,1);
\end{tikzpicture}
}
\newcommand\doublesymTriangle[1][1]{%
\begin{tikzpicture}[scale=#1]
\draw (0,0) -- (1,0) -- (1,1) -- (0,1) -- cycle;
\draw (0,0) -- (1,1);
\draw (0,1) -- (1,0);
\end{tikzpicture}
}
\newcommand{\T}[2]{\mathrm{T}^{\triangle_{#1}}_{#2}}
\newcommand{\spT}[2]{\mathrm{T}^{\, \llrighttriangle[0.13]_{#1}}_{#2}}
\newcommand{\GT}[2]{\mathrm{GT}^{\triangle_{#1}}_{#2}}
\newcommand{\spGT}[2]{\mathrm{GT}^{\, \llrighttriangle[0.13]_{#1}}_{#2}}
\newcommand{\En}{\mathcal{E}^{\triangle}}
\newcommand{\spEn}{\mathcal{E}^{\, \llrighttriangle[0.13]}}
\begin{document}

\pagenumbering{roman}

\begin{titlepage}





\begin{center}

 
\includegraphics[width=0.35\textwidth]{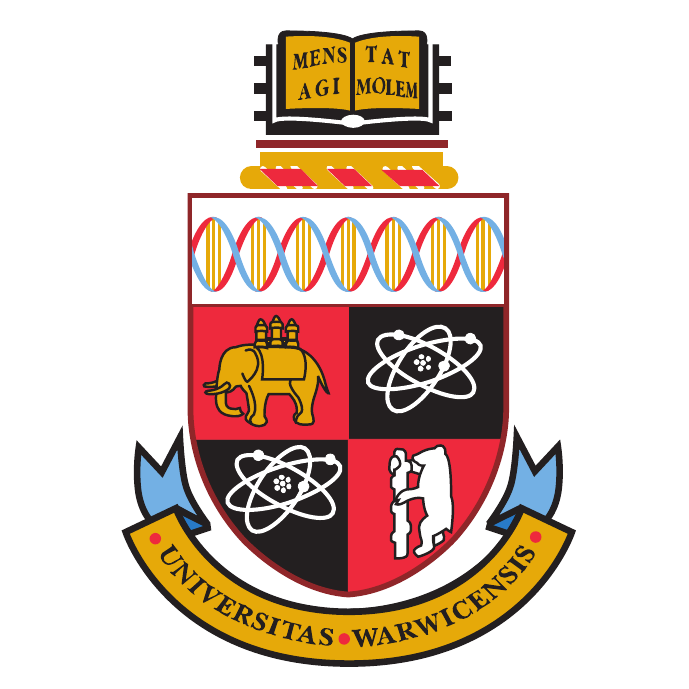} \\[1cm]  

\HRule \\[1cm]


{\scshape \huge \bfseries
Random polymers} \\[0.3cm]
{\scshape \huge \bfseries
via orthogonal Whittaker and} \\[0.3cm]
{\scshape \huge \bfseries
symplectic Schur functions} \\[1cm]

{\LARGE Elia Bisi} \\[0.5cm]

\HRule\\[3.3cm]

{\large Thesis submitted for the degree of \emph{Doctor of Philosophy}}

\vfill


{\large\bfseries University of Warwick \\ Department of Statistics} \\[0.4cm]
{\large July 2018}

\end{center}

\end{titlepage}

\cleardoublepage

\tableofcontents
\markboth{Contents}{Contents}

\renewcommand{\listfigurename}{List of figures}
\cleardoublepage
\markboth{\listfigurename}{\listfigurename}

\listoffigures
\chapter*{Acknowledgments}
\markboth{Acknowledgments}{Acknowledgments}
\addcontentsline{toc}{chapter}{Acknowledgments}
\label{ch:acknowledgments}

I am very grateful to my PhD supervisor Nikos Zygouras for truly caring about my research project and giving me valuable advice.

I also wish to thank Jon Warren and Patrik Ferrari, who carefully examined my thesis, provided useful suggestions, and stimulated interesting discussions.

Finally, I am thankful to Alejandra for always being there.
\chapter*{Declarations}
\markboth{Declarations}{Declarations}
\addcontentsline{toc}{chapter}{Declarations}
\label{ch:declarations}

I declare that I have developed and written this PhD thesis entitled ``Random polymers via orthogonal Whittaker and symplectic Schur functions'' completely by myself, under the supervision of Nikos Zygouras, for the degree of Doctor of Philosophy in Statistics.
I have not used sources or means without declaration in the text.
I also confirm that this thesis has not been submitted for a degree at any other university.

During my PhD I have written the following articles in collaboration with my supervisor Nikos Zygouras:
\begin{itemize}
\item
``Point-to-line polymers and orthogonal Whittaker functions''~\cite{bisiZygouras17a}, to be published in \emph{Transactions of the American Mathematical Society};
\item
``GOE and $\rm{Airy}_{2\to 1}$ marginal distribution via symplectic Schur functions''~\cite{bisiZygouras17b}, to be published in a Springer volume in honor of the 75th birthday of S.~R.~Srinivasa Varadhan.
\end{itemize}
Chapter~\ref{ch:preliminary} deals with preliminary notions that can be found, often presented in a different form, in the literature (references are provided therein); some parts are also based on~\cite{bisiZygouras17a}.
Chapter~\ref{ch:polymer} and section~\ref{sec:expLPP} are an extended version of the content of~\cite{bisiZygouras17a} (even though some of the proofs in section~\ref{sec:expLPP} are obtained via different methods).
Section~\ref{sec:geomLPP} has not been published at the time of writing.
Section~\ref{sec:LPPasymptotics} is based on the content of~\cite{bisiZygouras17b}.
Finally, some of the figures come from one of the two articles cited above.

\chapter*{Abstract}
\markboth{Abstract}{Abstract}
\addcontentsline{toc}{chapter}{Abstract}
\label{ch:abstract}

This thesis deals with some $(1+1)$-dimensional lattice path models from the KPZ universality class: the directed random polymer with inverse-gamma weights (known as \emph{log-gamma polymer}) and its zero temperature degeneration, i.e.\ the last passage percolation model, with geometric or exponential waiting times.
We consider three path geometries: point-to-line, point-to-half-line, and point-to-line with paths restricted to stay in a half-plane.
Through exact formulas, we establish new connections between integrable probabilistic models and the ubiquitous Whittaker and Schur functions.

More in detail, via the use of A.~N.~Kirillov's geometric Robinson-Schensted-Knuth (RSK) correspondence, we compute the Laplace transform of the polymer partition functions in the above geometries in terms of \emph{orthogonal Whittaker functions}.
In the case of the first two geometries we also provide
multiple contour integral formulas.

For the corresponding last passage percolation problems, we obtain new formulas in terms of \emph{symplectic Schur functions}, both directly via $\RSK$ on polygonal arrays and via zero temperature limit from the log-gamma polymer formulas.
As scaling limits of the point-to-line and point-to-half-line models with exponential waiting times, we derive Sasamoto's Fredholm determinant formula for the GOE Tracy-Widom distribution, as well as the one-point marginal distribution of the ${\rm Airy}_{2\to1}$ process.

\cleardoublepage
\pagenumbering{arabic}

\chapter*{Introduction}
\markboth{Introduction}{Introduction}
\addcontentsline{toc}{chapter}{Introduction}
\label{ch:intro}

In statistical physics, surface growth models simulate the behavior of particles such as atoms or molecules that, once ejected onto a surface, attach to each other and form growing islands.
The most celebrated and studied equation that governs the behavior of randomly growing interfaces in physics is the \emph{Kardar-Parisi-Zhang (KPZ) equation}~\cite{kardarParisiZhang86}.
This is a nonlinear stochastic partial differential equation characterized by both \emph{local evolution} and \emph{local randomness}:
\[
\partial_t h =\frac{1}{2}\partial_{xx}^2 h +\frac{1}{2} (\partial_x h)^2+ \dot{W}(t,x) \, ,
\]
where $\dot{W}(t,x)$ is space-time white noise.
The solution $h(x,t)$ is a random function that represents the height of the surface and depends on a spatial coordinate $x$ and a time coordinate $t$.
The KPZ equation is characterized, in one spatial dimension, by the fluctuation exponent $1/3$ and spatial correlation exponent $2/3$, and by certain asymptotic distributions (such as the GOE and GUE Tracy-Widom distributions~\cite{tracyWidom94, tracyWidom96} from random matrix theory) depending on the initial conditions.

Many influential mathematical works have analyzed a few probabilistic models that in various ways represent discretizations of the KPZ equation, leading to remarkable connections with random matrices, combinatorial structures, and representation theoretic objects.
This rich family of models, called \emph{KPZ universality class}, includes:
\begin{itemize}
\item
the problem of the length of the longest increasing subsequence of random permutations~\cite{baikDeiftJohansson99, baikRains01a, baikRains01b}, and the related last passage percolation~\cite{johansson00} and polynuclear growth~\cite{prahoferSpohn00, prahoferSpohn02, ferrari04} models;
\item
the asymmetric simple exclusion process (ASEP)\footnote{This is an interacting particle system on $\Z$ where each site is either occupied by one particle or empty.
Each particle independently, after a mean one exponential time, jumps one site to the right with probability $p$ and one site to the left with probability $1-p$.
The jump is performed only if the target site is vacant, otherwise nothing happens and the particle waits another independent exponential time.
}
with step and step Bernoulli initial conditions, solved via Bethe Ansatz by Tracy and Widom~\cite{tracyWidom09a, tracyWidom09b};
\item
the construction of Macdonald processes by Borodin and Corwin~\cite{borodinCorwin14}, including various interacting particle systems that fall within this scope;
\item
the stochastic six-vertex model~\cite{borodinPetrov16};
\item
the Brownian semi-discrete directed polymer studied by O'Connell~\cite{oConnell12} and its relation to the quantum Toda hamiltonian; 
\item
the log-gamma directed polymer, introduced by Sepp\"al\"ainen~\cite{seppalainen12} and further studied in~\cite{corwinOConnellSeppalainenZygouras14, oConnellSeppalainenZygouras14, borodinCorwinRemenik13, nguyenZygouras17};
\item
the totally asymmetric simple exclusion process (TASEP)\footnote{This particle system is a specialization of ASEP in the case $p=1$, i.e.\ when particles can only jump to the right.} with arbitrary initial conditions, studied via the KPZ fixed point process~\cite{matetskiQuastelRemenik16}.
\end{itemize}

In this thesis we deal with a few discrete models within the KPZ universality class.
In particular, the two main random objects of interest will be the so-called \emph{polymer partition function} and \emph{last passage percolation} (LPP), respectively defined by
\begin{align}
\label{eq:polymerPartitionFn}
Z
&:= \sum_{\pi \in \Pi} \prod_{(i,j)\in\pi} W_{i,j} \, , \\
\label{eq:LPP}
\tau
&:= \max_{\pi \in \Pi} \sum_{(i,j)\in\pi} W_{i,j} \, .
\end{align}
They are both associated to a given path geometry $\Pi$, i.e.\ a finite set of (nearest neighbor) directed paths in the lattice $\Z_{>0}^2$ and to a field of independent random weights $\{W_{i,j}\}$ (assumed to be positive in~\eqref{eq:polymerPartitionFn}) assigned to the sites of the lattice.
Here, by \emph{directed path} we mean a finite sequence $\pi=((i_1,j_1), (i_2, j_2), \dots)$ such that and $(i_{k+1},j_{k+1}) - (i_k,j_k)$ is either $(1,0)$ or $(0,1)$; namely, at each step of the path, one and only one of the two integer coordinates increases by one.
The denomination ``last passage percolation'' is self-explanatory.
Indeed, if each variable $W_{i,j}$ is thought of as a waiting time associated to the site $(i,j)$, the total waiting time associated to a path is the sum of all $W_{i,j}$'s collected along the given path; if then $\Pi$ is the set of all directed paths that go across a region of the lattice, then $\tau$ is the maximal passage time necessary to cross such a region.
On the other hand, explaining where the denomination ``polymer partition function'' for $Z$ arises from will be also helpful to understand its deep connection with the LPP.

The \emph{directed polymer in random environment} (see the lecture notes~\cite{comets17} for a recent review) is a statistical mechanical model defined via the following probability measure $P$ on a set $\Pi$ of directed paths:
\begin{equation}
\label{eq:polymerMeasure}
P(\pi) := \frac{1}{Z^{\beta,\bm{\omega}}} \e^{-\beta H^{\bm{\omega}}(\pi)} \qquad \text{for } \pi\in\Pi \, .
\end{equation}
The objects involved in the polymer measure are:
\begin{itemize}
\item
a parameter $\beta\geq 0$, often thought of as \emph{inverse temperature};
\item
a field $\bm{\omega}$ of random variables $\omega_{i,j}$ assigned to each site $(i,j)$ of the lattice;
\item
a hamiltonian function $H$ that associates to each path $\pi$ the energy
\begin{equation}
\label{eq:polymerHamiltonian}
H^{\bm{\omega}}(\pi)
:= -\sum_{(i,j)\in \pi} \omega_{i,j} \, ;
\end{equation}
\item
a normalization $Z^{\beta,\bm{\omega}}$ (depending on $\beta$ and $\bm{\omega}$), called \emph{partition function}, that makes $P$ a probability measure.
\end{itemize}
According to the intuition, the higher the energy of a path, the lower its probability.
Now, thanks to~\eqref{eq:polymerMeasure} and~\eqref{eq:polymerHamiltonian}, the partition function can be rewritten as
\begin{equation}
\label{eq:polymerPartitionFnBeta}
Z^{\beta,\bm{\omega}} = \sum_{\pi\in\Pi} \e^{\beta \sum_{(i,j)\in\pi} \omega_{i,j}}
= \sum_{\pi\in\Pi} \prod_{(i,j)\in\pi} \e^{\beta \omega_{i,j}}
\end{equation}
and it corresponds to the variable defined in~\eqref{eq:polymerPartitionFn}, if we set $W_{i,j} := \e^{\beta \omega_{i,j}}$ for all $(i,j)$.
It is likewise clear that
\begin{equation}
\label{eq:zeroTempLimit}
\lim_{\beta \to \infty} \frac{1}{\beta} \log Z^{\beta,\bm{\omega}}
= \max_{\pi \in \Pi} \sum_{(i,j)\in\pi} \omega_{i,j}
\end{equation}
is the LPP defined in~\eqref{eq:LPP} for the random environment $\bm{\omega}$.
Since $\beta$ is the inverse temperature, the limit $\beta\to\infty$ above is called \emph{zero temperature limit}.
For this reason, \eqref{eq:LPP} is considered to be the zero temperature version of~\eqref{eq:polymerPartitionFn}.
For a more in-depth and comprehensive analysis of the zero temperature limit, see Appendix~\ref{appendix:zeroTempLimit}.

Let us mention that our two models have direct links to surface growth models: for example, the LPP can be seen to be equivalent to a certain discrete polynuclear growth model~\cite{prahoferSpohn00, prahoferSpohn02, ferrari04}.
In fact, both the polymer partition function and the LPP can be viewed as discretizations of the solution to the KPZ equation with initial conditions depending on the geometry of the lattice paths~\cite{corwin12}.
For instance, the point-to-point geometry corresponds to the so-called \emph{narrow wedge} initial condition for the KPZ equation.

Many breakthroughs in the setting of the KPZ universality class have been possible by analyzing \emph{integrable}  models, where the choice of a particular distribution for the random environment permits to obtain exact formulas, typically in terms of special functions from algebraic combinatorics and representation theory.
For instance, this has been the case for the point-to-point last passage percolation with geometric waiting times~\cite{johansson00}, or the log-gamma polymer~\cite{seppalainen12}.
In this thesis we will focus precisely on the aforementioned models, exploring new path geometries and discovering connections to different special functions.

Let us first discuss the \emph{log-gamma polymer}, i.e.\ simply a polymer model with inverse-gamma weights $W_{i,j}$'s in~\eqref{eq:polymerPartitionFn} (or equivalently log-gamma weights $\omega_{i,j}$'s in~\eqref{eq:polymerPartitionFnBeta}).
We refer to \emph{point-to-point geometry} as the set of all directed paths starting at a fixed point, usually $(1,1)$, and ending at another fixed point, say $(m,n)$.
Using ideas from algebraic combinatorics and in particular a \emph{geometric}\footnote{Originally named \emph{tropical} by Kirillov.} lifting (due to A.~N.~Kirillov~\cite{kirillov01} and further studied in~\cite{noumiYamada04}) of the classical Robinson-Schensted-Knuth correspondence, in~\cite{corwinOConnellSeppalainenZygouras14, oConnellSeppalainenZygouras14} the point-to-point log-gamma polymer partition function was linked to $\GL_n(\R)$-Whittaker functions.
Whittaker functions will be introduced in section~\ref{sec:Whittaker}.
They are ubiquitous in mathematics, as they appear for instance in the theory of automorphic forms, mirror symmetry, quantum integrable systems, and representation theory (see~\cite{lam13} for a review).
An earlier probabilistic connection to $\GL_n(\R)$-Whittaker functions had been found out in~\cite{oConnell12}, for the Brownian semi-discrete polymer.
In~\cite{corwinOConnellSeppalainenZygouras14, oConnellSeppalainenZygouras14}, the Laplace transform of the point-to-point log-gamma polymer partition function was expressed, essentially, as an integral of two $\GL_n(\R)$-Whittaker functions - see~\eqref{eq:pointToPointWhittakerFormula}.
Furthermore, using the Plancherel theory for $\GL_n(\R)$-Whittaker functions and a remarkable Whittaker functions' integral identity due to Bump and Stade (see Theorem~\ref{thm:bumpStade}), the aforementioned Laplace transform was also written as a multiple contour integral of gamma functions.
This formula was subsequently turned into a Fredholm determinant in~\cite{borodinCorwinRemenik13}, thus facilitating the asymptotic analysis (KPZ fluctuation exponent $1/3$ and the GUE Tracy-Widom limiting distribution, in this case).
We will review the point-to-point log-gamma polymer model in the introduction to chapter~\ref{ch:polymer}.

Our contribution in the context of the log-gamma polymer amounts to exploring other path geometries and establishing new connections to different types of Whittaker functions.
We still consider the directed polymer with inverse-gamma weights, but with the endpoint lying free on a line.
In particular, we take into account three different geometries of paths (see also Figure~\ref{fig:directedPath} for a graphical representation): 
\begin{enumerate}
\item
\emph{point-to-line} or \emph{flat} geometry, where paths start from $(1,1)$ and end at any point of the line $\{(m,n)\colon m+n = N+1\}$ for a fixed $N$ (equivalently, all paths of a fixed length $N-1$ are considered);
\item
\emph{point-to-half-line} or \emph{half-flat} geometry, where paths start from $(1,1)$ and end at any point of the half-line $\{(m,n)\colon m+n=N+1, \,\, m\leq n \}$ for a fixed $N$;
\item 
\emph{restricted point-to-line} or \emph{restricted half-flat} geometry, where paths start from $(1,1)$, end at any point of the half-line $\{(m,n)\colon m+n=N+1, \,\, m\leq n \}$ for a fixed $N$, and are restricted to stay in the half-space $\{(i,j)\colon i\leq j\}$.
\end{enumerate}
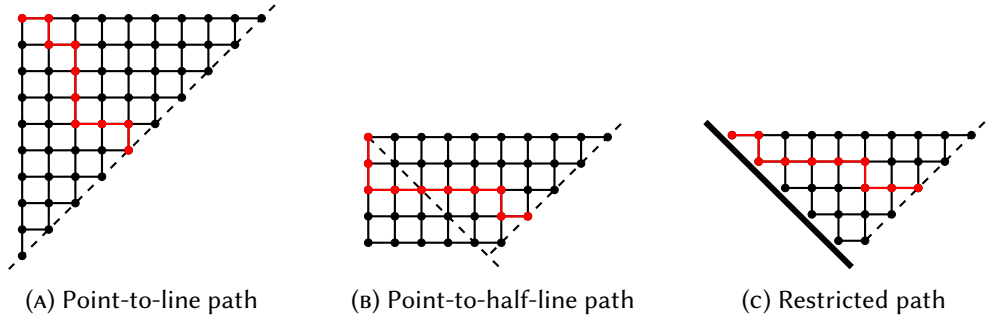
\begin{figure}
\centering
\begin{minipage}[b]{.33\linewidth}
\centering
\begin{tikzpicture}[scale=0.35]
\draw[thick,dashed] (10.5,-.5) -- (.5,-10.5);
\foreach \i in {1,...,10}{
	\draw[thick] (1,-\i) grid (11-\i,-\i);
	\draw[thick] (\i,-1) grid (\i,-11+\i);
	
		\foreach \j in {\i,...,10}{
		\node[draw,circle,inner sep=1pt,fill] at (11-\j,-\i) {};
	}
}

\draw[thick,color=red,-] (1,-1) -- (2,-1) -- (2,-2) -- (3,-2) -- (3,-3) -- (3,-4) -- (3,-5) -- (4,-5) -- (5,-5) -- (5,-6);
\foreach \x in {(1,-1),(2,-1),(2,-2),(3,-2),(3,-3),(3,-4),(3,-5),(4,-5),(5,-5),(5,-6)}{
	\node[draw,circle,inner sep=1pt,fill,red] at \x {};
	}
\end{tikzpicture}
\subcaption{Point-to-line path}
\label{subfig:flatPath}
\end{minipage}%
\begin{minipage}[b]{.33\linewidth}
\centering
\begin{tikzpicture}[scale=0.35]
\draw[thick,dashed] (10.5,-.5) -- (5.5,-5.5); \draw[thick,dashed] (1,-1) -- (6,-6);
\foreach \i in {1,...,5}{
	\draw[thick] (1,-\i) grid (11-\i,-\i);
	
		\foreach \j in {\i,...,10}{
		\node[draw,circle,inner sep=1pt,fill] at (11-\j,-\i) {};
	}
}

\foreach \j in {1,...,5}{
	\draw[thick] (\j,-1) grid (\j,-5);
	\draw[thick] (5+\j,-1) grid (5+\j,-6+\j);
}

\draw[thick,color=red,-] (1,-1) -- (1,-2) -- (1,-3) -- (2,-3) -- (3,-3) -- (4,-3) -- (5,-3) -- (6,-3) -- (6,-4) -- (7,-4);
\foreach \x in {(1,-1),(1,-2),(1,-3),(2,-3),(3,-3),(4,-3),(5,-3),(6,-3),(6,-4),(7,-4)}{
	\node[draw,circle,inner sep=1pt,fill,red] at \x {};
	}
\end{tikzpicture}
\subcaption{Point-to-half-line path}
\label{subfig:hFlatPath}
\end{minipage}%
\begin{minipage}[b]{.33\linewidth}
\centering
\begin{tikzpicture}[scale=0.35]
\draw[line width=0.8mm] (0,-.5) -- (5.5,-6);
\draw[thick,dashed] (10.5,-.5) -- (6,-5);
\foreach \i in {1,...,5}{
	\draw[thick] (\i,-\i) grid (11-\i,-\i);
	
		\foreach \j in {\i,...,5}{
		\node[draw,circle,inner sep=1pt,fill] at (\j,-\i) {};
		\node[draw,circle,inner sep=1pt,fill] at (11-\j,-\i) {};
	}
}

\foreach \j in {1,...,5}{
	\draw[thick] (\j,-1) grid (\j,-\j);
	\draw[thick] (5+\j,-1) grid (5+\j,-6+\j);
}

\draw[thick,color=red,-] (1,-1) -- (2,-1) -- (2,-2) -- (3,-2) -- (4,-2) -- (5,-2) -- (6,-2) -- (6,-3) -- (7,-3) -- (8,-3);
\foreach \x in {(1,-1),(2,-1),(2,-2),(3,-2),(4,-2),(5,-2),(6,-2),(6,-3),(7,-3),(8,-3)}{
	\node[draw,circle,inner sep=1pt,fill,red] at \x {};
	}
\end{tikzpicture}
\subcaption{Restricted path}
\label{subfig:rFlatPath}
\end{minipage}
\caption[Directed lattice paths in the point-to-line, point-to-half-line, and restricted point-to-half-line geometries]{Directed paths in $\Z_{>0}^2$ of length $9$.
The three paths, highlighted in red, correspond to three different geometries, as specified.
The picture is rotated by $90^{\circ}$ clockwise w.r.t.\ the Cartesian coordinate system, to adapt it to the usual matrix/array indexing.}
\label{fig:directedPath}
\end{figure}
We will express the distribution of the log-gamma polymer partition function in the above geometries in terms of Whittaker functions associated to both $\GL_N(\R)$ and the orthogonal group $\SO_{2N+1}(\R)$.
In particular, the Laplace transform of the point-to-line, point-to-half-line, and restricted point-to-line partition functions will be given, respectively, in terms of: an integral of two $\SO_{2N+1}(\R)$-Whittaker functions (formula~\eqref{eq:flatWhittakerFormula}), an integral of one $\SO_{2N+1}(\R)$ and one $\GL_N(\R)$-Whittaker function (formula~\eqref{eq:hFlatWhittakerFormula}), and an integral of one $\SO_{2N+1}(\R)$-Whittaker function (formula~\eqref{eq:rFlatWhittakerFormula}).
It is interesting to note the structure of these formulas in comparison to formula~\eqref{eq:pointToPointWhittakerFormula} for the point-to-point polymer.
Informally, one could say that ``opening'' each part of the endpoint's ``wedge'' to a (diagonally) flat part corresponds to replacing a $\GL_N(\R)$-Whittaker function with an $\SO_{2N+1}(\R)$-Whittaker function.
However, a priori there is no obvious reason why this analogy should take place.
  
Whittaker functions associated to general Lie groups have already appeared in probability to describe the law of the Brownian motion on these Lie groups conditioned on certain exponential functionals~\cite{baudoinOConnell11, chhaibi13}.
In~\cite{nteka18}, orthogonal Whittaker functions also emerged in the description of the Markovian dynamics of systems of interacting particles restricted by a ``soft'' wall.
In our setting, orthogonal Whittaker functions emerge through a combinatorial analysis of the log-gamma polymer via the geometric Robinson-Schensted-Knuth correspondence.
Using an extension of geometric $\RSK$ to polygonal arrays and its properties as in~\cite{nguyenZygouras17}, we determine the joint law of all point-to-point partition functions with endpoint on a line or half-line, also when paths are restricted to a half-plane (see Lemmas~\ref{lemma:flatP2PjointLaw}, \ref{lemma:hFlatP2PjointLaw}, and~\ref{lemma:rFlatP2PjointLaw}).
Subsequently, we derive integral formulas for the Laplace transform of the various point-to-line partition functions after expressing them as a sum of the corresponding point-to-point ones.
Such formulas do not immediately relate to Whittaker functions, but they do so after a certain change of variables and appropriate decompositions of the integrals, thus leading to the aforementioned results.
Even though simple, the alluded change of variables is remarkable in the sense that it precisely couples the structure of orthogonal Whittaker functions with the combinatorial structure of the point-to-line polymers and their Laplace transforms.
As it will become clear in the proofs of Theorems~\ref{thm:flatWhittakerFormula}, \ref{thm:hFlatWhittakerFormula}, and~\ref{thm:rFlatWhittakerFormula}, here the role of the Laplace transform as a functional is crucial.
In fact, we would probably not have been able to see the connection to Whittaker functions, had we aimed to compute different functionals.
On the other hand, the Laplace transform determines the distribution and its use is thus totally justified.

For the point-to-line and point-to-half-line cases, we go one step further by rewriting the Laplace transforms as contour integrals involving gamma functions\footnote{Even though we can also formally do it in the restricted point-to-line case, we miss the estimates that would fully justify such a transformation.}.
This is done via the use of the Plancherel theory for $\GL_N(\R)$-Whittaker functions and certain special integral identities.
The Bump-Stade identity~\eqref{eq:bumpStade} expresses a certain integral of two $\GL_N(\R)$-Whittaker functions in terms of gamma functions.
The Ishii-Stade identity~\eqref{eq:ishiiStade} does a similar job for an integral of one $\SO_{2N+1}(\R)$ and one $\GL_N(\R)$-Whittaker function.
They are important in number theory as they lead to functional equations for automorphic $L$-functions, facilitating the study of their zeros~\cite{bump89, goldfeld06}.
Currently, integral identities for
products of one $\GL_N(\R)$ and one $\SO_{2N}(\R)$-Whittaker functions are not available in the literature; this is the reason why we restrict our presentation to polymers of odd length, even though our combinatorial analysis would also allow us to write analogous formulas for polymers of even length in terms of $\SO_{2N}(\R)$-Whittaker functions.
 
Calabrese and Le Doussal studied in~\cite{calabreseLeDoussal11, leDoussalCalabrese12} the \emph{continuum random polymer} with flat initial conditions.
Via the non-rigorous approach of Bethe ansatz for the Lieb-Liniger model and the \emph{replica trick}, they exhibited that the Laplace transform of the partition function can be written in terms of a Fredholm Pfaffian, from which they obtained the GOE Tracy-Widom asymptotics.
Their method consisted in first deriving a series representation for the half-flat initial condition and then the flat case was deduced from the former via a suitable limit.
More recently, Grange~\cite{grange17} applied the methods of Calabrese-Le Doussal and of Thiery-Le Doussal~\cite{thieryLeDoussal14} to study, again at non-rigorous level, the Laplace transform of the log-gamma polymer with endpoint lying free on a line.

Ortmann-Quastel-Remenik~\cite{ortmannQuastelRemenik16, ortmannQuastelRemenik17} made some steps rigorous in the approach of Calabrese-Le Doussal, working on the ASEP with half-flat and flat initial conditions (see also the earlier work~\cite{lee10}).
Notice that, in the context of particle systems on $\Z$, the flat state is an alternating configuration of particles and holes of type $\cdots 01010101 \cdots$ (where $1$ denotes a particle and $0$ a hole), whereas the half-flat state $\cdots 01010000 \cdots$ alternates particles and holes on one half-line and has holes only on the other half-line.
In the half-flat case~\cite{ortmannQuastelRemenik16}, a series formula was obtained for the $q$-deformed Laplace transform of the ASEP height function.
Formal asymptotics on this formula suggested that the (centered and rescaled) limiting distribution should be given by the one-point marginal of the ${\rm Airy}_{2\to 1} $ process.
Even though such a limiting distribution is expressed in terms of a Fredholm determinant, the Fredholm structure is not apparent before passing to the limit.
In the flat case~\cite{ortmannQuastelRemenik17}, on the track of \cite{calabreseLeDoussal11, leDoussalCalabrese12}, a series formula was obtained for the same $q$-deformed Laplace transform of the height function as a suitable limit of the half-flat case.
This formula does not have an apparent Fredholm structure either.
A Fredholm Pfaffian appears only for a different $q$-deformation of the Laplace transform, which has the drawback of not determining the distribution of the height function.
   
Our approach is orthogonal to the methods used in the above works.
We do not rely on Bethe ansatz computations, but we rather explore the underlying combinatorial structure of the log-gamma polymer.
Moreover, we do not derive the flat case as a limit of the half-flat, but we work instead with their common features; this allows for a more unified and systematic approach and gives access to other geometries too.
We do not pursue in this thesis an asymptotic analysis on the law of the partition functions, as our primary focus has been the analysis of their combinatorial structure and the links to orthogonal Whittaker functions. 
We hope, though, that the methods developed here can provide a rigorous route to the asymptotics of the log-gamma polymer in the flat and half-flat geometries; this is currently under investigation.
Such a hope is also reinforced by the fact that in the zero temperature case (see the discussion that follows) the formulas that emerge from our approach provide alternative derivations of GOE Tracy-Widom and ${\rm Airy}_{2\to 1}$ statistics.

We now pass to the zero temperature setting.
In general, the last passage percolation models are easier to analyze than the corresponding polymer models, thanks to the direct availability of determinantal structures.
For an overview on the LPP and other similar integrable models related to random matrix theory and determinantal structures, we refer the reader to~\cite[ch.~10]{forrester10}.

The first systematic study on the LPP model goes back to Johansson~\cite{johansson00}, who obtained an exact formula for the point-to-point model with i.i.d.\ geometric waiting times via the classical combinatorial Robinson-Schensted-Knuth ($\RSK$) correspondence; he was then able to derive the GUE Tracy-Widom distribution in the scaling limit.
Baik and Rains~\cite{baikRains01a}, among other things, extended Johansson's formula by considering geometric waiting times with a wider range of parameters: this allowed expressing the distribution of the point-to-point LPP in terms of a sum of two Schur functions (see~\eqref{eq:geomPointToPointLPPSchur}).
Schur functions will be introduced in section~\ref{sec:Schur}.
In representation theory, they appear as characters of irreducible representations of classical groups~\cite{fultonHarris91}, and can be defined as certain sums over combinatorial structures such as Gelfand-Tsetlin patterns or Young tableaux.
They possess a very useful determinantal structure, which arises from the Weyl character formulas as well as the Jacobi-Trudi identities.

Our contribution in this setting consists in studying the LPP models corresponding to the same three path geometries considered in positive temperature, with either geometric or exponential waiting times.
In the geometric case, our formulas for the distribution of $\tau$ will involve, besides standard Schur functions (i.e.\ characters of $\GL_N(\C)$), which have appeared so far in the study of LPP models, also symplectic Schur functions (i.e.\ characters of $\Sp_{2N}(\C)$).
In particular, we will find: a sum of two symplectic Schur functions in the point-to-line case, see~\eqref{eq:flatGeomLPP}; a sum of one symplectic and one standard Schur function in the point-to-half-line case, see~\eqref{eq:hFlatGeomLPP}; a sum of one symplectic Schur function in the restricted point-to-line case, see~\eqref{eq:rFlatGeomLPP}.
The proofs will be, in spirit, similar to the analogous ones for the log-gamma polymer: they will be based on a suitable extension (described in subsection~\ref{subsec:tropicalization&RSK}) of the classical $\RSK$ correspondence, instead of its geometric lifting as in positive temperature.

Let us point out here that a formula for the geometric point-to-line LPP was already given in~\cite{baikRains01a}, and involves a sum of \emph{one standard} Schur function parametrized by even partitions - see~\eqref{eq:baikRainsFlatLPP}.
As such, Baik-Rains's formula is essentially different in its structure, but still equivalent, to our point-to-line LPP formula~\eqref{eq:flatGeomLPP} that involves a sum of \emph{two symplectic} Schur functions.
See the discussion at the end of subsection~\ref{subsec:flatGeomLPP} for more details.
Furthermore, Ferrari~\cite{ferrari04} studied the \emph{flat polynuclear growth model}, a continuous version of the LPP model related to the Hammersley process.
Methodologically, both~\cite{baikRains01a} and~\cite{ferrari04} used a symmetrization argument that amounts to considering point-to-point LPP on a square array with symmetric environment about the antidiagonal; this turns out to be essentially equivalent to the point-to-line LPP\footnote{If one thinks about that, it all comes down to the fact that $\max(2a,2b,\dots) = 2\max(a,b.\dots)$.
On the other hand, the analogous statement in positive temperature is not true: the point-to-point partition function for a symmetric environment about the antidiagonal does not equal the point-to-line partition function.
Again, the basic reason for this is quite simple: $a^2 + b^2 + \dots \neq (a+b+\dots)^2$.}.
On the contrary, instead of applying the $\RSK$ to a symmetrized square array, we directly apply it to a triangular array; for this reason, we need to use a generalization of $\RSK$ to generic polygonal arrays.

Let us now discuss the exponential LPP model, which is the most directly related to the TASEP (see the introduction to section~\ref{sec:expLPP} for more details).
The formulas~\eqref{eq:flatExpLPPSchur}, \eqref{eq:hFlatExpLPPSchur}, \eqref{eq:rFlatExpLPPSchur} we obtain for our three path geometries are similar to the geometric case, \emph{mutatis mutandis}: sums need to be replaced by integrals and discrete Schur functions by their continuum analogs.
Continuous Schur functions are continuous limits of rescaled Schur functions, and by Riemann sum approximation turn out to be integrals on continuous Gelfand-Tsetlin patterns (see~\eqref{eq:schurCont} and~\eqref{eq:spSchurCont}).
Notice that, at this stage, we can obtain these formulas in three different ways: (i) via zero temperature limit from the log-gamma polymer formulas, (ii) directly via $\RSK$, and (iii) via exponential limit from the geometric LPP formulas.
We will see all these methods in action.

In principle, one might expect that, in the zero temperature limit, $\SO_{2n+1}(\R)$-Whittaker functions scale to the corresponding (continuous) \emph{orthogonal} Schur functions.
In fact, in Proposition~\ref{prop:soWhittakerRescaling} we will see that orthogonal Whittaker functions scale to (continuous) \emph{symplectic} Schur functions: this is why, for example, the integral of two orthogonal Whittaker functions for the point-to-line log-gamma polymer scales to an integral of two continuous symplectic Schur functions for the point-to-line exponential LPP.
The reason for this lies in the Casselman-Shalika formula~\cite{casselmanShalika80}, which links Whittaker functions on a group $G$ to characters of the irreducible representations of the Langlands dual group of $G$.
Since the dual of the orthogonal group $\SO_{2n+1}(\R)$ is the symplectic group $\Sp_{2n}(\C)$, odd orthogonal Whittaker functions are the analog of symplectic Schur functions.
On the other hand, since the dual of $\GL_n(\R)$ is $\GL_n(\C)$, $\GL_n(\R)$-Whittaker functions are the analog of Schur functions.

We conclude this thesis by an asymptotic analysis of our point-to-line and point-to-half-line exponential LPP models, finding KPZ fluctuations of order $N^{1/3}$ and the expected limiting distributions for these two path geometries.
We thus provide a new route to the GOE Tracy-Widom distribution and to the one-point marginal distribution of the ${\rm Airy}_{2\to 1}$ process respectively, via symplectic Schur formulas.
Using the determinantal form of Schur functions~\eqref{eq:schurContDet} and~\eqref{eq:spSchurContDet} and the so-called Cauchy-Binet identity~\eqref{eq:cauchyBinet}, which expresses the multiple integral of the product between two determinantal functions as the determinant of a single integral, our Schur functions' formulas~\eqref{eq:flatExpLPPSchur} and~\eqref{eq:hFlatExpLPPSchur} can be turned into ratios of determinants (see~\eqref{eq:flatExpLPPdet} and~\eqref{eq:hFlatExpLPPdet}) and then into Fredholm determinants, amenable to asymptotic analysis via steepest descent.

For the point-to-line model we obtain Sasamoto's formula~\cite{sasamoto05} for the GOE Tracy-Widom distribution, see~\eqref{eq:GOE} and~\eqref{eq:GOEkernel}.
This expression is different from the one originally derived by Tracy and Widom, first expressed in terms of Painlev\'e functions~\cite{tracyWidom96} and then in terms of a block-Fredholm Pfaffian~\cite{tracyWidom98, tracyWidom05}. 
Sasamoto's original derivation of~\eqref{eq:GOEkernel} came through the analysis of TASEP with an initial configuration of the form $\cdots01010000\cdots$, where $1$ denotes a particle and $0$ a hole.
The presence of the semi-infinite sequence of holes is technical and the actual focus of the asymptotic analysis in~\cite{sasamoto05} is on the alternating particle-hole regime, which simulates the \emph{flat} initial condition.
The starting point for this derivation was Sch\"utz's determinantal formula~\cite{schutz97} for the occupation probabilites in TASEP, obtained via Bethe ansatz methods.
A proof that Sasamoto's formula actually provides a different expression for the GOE Tracy-Widom distribution was provided in~\cite{ferrariSpohn05}.
Subsequently to~\cite{sasamoto05}, the $\rm{Airy}_{2\to 1}$ process was constructed in~\cite{borodinFerrariSasamoto08} by studying, again via Sch\"utz's and Sasamoto's formulas, the asymptotic distribution of TASEP particles with initial configuration $\cdots01010000\cdots$, but now at the interface between the left half-line $\cdots0101$ of alternating particles-holes and the right half-line $0000\cdots$ of holes.

Asymptotics recovering the GOE Tracy-Widom distribution as a limiting law have been also performed in~\cite{baikRains01b, ferrari04} for last passage percolation and polynuclear growth models, as well as in the more recent nonrigorous work~\cite{leDoussalCalabrese12} for the KPZ equation with flat initial data.
All these already cited works derive Painlev\'e expressions or various forms of block-Fredholm Pfaffian formulas for the GOE Tracy-Widom distribution.
In contrast, our approach leads directly to Sasamoto's Fredholm determinant formula~\eqref{eq:GOE}-\eqref{eq:GOEkernel}, as well as to the ${\rm Airy}_{2\to 1}$ marginal distribution~\eqref{eq:Airy21}-\eqref{eq:Airy21kernel}.
 
{\bf Outline of the thesis.}
In chapter~\ref{ch:preliminary} we introduce the main technical tools that we need (combinatorial and geometric $\RSK$, Schur functions, and Whittaker functions); in this propaedeutic material we will provide most of the proofs to make this thesis as self-contained as possible.
Chapter~\ref{ch:polymer} about the log-gamma polymer models is based on our article~\cite{bisiZygouras17a}: we first present the Whittaker integral formulas and then the contour integrals.
Chapter~\ref{ch:LPP} about the LPP models is structured as follows: section~\ref{sec:geomLPP} on the geometric models does not appear as such in any published work; section~\ref{sec:expLPP} on the exponential models is still mainly based on~\cite{bisiZygouras17a}, even though some proofs are different; section~\ref{sec:LPPasymptotics} on the asymptotic analysis of the exponential models is based on our second article~\cite{bisiZygouras17b}.
In appendix~\ref{appendix:zeroTempLimit} we recap the zero temperature limit in the form we need it; finally, in appendix~\ref{appendix:Whittaker} we discuss Whittaker functions from a number theoretic point of view.

{\bfseries Notation}.
We adopt the standard notations for number sets, such as $\Z$, $\R$ and $\C$.
Other notations for number sets will be either self-explanatory, such as $\Z_{\geq 0}$ for the set of nonnegative integers, or defined where useful.
We denote single numbers in nonbold lower case letters (e.g.\ $x$) and arrays of numbers - such as vectors, matrices, tableaux and partitions - by bold lower case letters (e.g.\ $\bm{x}$).
If such objects are random, we tend to use capitals: again, nonbold for numbers (e.g.\ $X$) and bold for arrays (e.g.\ $\bm{X}$).
We denote by $\Gamma(\cdot)$ the gamma function.
For a real random variables $X$, we call CDF its cumulative distribution function $x \mapsto \P(X\leq x)$.
We also fix once for all the following probability distributions\footnote{Notice that the parametrization of the geometric distribution is not the most standard one.}:

\begin{center}
\begin{tabular}{ccccc}
\toprule
\bfseries Distribution & \bfseries Notation & \bfseries Parameters & \bfseries Support & \bfseries Density/mass \\
\midrule
gamma & ${\rm Gamma}(\alpha,\beta)$ & $\alpha,\beta\in\R_{>0}$ & $x\in\R_{>0}$ & $\frac{\beta^{\alpha}}{\Gamma(\alpha)} x^{\alpha-1} \e^{-\beta x}$ \\
exponential & $\Exp(\lambda)$ & $\lambda\in\R_{>0}$ & $x\in\R_{>0}$ & $\lambda \e^{-\lambda x}$ \\
geometric & $\Geom(p)$ & $p\in(0,1)$ & $k\in\Z_{\geq 0}$ & $(1-p) p^k$ \\
\bottomrule
\end{tabular}
\end{center}

\chapter{Combinatorics and special functions}
\label{ch:preliminary}

In this chapter we present some preliminary notions that will play a crucial role in the thesis: combinatorial correspondences of $\RSK$-type (section~\ref{sec:RSK}), Schur functions (section~\ref{sec:Schur}), and Whittaker functions (section~\ref{sec:Whittaker}).
These tools will be used, in the following chapters, to study the polymer and last passage percolation models in the various point-to-line geometries described in the Introduction.

\section{$\RSK$-type correspondences}
\label{sec:RSK}

We first introduce the classical $\RSK$ correspondence as a combinatorial algorithm.
Next, we describe the so-called geometric $\RSK$ correspondence and its main properties.
Finally, we derive an extended version of the $\RSK$ algorithm as a suitable limit of the geometric $\RSK$.

\subsection{Combinatorial $\RSK$ correspondence}
\label{subsec:RSK}

The classical combinatorial Robinson-Schensted correspondence is a bijection between permutations and pairs of standard Young tableaux of the same shape.
Viewing permutations as special $\{0,1\}$-matrices (``permutation matrices''), it can be generalized to the so-called \emph{Robinson-Schensted-Knuth correspondence} ($\RSK$), which is a bijection between matrices of non-negative integers and pairs of semistandard Young tableaux of the same shape.
We further define the $\RSK$ and describe its main features; for further details and proofs of the results we refer the reader to~\cite{stanley99}.

A \emph{partition} of $n$ of length $l$ is a weakly decreasing sequence $\bm{\lambda} = (\lambda_1, \dots, \lambda_l)$ of $l$ positive integers that sum up to $n$: we write $\bm{\lambda} \vdash n$ and $l(\bm{\lambda}) = l$.
The empty partition $\bm{\lambda} = \emptyset$ is the only one of length $0$.
A graphical representation of a partition is a \emph{Young diagram}, i.e.\ a finite collection of boxes arranged in left-justified rows whose lengths are weakly decreasing, as in Figure~\ref{subfig:YoungDiagram}.

\begin{figure}
\centering
\begin{minipage}[b]{.3\linewidth}
\centering
\[
\yng(4,3,1)
\]
\subcaption{Young diagram corresponding to partition $(4,3,1)$.}
\label{subfig:YoungDiagram}
\end{minipage}\hfill
\begin{minipage}[b]{.3\linewidth}
\centering
\[
\young(1123,244,4)
\]
\subcaption{Semistandard Young tableau of size $8$, shape $(4,3,1)$, and type $(2,2,1,3)$.}
\label{subfig:YoungTableau}
\end{minipage}\hfill
\begin{minipage}[b]{.3\linewidth}
\[
\young(1122,234,44)
\]
\subcaption{Semistandard Young tableau obtained by inserting $2$ into the previous one.}
\label{subfig:YoungTableauInsertion}
\end{minipage}
\caption[Examples of Young diagrams and tableaux]{Examples of Young diagrams and tableaux.}
\label{fig:YoungTableaux}
\end{figure}

A \emph{Young tableau} $\bm{p}$ is obtained by filling the boxes of a Young diagram with positive integers; the \emph{size} of $\bm{p}$ is its total number of boxes; the \emph{shape} of $\bm{p}$, denoted by $\shape(\bm{p})$, is the partition that corresponds to the underlying Young diagram; the \emph{type} of $\bm{p}$ is the sequence $(\type(\bm{p})_1, \type(\bm{p})_2, \dots)$ such that $\type(\bm{p})_i$ is the number of $i$'s in $\bm{p}$ (the sequence is actually finite, in the sense that $\type(\bm{p})_i=0$ for $i$ large enough).
A Young tableau is called \emph{semistandard} if its rows are weakly increasing and its columns are strictly increasing, as in Figure~\ref{subfig:YoungTableau}.

We now define the \emph{insertion} of $k\in\Z_{>0}$ into a semistandard Young tableau $\bm{p}=\{ p_{i,j} \}$.
In the $i$-th row, starting from $i=1$, we search for the smaller $j$ such that $p_{i,j} > k$.
If such a $j$ does not exist, we simply add a box filled with $k$ at the end of the $i$-th row; if such a $j$ does exist, we fill box $(i,j)$ with $k$ and bump the old entry $p_{i,j}$ to the next row $i+1$, where we will try to insert it in the same way.
Clearly, the procedure must stop in a finite number of steps, producing a new semistandard Young tableau whose size is increased by one, as the one of Figure~\ref{subfig:YoungTableauInsertion}.

For any $m \times n$ matrix $\bm{w} = \{w_{i,j}\}$ with entries in $\Z_{\geq 0}$, we consider two words of the same length, which are concatenations of weakly increasing words in the alphabet $\Z_{>0}$:
\begin{enumerate}
\item
the word $v := v_1 \dots v_m$ such that $v_i := 1^{w_{i,1}} \dots n^{w_{i,n}}$;
\item
the word $v' := v'_1 \dots v'_n$ such that $v'_j := 1^{w_{1,j}} \dots m^{w_{m,j}}$.
\end{enumerate}
The \emph{$\RSK$ correspondence} is defined as the map that associates such a matrix $\bm{w}$ to the pair of semistandard Young tableaux $(\bm{p},\bm{q})$, where $\bm{p}$ is obtained by inserting all numbers appearing in word $v$ successively (starting from the empty tableau) and $\bm{q}$ is obtained in the same way using $v'$ instead.
It is immediate by construction that the $j$-th column of $\bm{w}$ sums up to the number of $j$'s in $\bm{p}$, and similarly the $i$-th row of $\bm{w}$ sums up to the number of $i$'s in $\bm{q}$.
Also observe that, transposing matrix $\bm{w}$, the roles of $\bm{p}$ and $\bm{q}$ are interchanged.

\begin{figure}
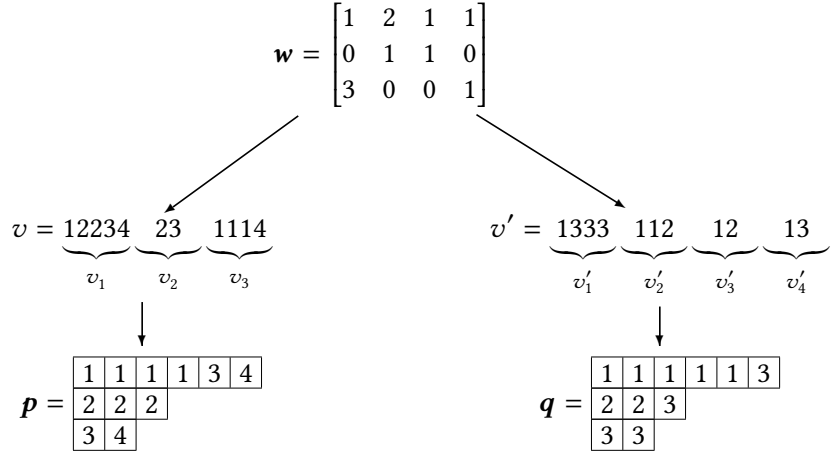

\begin{diagram}
&&
\bm{w} =
\begin{bmatrix}
1 & 2 & 1 & 1 \\
0 & 1 & 1 & 0 \\
3 & 0 & 0 & 1
\end{bmatrix}
&&\\
&\ldTo & &\rdTo &\\
v = \underbrace{12234}_{v_1}\underbrace{23}_{v_2}\underbrace{1114}_{v_3}
&&&
&v' = \underbrace{1333}_{v'_1}\underbrace{112}_{v'_2}\underbrace{12}_{v'_3}\underbrace{13}_{v'_4}
 \\
\dTo &&& &\dTo \\
\bm{p} = \young(111134,222,34)
&&&
&\bm{q} = \young(111113,223,33)
\end{diagram}
\caption[Visualization of the $\RSK$ correspondence]{The $\RSK$ correspondence: matrix $\bm{w}$ is associated to words $v$ and $v'$, which are in turn mapped to the corresponding tableaux $\bm{p}$ and $\bm{q}$.}
\label{fig:RSKexample}
\end{figure}

Looking at the example of $\RSK$ given in figure~\ref{fig:RSKexample}, one notices that $\bm{p}$ and $\bm{q}$ are of the same shape.
This is not fortuitous, because $\bm{q}$ can also be constructed at the same time as $\bm{p}$ as follows: after inserting a number belonging to word $v_i$ into $\bm{p}$, a box is added to $\bm{q}$ in the same position as the box added to $\bm{p}$ in the insertion procedure, and filled with $i$.
It is clear from this alternative construction that $\bm{p}$ and $\bm{q}$, thus called \emph{insertion tableau} and \emph{recording tableau} respectively, are of the same shape.
In fact, any given pair of semistandard Young tableaux with the same shape is the image of a unique $\Z_{\geq 0}$-matrix under $\RSK$.

The reason why we have introduced $\RSK$ lies in its connection with the directed last passage percolation (LPP) model defined in the Introduction: the length of the first row of the two output tableaux corresponds to the LPP from site $(1,1)$ to site $(m,n)$ with waiting time $w_{i,j}$ at site $(i,j)$.
For example, in Figure~\ref{fig:RSKexample}, the LPP time on $\bm{w}$ from $(1,1)$ to $(3,4)$ is $6$, which is also the length of the first row of $\bm{p}$ and $\bm{q}$.
Let us just mention that the lengths of the other rows of $\bm{p}$ and $\bm{q}$ can also be expressed in terms of piecewise linear functions of $\bm{w}$, analogous to LPP and involving non-intersecting  paths: the result is known as Greene's Theorem.

Let us now summarize the properties of $\RSK$ seen so far:
\begin{proposition}[\cite{stanley99}]
\label{prop:RSK}
Let $m,n\geq 1$.
The $\RSK$ correspondence is a bijection between matrices $\bm{w} = \{w_{i,j}\} \in \Z_{\geq 0}^{m\times n}$ and pairs $(\bm{p},\bm{q})$ of semistandard Young tableaux of the same shape such that $\max_{i,j} p_{i,j} \leq n$ and $\max_{i,j} q_{i,j} \leq m$.
The $\RSK$ satisfies the following properties:
\begin{enumerate}
\item
If $\Pi_{m,n}$ is the set of all directed paths from $(1,1)$ to $(m,n)$, then
\[
\shape(\bm{p})_1 = \shape(\bm{q})_1 = \max_{\pi\in \Pi_{m,n}} \sum_{(i,j)\in \pi} w_{i,j} \, .
\]
\item
The type of $\bm{p}$ and $\bm{q}$ is determined by:
\begin{align*}
\type(\bm{p})_j &= \sum_{i=1}^m w_{i,j}
&& \text{for } 1\leq j\leq n \, , \\
\type(\bm{q})_i &= \sum_{j=1}^n w_{i,j}
&& \text{for } 1\leq i\leq m \, .
\end{align*}
\item
If $\bm{w} \xmapsto{\RSK} (\bm{p},\bm{q})$, then $\bm{w}^\top \xmapsto{\RSK} (\bm{q},\bm{p})$.
\end{enumerate}
\end{proposition}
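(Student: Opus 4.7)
The plan is to verify the four statements separately, reducing each to standard facts about row insertion. For the bijection and statement (ii), I would argue directly from the definition. Constructing $\bm{p}$ and $\bm{q}$ in parallel (every time a letter of the subword $v_i$ is inserted, add a box labeled $i$ to $\bm{q}$ in the position of the new box in $\bm{p}$) shows that $\bm{p}$ and $\bm{q}$ always have equal shape, and that $\bm{q}$ is a genuine semistandard tableau with entries bounded by $m$ (the weak monotonicity across rows comes from the fact that boxes labeled $i$ are added before boxes labeled $i+1$, and the strict monotonicity down columns from the shape evolution of $\bm{p}$). Invertibility follows by reversing the procedure: locate the rightmost box of $\bm{q}$ carrying the largest label $i$, delete it, and perform reverse row insertion on the corresponding box of $\bm{p}$ to recover the last letter of $v_i$. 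Iterating extracts $v$ uniquely and hence recovers the original matrix $\bm{w}$ via the definition of $v$. Statement (ii) is then immediate: the letter $j$ appears in $v$ exactly $\sum_i w_{i,j}$ times by construction, and insertion preserves the multiset of entries, so $\type(\bm{p})_j = \sum_i w_{i,j}$; the analogous statement for $\bm{q}$ follows from the parallel construction above, since the number of $i$'s added equals the length of $v_i$, which is $\sum_j w_{i,j}$.

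For statement (i), I would invoke Schensted's theorem (the first-row case of Greene's theorem): for any word $v$, $\shape(\bm{p})_1$ equals the length of the longest weakly increasing subsequence of $v$. Given this, it remains to set up a weight-preserving bijection between weakly increasing subsequences of $v$ and directed paths in $\Pi_{m,n}$. Concretely, a weakly increasing subsequence of $v$ that uses some $j$'s from the block $v_i$ must, if it also uses a $j'$ from $v_{i'}$ with $j' \geq j$, have $i' \geq i$; the collection of pairs $(i,j)$ for which the subsequence uses at least one letter from the $j$-block of $v_i$ therefore forms a directed path from some $(i_1,j_1)$ to some $(i_2,j_2)$, and the length of the subsequence is the sum of $w_{i,j}$ along that path. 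Maximizing over subsequences matches maximizing over $\pi\in\Pi_{m,n}$ of $\sum_{(i,j)\in\pi} w_{i,j}$, which gives the identity.

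For statement (iii), the transpose symmetry, the word-insertion definition does not make the symmetry transparent: the words $v$ and $v'$ constructed from $\bm{w}$ and $\bm{w}^\top$ are related in a nontrivial way, and one needs to show that the resulting insertion/recording tableaux swap. The cleanest route is to reformulate $\RSK$ via Fomin's growth diagrams: attach a partition $\bm{\mu}(i,j)$ to each lattice point of $[0,m]\times[0,n]$ via a local rule that determines $\bm{\mu}(i,j)$ from $\bm{\mu}(i-1,j-1)$, $\bm{\mu}(i-1,j)$, $\bm{\mu}(i,j-1)$, and the weight $w_{i,j}$. The local rule is manifestly symmetric under interchange of the two coordinates, so transposing $\bm{w}$ corresponds to reflecting the diagram along the diagonal; one then reads off $\bm{p}$ from the partitions along the top edge and $\bm{q}$ from those along the right edge (or vice versa), so the transpose exchanges them. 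The main obstacle in the proof is precisely this step: one must verify that the growth-diagram formulation agrees with the word-insertion definition, which requires a careful induction on the size of the matrix, checking that one step of row insertion is reproduced by one column of local rules. Once this equivalence is in hand, (iii) becomes transparent.
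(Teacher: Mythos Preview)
The paper does not actually prove this proposition: it is stated as a classical fact with a reference to Stanley, and the preceding text merely explains the constructions and asserts the properties (``It is immediate by construction that\ldots'', ``Also observe that, transposing matrix $\bm{w}$\ldots'', ``the length of the first row\ldots corresponds to the LPP\ldots Greene's Theorem''). So there is no ``paper's own proof'' to compare against; the proposition is a summary of standard results whose proofs the reader is directed to find in \cite{stanley99}.

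That said, your outline is a correct and standard route through the material. Your argument for the bijection and for property~(ii) matches what the paper sketches informally. For property~(i) you invoke Schensted's theorem and then identify maximal weakly increasing subsequences of $v$ with directed lattice paths; this is exactly the classical reduction, and it is consistent with the paper's remark that the length of the first row equals the point-to-point LPP (and that the remaining rows are governed by Greene's theorem). For property~(iii) you take the Fomin growth-diagram route, which is indeed the cleanest way to make the symmetry manifest; Stanley's treatment proceeds somewhat differently (via the combinatorics of insertion and Knuth equivalence), but the growth-diagram argument is by now equally standard and arguably more transparent. The one step you correctly flag as requiring care --- verifying that the local growth rule reproduces row insertion --- is the genuine content of that approach, and your plan to handle it by induction on the matrix size is the right one.
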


We now introduce a further combinatorial object which is in a bijective correspondence with semistandard Young tableaux.
Let $\bm{p}=\{ p_{i,j} \}$ be a semistandard Young tableau and, for all $i,j\geq 1$, let
\begin{equation}
\label{eq:GTpatternOfYoungTableau}
z_{i,j} := \#\{ \text{entries $\leq i$ in the $j$-th row of $\bm{p}$} \} \, .
\end{equation}
Notice that, for any given $n\geq \max_{i,j}p_{i,j}$, we have that $z_{n,j}=z_{n+1,j}=z_{n+2,j}=\dots$, hence all $z_{i,j}$ with $i>n$ are redundant and may be ignored.
Moreover, by the column strict rule, below the $i$-th row there can only be entries greater than $i$, so that $z_{i,j}=0$ for all $j>i$.
We may thus arrange all significant $z_{i,j}$'s in the triangular array
\begin{equation}
\label{eq:GTpattern}
\bm{z}= \{z_{i,j}\colon 1\leq j\leq i \leq n \} \, .
\end{equation}
An important property of $\bm{z}$ is given by the \emph{interlacing conditions}:
\begin{equation}
\label{eq:interlacing}
z_{i+1,j+1} \leq z_{i,j} \leq z_{i+1,j} \qquad \text{for } 1 \leq j \leq i < n \, ,
\end{equation}
which follow from the column strict rule for $\bm{p}$ as well.
A triangular array $\bm{z}$ of the form~\eqref{eq:GTpattern} satisfying the interlacing conditions~\eqref{eq:interlacing} is called \emph{Gelfand-Tsetlin pattern} of \emph{height} $n$ and is usually pictured as in Figure~\ref{fig:GTpattern} (notice that rows are arranged with second index increasing from right to left).
If, as in the case of~\eqref{eq:GTpatternOfYoungTableau}, its entries are non-negative integers, we call $\bm{z}$ a \emph{$\Z_{\geq 0}$-Gelfand-Tsetlin pattern}.
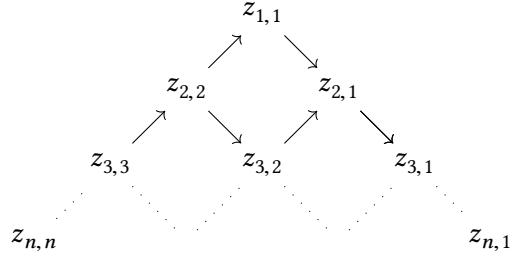
\begin{figure}
\centering
\begin{tikzpicture}[scale=1]

\node (z11) at (1,-1) {$z_{1,1}$};
\node (z21) at (2,-2) {$z_{2,1}$};
\node (z22) at (0,-2) {$z_{2,2}$};
\node (z31) at (3,-3) {$z_{3,1}$};
\node (z32) at (1,-3) {$z_{3,2}$};
\node (z33) at (-1,-3) {$z_{3,3}$};
\node (z41) at (4,-4) {$z_{n,1}$};
\node (z42) at (2,-4) {};
\node (z43) at (0,-4) {};
\node (z44) at (-2,-4) {$z_{n,n}$};

\draw[->] (z22) -- (z11);
\draw[->] (z11) -- (z21);
\draw[->] (z33) -- (z22);
\draw[->] (z22) -- (z32);
\draw[->] (z32) -- (z21);
\draw[->] (z21) -- (z31);
\draw[->] (z21) -- (z31);
\draw[loosely dotted] (z44) -- (z33);
\draw[loosely dotted] (z33) -- (z43);
\draw[loosely dotted] (z43) -- (z32);
\draw[loosely dotted] (z32) -- (z42);
\draw[loosely dotted] (z42) -- (z31);
\draw[loosely dotted] (z31) -- (z41);

\end{tikzpicture}
\caption[Triangular arrays and Gelfand-Tsetlin patterns]{A triangular array of height $n$.
If this is interpreted as a Gelfand-Tsetlin pattern, the arrows illustrate the interlacing conditions: $a \to b$ means $a\leq b$.
The arrows may also refer to the functional~\eqref{eq:glEnergy}: $\mathcal{E}^{\triangle}(\bm{z})$ is the sum of all ratios $a/b$ such that there is an arrow pointing from $a$ to $b$ in the diagram.}
\label{fig:GTpattern}
\end{figure}

By construction, the $i$-th row $(z_{i,1}, \dots, z_{i,i})$ of $\bm{z}$ is the shape of the semistandard Young tableau obtained by removing all boxes filled with numbers greater than $i$ from the original tableau $\bm{p}$; in particular, the bottom row $(z_{n,1}, \dots, z_{n,n})$ is the shape of $\bm{p}$, and is called \emph{shape} of $\bm{z}$ by analogy.
It is likewise clear that the number of $i$'s in $\bm{p}$ equals the difference between the sum of the $i$-th row of $\bm{z}$ and the sum of its $(i-1)$-th row.
Analogously to the type of $\bm{p}$, we then define the \emph{type} of $\bm{z}$ by
\begin{equation}
\label{eq:typeGTpattern}
\type(\bm{z})_i := \sum_{j = 1}^i z_{i,j} - \sum_{j = 1}^{i-1} z_{i-1,j} \qquad \text{for } i=1, \dots, n \, ,
\end{equation}
where the empty sum equals $0$ by convention (as we will always suppose from now on); we thus have that $\type(\bm{z}) = \type(\bm{p})$.

As the correspondence described above between semistandard Young tableaux and integer Gelfand-Tsetlin patterns is easily verified to be bijective, Proposition~\ref{prop:RSK} can be reformulated as follows:
\begin{proposition}
\label{prop:RSKGelfandTsetlin}
Let $m,n\geq 1$.
The $\RSK$ correspondence can be seen as a bijection between matrices $\bm{w} = \{w_{i,j}\} \in \Z_{\geq 0}^{m\times n}$ and pairs $(\bm{z},\bm{z}')$ of $\Z_{\geq 0}$-Gelfand-Tsetlin patterns of height $n$ and $m$ respectively and of the same shape. The $\RSK$ satisfies the following properties:
\begin{enumerate}
\item
\label{prop:RSKGelfandTsetlin_LPP}
If $\Pi_{m,n}$ is the set of all directed paths from $(1,1)$ to $(m,n)$, then
\[
z_{n,1} = z'_{m,1} = \max_{\pi\in \Pi_{m,n}} \sum_{(i,j)\in \pi} w_{i,j} \, .
\]
\item
\label{prop:RSKGelfandTsetlin_type}
The type of $\bm{z}$ and $\bm{z}'$ is determined by:
\begin{align*}
\type(\bm{z})_j &= \sum_{i=1}^m w_{i,j}
&& \text{for } 1\leq j\leq n \, , \\
\type(\bm{z}')_i &= \sum_{j=1}^n w_{i,j}
&& \text{for } 1\leq i\leq m \, .
\end{align*}
\item
\label{prop:RSKGelfandTsetlin_symmetry}
If $\bm{w} \xmapsto{\RSK} (\bm{z},\bm{z}')$, then $\bm{w}^{\top} \xmapsto{\RSK} (\bm{z}',\bm{z})$.
\end{enumerate}
\end{proposition}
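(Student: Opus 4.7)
The plan is to reduce Proposition 1.4 to Proposition 1.1 by establishing a natural bijection between semistandard Young tableaux with entries in $\{1,\dots,n\}$ and $\Z_{\geq 0}$-Gelfand-Tsetlin patterns of height $n$, then checking that the three properties in the conclusion transfer correctly through this bijection.

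\textbf{Step 1: The pattern-tableau bijection.} Given a semistandard Young tableau $\bm{p}$ with entries bounded by $n$, define $z_{i,j}$ by formula~\eqref{eq:GTpatternOfYoungTableau}. I would verify that $\bm{z}$ is indeed a $\Z_{\geq 0}$-Gelfand-Tsetlin pattern of height $n$: the inequality $z_{i+1,j} \geq z_{i,j}$ is immediate since adding entries equal to $i+1$ can only increase the count of entries $\leq i+1$ in any row, while $z_{i,j} \geq z_{i+1,j+1}$ follows from the column-strictness (entries in row $j+1$ at column $c$ must exceed those in row $j$ at column $c$, so the number of entries $\leq i+1$ in row $j+1$ cannot exceed the number of entries $\leq i$ in row $j$). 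For the inverse map, given a pattern $\bm{z}$, reconstruct $\bm{p}$ by placing, in row $j$, exactly $z_{i,j}-z_{i-1,j}$ copies of $i$ (with the convention $z_{i-1,j}=0$ when $j>i-1$); the interlacing conditions guarantee that these differences are nonnegative and that the resulting tableau is semistandard. This gives a bijection.

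\textbf{Step 2: Shape and type dictionary.} Under this bijection, the bottom row $(z_{n,1},\dots,z_{n,n})$ of $\bm{z}$ equals $\shape(\bm{p})$ (since $z_{n,j}$ counts all entries, which are all $\leq n$, in row $j$); in particular $z_{n,1} = \shape(\bm{p})_1$. Moreover, the identity $\type(\bm{z})_i = \sum_j z_{i,j} - \sum_j z_{i-1,j}$ counts precisely the entries equal to $i$ across all rows of $\bm{p}$, which is $\type(\bm{p})_i$. So the bijection intertwines shapes and types.

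\textbf{Step 3: Transfer of the three properties.} Now apply Proposition 1.1 after passing to Gelfand-Tsetlin patterns via Step 1. Bijectivity is preserved since both $\bm{p}\leftrightarrow\bm{z}$ and $\bm{q}\leftrightarrow\bm{z}'$ are bijections, and the shape-matching condition $\shape(\bm{p})=\shape(\bm{q})$ translates into the equality of bottom rows of $\bm{z}$ and $\bm{z}'$ (``same shape'' in the Gelfand-Tsetlin sense). The bound $\max p_{i,j}\leq n$ translates exactly into $\bm{z}$ having height $n$, and similarly for $\bm{q}$ and $\bm{z}'$. Property~\eqref{prop:RSKGelfandTsetlin_LPP} follows from property (i) of Proposition 1.1 and the identity $z_{n,1} = \shape(\bm{p})_1$ established in Step 2. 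Property~\eqref{prop:RSKGelfandTsetlin_type} follows similarly from property (ii) of Proposition 1.1 together with $\type(\bm{z}) = \type(\bm{p})$. Property~\eqref{prop:RSKGelfandTsetlin_symmetry} is an immediate rewriting of property (iii) of Proposition 1.1 under the bijection.

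\textbf{Anticipated obstacle.} There is essentially no difficulty beyond a careful verification of the interlacing from the column-strictness, and of invertibility of the pattern-to-tableau map. The only subtle point is making sure that the boundary conventions ($z_{i-1,j}=0$ when $j>i-1$, and that $z_{i,j}$ is defined for all $i \geq 1$ but stabilizes for $i \geq n$) are handled consistently so that both the shape and type match up cleanly. Since Proposition 1.1 has already been invoked, no new combinatorics is needed.
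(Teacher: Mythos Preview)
Your proposal is correct and takes essentially the same approach as the paper: the paper explicitly states that Proposition~\ref{prop:RSKGelfandTsetlin} is a reformulation of Proposition~\ref{prop:RSK} via the tableau--pattern bijection described in the preceding paragraphs, and your Steps~1--3 simply spell out that passage in more detail. No additional ingredient is needed beyond what you have written.
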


For an $m\times n$ input matrix $\bm{w}$, by the column strict rule the common shape of the output tableaux $\bm{p}$ and $\bm{q}$ is at most of length $m\wedge n$, hence in the Gelfand-Tsetlin pattern representation we have $z_{i,j}=0$ and $z'_{i,j}=0$ for all $(i,j)$ such that $j>m\wedge n$ (see for example the zero entry of $\bm{z}$ in Figure~\ref{subfig:GTpatternsEx}).
Ignoring such zero entries, $\bm{z}$ and $\bm{z}'$ can be glued together along their common shape to form a new $m\times n$ matrix $\bm{t}=(t_{i,j})$, as in Figure~\ref{subfig:RSKoutputMatrix}.
For instance, when $m<n$:
\begin{equation}
\label{eq:RSKoutputMatrix}
\bm{t}
=
\begin{tikzpicture}[baseline=(current bounding box.center)]
\matrix (m) [matrix of math nodes,nodes in empty cells,right delimiter={]},left delimiter={[} ]{
z_{m,m} && &z_{n,m}=z'_{m,m} & &z'_{1,1} \\
& & & & & \\
& & & & & \\
& & & & & \\
z_{1,1} && &z_{m,1} & &z_{n,1}=z'_{m,1} \\
} ;
\draw[loosely dotted] (m-1-1)-- (m-5-1);
\draw[loosely dotted] (m-1-1)-- (m-5-4);
\draw[loosely dotted] (m-5-1)-- (m-5-4);
\draw[loosely dotted] (m-1-1)-- (m-1-4);
\draw[loosely dotted] (m-1-4)-- (m-5-6);
\draw[loosely dotted] (m-5-4)-- (m-5-6);
\draw[loosely dotted] (m-1-6)-- (m-5-6);
\draw[loosely dotted] (m-1-4)-- (m-1-6);
\end{tikzpicture}
\, .
\end{equation}
In this representation, the common shape of the two Gelfand-Tsetlin patterns is thus given by the $(n-m)$-th diagonal of $\bm{t}$ read in reverse order, i.e.\ starting from the bottom-right entry of the matrix.

\begin{figure}
\centering
\begin{minipage}[b]{.7\linewidth}
\centering
\[
\bm{z} =
\begin{array}{ccccccc}
        & & &4 \\
        & &3 & &4 \\
        &1 & & 3 & & 5 \\
        0 & &2 & &3 & &6 \\
\end{array}
\qquad
\bm{z}' =
\begin{array}{ccccccc}
        & &5 \\
        &2 & &5 \\
        2 & &3 & &6 \\
\end{array}
\]
\subcaption{Pair of Gelfand-Tsetlin patterns with common shape $(6,3,2)$, corresponding to $\bm{p}$ and $\bm{q}$ respectively.}
\label{subfig:GTpatternsEx}
\end{minipage}\hfill
\begin{minipage}[b]{.27\linewidth}
\centering
\[
\bm{t} =
\begin{bmatrix}
1 &2 &2 &5 \\
3 &3 &3 &5 \\
4 &4 &5 &6
\end{bmatrix}
\]
\subcaption{Matrix constructed by gluing together $\bm{z}$ and $\bm{z}'$.}
\label{subfig:RSKoutputMatrix}
\end{minipage}
\caption[Alternative ways to represent the output of the $\RSK$ correspondence]{Alternative ways to represent the output $(\bm{p},\bm{q})$ of the $\RSK$ correspondence illustrated in Figure~\ref{fig:RSKexample}.}
\label{fig:RSKoutputAlternatives}
\end{figure}

\subsection{Geometric $\RSK$ correspondence}
\label{subsec:gRSK}

As first observed in~\cite{berensteinKirillov01}, the $\RSK$ can be entirely described by operations in the tropical semiring with operations $(\max,+)$.
Via a so-called \emph{geometric lifting}, i.e.\ by replacing these operations with their analogs $(+,\cdot)$ in the usual algebra, A.~N.~Kirillov~\cite{kirillov01} introduced the geometric $\RSK$ ($\gRSK$) correspondence.
In~\cite{oConnellSeppalainenZygouras14}, this was expressed as a bijection between matrices with positive entries via a composition of local birational maps (called local moves).
Here we mainly follow~\cite{nguyenZygouras17}, as the extension to generic polygonal arrays presented therein is needed for the following chapters.
Our description, however, highlights all the possible sequences of local moves that make up $\gRSK$ equivalently, due to their commuting properties.

We define a \emph{polygonal array} to be a numeric array $\bm{t} = \{t_{i,j} \colon (i,j)\in \mathcal{I}\}$ indexed by a finite set $\mathcal{I}\subseteq\Z_{>0}\times\Z_{>0}$ satisfying: if $(i,j)\in\mathcal{I}$, then $(i-1,j)\in\mathcal{I}$ if $i>1$, and $(i,j-1)\in\mathcal{I}$ if $j>1$. 
These conditions simply mean that $\mathcal{I}$ is the index set of a Young diagram, hence $\bm{t}$ can be equivalently defined as a Young diagram filled with numbers.
Given a set of numbers $S$, we denote by $S^{\mathcal{I}}$ the set of all polygonal arrays indexed by $\mathcal{I}$ with entries in $S$.
We say that $(m,n)\in\mathcal{I}$ is a \emph{border index} for $\mathcal{I}$ if $(m+1,n+1)$ does not belong to $\mathcal{I}$, or equivalently if it is the last (i.e.\ rightmost and bottommost) index of its diagonal.
We call $(m,n)$ \emph{outer index} for $\mathcal{I}$ if none of the three sites $(m,n+1),(m+1,n),(m+1,n+1)$ belongs to $\mathcal{I}$, or equivalently if $\mathcal{I}\setminus\{(m,n)\}$ is still the index set of a Young diagram.
Clearly, all outer index is also a border index.
See Figure~\ref{fig:polygonalArray} for a graphical illustration of $\bm{t}$ and its border and outer indices.

\begin{figure}
\centering
\begin{minipage}[b]{.5\linewidth}
\centering
\begin{tikzpicture}[scale=0.8, every node/.style={transform shape}]
\node (t11) at (1,-1) {$t_{1,1}$};
\node (t12) at (2,-1) {$t_{1,2}$};
\node (t13) at (3,-1) {$t_{1,3}$};
\node (t14) at (4,-1) {$t_{1,4}$};
\node[draw,circle,inner sep=1pt] (t15) at (5,-1) {$t_{1,5}$};
\node[draw,circle,inner sep=1pt] (t16) at (6,-1) {$\red{t_{1,6}}$};

\node (t21) at (1,-2) {$t_{2,1}$};
\node (t22) at (2,-2) {$t_{2,2}$};
\node (t23) at (3,-2) {$t_{2,3}$};
\node[draw,circle,inner sep=1pt] (t24) at (4,-2) {$t_{2,4}$};
\node[draw,circle,inner sep=1pt] (t25) at (5,-2) {$\red{t_{2,5}}$};

\node (t31) at (1,-3) {$t_{3,1}$};
\node (t32) at (2,-3) {$t_{3,2}$};
\node[draw,circle,inner sep=1pt] (t33) at (3,-3) {$t_{3,3}$};
\node[draw,circle,inner sep=1pt] (t34) at (4,-3) {$\red{t_{3,4}}$};

\node (t41) at (1,-4) {$t_{4,1}$};
\node[draw,circle,inner sep=1pt] (t42) at (2,-4) {$t_{4,2}$};
\node[draw,circle,inner sep=1pt] (t43) at (3,-4) {$\red{t_{4,3}}$};

\node[draw,circle,inner sep=1pt] (t51) at (1,-5) {$t_{5,1}$};
\node[draw,circle,inner sep=1pt] (t52) at (2,-5) {$\red{t_{5,2}}$};

\node[draw,circle,inner sep=1pt] (t61) at (1,-6) {$\red{t_{6,1}}$};

\draw[->] (0.4,-0.4) -- (t11);

\draw[->] (t11) -- (t12);
\draw[->] (t12) -- (t13);
\draw[->] (t13) -- (t14);
\draw[->] (t14) -- (t15);
\draw[->] (t15) -- (t16);

\draw[->] (t21) -- (t22);
\draw[->] (t22) -- (t23);
\draw[->] (t23) -- (t24);
\draw[->] (t24) -- (t25);

\draw[->] (t31) -- (t32);
\draw[->] (t32) -- (t33);
\draw[->] (t33) -- (t34);

\draw[->] (t41) -- (t42);
\draw[->] (t42) -- (t43);

\draw[->] (t51) -- (t52);

\draw[->] (t11) -- (t21);
\draw[->] (t21) -- (t31);
\draw[->] (t31) -- (t41);
\draw[->] (t41) -- (t51);
\draw[->] (t51) -- (t61);

\draw[->] (t12) -- (t22);
\draw[->] (t22) -- (t32);
\draw[->] (t32) -- (t42);
\draw[->] (t42) -- (t52);

\draw[->] (t13) -- (t23);
\draw[->] (t23) -- (t33);
\draw[->] (t33) -- (t43);

\draw[->] (t14) -- (t24);
\draw[->] (t24) -- (t34);

\draw[->] (t15) -- (t25);
\end{tikzpicture}
\subcaption{Triangular array}
\label{subfig:triangularArray}
\end{minipage}%
\begin{minipage}[b]{.5\linewidth}
\centering
\begin{tikzpicture}[scale=0.8, every node/.style={transform shape}]
\node (t11) at (1,-1) {$t_{1,1}$};
\node (t12) at (2,-1) {$t_{1,2}$};
\node (t13) at (3,-1) {$t_{1,3}$};
\node (t14) at (4,-1) {$t_{1,4}$};
\node (t15) at (5,-1) {$t_{1,5}$};
\node[draw,circle,inner sep=1pt] (t16) at (6,-1) {$t_{1,6}$};

\node (t21) at (1,-2) {$t_{2,1}$};
\node (t22) at (2,-2) {$t_{2,2}$};
\node[draw,circle,inner sep=1pt] (t23) at (3,-2) {$t_{2,3}$};
\node[draw,circle,inner sep=1pt] (t24) at (4,-2) {$t_{2,4}$};
\node[draw,circle,inner sep=1pt] (t25) at (5,-2) {$t_{2,5}$};
\node[draw,circle,inner sep=1pt] (t26) at (6,-2) {$\red{t_{2,6}}$};

\node (t31) at (1,-3) {$t_{3,1}$};
\node (t32) at (2,-3) {$t_{3,2}$};
\node[draw,circle,inner sep=1pt] (t33) at (3,-3) {$t_{3,3}$};

\node (t41) at (1,-4) {$t_{4,1}$};
\node (t42) at (2,-4) {$t_{4,2}$};
\node[draw,circle,inner sep=1pt] (t43) at (3,-4) {$t_{4,3}$};

\node (t51) at (1,-5) {$t_{5,1}$};
\node[draw,circle,inner sep=1pt] (t52) at (2,-5) {$t_{5,2}$};
\node[draw,circle,inner sep=1pt] (t53) at (3,-5) {$\red{t_{5,3}}$};

\node[draw,circle,inner sep=1pt] (t61) at (1,-6) {$t_{6,1}$};
\node[draw,circle,inner sep=1pt] (t62) at (2,-6) {$\red{t_{6,2}}$};

\draw[->] (0.4,-0.4) -- (t11);

\draw[->] (t11) -- (t12);
\draw[->] (t12) -- (t13);
\draw[->] (t13) -- (t14);
\draw[->] (t14) -- (t15);
\draw[->] (t15) -- (t16);

\draw[->] (t21) -- (t22);
\draw[->] (t22) -- (t23);
\draw[->] (t23) -- (t24);
\draw[->] (t24) -- (t25);
\draw[->] (t25) -- (t26);

\draw[->] (t31) -- (t32);
\draw[->] (t32) -- (t33);

\draw[->] (t41) -- (t42);
\draw[->] (t42) -- (t43);

\draw[->] (t51) -- (t52);
\draw[->] (t52) -- (t53);

\draw[->] (t61) -- (t62);

\draw[->] (t11) -- (t21);
\draw[->] (t21) -- (t31);
\draw[->] (t31) -- (t41);
\draw[->] (t41) -- (t51);
\draw[->] (t51) -- (t61);

\draw[->] (t12) -- (t22);
\draw[->] (t22) -- (t32);
\draw[->] (t32) -- (t42);
\draw[->] (t42) -- (t52);
\draw[->] (t52) -- (t62);

\draw[->] (t13) -- (t23);
\draw[->] (t23) -- (t33);
\draw[->] (t33) -- (t43);
\draw[->] (t43) -- (t53);

\draw[->] (t14) -- (t24);

\draw[->] (t15) -- (t25);

\draw[->] (t16) -- (t26);
\end{tikzpicture}
\subcaption{Generic polygonal array}
\label{subfig:polygonalArray}
\end{minipage}
\caption[Young-shaped polygonal arrays]{Examples of polygonal array, where all the border indices are circled and the outer border indices are also highlighted in red.
The arrows refer to formula~\eqref{eq:energy}: $\mathcal{E}(\bm{t})$ is the sum of all $a/b$ such that there is an arrow pointing from $a$ to $b$ in the diagram; the extra arrow pointing to $t_{1,1}$ corresponds to the term $1/t_{1,1}$.
The figures also illustrate the ordering defined in~\eqref{eq:RSKordering}, if one interprets an arrow pointing from $a$ to $b$ as the inequality $a\leq b$ and the arrow pointing to $t_{1,1}$ as the inequality $0\leq t_{1,1}$.}
\label{fig:polygonalArray}
\end{figure}
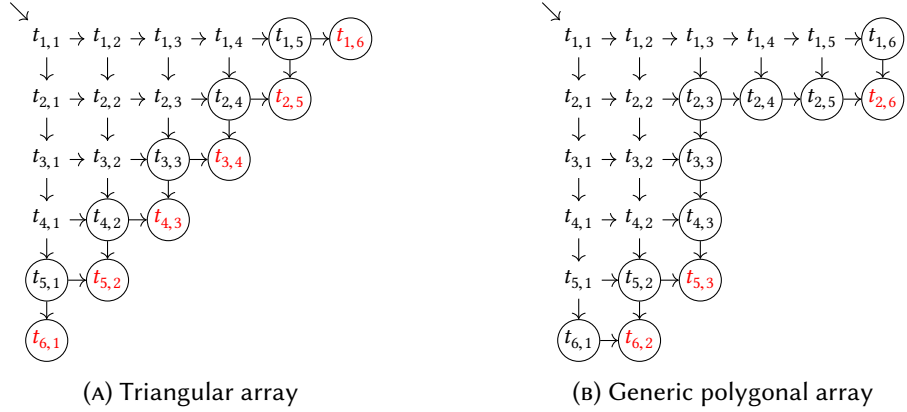

We start by defining the following birational maps acting on $\bm{w}\in\arrays_{>0}$, with the convention that $w_{0,j} = w_{i,0} =0$ but $w_{1,0} + w_{0,1} = 1$:
\begin{itemize}
\item
for all $(i,j)\in\mathcal{I}$, $a_{i,j}$ replaces $w_{i,j}$ with
\begin{equation}
\label{eq:gRSKlocalMoveA}
w_{i,j}(w_{i-1,j}+w_{i,j-1})
\end{equation}
and leaves all other entries of $\bm{w}$ unchanged;
\item
for all \emph{non}-border indices $(i,j)\in\mathcal{I}$, $b_{i,j}$ replaces $w_{i,j}$ with
\begin{equation}
\label{eq:gRSKlocalMoveB}
\frac{1}{w_{i,j}} (w_{i-1,j} + w_{i,j-1}) \left(\frac{1}{w_{i+1,j}} + \frac{1}{w_{i,j+1}} \right)^{-1}
\end{equation}
and leaves all other entries of $\bm{w}$ unchanged.
\end{itemize}
These maps are related to Bender-Knuth transformations~\cite{bender72}, and are called \emph{local moves} because they act on matrices locally, modifying the entry $(i,j)$ only and using its nearest neighbors only.

Next, we define the operation
\begin{equation}
\label{eq:gRSKrho}
\rho_{i,j} :=
\bigcirc_{k\geq 1} b_{i-k,j-k} \circ a_{i,j} \, ,
\end{equation}
where $\bigcirc_{k\geq 1}$ indicates a sequence of compositions in which $b_{i-k,j-k}$ appears if and only if $(i-k,j-k)\in\mathcal{I}$.
It is clear from Figure~\ref{subfig:localMoves} that two local moves indexed by $(i,j)$ and $(i',j')$ commute if they are \emph{not} nearest neighbors, i.e.\ if $\abs{i-i'} + \abs{j-j'} > 1$.
Consequently, the order in which the local moves making up a single $\rho_{i,j}$ are applied does not matter.
Moreover, $\rho_{i,j}$ and $\rho_{i',j'}$ commute whenever the diagonals that $(i,j)$ and $(i',j')$ belong to are neither the same nor consecutive, i.e.\ $\abs{(j-i)-(j'-i')}>1$, as illustrated in Figure~\ref{subfig:diagonalMaps}.

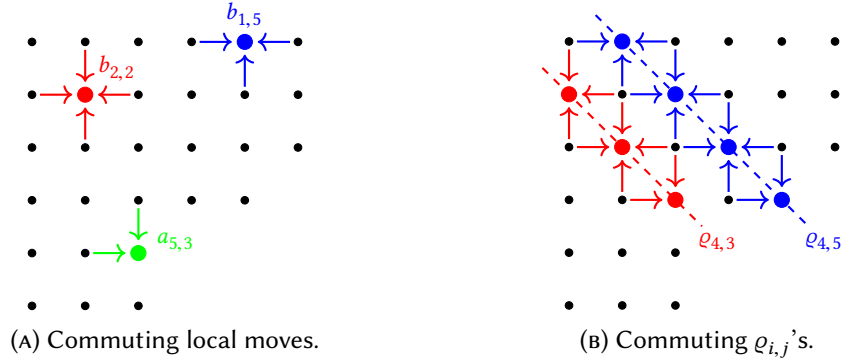
\begin{figure}
\centering
\begin{minipage}[b]{.5\linewidth}
\centering
\begin{tikzpicture}[scale=0.7]
\foreach \x in {(1,-1),(2,-1),(3,-1),(4,-1),(5,-1),(6,-1),(1,-2),(2,-2),(3,-2),(4,-2),(5,-2),(6,-2),(1,-3),(2,-3),(3,-3),(4,-3),(5,-3),(6,-3),(1,-4),(2,-4),(3,-4),(4,-4),(5,-4),(1,-5),(2,-5),(3,-5),(1,-6),(2,-6),(3,-6)}{
	\node[draw,circle,inner sep=1pt,outer sep=2pt,fill] at \x {};
	}

\node[draw,circle,inner sep=2pt,outer sep=2pt,fill,red] (t22) at (2,-2) {};
\node[inner sep=1pt,outer sep=2pt] (t21) at (1,-2) {};
\node[inner sep=1pt,outer sep=2pt] (t12) at (2,-1) {};
\node[inner sep=1pt,outer sep=2pt] (t23) at (3,-2) {};
\node[inner sep=1pt,outer sep=2pt] (t32) at (2,-3) {};
\draw[->, red, thick] (t21) -- (t22);
\draw[->, red, thick] (t12) -- (t22);
\draw[->, red, thick] (t23) -- (t22);
\draw[->, red, thick] (t32) -- (t22);
\node at (2.6,-1.5) {\footnotesize \textcolor{red}{$b_{2,2}$}};

\node[draw,circle,inner sep=2pt,outer sep=2pt,fill,blue] (t15) at (5,-1) {};
\node[inner sep=1pt,outer sep=2pt] (t14) at (4,-1) {};
\node[inner sep=1pt,outer sep=2pt] (t25) at (5,-2) {};
\node[inner sep=1pt,outer sep=2pt] (t16) at (6,-1) {};
\draw[->, blue, thick] (t14) -- (t15);
\draw[->, blue, thick] (t25) -- (t15);
\draw[->, blue, thick] (t16) -- (t15);
\node at (5,-0.5) {\footnotesize \textcolor{blue}{$b_{1,5}$}};

\node[draw,circle,inner sep=2pt,outer sep=2pt,fill,green] (t53) at (3,-5) {};
\node[inner sep=1pt,outer sep=2pt] (t52) at (2,-5) {};
\node[inner sep=1pt,outer sep=2pt] (t43) at (3,-4) {};
\draw[->, green, thick] (t52) -- (t53);
\draw[->, green, thick] (t43) -- (t53);
\node at (3.7,-4.8) {\footnotesize \textcolor{green}{$a_{5,3}$}};

\end{tikzpicture}
\subcaption{Commuting local moves.}
\label{subfig:localMoves}
\end{minipage}%
\begin{minipage}[b]{.5\linewidth}
\centering
\begin{tikzpicture}[scale=0.7]
\foreach \x in {(1,-1),(2,-1),(3,-1),(4,-1),(5,-1),(6,-1),(1,-2),(2,-2),(3,-2),(4,-2),(5,-2),(6,-2),(1,-3),(2,-3),(3,-3),(4,-3),(5,-3),(6,-3),(1,-4),(2,-4),(3,-4),(4,-4),(5,-4),(1,-5),(2,-5),(3,-5),(1,-6),(2,-6),(3,-6)}{
	\node[draw,circle,inner sep=1pt,outer sep=2pt,fill] at \x {};
	}

\node[draw,circle,inner sep=2pt,outer sep=2pt,fill,red] (t43) at (3,-4) {};
\node[draw,circle,inner sep=2pt,outer sep=2pt,fill,red] (t32) at (2,-3) {};
\node[draw,circle,inner sep=2pt,outer sep=2pt,fill,red] (t21) at (1,-2) {};
\node[inner sep=1pt,outer sep=2pt] (t42) at (2,-4) {};
\node[inner sep=1pt,outer sep=2pt] (t31) at (1,-3) {};
\node[inner sep=1pt,outer sep=2pt] (t44) at (4,-4) {};
\node[inner sep=1pt,outer sep=2pt] (t33) at (3,-3) {};
\node[inner sep=1pt,outer sep=2pt] (t22) at (2,-2) {};
\node[inner sep=1pt,outer sep=2pt] (t11) at (1,-1) {};

\draw[->, red, thick] (t42) -- (t43);
\draw[->, red, thick] (t33) -- (t43);
\draw[->, red, thick] (t31) -- (t32);
\draw[->, red, thick] (t22) -- (t32);
\draw[->, red, thick] (t33) -- (t32);
\draw[->, red, thick] (t42) -- (t32);
\draw[->, red, thick] (t31) -- (t21);
\draw[->, red, thick] (t22) -- (t21);
\draw[->, red, thick] (t11) -- (t21);

\draw[thick,dashed,red] (0.5,-1.5) -- (3.5,-4.5);
\node at (3.8,-4.8) {\footnotesize \textcolor{red}{$\rho_{4,3}$}};

\node[draw,circle,inner sep=2pt,outer sep=2pt,fill,blue] (t45) at (5,-4) {};
\node[draw,circle,inner sep=2pt,outer sep=2pt,fill,blue] (t34) at (4,-3) {};
\node[draw,circle,inner sep=2pt,outer sep=2pt,fill,blue] (t23) at (3,-2) {};
\node[draw,circle,inner sep=2pt,outer sep=2pt,fill,blue] (t12) at (2,-1) {};
\node[inner sep=1pt,outer sep=2pt] (t35) at (5,-3) {};
\node[inner sep=1pt,outer sep=2pt] (t24) at (4,-2) {};
\node[inner sep=1pt,outer sep=2pt] (t13) at (3,-1) {};

\draw[->, blue, thick] (t44) -- (t45);
\draw[->, blue, thick] (t35) -- (t45);
\draw[->, blue, thick] (t44) -- (t34);
\draw[->, blue, thick] (t35) -- (t34);
\draw[->, blue, thick] (t33) -- (t34);
\draw[->, blue, thick] (t24) -- (t34);
\draw[->, blue, thick] (t33) -- (t23);
\draw[->, blue, thick] (t24) -- (t23);
\draw[->, blue, thick] (t13) -- (t23);
\draw[->, blue, thick] (t22) -- (t23);
\draw[->, blue, thick] (t11) -- (t12);
\draw[->, blue, thick] (t22) -- (t12);
\draw[->, blue, thick] (t13) -- (t12);

\draw[thick,dashed,blue] (1.5,-0.5) -- (5.5,-4.5);
\node at (5.8,-4.8) {\footnotesize \textcolor{blue}{$\rho_{4,5}$}};

\end{tikzpicture}
\subcaption{Commuting $\rho_{i,j}$'s.}
\label{subfig:diagonalMaps}
\end{minipage}%

\caption[Graphical representation of the local moves that compose the geometric $\RSK$ correspondence]{Graphical representation of how local moves $a_{i,j}$'s and $b_{i,j}$'s and maps $\rho_{i,j}$'s that compose the $\gRSK$ correspondence act on a polygonal array.
The arrows point from a node involved in the definition of a local move to a colored node, which corresponds to the entry that is modified by the local move.
One can see that any two local moves commute if they are indexed by lattice vertices that are \emph{not} nearest neighbors, and any two maps $\rho_{i,j}$'s commute if they are indexed by vertices that do \emph{not} belong to neighboring diagonals.}
\label{fig:localMoves}
\end{figure}

We can now construct, inductively on the size of $\mathcal{I}$, the map $\gRSK \colon \arrays_{>0} \to \arrays_{>0}$.
For the index set of size $0$ we set $\gRSK(\emptyset):=\emptyset$, and for index sets of greater size we define
\begin{equation}
\label{eq:gRSKconstruction}
\gRSK(\bm{w}) := \rho_{m,n}\left(\gRSK\left(\bm{w}^{(m,n)}\right) \sqcup w_{m,n}\right) \, ,
\end{equation}
where $(m,n)\in\mathcal{I}$ is any \emph{outer} index arbitrarily chosen, $\bm{w}^{(m,n)} := \{ w_{i,j}\colon (i,j)\in \mathcal{I}\setminus \{(m,n)\}\}$ is the subarray of $\bm{w}$ that excludes entry $(m,n)$, and $\sqcup$ denotes concatenation.
In words, $\gRSK$ is first applied to the subarray of $\bm{w}$ that excludes entry $(m,n)$, then the output is concatenated with entry $(m,n)$, and finally map $\rho_{m,n}$ is applied to the resulting array; we call the latter two steps \emph{insertion} of $w_{m,n}$.

For example, for arrays of shape $(3,2)$ we have:
\[
\begin{matrix}
w_{1,1} &w_{1,2} &w_{1,3} \\
w_{2,1} &w_{2,2}
\end{matrix}
\qquad\xmapsto{\gRSK}\qquad
\begin{matrix}
\frac{w_{1,2}w_{2,1}}{w_{1,2}+w_{2,1}}
&w_{1,1} w_{1,2}
&w_{1,1}w_{1,2}w_{1,3} \\
w_{1,1}w_{2,1}
&w_{1,1}w_{2,2}(w_{1,2}+w_{2,1})
\end{matrix} \, .
\]

Notice that $\gRSK$ is well defined, because the order in which outer indices are chosen does not matter: distinct outer indices are never on the same diagonal nor on consecutive diagonals, so all $\rho_{i,j}$'s indexed by the outer indices of a given array commute.
 
In order to state the main properties of $\gRSK$, it is convenient to introduce the following notations.
We denote by $\pi_k(\bm{t})$ the product of all elements on the $k$-th diagonal of an array $\bm{t}$:
\begin{equation}
\label{eq:prodDiagonal}
\pi_k(\bm{t}):=\prod_{\substack{(i,j)\in\mathcal{I} , \\ j-i=k}} t_{i,j} \, ,
\end{equation}
with the convention that the empty product equals $1$ (as we will always suppose from now on).
We also set the \emph{energy} of $\bm{t}$ to be
\begin{equation}
\label{eq:energy}
\mathcal{E}(\bm{t})
:= \frac{1}{t_{1,1}}
+ \sum_{(i,j)\in\mathcal{I}} \frac{t_{i-1,j}+t_{i,j-1}}{t_{i,j}}
\, ,
\end{equation}
with the convention that $t_{i,j}:=0$ when $(i,j)\notin \mathcal{I}$.
See Figure~\ref{fig:polygonalArray} for a graphical illustration of the energy of $\bm{t}$.
\begin{proposition}
\label{prop:gRSKproperties}
The $\gRSK$ correspondence is a bijection $\arrays_{>0} \to \arrays_{>0}$ that satisfies the following properties, for any border index $(m,n)\in \mathcal{I}$, $\bm{w}\in\arrays_{>0}$, and $\bm{t} := \gRSK(\bm{w})$:
\begin{enumerate}
\item
\label{prop:gRSKproperties_partitionFn}
If $\Pi_{m,n}$ is the set of all directed paths from $(1,1)$ to $(m,n)$, then
\begin{equation}
\label{eq:gRSKproperties_partitionFn}
t_{m,n} = \sum_{\pi\in\Pi_{m,n}} \prod_{(i,j)\in \pi} w_{i,j} \, .
\end{equation}
\item
\label{prop:gRSKproperties_type}
If $(m,n+1)\notin\mathcal{I}$, then
\begin{equation}
\label{eq:gRSKproperties_type1}
\frac{\pi_{n-m}(\bm{t})}{\pi_{n-m+1}(\bm{t})}
= \prod_{j=1}^{n} w_{m,j}
\, .
\end{equation}
Analogously, if $(m+1,n)\notin\mathcal{I}$, then
\begin{equation}
\label{eq:gRSKproperties_type2}
\frac{\pi_{n-m}(\bm{t})}{\pi_{n-m-1}(\bm{t})}
= \prod_{i=1}^{m} w_{i,n}
\, .
\end{equation}
\item
\label{prop:gRSKproperties_invWeights}
It holds that
\begin{equation}
\label{eq:gRSKproperties_invWeights}
\mathcal{E}(\bm{t})
= \sum_{(i,j)\in\mathcal{I}} \frac{1}{w_{i,j}}
\, .
\end{equation}
\item
\label{prop:gRSKproperties_Jacobian}
The transformation
\begin{equation}
\label{eq:gRSKproperties_Jacobian}
(\log w_{i,j}, \, (i,j)\in\mathcal{I})
\mapsto
(\log t_{i,j}, \, (i,j)\in\mathcal{I})
\end{equation}
has Jacobian equal to $\pm 1$.
\end{enumerate}
\end{proposition}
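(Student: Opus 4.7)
My strategy is induction on $|\mathcal{I}|$, driven by the recursive definition~\eqref{eq:gRSKconstruction}: the base case $|\mathcal{I}|=0$ is trivial, and for the inductive step I fix an outer index $(m,n)\in\mathcal{I}$, assume all four properties hold for $\gRSK(\bm{w}^{(m,n)})$, and verify that they are preserved after applying $\rho_{m,n}=\bigcirc_{k\geq 1}b_{m-k,n-k}\circ a_{m,n}$ to $\gRSK(\bm{w}^{(m,n)})\sqcup w_{m,n}$. Bijectivity comes for free: each $a_{i,j}$ and each $b_{i,j}$ modifies its single active entry by an explicit nonzero rational function of its nearest neighbors, hence admits an explicit inverse, and so the composition defining $\gRSK$ is birational on $\arrays_{>0}$. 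Property~\eqref{eq:gRSKproperties_Jacobian} is immediate in log coordinates: the map $a_{i,j}$ sends $\log w_{i,j}$ to $\log w_{i,j}+\log(w_{i-1,j}+w_{i,j-1})$ while leaving the other log-entries fixed, so its Jacobian determinant is $+1$; the map $b_{i,j}$ sends $\log w_{i,j}$ to $-\log w_{i,j}$ plus a function of the other entries, so its Jacobian determinant is $-1$; composing a finite number of such maps yields $\pm 1$.

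For property~\eqref{eq:gRSKproperties_partitionFn}, the key observation is that any border index $(m,n)$ of $\mathcal{I}$ remains a border index of every intermediate subarray appearing in the construction, because $(m+1,n+1)\notin\mathcal{I}$ and therefore no site south-east of it is ever inserted. Consequently $t_{m,n}$ is modified only once, namely by the single $a_{m,n}$ performed at the moment $(m,n)$ is inserted (later $b$-moves act only on non-border indices and therefore skip it). That action produces
\[
t_{m,n}=w_{m,n}\bigl(t_{m-1,n}+t_{m,n-1}\bigr),
\]
and combining this with the inductive hypothesis at $(m-1,n)$ and $(m,n-1)$ (which are themselves border indices of $\mathcal{I}\setminus\{(m,n)\}$ precisely because $(m,n)$ is outer in $\mathcal{I}$) yields the standard polymer-type recursion, whose unique solution is the sum of weighted directed paths appearing in~\eqref{eq:gRSKproperties_partitionFn}.

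The main effort will go into properties~\eqref{eq:gRSKproperties_type1}, \eqref{eq:gRSKproperties_type2}, and~\eqref{eq:gRSKproperties_invWeights}, which rely on an explicit bookkeeping of how the chain of local moves in $\rho_{m,n}$ acts on the diagonal products $\pi_k$ and on the energy functional $\mathcal{E}$. For the type identities I would first compute that $a_{m,n}$ multiplies $\pi_{n-m}(\bm{t})$ by the factor $t_{m-1,n}+t_{m,n-1}$ (since it modifies only the last entry of that diagonal), and then verify that each subsequent $b_{m-k,n-k}$ preserves the ratios $\pi_{n-m}/\pi_{n-m+1}$ along diagonals terminating at a border index; combining with the inductive hypothesis for $\bm{w}^{(m,n)}$ yields~\eqref{eq:gRSKproperties_type1} and, by the same argument applied to $\bm{w}^{\top}$, also~\eqref{eq:gRSKproperties_type2}. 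For the energy identity the cleanest route is to verify directly that $\rho_{m,n}$ maps $\gRSK(\bm{w}^{(m,n)})\sqcup w_{m,n}$ to an array whose energy equals $\sum_{(i,j)\in\mathcal{I}}1/w_{i,j}$: starting from the value of $\mathcal{E}$ on the concatenated array, which by the inductive hypothesis equals $\sum_{(i,j)\neq(m,n)}1/w_{i,j}+(t_{m-1,n}+t_{m,n-1})/w_{m,n}$, the algebraic rearrangement induced by $a_{m,n}$ and the subsequent chain of $b$-moves along $\{(m-k,n-k):k\geq 1\}$ must telescope to the desired sum. This telescoping, which rests on delicate cancellations of $b$-move contributions across the diagonal and on carefully respecting the corner convention $w_{1,0}+w_{0,1}=1$, is the principal obstacle; once it is in place, the induction simultaneously closes~\eqref{eq:gRSKproperties_invWeights} and the type identities.
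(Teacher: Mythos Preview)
Your overall inductive framework matches the paper's, and your treatment of bijectivity, property~\ref{prop:gRSKproperties_Jacobian}, and property~\ref{prop:gRSKproperties_partitionFn} is correct and essentially identical to the paper's argument.

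There is, however, a concrete error in your plan for property~\ref{prop:gRSKproperties_type}. You claim that ``each subsequent $b_{m-k,n-k}$ preserves the ratios $\pi_{n-m}/\pi_{n-m+1}$''. This is false: each $b_{m-k,n-k}$ modifies exactly the entry $t_{m-k,n-k}$, which lies on the $(n-m)$-th diagonal, so $\pi_{n-m}$ changes while $\pi_{n-m+1}$ does not. The ratio therefore changes at every step of the chain. What actually happens (and what the paper computes) is that the full product $\pi_{n-m}(\bm{t})=t_{m,n}\prod_{k\geq 1}t_{m-k,n-k}$ telescopes: writing out the new value of each $t_{m-k,n-k}$ from~\eqref{eq:gRSKlocalMoveB}, one finds
\[
\pi_{n-m}(\bm{t})
= \frac{w_{m,n}}{\pi_{n-m}(\bm{t}^{(m,n)})}\,\pi_{n-m+1}(\bm{t}^{(m,n)})\,\pi_{n-m-1}(\bm{t}^{(m,n)}),
\]
and combining this with the inductive hypothesis $\pi_{n-m-1}(\bm{t}^{(m,n)})/\pi_{n-m}(\bm{t}^{(m,n)})=\prod_{j=1}^{n-1}w_{m,j}$ (applied at the border index $(m,n-1)$ of $\mathcal{I}\setminus\{(m,n)\}$) gives~\eqref{eq:gRSKproperties_type1}. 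So the right structure is a telescoping product, not invariance of the ratio under each move.

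For property~\ref{prop:gRSKproperties_invWeights}, your setup is correct and equivalent to the paper's inductive step $\mathcal{E}(\bm{t})-\mathcal{E}(\bm{t}^{(m,n)})=1/w_{m,n}$, but you stop short of the computation and call it ``the principal obstacle''. It is in fact not delicate at all once you notice the following: the local move $b_{i,j}$ leaves the two-term combination
\[
\frac{t_{i-1,j}+t_{i,j-1}}{t_{i,j}} \;+\; t_{i,j}\!\left(\frac{1}{t_{i+1,j}}+\frac{1}{t_{i,j+1}}\right)
\]
invariant (indeed, by~\eqref{eq:gRSKlocalMoveB} it simply swaps the two summands). These are exactly the terms of $\mathcal{E}$ involving the entry $(i,j)$. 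Hence every $b_{m-k,n-k}$ in $\rho_{m,n}$ leaves $\mathcal{E}$ unchanged, and the only contribution comes from $a_{m,n}$, which inserts the new term $(t_{m-1,n}+t_{m,n-1})/t_{m,n}=1/w_{m,n}$. No further cancellations are needed.
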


We will provide a self-contained proof of this proposition, but let us first make a few remarks.
Property~\ref{prop:gRSKproperties_partitionFn} explains how the point-to-point polymer partition functions, defined in the Introduction, can be expressed in terms of the $\gRSK$ correspondence.
In light of this connection, the other properties turn out to be useful in computations related to the log-gamma polymer, as it will become clear in chapter~\ref{ch:polymer}.
We finally observe that property~\ref{prop:gRSKproperties_type} is easily seen to be equivalent to the following: if $(m,n)$ is a border index, then
\begin{equation}
\label{eq:gRSKprodSubarray}
\pi_{n-m}(\bm{t})
= \prod_{i=1}^m \prod_{j=1}^n w_{i,j} \, .
\end{equation}

\begin{proof}
Since local moves are bijective, the $\gRSK$ correspondence is also a bijection.
All the properties can be proven by induction on the size of the arrays.
The induction basis is always trivial, as $\gRSK$ coincides with the identity for arrays of size $1$.
Let us prove the induction steps, fixing an index set $\mathcal{I}$ of size greater than $1$ and assuming that the properties are true for arrays of smaller size.
\begin{enumerate}
\item
Assume first that $(m,n)$ is an outer index.
Then~\eqref{eq:gRSKconstruction} holds and
\[
t_{m,n} = w_{m,n}(t_{m-1,n} + t_{m,n-1})
\]
by the definition of local moves (see~\eqref{eq:gRSKrho} and~\eqref{eq:gRSKlocalMoveA}).
Moreover, $(m-1,n)$ and $(m,n-1)$ are both border indices, hence by induction hypothesis
\[
t_{m-1,n} = \sum_{\pi\in\Pi_{m-1,n}} \prod_{(i,j)\in \pi} w_{i,j}
\qquad\text{and}\qquad
t_{m,n-1} = \sum_{\pi\in\Pi_{m,n-1}} \prod_{(i,j)\in \pi} w_{i,j} \, .
\]
Since the penultimate site visited by any path in $\Pi_{m,n}$ is either $(m-1,n)$ or $(m,n-1)$, \eqref{eq:gRSKproperties_partitionFn} follows immediately.

Let now $(m,n)$ be a generic border index.
Since $(m+1,n+1)\notin\mathcal{I}$, no insertion performed after the insertion of $w_{m,n}$ can possibly modify entry $(m,n)$.
It follows that $t_{m,n}$ equals the value of entry $(m,n)$ after applying $\gRSK$ to the subarray of $\bm{w}$ indexed by $\{1,\dots,m\}\times \{1,\dots,n\}$; for this subarray $(m,n)$ is an outer index, so~\eqref{eq:gRSKproperties_partitionFn} holds true.
\item
The proofs of~\eqref{eq:gRSKproperties_type1} and~\eqref{eq:gRSKproperties_type2} are analogous, so we will only give the first one.

Suppose first that $(m,n)$ is an outer index, denote by $\bm{w}^{(m,n)}$ the subarray of $\bm{w}$ indexed by $\mathcal{I}\setminus\{(m,n)\}$, and let $\bm{t}^{(m,n)} := \gRSK(\bm{w}^{(m,n)})$.
By induction hypothesis we can apply \eqref{eq:gRSKproperties_type1} to $\bm{w}^{(m,n)}$, obtaining
\begin{equation}
\label{eq:gRSKproperties_type1Induction}
\frac{\pi_{n-m-1}(\bm{t}^{(m,n)})}{\pi_{n-m}(\bm{t}^{(m,n)})}
= \prod_{j=1}^{n-1} w_{m,j} \, .
\end{equation}
Using \eqref{eq:gRSKconstruction}, \eqref{eq:gRSKrho}, and the definition of local moves, we find:
\[
\begin{split}
\pi_{n-m}(\bm{t})
&= t_{m,n} \prod_{k\geq 1} t_{m-k,n-k} \\
&= w_{m,n} (t_{m-1,n}^{(m,n)} + t_{m,n-1}^{(m,n)})
\prod_{k\geq 1} \frac{t_{m-k-1,n-k}^{(m,n)} + t_{m-k,n-k-1}^{(m,n)}}{t^{(m,n)}_{m-k,n-k}}
\frac{t_{m-k,n-k+1}^{(m,n)} t_{m-k+1,n-k}^{(m,n)}}{t_{m-k,n-k+1}^{(m,n)} + t_{m-k+1,n-k}^{(m,n)}} \\
&= \frac{w_{m,n}}{\pi_{n-m}(\bm{t}^{(m,n)})} \pi_{n-m+1}(\bm{t}^{(m,n)}) \pi_{n-m-1}(\bm{t}^{(m,n)}) \, ,
\end{split}
\]
under the usual notational conventions.
Combining the latter computation with~\eqref{eq:gRSKproperties_type1Induction}, and using the fact that $\pi_{n-m+1}(\bm{t}^{(m,n)}) = \pi_{n-m+1}(\bm{t})$ (the insertion of $w_{m,n}$ only modifies the $(n-m)$-th diagonal), one easily concludes~\eqref{eq:gRSKproperties_type1}.

Let now $(m,n)$ be a generic a border index.
Since $(m+1,n+1)\notin\mathcal{I}$, no insertion performed after the insertion of $w_{m,n}$ can possibly modify the $(n-m)$-th diagonal.
Since by hypothesis $(m,n+1)\notin\mathcal{I}$ either, the same holds for the $(n-m+1)$-th diagonal.
It follows that $\pi_{n-m}(\bm{t})$ and $\pi_{n-m+1}(\bm{t})$ equal the corresponding values for the image of the subarray indexed by $\{1,\dots,m\}\times \{1,\dots,n\}$; for this subarray $(m,n)$ is an outer index, so~\eqref{eq:gRSKproperties_type1} holds true.
\item
Choosing any outer index $(m,n)$, by induction, it suffices to prove that
\begin{equation}
\label{eq:gRSKproperties_invWeightsInduction}
\mathcal{E}(\bm{t}) - \mathcal{E}(\bm{t}^{(m,n)}) = \frac{1}{w_{m,n}} \, ,
\end{equation}
where $\bm{t}^{(m,n)}$ is the image under $\gRSK$ of the subarray of $\bm{w}$ indexed by $\mathcal{I}\setminus\{(m,n)\}$.
Since the only entries that the insertion of $w_{m,n}$ changes are the ones on the $(n-m)$-th diagonal, it suffices to analyze the addends of $\mathcal{E}(\bm{t})$ that contain them.
Firstly, for each $(i,j)\neq (m,n)$ such that $j-i=n-m$, we rewrite the terms of $\mathcal{E}(\bm{t})$ that contain $t_{i,j}$ in terms of the entries of $\bm{t}^{(m,n)}$:
\[
\frac{t_{i-1,j} + t_{i,j-1}}{t_{i,j}} + t_{i,j} \left(\frac{1}{t_{i+1,j}} + \frac{1}{t_{i,j+1}} \right)
= t^{(m,n)}_{i,j} \left(\frac{1}{t^{(m,n)}_{i+1,j}} + \frac{1}{t^{(m,n)}_{i,j+1}} \right) + \frac{t^{(m,n)}_{i-1,j} + t^{(m,n)}_{i,j-1}}{t^{(m,n)}_{i,j}} \, ,
\]
which results from applying local move $b_{i,j}$ to $\bm{t}^{(m,n)}$ (notice that, in the only case $(i,j)=(1,1)$, $t_{i-1,j} + t_{i,j-1}$ and $t^{(m,n)}_{i-1,j} + t^{(m,n)}_{i,j-1}$ are replaced with $1$).
On the other hand, by applying local move $a_{m,n}$ to $\bm{t}^{(m,n)}$, we find that the terms of $\mathcal{E}(\bm{t})$ that contain $t_{m,n}$ equal the inverse of $w_{m,n}$:
\[
\frac{t_{m-1,n} + t_{m,n-1}}{t_{m,n}}
= \frac{1}{w_{m,n}} \, .
\]
From the latter two equations, it is easy to conclude~\eqref{eq:gRSKproperties_invWeightsInduction}.
\item
This property follows from the fact that $\gRSK$ is defined as a composition of local moves of type $a_{i,j}$ and $b_{i,j}$, and each of them is trivially volume preserving in logarithmic variables.
\qedhere
\end{enumerate}
\end{proof}

Since we will also deal with polymers in symmetric environment, we now wish to specialize the $\gRSK$ correspondence to symmetric arrays.
The particular case of symmetric matrices has been already studied in~\cite{oConnellSeppalainenZygouras14}, but for our purposes it is useful to extend the focus to polygonal arrays of arbitrary shape.
We thus define the \emph{transpose} of an index set $\mathcal{I}$ as the index set $\mathcal{I}^\top := \{(i,j)\in\Z_{>0}\times\Z_{>0} \colon (j,i)\in\mathcal{I}\}$; similarly, we define the \emph{transpose} $\bm{t}^\top$ of a polygonal array $\bm{t}$ by setting $t^\top_{i,j} := t_{j,i}$ for all $(i,j)\in\mathcal{I}^\top$.
An index set $\mathcal{I}$ will be called \emph{symmetric} if $\mathcal{I}=\mathcal{I}^\top$, and a polygonal array $\bm{t}$ indexed by a symmetric $\mathcal{I}$ will be called \emph{symmetric} if $\bm{t}=\bm{t}^\top$.

Properties~\ref{prop:gRSKproperties_partitionFn}-\ref{prop:gRSKproperties_type}-\ref{prop:gRSKproperties_invWeights} in Proposition \ref{prop:gRSKproperties} transfer directly to the case of symmetric arrays.
The volume preserving property is also satisfied: 
\begin{proposition}
\label{prop:symmetricgRSK}
Let $\bm{w}\in\arrays_{>0}$ and $\bm{t} := \gRSK(\bm{w})$.
Then $\gRSK\left(\bm{w}^\top\right) = \bm{t}^\top$.
In particular, if $\bm{w}$ is symmetric, so is $\bm{t}$. 
Moreover, in the symmetric case, the transformation
\begin{equation}
\label{eq:symmetricgRSK}
\{\log w_{i,j} \colon (i,j)\in\mathcal{I}, \, i\leq j\}
\mapsto
\{\log t_{i,j} \colon (i,j)\in\mathcal{I}, \, i\leq j\}
\end{equation}
has Jacobian equal to $\pm 1$.
\end{proposition}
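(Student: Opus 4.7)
The proof has two components: the transpose-equivariance $\gRSK(\bm{w}^\top)=\bm{t}^\top$, from which symmetry preservation is automatic, and the Jacobian computation in the symmetric case, which I would handle by decomposing $\gRSK$ into paired local moves.

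For the first component, I would check at the local level that $a_{j,i}(\bm{w}^\top)=(a_{i,j}(\bm{w}))^\top$ and $b_{j,i}(\bm{w}^\top)=(b_{i,j}(\bm{w}))^\top$, which is a direct reading of~\eqref{eq:gRSKlocalMoveA}--\eqref{eq:gRSKlocalMoveB}: both formulas depend symmetrically on the neighbor pairs $\{w_{i-1,j},w_{i,j-1}\}$ and $\{w_{i+1,j},w_{i,j+1}\}$, which swap correctly under transposition. Composing these local identities gives $\rho_{j,i}(\bm{w}^\top)=(\rho_{i,j}(\bm{w}))^\top$. Then I would induct on $|\mathcal{I}|$: given an outer index $(m,n)\in\mathcal{I}$, its transpose $(n,m)$ is outer for $\mathcal{I}^\top$, so~\eqref{eq:gRSKconstruction} yields
\[
\gRSK(\bm{w}^\top)=\rho_{n,m}\bigl(\gRSK((\bm{w}^{(m,n)})^\top)\sqcup w_{m,n}\bigr),
\]
and the induction hypothesis together with the transpose-equivariance of $\rho$ gives $\gRSK(\bm{w}^\top)=\gRSK(\bm{w})^\top$. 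Applying this to a symmetric $\bm{w}$ immediately produces the symmetry of $\bm{t}$.

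For the Jacobian statement, the key observation is that when $m\neq n$ and both $(m,n),(n,m)$ are outer indices (as is always the case when $\mathcal{I}$ is symmetric), they sit on diagonals at distance $2|n-m|\geq 2$, so $\rho_{m,n}$ and $\rho_{n,m}$ commute by the criterion recalled in subsection~\ref{subsec:gRSK}. Iterating this pairing in the inductive construction (and treating diagonal outer indices $(m,m)$ singly), the entire $\gRSK$ on a symmetric input can be reorganized as a sequence of \emph{paired} local moves $\{a_{i,j},a_{j,i}\}$ and $\{b_{i,j},b_{j,i}\}$ for $i<j$, together with isolated $a_{i,i}$ and $b_{i,i}$. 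Viewed on the upper-triangular log coordinates $\{\log w_{i,j}:i\leq j\}$, a paired $a$-move leaves every variable unchanged except $\log w_{i,j}$, which is sent to $\log w_{i,j}+\log(w_{i-1,j}+w_{i,j-1})$ (lower-triangular entries being accessed through symmetry), whereas a paired $b$-move sends $\log w_{i,j}$ to $-\log w_{i,j}+g$, where $g$ depends only on variables other than $w_{i,j}$. In either case the Jacobian matrix differs from the identity only in a single row, with diagonal entry $+1$ or $-1$ respectively, so the move has Jacobian $\pm 1$, and composing over the whole procedure gives Jacobian $\pm 1$ for the full map~\eqref{eq:symmetricgRSK}.

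The main obstacle is the combinatorial bookkeeping needed to justify reordering the local moves into symmetric pairs; this is underwritten by the two commutativity criteria discussed in subsection~\ref{subsec:gRSK} (non-adjacent-vertex commutativity for $a,b$ and non-adjacent-diagonal commutativity for $\rho$), which together ensure that all the reorderings required are legitimate. Once the paired decomposition is in place, the Jacobian reduces to the elementary move-by-move check above.
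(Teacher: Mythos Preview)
Your proof is correct and follows essentially the same strategy as the paper's: transpose-equivariance via the symmetry of the local moves and induction on~\eqref{eq:gRSKconstruction}, and the Jacobian statement via the same induction with two cases (diagonal outer index versus symmetric pair of off-diagonal outer indices). The only minor difference is in the off-diagonal case: you interleave $\rho_{m,n}$ and $\rho_{n,m}$ into symmetric pairs of local moves and check each pair has Jacobian $\pm 1$ on the upper-triangular coordinates, whereas the paper simply observes that $\rho_{m,n}$ (for $m<n$) touches only strictly-upper-triangular entries (so the generic Jacobian $\pm 1$ of each local move applies directly) while $\rho_{n,m}$ touches only strictly-lower-triangular entries and hence leaves the upper-triangular coordinates untouched. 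The paper's route avoids the commutativity bookkeeping you invoke for the pairing, but both arguments are valid and rest on the same local-move Jacobian computation.
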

\begin{proof}
The fact that $\gRSK\big(\bm{w}^\top\big)=\gRSK(\bm{w})^\top$ is an easy consequence of the inductive construction~\eqref{eq:gRSKconstruction} of $\gRSK$, since local moves are clearly symmetric, in the sense that $a_{j,i}\big(\bm{w}^\top\big) = a_{i,j}(\bm{w})^\top$, and the same holds for $b_{i,j}$.

We now check the volume preserving property in the case of symmetric $\bm{w}$, proceeding by induction on the size of the array.
For arrays of size $1$ the $\gRSK$ is the identity, so the statement trivially holds.
Let us prove it for a given symmetric index set $\mathcal{I}$ of size greater than $1$, assuming it holds for any array of smaller size.
Suppose first there exists an outer index $(n,n)$ on the diagonal: then $\bm{t} = \rho_{n,n}(\bm{t}^{(n,n)} \sqcup w_{n,n})$, where $\bm{t}^{(n,n)}:= \gRSK(\bm{w}^{(n,n)})$ and $\bm{w}^{(n,n)}$ is the subarray of $\bm{w}$ indexed by $\mathcal{I}\setminus\{(n,n)\}$.
Since $\bm{w}^{(n,n)}$ is symmetric as $\bm{w}$ is, by induction hypothesis we have that
\[
(\log w_{i,j} \colon (i,j)\in\mathcal{I}\setminus\{(n,n)\}, \, i\leq j)
\mapsto
\left( \log t^{(n,n)}_{i,j} \colon (i,j)\in\mathcal{I}\setminus\{(n,n)\}, \, i\leq j \right)
\]
has Jacobian $\pm 1$.
After the insertion of $w_{n,n}$, due to the definition of local moves and the symmetric constraint, the entries on the main diagonal can be written in terms of $w_{n,n}$ and $t^{(n,n)}_{i,j}$ ($i\leq j$), as follows:
\[
t_{n,n}= 2w_{n,n} t^{(n,n)}_{n-1,n} \, ,
\qquad\qquad
t_{i,i} = \frac{t^{(n,n)}_{i-1,i} t^{(n,n)}_{i,i+1}}{t^{(n,n)}_{i,i}} \quad \text{for } 1\leq i <n \, .
\]
The thesis then follows from the fact that the transformations $(x,y)\mapsto (2xy,y)$ and $(x,y,z)\mapsto (yz/x,y,z)$ have Jacobian $\pm 1$ in logarithmic variables.
Suppose now that $\mathcal{I}$ has no outer index on the diagonal: then by symmetry we can find two outer indices $(m,n)$ and $(n,m)$ with $m<n$.
By induction, the volume preserving property holds for the subarray of $\bm{w}$ indexed by $\mathcal{I}\setminus\{(m,n),(n,m)\}$ and its $\gRSK$ image.
To recover the whole $\bm{t}$, one still has to insert $w_{m,n}$ and $w_{n,m}$.
Since $m<n$, the insertion of $w_{m,n}$ only acts on the entries on or above the diagonal and does not modify the volume preserving property, as local moves have Jacobian $\pm 1$ (as we already noticed in the proof of Proposition~\ref{prop:gRSKproperties}-\ref{prop:gRSKproperties_Jacobian}).
On the other hand, the insertion of $w_{n,m}$ (which makes the whole array symmetric again) does not modify the entries on or above the diagonal at all.
This concludes the proof.
\end{proof}

\subsection{Tropicalization and extended $\RSK$}
\label{subsec:tropicalization&RSK}

We now consider an appropriate limit of $\gRSK$ that results in an extension of the combinatorial $\RSK$ correspondence introduced in subsection~\ref{subsec:RSK}.
Such a limit is normally called \emph{tropicalization}, because it transforms the operations $(+,\cdot)$ of the usual algebra into the tropical operations $(\max,+)$, thus producing a piecewise linear map.
In the terminology of statistical mechanics, it corresponds to the \emph{zero temperature limit}, whose meaning in our setting has been explained in the Introduction and will be further discussed in Appendix~\ref{appendix:zeroTempLimit}.

For any $\epsilon>0$, define $g_{\epsilon}\colon \R \to \R_{>0}$ by $g_\epsilon(x) := \e^{x/\epsilon}$, so that $g_{\epsilon}^{-1}\colon \R_{>0} \to \R$ is given by $g_{\epsilon}^{-1}(y) := \epsilon\log(y)$.
For any index set $\mathcal{I}$, we also define $g_{\epsilon}\colon \arrays \to \arrays_{>0}$ by applying $g_{\epsilon}$ to each entry of the array; we similarly define $g_{\epsilon}^{-1}\colon \arrays_{>0} \to \arrays$ by entrywise application of $g_{\epsilon}^{-1}$.
Let us then set
\begin{equation}
\label{eq:RSKtropicalization}
\RSK\colon \arrays \to \arrays \, ,
\qquad\qquad
\RSK :=
\lim_{\epsilon \downarrow 0} g_{\epsilon}^{-1} \circ \gRSK \circ g_{\epsilon} \, .
\end{equation}
We will shortly see in what sense the map defined above is an extension of the combinatorial correspondence defined in subsection~\ref{subsec:RSK}.

We first wish to express this $\RSK$ as a composition of local moves, similarly as for $\gRSK$.
Using the recursive structure~\eqref{eq:gRSKconstruction} of $\gRSK$ and the same notation of subsection~\ref{subsec:gRSK}, we have that
\[
\begin{split}
g_{\epsilon}^{-1} \circ \gRSK \circ g_{\epsilon} (\bm{w})
&= g_{\epsilon}^{-1} \circ \rho_{m,n} \left( \gRSK\left(g_{\epsilon}\left(\bm{w}^{(m,n)}\right)\right) \sqcup g_{\epsilon}(w_{m,n}) \right) \\
&= g_{\epsilon}^{-1} \circ \rho_{m,n} \circ g_{\epsilon} \left(g_{\epsilon}^{-1} \circ \gRSK \circ g_{\epsilon} \left(\bm{w}^{(m,n)}\right) \sqcup w_{m,n} \right) \, ,
\end{split}
\]
for any outer index $(m,n)$.
We then set
\[
\tilde{\rho}_{m,n}
:= \lim_{\epsilon\downarrow 0} g_{\epsilon}^{-1} \circ \rho_{m,n} \circ g_{\epsilon}
= \bigcirc_{k\geq 1} \tilde{b}_{m-k,n-k} \circ \tilde{a}_{m,n} \, ,
\]
where $\tilde{a}_{i,j} := \lim_{\epsilon\downarrow 0} g_{\epsilon}^{-1} \circ a_{i,j} \circ g_{\epsilon}$ and $\tilde{b}_{i,j} := \lim_{\epsilon\downarrow 0} g_{\epsilon}^{-1} \circ b_{i,j} \circ g_{\epsilon}$.
Using the definition of $a_{i,j}$ and $b_{i,j}$ and computing the limits, one can see that $\tilde{a}_{i,j}$ and $\tilde{b}_{i,j}$ are the following piecewise linear maps acting on $\bm{w}\in\arrays$, with the convention that $w_{0,j} = w_{i,0} =-\infty$ but $\max(w_{1,0},w_{0,1}) = 0$:
\begin{itemize}
\item
for all $(i,j)\in\mathcal{I}$, $\tilde{a}_{i,j}$ replaces $w_{i,j}$ with
\begin{equation}
\label{eq:RSKlocalMoveA}
\max(w_{i-1,j}, w_{i,j-1}) + w_{i,j}
\end{equation}
and leaves all other entries of $\bm{w}$ unchanged;
\item
for all \emph{non}-border index $(i,j)\in\mathcal{I}$, $\tilde{b}_{i,j}$ replaces $w_{i,j}$ with
\begin{equation}
\label{eq:RSKlocalMoveB}
\max(w_{i-1,j}, w_{i,j-1}) + \min(w_{i+1,j}, w_{i,j+1}) - w_{i,j}
\end{equation}
and leaves all other entries of $\bm{w}$ unchanged.
\end{itemize}
We have thus derived the inductive structure of $\RSK$ from the corresponding inductive structure of $\gRSK$:
\begin{equation}
\label{eq:RSKconstruction}
\RSK(\bm{w}) := \tilde{\rho}_{m,n}\left(\RSK\left(\bm{w}^{(m,n)}\right) \sqcup w_{m,n}\right) \, ,
\end{equation}
where $(m,n)\in\mathcal{I}$ is any \emph{outer} index.

The action on arrays of shape $(2,2)$, for instance, is given by:
\[
\begin{matrix}
w_{1,1} &w_{1,2} \\
w_{2,1} &w_{2,2}
\end{matrix}
\qquad \xmapsto{\RSK} \qquad
\begin{matrix}
\min(w_{1,2},w_{2,1})
&w_{1,1} + w_{1,2} \\
w_{1,1} + w_{2,1}
&w_{1,1} + w_{2,2} + \max(w_{1,2},w_{2,1})
\end{matrix} \, .
\]
As one can easily check for this example, we are indeed dealing with an extension of the combinatorial $\RSK$ introduced in subsection~\ref{subsec:RSK}, in a twofold sense: nonnegative integer entries are replaced with real entries, and matrices are replaced with more general polygonal arrays.
Namely, the image under this map of a matrix with nonnegative integer entries is the matrix~\eqref{eq:RSKoutputMatrix} obtained by gluing together the two output $\Z_{\geq 0}$-Gelfand-Tsetlin patterns of the classical combinatorial $\RSK$.
The generalization of $\RSK$ to input matrices with real entries was first introduced in~\cite{berensteinKirillov01}, and described - as we did - as a composition of piecewise linear local maps in~\cite{oConnellSeppalainenZygouras14}.

All the properties of $\gRSK$ seen in subsection~\ref{subsec:gRSK} can be specialized to $\RSK$, either by taking the appropriate limit or by using the inductive construction~\eqref{eq:RSKconstruction} of $\RSK$ directly.
In particular, properties~\ref{prop:gRSKproperties_partitionFn}, \ref{prop:gRSKproperties_type} and~\ref{prop:gRSKproperties_invWeights} of Proposition~\ref{prop:gRSKproperties} are all of the form $\phi(\bm{w}) = \psi(\gRSK(\bm{w}))$ for certain rational functions $\phi,\psi\colon \arrays_{>0} \to \R_{>0}$; therefore, the corresponding $\RSK$ properties read as $\tilde{\phi}(\bm{w}) = \tilde{\psi}(\RSK(\bm{w}))$, where $\tilde{\phi} := \lim_{\epsilon\downarrow 0} g_{\epsilon}^{-1} \circ \phi \circ g_{\epsilon}$ and $\tilde{\psi} := \lim_{\epsilon\downarrow 0} g_{\epsilon}^{-1} \circ \psi \circ g_{\epsilon}$.
They can also be practically obtained by formally replacing $(+,\cdot)$ with the tropical operations $(\max,+)$.
On the other hand, the bijective, symmetric and volume preserving properties of $\RSK$ that we will state can be easily deduced from the definition of the piecewise linear local moves.
For convenience, let us denote by $\sigma_k(\bm{t})$ the sum of all elements on the $k$-th diagonal of an array $\bm{t}$:
\begin{equation}
\label{eq:sumDiagonal}
\sigma_k(\bm{t}):=\sum_{\substack{(i,j)\in\mathcal{I} , \\ j-i=k}} t_{i,j} \, .
\end{equation}

\begin{proposition}
\label{prop:RSKproperties}
The $\RSK$ correspondence is a bijection $\arrays \to \arrays$ that satisfies the following properties, for any border index $(m,n)\in \mathcal{I}$, $\bm{w}\in\arrays$, and $\bm{t} := \RSK(\bm{w})$:
\begin{enumerate}
\item
\label{prop:RSKproperties_LPP}
If $\Pi_{m,n}$ is the set of all directed paths from $(1,1)$ to $(m,n)$, then
\begin{equation}
\label{eq:RSKproperties_LPP}
t_{m,n} = \max_{\pi\in\Pi_{m,n}} \sum_{(i,j)\in \pi} w_{i,j} \, .
\end{equation}
\item
\label{prop:RSKproperties_type}
If $(m,n+1)\notin\mathcal{I}$, then
\begin{equation}
\label{eq:RSKproperties_type1}
\sigma_{n-m}(\bm{t}) - \sigma_{n-m+1}(\bm{t})
= \sum_{j=1}^{n} w_{m,j}
\, .
\end{equation}
Analogously, if $(m+1,n)\notin\mathcal{I}$, then
\begin{equation}
\label{eq:RSKproperties_type2}
\sigma_{n-m}(\bm{t}) - \sigma_{n-m-1}(\bm{t})
= \sum_{i=1}^{m} w_{i,n}
\, .
\end{equation}
\item
\label{prop:RSKproperties_interlacing}
It holds that
\begin{equation}
\label{eq:RSKproperties_interlacing}
\min_{(i,j)\in\mathcal{I}} \{ t_{1,1} \, , \,\, t_{i,j} - t_{i-1,j} \, , \,\, t_{i,j} - t_{i,j-1} \}
= \min_{(i,j)\in\mathcal{I}} w_{i,j}
\, ,
\end{equation}
with the convention that $t_{i,j}:=-\infty$ when $(i,j)\notin \mathcal{I}$.
In particular, $\RSK$ can be restricted to a bijection between arrays indexed by $\mathcal{I}$ with non-negative real entries, such that the output array satisfies the following ordering: for all $(i,j)\in\mathcal{I}$
\begin{equation}
\label{eq:RSKordering}
t_{i-1,j} \leq t_{i,j} \quad\text{if } i>1
\quad\qquad \text{and} \quad\qquad
t_{i,j-1} \leq t_{i,j} \quad\text{if } j>1 \, .
\end{equation}
\item
\label{prop:RSKproperties_Jacobian}
The transformation $\bm{w} \mapsto \bm{t}$ has Jacobian equal to $\pm 1$ almost everywhere.
\end{enumerate}
\end{proposition}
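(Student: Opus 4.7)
The plan is to deduce properties (i), (ii), (iii) from the analogous statements for $\gRSK$ in Proposition~\ref{prop:gRSKproperties} via tropicalization, exploiting the defining identity $\RSK = \lim_{\epsilon\downarrow 0} g_\epsilon^{-1}\circ \gRSK\circ g_\epsilon$, while property (iv) will be extracted directly from the inductive local-move construction~\eqref{eq:RSKconstruction}. The key observation is that conjugation by $g_\epsilon$ converts products of positive quantities into sums and sums of positive quantities into tropical maxes in the limit, since $\epsilon\log(\e^{a/\epsilon}+\e^{b/\epsilon})\to \max(a,b)$ as $\epsilon\downarrow 0$. Concretely, if $\phi,\psi$ are positive rational functions with $\phi(\bm{w}) = \psi(\gRSK(\bm{w}))$ on $\arrays_{>0}$, then their tropicalizations $\tilde\phi,\tilde\psi$, obtained by replacing $(+,\cdot)$ with $(\max,+)$, satisfy $\tilde\phi(\bm{w}) = \tilde\psi(\RSK(\bm{w}))$ on $\arrays$.

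Applying this recipe to~\eqref{eq:gRSKproperties_partitionFn} gives property (i): the path product becomes the path sum and the outer sum over $\pi\in\Pi_{m,n}$ becomes a $\max$, yielding~\eqref{eq:RSKproperties_LPP}. Similarly, the multiplicative ratios $\pi_{n-m}(\bm{t})/\pi_{n-m\pm 1}(\bm{t})$ tropicalize to $\sigma_{n-m}(\bm{t})-\sigma_{n-m\pm 1}(\bm{t})$ and $\prod_j w_{m,j}$ tropicalizes to $\sum_j w_{m,j}$, so~\eqref{eq:gRSKproperties_type1}-\eqref{eq:gRSKproperties_type2} yield~\eqref{eq:RSKproperties_type1}-\eqref{eq:RSKproperties_type2}, establishing property (ii).

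Property (iii) requires more bookkeeping because of the reciprocals. Under conjugation, $1/x\mapsto -x$ and $(a+b)/c\mapsto \max(a,b)-c$, so the energy identity~\eqref{eq:gRSKproperties_invWeights} tropicalizes to
\[
\max\Bigl\{-t_{1,1},\,\max_{(i,j)\in\mathcal{I}}\bigl[\max(t_{i-1,j},t_{i,j-1})-t_{i,j}\bigr]\Bigr\} \;=\; \max_{(i,j)\in\mathcal{I}}(-w_{i,j}),
\]
where the conventions $t_{i,j}=-\infty$ and $w_{i,j}=+\infty$ for $(i,j)\notin\mathcal{I}$ correspond to the original conventions $t_{i,j}=0$ and $w_{i,j}=+\infty$ under $g_\epsilon^{-1}$. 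Using $\max(-x_k) = -\min(x_k)$ and $\max(a,b)-c = -\min(c-a,c-b)$, and combining the nested mins, one recovers~\eqref{eq:RSKproperties_interlacing}. The restricted bijection between $\R_{\geq 0}^\mathcal{I}$ and arrays satisfying $t_{1,1}\geq 0$ together with~\eqref{eq:RSKordering} is then immediate, as both conditions are equivalent to the common quantity in~\eqref{eq:RSKproperties_interlacing} being non-negative, and $\RSK$ is already a bijection on $\arrays$ since each $\tilde{a}_{i,j},\tilde{b}_{i,j}$ is manifestly bijective on $\arrays$ by its explicit formula.

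For property (iv) I would argue directly: each local move $\tilde{a}_{i,j}$ or $\tilde{b}_{i,j}$ updates only the single entry at $(i,j)$ and is piecewise linear in $w_{i,j}$ with coefficient $+1$ or $-1$ respectively, so its Jacobian matrix is triangular with diagonal $\pm 1$ almost everywhere (off the measure-zero set where $\max$ or $\min$ switches branch). By~\eqref{eq:RSKconstruction}, $\RSK$ is a finite composition of such moves, which preserves the property. The main technical point in the whole argument is the formal interchange of $\lim_{\epsilon\downarrow 0}$ with the identities of Proposition~\ref{prop:gRSKproperties}: because those identities are finite algebraic expressions in strictly positive quantities of the form $g_\epsilon(x)$ with $x$ ranging over a compact set of real entries, pointwise convergence $g_\epsilon^{-1}\circ\phi\circ g_\epsilon \to \tilde\phi$ is routine, and this is where I expect the bulk of the bookkeeping — rather than any real obstacle — to lie.
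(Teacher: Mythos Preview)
Your proposal is correct and follows essentially the same route as the paper: properties (i)--(iii) are obtained by tropicalizing the corresponding identities $\phi(\bm{w})=\psi(\gRSK(\bm{w}))$ of Proposition~\ref{prop:gRSKproperties} via the limit $\lim_{\epsilon\downarrow 0} g_\epsilon^{-1}\circ(\cdot)\circ g_\epsilon$, while the bijectivity and the volume-preserving property (iv) are deduced directly from the explicit piecewise linear local moves $\tilde a_{i,j},\tilde b_{i,j}$. The paper sketches exactly this and omits the details you spell out; your handling of property (iii), including the conventions under tropicalization and the negation to pass from $\max$ to $\min$, is precisely the bookkeeping the paper alludes to.
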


It is clear that properties~\ref{prop:RSKproperties_LPP} and~\ref{prop:RSKproperties_type} of Proposition~\ref{prop:RSKproperties} are a generalization of~\ref{prop:RSKGelfandTsetlin_LPP} and~\ref{prop:RSKGelfandTsetlin_type} of Proposition~\ref{prop:RSKGelfandTsetlin} respectively, whereas property~\ref{prop:RSKproperties_interlacing} corresponds to the non-negativity and interlacing conditions that the two output Gelfand-Tsetlin patterns satisfy in the combinatorial $\RSK$.
The ordering~\eqref{eq:RSKordering} can also be visualized in Figure~\ref{fig:polygonalArray}.

\begin{proposition}
\label{prop:symmetricRSK}
Let $\bm{w}\in\arrays$ and $\bm{t} := \RSK(\bm{w})$.
Then $\RSK\left(\bm{w}^\top\right) = \bm{t}^\top$.
In particular, if $\bm{w}$ is symmetric, so is $\bm{t}$. 
Moreover, in the symmetric case, the transformation
\begin{equation}
\label{eq:symmetricRSK}
\{w_{i,j} \colon (i,j)\in\mathcal{I}, \, i\leq j\}
\mapsto
\{t_{i,j} \colon (i,j)\in\mathcal{I}, \, i\leq j\}
\end{equation}
has Jacobian equal to $\pm 1$ almost everywhere.
\end{proposition}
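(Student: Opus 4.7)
The plan is to mirror the inductive proof of Proposition \ref{prop:symmetricgRSK}, using the piecewise linear local moves \eqref{eq:RSKlocalMoveA}--\eqref{eq:RSKlocalMoveB} in place of their geometric counterparts. The first two assertions could alternatively be deduced by taking $\epsilon \downarrow 0$ in the corresponding $\gRSK$ identities via \eqref{eq:RSKtropicalization}, but the Jacobian claim needs a direct argument, since a pointwise limit of volume-preserving maps need not be volume-preserving.

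First I would check the transpose-equivariance of the local moves: inspection of \eqref{eq:RSKlocalMoveA}--\eqref{eq:RSKlocalMoveB} shows that $\tilde a_{j,i}(\bm w^\top) = \tilde a_{i,j}(\bm w)^\top$ and similarly for $\tilde b_{i,j}$, because each formula is symmetric in the pair of neighbours $\{(i-1,j),(i,j-1)\}$ and in $\{(i+1,j),(i,j+1)\}$. Since outer indices of $\mathcal{I}^\top$ are the transposes of outer indices of $\mathcal{I}$, an induction on $|\mathcal{I}|$ based on \eqref{eq:RSKconstruction} then gives $\RSK(\bm w^\top) = \RSK(\bm w)^\top$, and specialising to symmetric $\bm w$ yields the symmetry of $\bm t$.

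For the Jacobian I would induct on $|\mathcal{I}|$, splitting into two cases. If $\mathcal{I}$ admits an outer diagonal index $(n,n)$, the inductive hypothesis applies to the symmetric subarray $\bm w^{(n,n)}$, and it remains to track the insertion of $w_{n,n}$ through $\tilde \rho_{n,n}$. Because $\bm t^{(n,n)}$ is symmetric by the first claim, the $\max$ and $\min$ appearing in the successive diagonal updates collapse (e.g.\ $\max(t^{(n,n)}_{n-1,n}, t^{(n,n)}_{n,n-1}) = t^{(n,n)}_{n-1,n}$), producing genuinely affine relations
\[
t_{n,n} = w_{n,n} + t^{(n,n)}_{n-1,n}, \qquad t_{i,i} = t^{(n,n)}_{i-1,i} + t^{(n,n)}_{i,i+1} - t^{(n,n)}_{i,i} \quad (1 \leq i < n),
\]
each with determinant $\pm 1$ in its newly introduced variable. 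If instead $\mathcal{I}$ has no outer diagonal index, by symmetry there is an outer pair $(m,n),(n,m)$ with $m<n$; apply the induction to the symmetric $\mathcal{I}\setminus\{(m,n),(n,m)\}$, then insert $w_{m,n}$, which via $\tilde \rho_{m,n}$ only modifies entries strictly above the diagonal (those on the $(n-m)$-th diagonal) and does so through maps that are affine with determinant $\pm 1$ on each linearity region; the subsequent insertion of $w_{n,m}$ leaves upper-triangular entries untouched.

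The main subtlety is the ``almost everywhere'' qualifier: the $\max$ and $\min$ in \eqref{eq:RSKlocalMoveB} make $\tilde b_{i,j}$ non-differentiable along a finite union of measure-zero hyperplanes, but on each complementary open region the local moves are affine with determinant $\pm 1$, so their composition retains Jacobian $\pm 1$ almost everywhere.
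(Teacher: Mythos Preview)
Your proposal is correct and follows exactly the approach the paper intends: the paper does not give an explicit proof of this proposition but indicates that all properties of $\gRSK$ specialise to $\RSK$ ``either by taking the appropriate limit or by using the inductive construction~\eqref{eq:RSKconstruction} of $\RSK$ directly'', and your argument is precisely the tropicalised mirror of the inductive proof of Proposition~\ref{prop:symmetricgRSK}. Your explicit diagonal formulas $t_{n,n} = w_{n,n} + t^{(n,n)}_{n-1,n}$ and $t_{i,i} = t^{(n,n)}_{i-1,i} + t^{(n,n)}_{i,i+1} - t^{(n,n)}_{i,i}$ are the correct tropicalisations of the paper's $t_{n,n}= 2w_{n,n} t^{(n,n)}_{n-1,n}$ and $t_{i,i} = t^{(n,n)}_{i-1,i} t^{(n,n)}_{i,i+1} / t^{(n,n)}_{i,i}$, and your handling of the ``almost everywhere'' caveat via the piecewise-affine structure is the appropriate refinement needed in the tropical setting.
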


The symmetric property stated in the latter proposition generalizes property~\ref{prop:RSKGelfandTsetlin_symmetry} of Proposition~\ref{prop:RSKGelfandTsetlin}.

\section{Schur functions}
\label{sec:Schur}

Schur functions, named after Issai Schur, appear in various branches of mathematics, especially in Young tableaux combinatorics and in the representation theory of classical Lie groups; see~\cite{sundaram90} for a survey about the connection between these two viewpoints.
In this section, we will introduce the two types of Schur functions that are relevant to our purposes: ``standard'' Schur functions (related to general linear groups) and symplectic Schur functions (related to symplectic groups).

\subsection{Standard Schur functions}
\label{subsec:standardSchur}

Schur functions (that we call ``standard'' in order to distinguish them from their symplectic analog) are symmetric polynomials indexed by integer partitions and form a basis for the algebra of symmetric functions.
In combinatorics they are viewed as generating functions of semistandard Young tableaux, or Gelfand-Tsetlin patterns equivalently, and are connected to the $\RSK$ correspondence (we will see this connection, from a probabilistic standpoint, in the introduction to chapter~\ref{ch:LPP}).
They also occur in the representation theory of symmetric groups, general linear groups and unitary groups, as characters of finite-dimensional irreducible representations.
Finally, Schur functions find applications in algebraic geometry, especially in the Schurbert calculus and the theory of Grassmann varieties.
For a detailed treatment of Schur functions we refer the reader to~\cite{stanley99}.

We will give the combinatorial definition of Schur functions in terms of Gelfand-Tsetlin patterns.
Recall from subsection~\ref{subsec:RSK} that a Gelfand-Tsetlin pattern of height $n$ is a triangular array of the form~\eqref{eq:GTpattern} satisfying the interlacing conditions~\eqref{eq:interlacing} (see Figure~\ref{fig:GTpattern}).
Given $\bm{x}\in \R^n$ and a set $S\subseteq \R$, let us denote by $\GT{n}{S}(\bm{x})$ the set of all Gelfand-Tsetlin patterns of height $n$ with entries in $S$ and shape (i.e.\ bottom row) equal to $\bm{x}$.
Notice that $\GT{n}{S}(\bm{x})$ is empty whenever the chain $x_1\geq \dots\geq x_n$ fails to hold.
Recall the definition~\eqref{eq:typeGTpattern} of type for a Gelfand-Tsetlin pattern.
\begin{definition}
\label{def:schur}
The \emph{(standard) Schur function} in $n$ variables $\bm{a}=(a_1,\dots,a_n) \in\C^n$ indexed by an integer partition $\bm{\lambda}$ of length at most $n$ is the polynomial
\begin{equation}
\label{eq:schur}
\schur_{\bm{\lambda}}(\bm{a})
:=
\sum_{\bm{z}\in \GT{n}{\Z}\left(\bm{\lambda}\right)}
\prod_{k=1}^{n} a_k^{\type(\bm{z})_k}  \, .
\end{equation}
\end{definition}

As characters of the irreducible representations of $\GL_n(\C)$, Schur functions can be written as a ratio of determinants by the Weyl character formula~\cite{fultonHarris91}:
\begin{equation}
\label{eq:schurDet}
\schur_{\bm{\lambda}}(\bm{a})
= \frac{\det\left(a_i^{\lambda_j+n-j}\right)_{1\leq i,j\leq n}}{\det\left(a_i^{n-j}\right)_{1\leq i,j\leq n}}
= \frac{\det\left(a_i^{\lambda_j+n-j}\right)_{1\leq i,j\leq n}}{\prod_{1\leq i<j\leq n} (a_i - a_j)} \, ,
\end{equation}
being the denominator a Vandermonde determinant.
It is clear from the above formula that Schur functions are symmetric, i.e.\ invariant under the action of $\Sym_n$ (which is the Weyl group associated to $\GL_n(\C)$) on its arguments.

It can be easily deduced from either the definition or the determinantal formula that
\[
\schur_{\bm{\lambda}+k}(\bm{a})
= \left(\prod_{i=1}^n a_i \right)^k \schur_{\bm{\lambda}}(\bm{a}) \, ,
\]
where $\bm{\lambda}+k$ stands for $(\lambda_1 +k,\dots,\lambda_n +k)$.
This remark permits extending the definition of Schur functions and its associated determinantal formula to any $\bm{\lambda}\in\Z^n$ such that $\lambda_1 \geq \cdots \geq \lambda_n$.
Notice however that, if we do not require $\lambda_n \geq 0$, the functions thus obtained are in general not polynomial but Laurent polynomials.

In chapter~\ref{ch:LPP} we will come across a natural ``continuous version'' of Schur functions, which is essentially obtained from~\eqref{eq:schur} by replacing the sum on integer Gelfand-Tsetlin patterns with an integral on real Gelfand-Tsetlin patterns.
\begin{definition}
\label{def:schurCont}
We define the \emph{continuous (standard) Schur function} indexed by a parameter $\bm{\alpha}=(\alpha_1,\dots,\alpha_n)\in\C^n$ to be
\begin{equation}
\label{eq:schurCont}
\schur^{\cont}_{\bm{\alpha}}(\bm{x})
:= \int_{\GT{n}{\R}(\bm{x})}
\prod_{k=1}^n
\e^{ \alpha_k \type(\bm{z})_k}
\prod_{\substack{1\leq i<n \\ 1\leq j\leq i}}
\!\!
\diff z_{i,j}
\end{equation}
for all $\bm{x}=(x_1,\dots,x_n)\in\R^n$ such that $x_1 > \dots > x_n$.
\end{definition}
We point out that, if some of the inequalities in $x_1 > \dots > x_n$ are replaced with equalities, then some of the entries of any $\bm{z}\in \GT{n}{\R}(\bm{x})$ are ``blocked'' by the interlacing conditions (e.g., if $x_{n-1}=x_n$, then $z_{n-1,n-1}$ must also equal them).
Therefore, in this case $\GT{n}{\R}(\bm{x})$ has zero Lebesgue measure and the integral in~\eqref{eq:schurCont} vanishes.

We also remark that, comparing to~\eqref{eq:schur}, the roles of the parameter and the argument are exchanged in~\eqref{eq:schurCont}: we have adopted this notation in the continuous setting by analogy with the usual notation of Whittaker functions, whose scaling limits are just continuous Schur functions (see section~\ref{sec:Whittaker} and in particular Proposition~\ref{prop:glWhittakerRescaling}).

By Riemann sum approximation, continuous Schur functions are indeed the continuous limit of Schur functions:
\[
\begin{split}
\schur^{\cont}_{\bm{\alpha}}(\bm{x})
&= \lim_{\delta \downarrow 0}
\delta^{n(n-1)/2}
\!\!\!\!
\sum_{\bm{z} \in \GT{n}{\Z}(\bm{x}/\delta)} \prod_{k=1}^n \e^{\delta \alpha_k \type(\bm{z})_k} \\
&= \lim_{\delta \downarrow 0}
\delta^{n(n-1)/2}
\schur_{\floor{\bm{x}/\delta}}\left(\e^{\delta \alpha_1},\dots, \e^{\delta \alpha_n} \right) \, .
\end{split}
\]
By plugging formula~\eqref{eq:schurDet} for Schur functions into the latter expression and computing the limit, one can see that continuous Schur functions also have a determinantal form:
\begin{equation}
\label{eq:schurContDet}
\schur^{\cont}_{\bm{\alpha}}(\bm{x})
= \frac{\det\left(\e^{\alpha_j x_i}\right)_{1\leq i,j\leq n}}{\prod_{1\leq i<j\leq n} (\alpha_i - \alpha_j)} \, .
\end{equation}

\begin{remark}
\label{rem:schurEqualParameters}
When $\alpha_i=\alpha_j$ for some $i,j$, the determinantal formulas~\eqref{eq:schurDet} and~\eqref{eq:schurContDet} take on the indeterminate form $0/0$, but are still valid in the limit as $\alpha_i-\alpha_j \to 0$.
For example, when all $\alpha_i$'s are equal to a given $\alpha$, \eqref{eq:schurContDet} becomes
\[
\schur^{\cont}_{(\alpha,\dots,\alpha)}(\bm{x})
= \frac{\det\left(x_i^{n-j} \e^{\alpha x_i} \right)_{1\leq i,j\leq n}}
{\prod_{k=1}^n (k-1)!} \, .
\]
This formula is an immediate consequence of the following fact: if the functions $f_1,\dots,f_n$ are differentiable $n-1$ times at $\alpha$, then
\begin{equation}
\label{eq:wronskian}
\frac{\det(f_i(\alpha_j))_{1\leq i,j\leq n}}{\prod_{1\leq i<j\leq n} (\alpha_j - \alpha_i)}
\longrightarrow
\frac{W(f_1,\dots,f_n)(\alpha)}
{\prod_{k=1}^n (k-1)!} \qquad \text{as } \alpha_1,\dots,\alpha_n \to \alpha \, ,
\end{equation}
where $W(f_1,\dots,f_n)(\alpha) := \det\big(f_i^{(j-1)}(\alpha)\big)_{1\leq i,j\leq n}$ is the Wronskian of $f_1,\dots,f_n$ at $\alpha$.
\end{remark}

Schur functions satisfy the celebrated \emph{Cauchy identity}:
\begin{equation}
\label{eq:cauchyIdentity}
\sum_{\bm{\lambda}}
\schur_{\bm{\lambda}}(p_1, \dots, p_n) \,
\schur_{\bm{\lambda}}(q_1, \dots, q_n)
= \prod_{i,j=1}^n \frac{1}{1- p_i q_j} \, ,
\end{equation}
where the sum is over all integer partitions $\bm{\lambda}$ of length at most $n$.
If we view each term $(1-p_i q_j)^{-1}$ as the corresponding geometric series $\sum_{k=0}^{\infty} (p_i q_j)^k$, the Cauchy identity is true in the sense of formal power series, but has analytical meaning only when $\abs{p_i q_j} <1$ for all $i,j$.
The classical combinatorial proof of~\eqref{eq:cauchyIdentity} (see for example~\cite[Th.~7.12.1]{stanley99}) relies on the $\RSK$ correspondence; at the beginning of chapter~\ref{ch:LPP} we will incidentally see a version of such a proof that also involves a probabilistic interpretation.

The analogous Cauchy identity for continuous Schur functions reads as
\begin{equation}
\label{eq:cauchyIdentityCont}
\int_{\{x_1 > \cdots > x_n >0\}} \schur^{\cont}_{-\bm{\alpha}}(\bm{x}) 
\, \schur^{\cont}_{-\bm{\beta}}(\bm{x})
\prod_{i=1}^n \diff x_i
= \prod_{i,j=1}^n \frac{1}{\alpha_i + \beta_j} \, ,
\end{equation}
where the parameters $\bm{\alpha}$ and $\bm{\beta}$ satisfy $\Re(\alpha_i + \beta_j) >0$ for all $i,j$.
It can be easily deduced from~\eqref{eq:cauchyIdentity} by setting $p_i := \e^{-\delta \alpha_i}$ and $q_j := \e^{-\delta \beta_j}$, letting $\delta \downarrow 0$, and using Riemann sum approximations.

\subsection{Symplectic Schur functions}
\label{subsec:symplecticSchur}

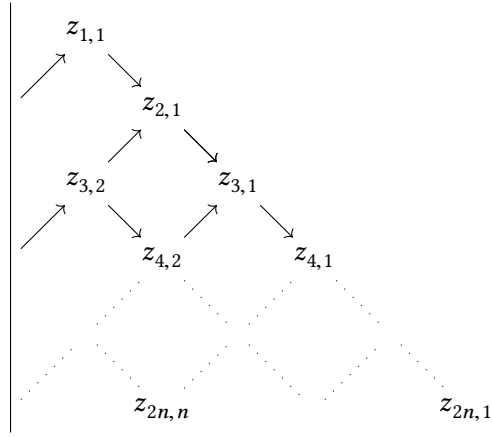
\begin{figure}
\centering
\begin{tikzpicture}[scale=1]

\node (z11) at (1,-1) {$z_{1,1}$};
\node (z21) at (2,-2) {$z_{2,1}$};
\node (z22) at (0,-2) {};
\node (z31) at (3,-3) {$z_{3,1}$};
\node (z32) at (1,-3) {$z_{3,2}$};
\node (z41) at (4,-4) {$z_{4,1}$};
\node (z42) at (2,-4) {$z_{4,2}$};
\node (z43) at (0,-4) {};
\node (z51) at (5,-5) {};
\node (z52) at (3,-5) {};
\node (z53) at (1,-5) {};
\node (z61) at (6,-6) {$z_{2n,1}$};
\node (z62) at (4,-6) {};
\node (z63) at (2,-6) {$z_{2n,n}$};
\node (z64) at (0,-6) {};

\draw[->] (z22) -- (z11);
\draw[->] (z11) -- (z21);
\draw[->] (z32) -- (z21);
\draw[->] (z21) -- (z31);
\draw[->] (z21) -- (z31);
\draw[->] (z43) -- (z32);
\draw[->] (z32) -- (z42);
\draw[->] (z42) -- (z31);
\draw[->] (z31) -- (z41);
\draw[loosely dotted] (z53) -- (z42);
\draw[loosely dotted] (z42) -- (z52);
\draw[loosely dotted] (z52) -- (z41);
\draw[loosely dotted] (z41) -- (z51);
\draw[loosely dotted] (z64) -- (z53);
\draw[loosely dotted] (z53) -- (z63);
\draw[loosely dotted] (z63) -- (z52);
\draw[loosely dotted] (z52) -- (z62);
\draw[loosely dotted] (z62) -- (z51);
\draw[loosely dotted] (z51) -- (z61);

\draw (0,-0.6) -- (0,-6.3);

\end{tikzpicture}
\caption[``Half-triangular'' arrays and symplectic Gelfand-Tsetlin patterns]{A ``half-triangular'' array of height $2n$.
If this is interpreted as a symplectic Gelfand-Tsetlin pattern, the arrows illustrate the interlacing condition: $a \to b$ means $a\leq b$, with the convention that $a$ equals $0$ if it lies on the vertical ``wall''.
The arrows may also refer to the functional~\eqref{eq:soEnergy}: $\spEn(\bm{z})$ is the sum of all ratios $a/b$ such that there is an arrow pointing from $a$ to $b$ in the diagram; in this case, $a$ equals $1$ by convention if it lies on the vertical wall, so that the (only) inhomogeneous addends in $\spEn(\bm{z})$ are $1/z_{2k-1,k}$ for $1\leq k\leq n$.}
\label{fig:spGTpattern}
\end{figure}

Symplectic Schur functions are Laurent polynomials indexed by partitions, invariant under both permutation of the variables and multiplicative inversion of any of them.
In combinatorics they are defined as generating functions of symplectic tableaux, or symplectic Gelfand-Tsetlin patterns equivalently, and are connected to the Berele's insertion algorithm~\cite{berele86}.
A \emph{symplectic Gelfand-Tsetlin pattern} of height $2n$ is a ``half-triangular'' array $\bm{z}=\{z_{i,j}\colon 1\leq i\leq 2n, \, 1\leq j\leq \ceil{i/2}\}$ satisfying the interlacing conditions:
\begin{equation}
\label{eq:interlacingSp}
z_{i+1,j+1}\leq z_{i,j}\leq z_{i+1,j}
\qquad\quad \text{for } 1\leq i < 2n \, , \,\, 1\leq j\leq \ceil{i/2} \, ,
\end{equation}
with the convention that $z_{i,j}:=0$ when $j>\ceil{i/2}$ (so that all its entries are non-negative); the \emph{shape} of $\bm{z}$ is its bottom row $(z_{2n,1},\dots,z_{2n,n})$.
See Figure~\ref{fig:spGTpattern} for a graphical representation.
Given $\bm{x}\in\R^n$ and a set $S\subseteq \R$, let us denote by $\spGT{2n}{S}(\bm{x})$ the set of all symplectic Gelfand-Tsetlin patterns of height $2n$ with entries in $S$ and shape $\bm{x}$; notice that $\spGT{2n}{S}(\bm{x})$ is empty whenever $x_1\geq \dots\geq x_n \geq 0$ fails to hold.
Analogously to~\eqref{eq:typeGTpattern}, we define the \emph{type} of a symplectic Gelfand-Tsetlin pattern $\bm{z}$ to be the vector $\type(\bm{z})\in \R_{\geq 0}^{2n}$ such that
\begin{equation}
\label{eq:typeSpGTpattern}
\type(\bm{z})_i := \sum_{j = 1}^{\ceil{\frac{i}{2}}} z_{i,j} - \sum_{j = 1}^{\ceil{\frac{i-1}{2}}} z_{i-1,j} \qquad \text{for } i=1, \dots, 2n \, .
\end{equation}
\begin{definition}
\label{def:spSchur}
The \emph{symplectic Schur function} in $n$ variables $\bm{a}=(a_1,\dots,a_n)\in\C^n$ indexed by an integer partition $\bm{\lambda}$ of length at most $n$ is the Laurent polynomial
\begin{equation}
\label{eq:spSchur}
\sp_{\bm{\lambda}}(\bm{a})
:=
\sum_{\bm{z}\in \spGT{2n}{\Z}\left(\bm{\lambda}\right)}
\prod_{k=1}^{n} a_k^{\type(\bm{z})_{2k-1} - \type(\bm{z})_{2k}} \, .
\end{equation}
\end{definition}
As characters of the irreducible representations of the symplectic group $\Sp_{2n}(\C)$, symplectic Schur functions can be written as a ratio of determinants (the denominator having a closed form) by the Weyl character formula~\cite{fultonHarris91}:
\begin{equation}
\label{eq:spSchurDet}
\begin{split}
\sp_{\bm{\lambda}}(\bm{a})
&= \frac{\det\left(a_i^{\lambda_j+n-j+1} - a_i^{-(\lambda_j+n-j+1)}\right)_{1\leq i,j\leq n}}
{\det\left(a_i^{n-j+1} - a_i^{-(n-j+1)}\right)_{1\leq i,j\leq n}} \\
&= \frac{\det\left(a_i^{\lambda_j+n-j+1} - a_i^{-(\lambda_j+n-j+1)}\right)_{1\leq i,j\leq n}}{\prod_{1\leq i<j\leq n}
(a_i - a_j)(a_i a_j - 1)
\prod_{k=1}^n (a_k^2-1)a_k^{-n}} \, .
\end{split}
\end{equation}
It is clear from the above formula that Schur functions are invariant under transformations of the type $a_i \mapsto a_i^{-1}$ and permutations of the $a_i$'s, i.e.\ under the action of $(\Z/2\Z)^n \rtimes \Sym_n$ (which is the Weyl group associated to $\Sp_{2n}(\C)$) on its arguments.

In chapter~\ref{ch:LPP} we will also use the ``continuous version'' of symplectic Schur functions, which is an integral on real symplectic Gelfand-Tsetlin patterns as specified below.
\begin{definition}
\label{def:spSchurCont}
We define the \emph{continuous symplectic Schur function} indexed by a parameter $\bm{\alpha}=(\alpha_1,\dots,\alpha_n) \in\C^n$ to be
\begin{equation}
\label{eq:spSchurCont}
\sp^{\cont}_{\bm{\alpha}}(\bm{x})
:= \int_{\spGT{2n}{\R}(\bm{x})}
\prod_{k=1}^n
\e^{ \alpha_k \left(\type(\bm{z})_{2k-1} - \type(\bm{z})_{2k}\right)}
\prod_{\substack{1\leq i<2n \\ 1\leq j\leq \ceil{i/2}}}
\!\!
\diff z_{i,j}
\end{equation}
for all $\bm{x}=(x_1,\dots,x_n)\in\R^n$ such that $x_1 > \dots > x_n > 0$.
\end{definition}
The same remarks hold as for standard Schur functions: if some of the inequalities in $x_1 > \dots > x_n > 0$ are replaced with equalities, the integral in~\eqref{eq:spSchurCont} vanishes; the roles of the variables and the parameters are exchanged in the discrete and continuous case.

By Riemann sum approximation, continuous symplectic Schur functions are indeed the continuous limit of their discrete analog:
\[
\begin{split}
\sp^{\cont}_{\bm{\alpha}}(\bm{x})
&= \lim_{\delta \downarrow 0}
\delta^{n^2}
\!\!\!
\sum_{\bm{z} \in \spGT{2n}{\Z}(\bm{x}/\delta)}
\prod_{k=1}^n \e^{ \delta\alpha_k \left(\type(\bm{z})_{2k-1} - \type(\bm{z})_{2k}\right)} \\
&= \lim_{\delta \downarrow 0}
\delta^{n^2}
\sp_{\floor{\bm{x}/\delta}}\left(\e^{\delta \alpha_1},\dots, \e^{\delta \alpha_n} \right) \, .
\end{split}
\]
By plugging formula~\eqref{eq:spSchurDet} into the above limit, one can see that continuous symplectic Schur functions also have a determinantal form:
\begin{equation}
\label{eq:spSchurContDet}
\sp^{\cont}_{\bm{\alpha}}(\bm{x})
= \frac{\det\left(\e^{\alpha_j x_i} - \e^{-\alpha_j x_i}\right)_{1\leq i,j\leq n}}{\prod_{1\leq i<j\leq n}(\alpha_i-\alpha_j)
(\alpha_i+\alpha_j) \prod_{k=1}^n (2\alpha_k)} \, .
\end{equation}

\begin{remark}
\label{rem:spSchurEqualParameters}
When $\alpha_i=\alpha_j$ for some $i,j$, the determinantal formulas~\eqref{eq:spSchurDet} and~\eqref{eq:spSchurContDet} are still valid in the limit as $\alpha_i-\alpha_j \to 0$.
For example one can use~\eqref{eq:wronskian} to prove that, when all $\alpha_i$'s are equal to a given $\alpha$, \eqref{eq:spSchurContDet} reads as
\[
\sp^{\rm cont}_{(\alpha,\dots,\alpha)}(\bm{x})
= \frac{\det\big(x_i^{n-j} (\e^{\alpha x_i} -(-1)^{n-j} \e^{-\alpha x_i}) \big)_{1\leq i,j\leq n}}
{(2\alpha)^{n(n+1)/2} \prod_{k=1}^n (k-1)!} \, .
\]
\end{remark}

There is a \emph{symplectic Cauchy identity} that pairs a symplectic Schur function with a standard one~\cite{sundaram90}:
\begin{equation}
\label{eq:cauchyIdentitySp}
\sum_{\bm{\lambda}}
\sp_{\bm{\lambda}}(p_1, \dots, p_n) \,
\schur_{\bm{\lambda}}(q_1, \dots, q_n)
= \frac{\prod_{1\leq i<j \leq n} (1-q_i q_j)}{\prod_{1\leq i,j \leq n} (1-q_i p_j)(1-q_i p_j^{-1})} \, ,
\end{equation}
where the sum is over all integer partitions $\bm{\lambda}$ of length at most $n$.
Again, this identity is true in the sense of formal power series, but has analytical meaning only when $\abs{q_i p_j^{\pm 1}} <1$ for all $i,j$ (this in particular implies that $\abs{q_i} < 1$ for all $i$).
Sundaram gave a bijective proof of~\eqref{eq:cauchyIdentity} based on an application of the Berele's insertion algorithm~\cite{berele86}.

The analog of~\eqref{eq:cauchyIdentitySp} for continuous symplectic Schur functions reads as
\begin{equation}
\label{eq:cauchyIdentitySpCont}
\int_{\{x_1 > \cdots > x_n >0\}}
\sp^{\cont}_{\bm{\alpha}}(\bm{x}) 
\, \schur^{\cont}_{-\bm{\beta}}(\bm{x})
\prod_{i=1}^n \diff x_i
= \frac{\prod_{1\leq i<j \leq n} (\beta_i + \beta_j)}{\prod_{1\leq i,j \leq n} (\beta_i + \alpha_j)(\beta_i - \alpha_j)} \, ,
\end{equation}
where the parameters $\bm{\alpha}$ and $\bm{\beta}$ satisfy $\Re(\beta_i \pm \alpha_j) >0$ for all $i,j$.
It can be easily deduced from~\eqref{eq:cauchyIdentitySp} by setting $p_i := \e^{\delta \alpha_i}$ and $q_j := \e^{-\delta \beta_j}$, letting $\delta \downarrow 0$, and using Riemann sum approximations.

\section{Whittaker functions}\label{sec:Whittaker}

Whittaker functions appear in many different mathematical contexts, and have been accordingly defined and used in various ways.
They were first introduced by Edmund T.~ Whittaker~\cite{whittaker03} as solutions to the so-called Whittaker differential equation.
In modern analytic number theory, they play a central role in the area of automorphic forms and $L$-functions.
In this context, they come associated to a real reductive group, which corresponds to $\GL_2(\R)$ for the original Whittaker functions.
It was Jacquet~\cite{jacquet67} who first constructed, via a certain integral representation, Whittaker functions associated to higher rank groups.

In a representation theoretic setting, Konstant~\cite{kostant78} showed how Whittaker functions, viewed as solutions to a certain integrable system called quantum Toda lattice and associated with a real reductive Lie algebra, are connected to certain representations of that algebra.
Givental~\cite{givental97} constructed solutions to the same quantum integrable system via methods of quantum cohomology and mirror symmetry: more precisely, he constructed $\GL_n(\R)$-Whittaker functions as integrals over a mirror family.
This approach was extended further for general classical groups by Gerasimov-Lebedev-Oblezin \cite{gerasimovLebedevOblezin08, gerasimovLebedevOblezin12}. 

A comprehensive summary of various realizations of Whittaker functions can be found in the survey paper~\cite{lam13} and in the PhD thesis of Chhaibi~\cite{chhaibi13}: the first focuses on algebraic, geometric and combinatorial aspects, while the second one gives a probabilistic insight as well.

In this section we focus on Whittaker functions associated to the Lie algebras $\gl_n$ and $\so_{2n+1}$.
In particular, we introduce their integral representations on triangular patterns due to Givental~\cite{givental97} and Gerasimov-Lebedev-Oblezin~\cite{gerasimovLebedevOblezin08, gerasimovLebedevOblezin12}, and deal with their most relevant aspects for our purposes.
We will indifferently talk about Whittaker functions associated to a given Lie group, e.g.\ $\GL_n(\R)$, or to the corresponding Lie algebra, e.g.\ $\gl_n$.
However, we will tend to use more the former notation in the number theoretic context of Appendix~\ref{appendix:Whittaker}, where we provide further information on the role of Whittaker functions in number theory.

\subsection{$\gl_n$-Whittaker functions}
\label{subsec:glWhittaker}

Following~\cite{givental97, gerasimovLebedevOblezin08, gerasimovLebedevOblezin12}, we introduce $\gl_n$-Whittaker functions as integrals on triangular arrays.
Consider a triangular array
\begin{equation}
\label{eq:triangularArray}
\bm{z} = \{ z_{i,j}\colon 1\leq j\leq i\leq n \}
\end{equation}
of height $n\geq 1$ with positive entries.
Recall that if the entries are interlaced in the sense of~\eqref{eq:interlacing}, such an array is known as a Gelfand-Tsetlin pattern.
Even though in this context we work with triangular arrays that are \emph{not} required to satisfy the interlacing conditions, we still define a potential on them that encourages interlacement:
\begin{equation}
\label{eq:glEnergy}
\En(\bm{z})
:= \sum_{i=1}^{n-1} \sum_{j=1}^i
\left( \frac{z_{i+1,j+1}}{z_{i,j}} +\frac{z_{i,j}}{z_{i+1,j}} \right)
\, .
\end{equation}
Whenever an interlacing condition is \emph{not} satisfied, one of the ratios appearing above becomes larger than one, increasing the overall potential of the array.
Figure~\ref{fig:GTpattern} provides a visual representation of~\eqref{eq:glEnergy}.

We call $i$-th \emph{row} of $\bm{z}$ the vector $(z_{i,1},\dots,z_{i,i})$ of all entries with first index equal to $i$.
Replacing the tropical operations $(\max,+)$ with the usual ones $(+,\cdot)$ in~\eqref{eq:typeGTpattern}, we define the \emph{geometric type} of $\bm{z}$ to be the vector $\gtype(\bm{z}) \in \R_{>0}^{n}$ whose $i$-th component is the ratio between the product of the $i$-th row of $\bm{z}$ and the product of its $(i-1)$-th row:
\begin{equation}
\label{eq:glType}
\gtype(\bm{z})_i
:= \frac{\prod_{j = 1}^{i} z_{i,j}}{\prod_{j = 1}^{i-1} z_{i-1,j}} \qquad \text{for } i=1, \dots, n
\, .
\end{equation}
Denoting by $\T{n}{S}(\bm{x})$ the set of all triangular arrays of height $n$ with entries in $S\subseteq\R$ and bottom row equal to a given vector $\bm{x}\in \R^n$, we now define the $\gl_n$-Whittaker functions via the following integral representation:
\begin{definition}
\label{def:glWhittakerFn} 
The \emph{$\gl_{n}$-Whittaker function} with parameter $\bm{\alpha}=(\alpha_1,\dots,\alpha_n)\in\C^n$ and argument $\bm{x}=(x_1,\dots,x_n) \in\R_{>0}^n$ is given by
\begin{equation}
\label{eq:glWhittakerFn}
\Psi^{\gl_{n}}_{\bm{\alpha}}(\bm{x})
:= \int_{\T{n}{\R_{>0}}(\bm{x})}
\prod_{k=1}^{n} \gtype(\bm{z})_k^{\alpha_k}
\exp\left\{-\En(\bm{z})\right\}
\prod_{\substack{1\leq i<n \\ 1\leq j\leq i}} \frac{\diff z_{i,j}}{z_{i,j}} \, .
\end{equation}
\end{definition}
For example, $\Psi^{\gl_1}_{\alpha}(x) = x^{\alpha}$ (no integration is involved), and
\begin{equation}
\label{eq:gl_2WhittakerFn}
\Psi^{\gl_2}_{(\alpha_1,\alpha_2)}(x_1,x_2)
= \int_{\R_{>0}} z^{\alpha_1}\left(\frac{x_1 x_2}{z}\right)^{\alpha_2} \exp\left\{-\frac{x_2}{z}-\frac{z}{x_1}\right\} \frac{\diff z}{z} \, .
\end{equation}

The representation of $\gl_n$-Whittaker functions in Definition~\ref{def:glWhittakerFn} 
has a recursive structure: setting $\Psi_{\emptyset}^{\gl_{0}}(\emptyset) :=1$, it turns out that for all $n\geq 1$,
 $\bm{\alpha}\in\C^n$ and $\bm{x}\in\R_{>0}^n$
\begin{equation}
\label{eq:glWhittakerRecurs}
\Psi_{\bm{\alpha}}^{\gl_{n}}(\bm{x})
= \int_{\R_{>0}^{n-1}}
Q^{\gl_{n}}_{\alpha_n}(\bm{x},\bm{u})
\Psi_{\tilde{\bm{\alpha}}}^{\gl_{n-1}}(\bm{u})
\prod_{i=1}^{n-1} \frac{\diff u_i}{u_i} \, ,
\end{equation}
where $\tilde{\bm{\alpha}}:=(\alpha_1,\dots,\alpha_{n-1})$ and the kernel is defined by
\[
Q^{\gl_{n}}_{\alpha_n}(\bm{x},\bm{u})
:= \left( \frac{\prod_{i=1}^n x_i}{\prod_{i=1}^{n-1} u_i} \right)^{\alpha_n}
\prod_{i=1}^{n-1} \exp\left\{-\frac{x_{i+1}}{u_i} - \frac{u_i}{x_i}\right\} \, .
\]

The following properties of $\gl_n$-Whittaker functions are straightforward consequences of the definition:
\begin{itemize}
\item
if $c\in\C$ and $\bm{\alpha}+c$ stands for $(\alpha_1+c,\dots,\alpha_n+c)$, then
\begin{equation}
\label{eq:glWhittakerFnTranslation}
\Psi^{\gl_n}_{\bm{\alpha}+c}(\bm{x})
= \bigg(\prod_{i=1}^n x_i\bigg)^c
\Psi^{\gl_n}_{\bm{\alpha}}(\bm{x}) \, ;
\end{equation}
\item
if $s>0$, then
\begin{equation}
\label{eq:glWhittakerFnScalarMultiplication}
\Psi^{\gl_n}_{\bm{\alpha}}(s \bm{x})
= s^{\sum_{i=1}^n \alpha_i} \Psi^{\gl_n}_{\bm{\alpha}}(\bm{x}) \, ;
\end{equation}
\item
if we set $y_i := x_{n-i+1}^{-1}$ for $1\leq i\leq n$, then
\begin{equation}
\label{eq:glWhittakerFnSignChange}
\Psi^{\gl_n}_{-\bm{\alpha}}(\bm{x})
= \Psi^{\gl_n}_{\bm{\alpha}}(\bm{y}) \, .
\end{equation}
\end{itemize}

Properly rescaled $\gl_n$-Whittaker functions yield continuous Schur functions, as the next theorem states.
Such a rescaling is strongly related to the tropicalization procedure explained in subsection~\ref{subsec:tropicalization&RSK} and to the zero temperature limit that will be the subject of appendix~\ref{appendix:zeroTempLimit}.
\begin{proposition}
\label{prop:glWhittakerRescaling}
For all $\bm{\alpha}\in\C^n$ and $\bm{x}\in\R^n$ we have that
\begin{equation}
\label{eq:glWhittakerRescaling}
\lim_{\epsilon\downarrow 0}
\epsilon^{n(n-1)/2} 
\Psi_{\epsilon\bm{\alpha}}^{\gl_n} \left(\e^{x_1/\epsilon},\dots,\e^{x_n/\epsilon}\right)
= \schur^{\cont}_{\bm{\alpha}}(\bm{x})
\1_{\{x_1 > \dots > x_n \}} \, .
\end{equation}
\end{proposition}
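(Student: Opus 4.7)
The plan is to tropicalize the integral representation \eqref{eq:glWhittakerFn} directly, via the substitution $z_{i,j}=\e^{y_{i,j}/\epsilon}$. The measure $\prod dz_{i,j}/z_{i,j}$ becomes $\epsilon^{-n(n-1)/2}\prod dy_{i,j}$, producing exactly the Jacobian that cancels the prefactor $\epsilon^{n(n-1)/2}$ in \eqref{eq:glWhittakerRescaling}; simultaneously, the domain $\T{n}{\R_{>0}}(\e^{x_1/\epsilon},\ldots,\e^{x_n/\epsilon})$ becomes $\T{n}{\R}(\bm{x})$, with the bottom row pinned to $y_{n,j}=x_j$, matching the integration domain of $\schur^{\cont}_{\bm{\alpha}}(\bm{x})$ in \eqref{eq:schurCont}.

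I would next track how the remaining factors transform under the same substitution combined with $\bm{\alpha}\mapsto\epsilon\bm{\alpha}$. Since $\gtype(\bm{z})_k$ is a monomial in the $z_{i,j}$, one reads off immediately that $\gtype(\bm{z})_k^{\epsilon\alpha_k}=\exp(\alpha_k\type(\bm{y})_k)$, which is precisely the weight appearing in \eqref{eq:schurCont}. The energy \eqref{eq:glEnergy} turns into the sum of stretched exponentials
$$\En(\bm{z})=\sum_{i=1}^{n-1}\sum_{j=1}^{i}\Bigl(\e^{(y_{i+1,j+1}-y_{i,j})/\epsilon}+\e^{(y_{i,j}-y_{i+1,j})/\epsilon}\Bigr),$$
so that pointwise $\exp(-\En)$ tends to $1$ on the interior of the interlacing polytope \eqref{eq:interlacing} and to $0$ off its closure, reproducing the indicator of $\GT{n}{\R}(\bm{x})$ implicit in \eqref{eq:schurCont}. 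When two consecutive $x_j$'s coincide the interior is empty, which accounts for the indicator $\1_{\{x_1>\cdots>x_n\}}$ on the right-hand side.

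The main obstacle is passing the limit inside the integral. My plan is a two-region estimate: fix $\delta>0$ and split $\T{n}{\R}(\bm{x})$ into the $\delta$-contraction of the interlacing polytope, where each defect $y_{i+1,j+1}-y_{i,j}$ and $y_{i,j}-y_{i+1,j}$ is at most $-\delta$, and its complement. The contracted region is compact, since the chain of interlacing inequalities forces every $y_{i,j}$ into $[x_n,x_1]$, so bounded convergence there delivers the restricted integrand of \eqref{eq:schurCont}, and letting $\delta\downarrow 0$ recovers the full integral. On the complement at least one exponential in $\En$ is $\geq \e^{\delta/\epsilon}$, so $\exp(-\En)\leq\exp(-\e^{\delta/\epsilon})$; the delicate step is to combine this super-exponential suppression with the other, still-satisfied energy exponentials -- each $y_{i,j}$ is damped in both $\pm\infty$ directions by two exponentials attached to its own position -- in order to control the otherwise unbounded exponential growth of the $\type$-factor before $\exp(-\e^{\delta/\epsilon})$ kills the complementary integral. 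A cleaner alternative is induction on $n$ via the convolution recursion \eqref{eq:glWhittakerRecurs}, where the Laplace-type analysis need only be carried out on a single kernel.
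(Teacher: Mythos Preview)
Your approach is essentially identical to the paper's: the same exponential substitution $z_{i,j}=\e^{y_{i,j}/\epsilon}$, the same identification $\gtype^{\epsilon\alpha_k}=\e^{\alpha_k\type}$, and the same pointwise convergence of $\exp(-\En)$ to the interlacing indicator. The paper then simply invokes ``dominated convergence'' without exhibiting a dominating function, whereas you correctly flag this passage as the main obstacle and sketch a two-region estimate (or an inductive alternative via~\eqref{eq:glWhittakerRecurs}) to justify it --- so your write-up is in fact more careful on this point than the paper's.
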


\begin{proof}
For any index set $\mathcal{I}$ and array $\bm{a}= \{ a_k\colon k\in\mathcal{I} \} \in \R^{\mathcal{I}}$ and for all $\epsilon>0$, let us set $g_{\epsilon}(\bm{a}) := \{ \e^{a_k / \epsilon}\colon k\in\mathcal{I} \} \in \R_{>0}^{\mathcal{I}}$ as in subsection~\ref{subsec:tropicalization&RSK}.
In definition~\eqref{eq:glWhittakerFn}, we change variables by setting $z_{i,j}\mapsto g_{\epsilon}(z_{i,j})$ for all $1\leq j\leq i< n$ and obtain
\[
\Psi^{\gl_n}_{\epsilon \bm{\alpha}}(g_{\epsilon}(\bm{x}))
= \int_{\T{n}{\R}(\bm{x})}
\prod_{k=1}^{n} \gtype(g_{\epsilon}(\bm{z}))_k^{\epsilon\alpha_k}
\exp\left\{-\En(g_{\epsilon}(\bm{z}))\right\}
\prod_{\substack{1\leq i<n \\ 1\leq j\leq i}} \frac{\diff z_{i,j}}{\epsilon} \, .
\]
Using definitions~\eqref{eq:glType} and~\eqref{eq:typeGTpattern}, one can verify that $\gtype(g_{\epsilon}(\bm{z}))_k = g_{\epsilon}(\type(\bm{z})_k)$.
Furthermore, since $\exp\{-g_{\epsilon}(a)\} = \exp\{-\e^{a/\epsilon}\} \xrightarrow{\epsilon\downarrow 0} \1_{\{a\leq 0\}}$ for all $a\neq 0$, we have that
\[
\begin{split}
\exp\left\{-\En(g_{\epsilon}(\bm{z}))\right\}
= \, &\prod_{i=1}^{n-1} \prod_{j=1}^i
\exp\{ -g_{\epsilon}(z_{i+1,j+1}-z_{i,j})\}
\exp\{ -g_{\epsilon}(z_{i,j}-z_{i+1,j})\} \\
\xrightarrow{\epsilon\downarrow 0} \, &\prod_{i=1}^{n-1} \prod_{j=1}^i \1_{\{z_{i+1,j+1}\leq z_{i,j} \leq z_{i+1,j} \}}
= \1_{\GT{n}{\R}(\bm{x})}(\bm{z})
\end{split}
\]
for a.e.\ $\bm{z}\in \T{n}{\R}(\bm{x})$.
Notice that if $x_1 > \dots > x_n$ is \emph{not} satisfied, then the Lebesgue measure of $\GT{n}{\R}(\bm{x})$ vanishes.
By dominated convergence, it follows that
\[
\epsilon^{n(n-1)/2} 
\Psi_{\epsilon\bm{\alpha}}^{\gl_n} (g_{\epsilon}(\bm{x}))
\xrightarrow{\epsilon\downarrow 0}
\1_{\{x_1 > \dots > x_n \}}
\int_{\GT{n}{\R}(\bm{x})}
\prod_{k=1}^n
\e^{ \alpha_k \type(\bm{z})_k}
\prod_{\substack{1\leq i<n \\ 1\leq j\leq i}}
\!\!
\diff z_{i,j} \, .
\]
The latter integral defines the continuous Schur function on the right-hand side of~\eqref{eq:glWhittakerRescaling}.
\end{proof}

A property that Whittaker functions share with Schur functions is that they are invariant under the action of the corresponding Weyl group on the (spectral) parameters.
For the case $\gl_n$, although not obvious from the definition, this means that $\Psi^{\gl_n}_{\bm{\alpha}}(\cdot)$ is symmetric w.r.t.\ $(\alpha_1,\dots,\alpha_n)$.
 
As mentioned at the beginning of this section, Whittaker functions are related to certain integrable systems called quantum Toda lattices.
Let us define the quantum Toda hamiltonian associated to a Lie algebra $\mathfrak{g}$ as
\begin{equation}
\label{eq:quantumToda}
\mathcal{H}^{\mathfrak{g}} :=
\sum_{i=1}^n \frac{\partial^2}{\partial x_i^2}
- 2 \sum_{\bm{a}} d_{\bm{a}} \, \e^{-\langle \bm{a} , \bm{x} \rangle} \, ,
\end{equation}
where the sum is over a set of simple roots $\bm{a}\in \R^n$ of $\mathfrak{g}$, $\langle \cdot , \cdot \rangle$ denotes the standard scalar product in $\R^n$ and $d_{\bm{a}}$'s are appropriate rational constants (see~\cite{gerasimovLebedevOblezin12} for details).
Then this operator is diagonalized by $\mathfrak{g}$-Whittaker functions.
Since the simple roots of $\gl_n$ are $\bm{e}_i - \bm{e}_{i+1}$ for $1\leq i\leq n-1$, where $(\bm{e}_1,\dots,\bm{e}_n)$ is the canonical basis of $\R^n$, the quantum Toda hamiltonian associated to $\gl_n$ (also called of type $A_{n-1}$) is given by
\[
\mathcal{H}^{\gl_n} :=
\sum_{i=1}^n \frac{\partial^2}{\partial x_i^2}
-2\sum_{i=1}^{n-1} \e^{x_{i+1}-x_i} \, .
\]
It then turns out that the $\gl_n$-Whittaker function $\Psi^{\gl_n}_{\bm{\lambda}}(\e^{x_1},\dots,\e^{x_n})$ in exponential coordinates is eigenfunction of $\mathcal{H}^{\gl_n}$ with eigenvalue $\langle\bm{\lambda},\bm{\lambda}\rangle = \sum_{i=1}^n\lambda_i^2$.
As eigenfunctions of a self-adjoint operator, $\gl_n$-Whittaker functions come with a harmonic analysis, which is summarized in the following
\begin{theorem}[\cite{semenov-Tian-Shansky94, kharchevLebedev01}]
\label{thm:plancherel}
The integral transform
\[
\hat f(\bm{\lambda}) := \int_{\R_{>0}^n} f(\bm{x}) \Psi^{\gl_n}_{\bm{\lambda}}(\bm{x})
\prod_{i=1}^n \frac{\diff x_i}{x_i}
\]
defines an isometry from $L^2(\R_{>0}^n, \prod_{i=1}^n \diff x_i/x_i)$ to 
$L^2_{\sym}(\i\R^n,s_n(\bm{\lambda}) \diff \bm{\lambda})$, where $\i=\sqrt{-1}$, $L^2_{\sym}$ denotes the space of square integrable functions that are symmetric in their variables, and
\begin{equation}
\label{eq:sklyaninMeasure}
s_n(\bm{\lambda}) \diff \bm{\lambda}
:= \frac{1}{(2\pi\i)^n n!} \prod_{i\neq j} \frac{1}{\Gamma(\lambda_i-\lambda_j)}
\prod_{k=1}^n \diff\lambda_k
\end{equation}
is the \emph{Sklyanin measure}.
Namely, for all $f,g\in L^2(\R_{>0}^n, \prod_{i=1}^n \diff x_i/x_i)$ it holds that
\[
\int_{\R_{>0}^n} f(\bm{x}) \overline{g(\bm{x})} \prod_{i=1}^n\frac{\diff x_i}{x_i} 
= \int_{\i\R^n} \hat{f}(\bm{\lambda}) \overline{\hat{g}(\bm{\lambda})} s_n(\bm{\lambda}) \diff \bm{\lambda} \, .
\]
\end{theorem}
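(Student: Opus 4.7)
The plan is to proceed by induction on $n$, using the recursive structure \eqref{eq:glWhittakerRecurs} to peel off one variable at a time. The base case $n=1$ reduces to the classical Mellin transform $f\mapsto \int_{\R_{>0}} f(x) x^{\lambda}\diff x/x$, which is a well-known isometry from $L^2(\R_{>0}, \diff x/x)$ onto $L^2(\i\R, \diff\lambda/(2\pi\i))$; this matches $s_1(\lambda)\diff\lambda$ since the Gamma-function product in \eqref{eq:sklyaninMeasure} is empty when $n=1$.

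For the inductive step, introduce the intertwining operator
\begin{equation*}
(Tf)(\bm{u}, \lambda_n) := \int_{\R_{>0}^n} f(\bm{x}) \, Q^{\gl_n}_{\lambda_n}(\bm{x}, \bm{u}) \prod_{i=1}^n \frac{\diff x_i}{x_i},
\end{equation*}
so that by \eqref{eq:glWhittakerRecurs} the Whittaker transform $\hat{f}(\bm{\lambda})$ equals the $\gl_{n-1}$-Whittaker transform of $(Tf)(\,\cdot\,, \lambda_n)$ evaluated at $(\lambda_1, \dots, \lambda_{n-1})$. By induction the inner transform is an isometry onto $L^2_{\sym}(\i\R^{n-1}, s_{n-1}(\tilde{\bm{\lambda}})\diff\tilde{\bm{\lambda}})$, so the theorem reduces to showing that $T$ carries $L^2(\R_{>0}^n, \prod \diff x_i/x_i)$ isometrically into an $L^2$ on $\R_{>0}^{n-1}\times \i\R$ with the extra weight $\prod_{i<n} |\Gamma(\lambda_i - \lambda_n)|^{-2}$; combining this with the $(n-1)!$ already in $s_{n-1}$ and the full Weyl symmetrisation by $\Sym_n$ reproduces $s_n(\bm{\lambda})\diff\bm{\lambda}$. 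The input to that intertwiner identity is the distributional evaluation
\begin{equation*}
\int_{\R_{>0}^n} Q^{\gl_n}_{\lambda_n}(\bm{x},\bm{u}) \, \overline{Q^{\gl_n}_{\mu_n}(\bm{x},\bm{v})} \prod_{i=1}^n \frac{\diff x_i}{x_i} = C_n(\lambda_n) \, \delta(\lambda_n - \mu_n) \prod_{i=1}^{n-1} \delta(u_i - v_i),
\end{equation*}
which I would check by iterating the $x_i$-integrations: each is a one-dimensional Mellin integral that yields a Gamma function and an algebraic factor, and the cascade collapses in the appropriate limit to the stated delta functions together with the explicit Gamma-factor prefactor $C_n(\lambda_n)$.

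The principal difficulty is the rigorous execution of this iterated Mellin/contour computation, since along the imaginary axis the integrands are not absolutely integrable and contour shifts have to be justified by hand. I would first establish the identity on a dense subspace of Schwartz-type test functions supported away from the boundary of $\R_{>0}^n$, where all intermediate integrals converge absolutely thanks to Stirling bounds on $\Gamma$, and then extend to $L^2$ by density. The $\Sym_n$-invariance of $\Psi^{\gl_n}_{\bm{\lambda}}$ in the spectral parameter $\bm{\lambda}$, which one needs in order to land in the symmetric target space, follows from the fact that the quantum Toda Hamiltonian $\mathcal{H}^{\gl_n}$ is self-adjoint with a spectrum invariant under $\Sym_n$. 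Finally, surjectivity of $f\mapsto \hat{f}$ onto $L^2_{\sym}(\i\R^n, s_n\diff\bm{\lambda})$ is obtained by verifying that the formal adjoint of the transform serves as a right-inverse on a dense subset of the target, upgrading the isometric embedding to a unitary equivalence.
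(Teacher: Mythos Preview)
The paper does not contain a proof of this theorem: it is stated with attribution to \cite{semenov-Tian-Shansky94, kharchevLebedev01} and used as a black box (notably in Lemma~\ref{lemma:WhittakerTransforms} and Theorems~\ref{thm:flatContourInt}, \ref{thm:hFlatContourInt}). There is therefore nothing in the paper to compare your attempt against.

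As for your sketch itself: the inductive strategy via the recursion~\eqref{eq:glWhittakerRecurs} is indeed the approach of Kharchev--Lebedev, and the $n=1$ Mellin base case is correct. However, the displayed distributional identity for the $Q$-kernels is not quite the right object. The operator $T$ you wrote mixes one spectral variable $\lambda_n$ with $n-1$ spatial variables $\bm{u}$, and what actually drives the induction is an orthogonality/completeness relation for the family $\{Q^{\gl_n}_{\lambda_n}(\bm{x},\cdot)\}$ in the $\bm{x}$-variables, not a bilinear pairing of two $Q$'s in $\bm{x}$ producing a product of deltas in $(\lambda_n,\bm{u})$. Moreover, the extra Gamma weight you need is $\prod_{i<n}\Gamma(\lambda_i-\lambda_n)\Gamma(\lambda_n-\lambda_i)$, which emerges from how the $\gl_{n-1}$-transform interacts with the kernel $Q^{\gl_n}_{\lambda_n}$, not from a prefactor $C_n(\lambda_n)$ depending on $\lambda_n$ alone. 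Your description of the symmetrisation step (going from $(n-1)!$ and one distinguished index to the full $\Sym_n$ and $n!$) is also where the genuinely delicate analysis lies, and it is not addressed beyond a mention. If you want a rigorous route, the cited references carry out the recursive spectral decomposition carefully; your outline captures the architecture but not the correct intermediate identity.
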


Notice that, using the functional equation for the gamma function and Euler's reflection formula, one can show that the Sklyanin measure defined in~\eqref{eq:sklyaninMeasure} is a positive measure on $\i\R^n$.

As will be outlined in section~\ref{sec:MaassForms&WhittakerFns} of the appendix, certain integrals of Whittaker functions play an important role in the theory of automorphic $L$-functions.
One such integral formula pairs two $\gl_n$-Whittaker functions and is associated with $L$-factors of automorphic $L$-functions on $\GL_n(\R) \times \GL_n(\R)$.
It was conjectured by Bump~\cite{bump89} and proved for the general $n$ case by Stade~\cite{stade02}.
\begin{theorem}[Bump-Stade identity]
\label{thm:bumpStade}
Let $r>0$ and $\bm{\alpha},\bm{\beta}\in\C^n$ such that $\Re(\alpha_i+\beta_j)>0$ for all $i,j$. Then
\begin{equation}
\label{eq:bumpStade}
\int_{\R_{>0}^n } \e^{-r x_1}
\Psi^{\gl_n}_{\bm{\alpha}}(\bm{x})
\Psi^{\gl_n}_{\bm{\beta}}(\bm{x})
\prod_{i=1}^n \frac{\diff x_i}{x_i}
= r^{-\sum_{k=1}^n (\alpha_k+\beta_k)} \prod_{i,j=1}^n \Gamma(\alpha_i+\beta_j) \, .
\end{equation}
\end{theorem}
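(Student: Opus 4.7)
The plan is to proceed by induction on $n$. For $n=1$ the identity reduces to the classical gamma integral $\int_0^\infty e^{-rx}x^{\alpha+\beta-1}\,dx=r^{-(\alpha+\beta)}\Gamma(\alpha+\beta)$, so the base case is immediate.

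For the inductive step, I would apply the integral recursion (\ref{eq:glWhittakerRecurs}) to $\Psi^{\gl_n}_{\bm{\alpha}}$, writing it as an integral of $\Psi^{\gl_{n-1}}_{\tilde{\bm{\alpha}}}(\bm{u})$ against the kernel $Q^{\gl_n}_{\alpha_n}(\bm{x},\bm{u})$, and do the same for $\Psi^{\gl_n}_{\bm{\beta}}$ with an auxiliary variable $\bm{v}\in\R_{>0}^{n-1}$ and parameter $\beta_n$. After interchanging the order of integration, and using the translation identity (\ref{eq:glWhittakerFnTranslation}) to reorganise the monomial factors $(\prod_i x_i)^{\alpha_n+\beta_n}$ coming from the two kernels, the integrand depends on $\bm{x}$ only through explicit exponentials and powers. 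Each one-dimensional $x_i$-integral is then of the form $\int_0^\infty x^{s-1}e^{-Ax-B/x}\,dx$, which yields a modified Bessel $K$-function. The task becomes to show that the resulting integral over $(\bm{u},\bm{v})$, weighted by the lower-rank Whittaker factors $\Psi^{\gl_{n-1}}_{\tilde{\bm{\alpha}}}(\bm{u})$ and $\Psi^{\gl_{n-1}}_{\tilde{\bm{\beta}}}(\bm{v})$, collapses into a Bump--Stade pairing at level $n-1$ multiplied by the expected "new" gamma factors $\Gamma(\alpha_n+\beta_n)$, $\Gamma(\alpha_n+\beta_j)$ and $\Gamma(\alpha_i+\beta_n)$. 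A final analytic continuation in $\bm{\alpha},\bm{\beta}$ would extend the identity beyond the region of absolute convergence used for these manipulations.

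The main obstacle is precisely this Bessel-to-gamma collapse: after the $\bm{x}$-integration, the remaining integrand over $(\bm{u},\bm{v})$ features products of Bessel $K$-functions with mutually coupled arguments, and reorganising it into the inductive Bump--Stade form is the genuinely delicate analytic step. A cleaner alternative route, which I would pursue in parallel, is to invoke the Whittaker-Plancherel theorem (Theorem \ref{thm:plancherel}) to expand one of the two Whittaker functions spectrally against the Sklyanin measure; the remaining $\bm{x}$-integral of $e^{-rx_1}\Psi^{\gl_n}_{\bm{\alpha}}(\bm{x})\Psi^{\gl_n}_{\bm{\lambda}}(\bm{x})$ could then be evaluated by a Mellin--Barnes/residue calculation, bypassing the Bessel bookkeeping at the cost of a heavier analytic setup.
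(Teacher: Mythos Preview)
Your inductive route via the recursion~\eqref{eq:glWhittakerRecurs} is essentially Stade's original analytic method, and is genuinely different from what the paper does. The paper instead derives Bump--Stade as a byproduct of the geometric RSK correspondence: one computes the Laplace transform of the point-to-point log-gamma polymer partition function as an integral of two $\gl_n$-Whittaker functions (formula~\eqref{eq:pointToPointWhittakerFormula}), and then lets $r\to 0$ so that the Laplace transform tends to $1$; the resulting identity~\eqref{eq:bumpStadeEquiv} is equivalent to~\eqref{eq:bumpStade} after the change of variables $x_i\mapsto (rx_{n-i+1})^{-1}$ (Remark~\ref{rem:bumpStadeEquiv}). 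This bijective route avoids any Bessel bookkeeping --- the gamma factors arise simply as the normalisation constant of the inverse-gamma measure on the input array --- whereas your approach would give a self-contained analytic argument that does not rely on the polymer interpretation.

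That said, your proposal contains a real gap and a circularity. The gap: the ``Bessel-to-gamma collapse'' you flag is not mere bookkeeping but the entire analytic content of the theorem at level $n$. After integrating out $\bm{x}$ you do \emph{not} land on a Bump--Stade pairing of $\Psi^{\gl_{n-1}}_{\tilde{\bm{\alpha}}}(\bm{u})$ against $\Psi^{\gl_{n-1}}_{\tilde{\bm{\beta}}}(\bm{v})$ at a common argument; you get a coupled integral over $(\bm{u},\bm{v})$ with Bessel kernels linking the two, and reducing this to the inductive hypothesis requires a separate nontrivial identity (a Mellin--Barnes representation of the Bessel product followed by a Barnes-type evaluation). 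Nothing in your outline indicates how to carry this out. The circularity: your alternative Plancherel route needs the Whittaker transform of $\e^{-rx_1}\Psi^{\gl_n}_{\bm{\alpha}}(\bm{x})$, but in this paper that transform (Lemma~\ref{lemma:WhittakerTransforms}\ref{lemma:WhittakerTransforms_f}) is computed precisely by invoking Bump--Stade, so you would be assuming what you set out to prove.
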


A  bijective proof of this identity was given in~\cite{corwinOConnellSeppalainenZygouras14, oConnellSeppalainenZygouras14} via the use of the geometric $\RSK$ correspondence introduced in subsection~\ref{subsec:gRSK} - see Remark~\ref{rem:bumpStadeEquiv}.

Since $\gl_n$-Whittaker functions scale to continuous Schur functions by Proposition~\ref{prop:glWhittakerRescaling}, an appropriate limit of~\eqref{eq:bumpStade} again yields the continuous Cauchy identity~\eqref{eq:cauchyIdentityCont}.
This can be verified by setting $r=1$ and using~\eqref{eq:glWhittakerFnSignChange} and~\eqref{eq:glWhittakerRescaling}; in this limiting procedure, the exponential $\e^{- x_1}$ in~\eqref{eq:bumpStade} converts into the condition $x_n >0$ which appears in the region of integration of~\eqref{eq:cauchyIdentityCont}.
The Bump-Stade identity is thus the analog of the Cauchy identity in the setting of Whittaker functions.

As we will recap in the introduction to chapter~\ref{ch:polymer}, Theorems~\ref{thm:plancherel} and~\ref{thm:bumpStade} already played an important role in the computation of the Laplace transform of the point-to-point partition function of the log-gamma polymer.
We will likewise use these tools to study the log-gamma polymer in the point-to-line geometries (see section~\ref{sec:contourIntegrals}).

\subsection{$\so_{2n+1}$-Whittaker functions}
\label{subsec:soWhittaker}

Similarly to the $\gl_n$ case, we will define $\so_{2n+1}$-Whittaker functions as integrals on half-triangular arrays.
Such a definition was first given in~\cite{gerasimovLebedevOblezin08, gerasimovLebedevOblezin12}, but also naturally emerged in~\cite{nteka18} in the study of a system of interacting particles via \emph{intertwining} Markovian dynamics.
Let us consider a half-triangular array
\begin{equation}
\label{eq:halfTriangle}
\bm{z} = \{ z_{i,j}\colon 1\leq i\leq 2n \, , \,\, 1\leq j\leq \ceil{i/2}\}
\end{equation}
of height $2n$ with positive entries.
If the entries of $\bm{z}$ are interlaced in the sense of~\eqref{eq:interlacingSp}, such an array is known as a symplectic Gelfand-Tsetlin pattern.
In this context our arrays are not required to satisfy the interlacing conditions but are still encouraged 
to do so through the potential 
\begin{equation}
\label{eq:soEnergy}
\spEn(\bm{z})
:= \sum_{i=1}^{2n-1} \sum_{j=1}^{\ceil{i/2}}
\left( \frac{z_{i+1,j+1}}{z_{i,j}} +\frac{z_{i,j}}{z_{i+1,j}} \right)
\, ,
\end{equation}
where by convention $z_{i,j}:=1$ if $j>\ceil{i/2}$.
See Figure~\ref{fig:spGTpattern} for an illustration of this potential.
 
We also make the analogous definitions as in the $\gl_n$ case.
We call $i$-th \emph{row} of $\bm{z}$ the vector $(z_{i,1},\dots,z_{i,\ceil{i/2}})$ of all entries with first index equal to $i$.
We define the \emph{geometric type} of $\bm{z}$ to be the vector $\type(\bm{z}) \in \R_{>0}^{2n}$ whose components are
\begin{equation}
\label{eq:soType}
\gtype(\bm{z})_i
:= \frac{\prod_{j = 1}^{\ceil{i/2}} z_{i,j}}{\prod_{j = 1}^{\ceil{(i-1)/2}} z_{i-1,j}} \qquad \text{for } i=1, \dots, 2n
\, .
\end{equation}
Denoting by $\spT{2n}{S}(\bm{x})$ the set of all half-triangular arrays $\bm{z}$ of height $2n$ with entries in $S\subseteq\R$ and $(2n)$-th
row equal to a given vector $\bm{x}\in \R^n$, we define the $\so_{2n+1}$-Whittaker functions via the following integral representation:
\begin{definition}
\label{def:soWhittakerFn}
The \emph{$\so_{2n+1}$-Whittaker function} with parameter $\bm{\alpha}=(\alpha_1,\dots,\alpha_n) \in \C^n$ and argument $\bm{x}=(x_1,\dots,x_n)\in\R_{>0}^n$ is given by
\begin{equation}
\label{eq:soWhittakerFn}
\Psi^{\so_{2n+1}}_{\bm{\alpha}}(\bm{x})
:= \int_{\spT{2n}{\R_{>0}}(\bm{x})}
\prod_{k=1}^n \left(\frac{\gtype(\bm{z})_{2k-1}}{\gtype(\bm{z})_{2k}}\right)^{\alpha_k}
\exp\left\{-\spEn(\bm{z})\right\}
\prod_{\substack{1\leq i<2n \\ 1\leq j\leq \ceil{i/2}}} \!\! \frac{\diff z_{i,j}}{z_{i,j}} \, .
\end{equation}
\end{definition}

As an example, the $\so_3$-Whittaker function is
\begin{equation}
\label{eq:so_3WhittakerFn}
\Psi^{\so_3}_{\alpha}(x)
= \int_{\R_{>0}} \left(\frac{z^2}{x}\right)^{\alpha} \exp\left\{-\frac{1}{z}-\frac{z}{x}\right\} \frac{\diff z}{z} \, .
\end{equation}

Setting $\Psi_{\emptyset}^{\so_{1}}(\emptyset) :=1$, for any $n\geq 1$ the $\so_{2n+1}$-Whittaker functions can be recursively defined by
\begin{equation}
\label{eq:soWhittakerRecurs}
\Psi_{\bm{\alpha}}^{\so_{2n+1}}(\bm{x})
= \int_{\R_{>0}^{n-1}}
Q^{\so_{2n+1}}_{\alpha_n}(\bm{x},\bm{u})
\Psi_{\tilde{\bm{\alpha}}}^{\so_{2n-1}}(\bm{u})
\prod_{i=1}^{n-1} \frac{\diff u_i}{u_i} \, ,
\end{equation}
where $\tilde{\bm{\alpha}}:=(\alpha_1,\dots,\alpha_{n-1})$ and the kernel $Q^{\so_{2n+1}}_{\alpha_n}$ is given by
\[
\begin{split}
Q^{\so_{2n+1}}_{\alpha_n}(\bm{x},\bm{u})
:= \int_{\R_{>0}^n}
&\left( \frac{\prod_{i=1}^n v_i^2}{\prod_{i=1}^n x_i \prod_{i=1}^{n-1} u_i} \right)^{\alpha_n}
\prod_{i=1}^{n-1} \exp\left\{-\frac{v_{i+1}}{u_i} - \frac{u_i}{v_i} -\frac{x_{i+1}}{v_i} - \frac{v_i}{x_i} \right\} \\
&\times \exp\left\{-\frac{1}{v_n} - \frac{v_n}{x_n}\right\}
\prod_{i=1}^{n} \frac{\diff v_i}{v_i} \, .
\end{split}
\]

\begin{proposition}
\label{prop:soWhittakerRescaling}
For all $\bm{\alpha}\in\C^n$ and $\bm{x}\in\R^n$ we have that
\begin{equation}
\label{eq:soWhittakerRescaling}
\lim_{\epsilon\downarrow 0}
\epsilon^{n^2} 
\Psi_{\epsilon\bm{\alpha}}^{\so_{2n+1}} \left(\e^{x_1/\epsilon},\dots,\e^{x_n/\epsilon}\right)
= \sp^{\cont}_{\bm{\alpha}}(\bm{x})
\1_{\{x_1 > \dots > x_n > 0 \}} \, .
\end{equation}
\end{proposition}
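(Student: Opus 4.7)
The plan is to mirror the proof of Proposition~\ref{prop:glWhittakerRescaling}, adapting it to the half-triangular symplectic setting. First I would perform the change of variables $z_{i,j} \mapsto g_{\epsilon}(z_{i,j}) = \e^{z_{i,j}/\epsilon}$ in the integral representation~\eqref{eq:soWhittakerFn} for each of the $n^2$ free entries (indexed by $1\leq i\leq 2n-1$ and $1\leq j\leq \ceil{i/2}$). Each copy of $\diff z_{i,j}/z_{i,j}$ then contributes a factor $1/\epsilon$, accounting precisely for the prefactor $\epsilon^{n^2}$. A direct telescoping computation gives $\gtype(g_{\epsilon}(\bm{z}))_i = g_{\epsilon}(\type(\bm{z})_i)$, so after replacing $\bm{\alpha}$ by $\epsilon\bm{\alpha}$ the power factor becomes
\[
\prod_{k=1}^n \left(\frac{\gtype(g_\epsilon(\bm{z}))_{2k-1}}{\gtype(g_\epsilon(\bm{z}))_{2k}}\right)^{\epsilon\alpha_k}
= \prod_{k=1}^n \exp\bigl(\alpha_k(\type(\bm{z})_{2k-1}-\type(\bm{z})_{2k})\bigr),
\]
which is exactly the integrand appearing in Definition~\ref{def:spSchurCont}.

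Next I would treat the potential. Each summand of $\spEn(g_\epsilon(\bm{z}))$ has the form $\e^{u/\epsilon}$ where $u$ is either a difference $z_{i+1,j+1}-z_{i,j}$ or $z_{i,j}-z_{i+1,j}$. When a wall entry (indices with $j>\ceil{i/2}$) appears, the convention $z_{i,j}:=1$ in~\eqref{eq:soEnergy} is compatible, under the substitution, with the convention $z_{i,j}:=0$ of the symplectic Gelfand-Tsetlin definition~\eqref{eq:interlacingSp}; the resulting terms are of the form $\e^{-z_{i,j}/\epsilon}$ for $i$ odd and $j=\ceil{i/2}$. Since $\exp(-\e^{u/\epsilon}) \to \1_{\{u\leq 0\}}$ a.e.\ as $\epsilon\downarrow 0$, the factor $\exp(-\spEn(g_\epsilon(\bm{z})))$ converges almost everywhere to the product of indicators enforcing the full system of symplectic interlacing relations, namely $\1_{\spGT{2n}{\R}(\bm{x})}(\bm{z})$ (including the non-negativity at the wall). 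Dominated convergence then produces
\[
\lim_{\epsilon\downarrow 0} \epsilon^{n^2} \Psi^{\so_{2n+1}}_{\epsilon\bm{\alpha}}(g_\epsilon(\bm{x}))
= \int_{\spGT{2n}{\R}(\bm{x})} \prod_{k=1}^n \e^{\alpha_k(\type(\bm{z})_{2k-1}-\type(\bm{z})_{2k})} \prod_{i,j} \diff z_{i,j},
\]
which coincides with $\sp^{\cont}_{\bm{\alpha}}(\bm{x})$ by~\eqref{eq:spSchurCont}. Finally, if $x_1>\dots>x_n>0$ fails, some interlacing chain in $\spGT{2n}{\R}(\bm{x})$ is forced to be constant, so the set has zero Lebesgue measure and the limit vanishes, producing the indicator on the right-hand side.

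The main obstacle is the rigorous justification of dominated convergence: the power factor is not globally bounded, while $\exp(-\spEn(g_\epsilon(\bm{z})))\leq 1$ alone is not integrable against it on all of $\R^{n^2}$. The cleanest way is to split $\spT{2n}{\R}(\bm{x})$ into a compact neighborhood of the interlacing polytope, where the power factor is bounded by an $\epsilon$-independent constant, and its complement, where for every small $\epsilon$ at least one potential summand $\e^{u/\epsilon}$ with $u>0$ grows super-exponentially and kills any polynomial growth in $\exp(\sum_k \alpha_k \type(\bm{z})_{\cdot})$. This is the same technical point that is left implicit in the $\gl_n$ argument and can be handled identically here.
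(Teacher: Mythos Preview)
Your proposal is correct and follows essentially the same approach as the paper: the same change of variables $z_{i,j}\mapsto g_\epsilon(z_{i,j})$, the same identification $\gtype(g_\epsilon(\bm z))_k=g_\epsilon(\type(\bm z)_k)$, the same pointwise convergence of $\exp(-\spEn(g_\epsilon(\bm z)))$ to the symplectic interlacing indicator (with the wall terms producing the nonnegativity constraints $z_{2k-1,k}\geq 0$), and the same appeal to dominated convergence. Your final paragraph on the domination issue is a welcome addition---the paper leaves this point just as implicit as in the $\gl_n$ case---but otherwise the two arguments coincide.
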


\begin{proof}
Let us again set $g_{\epsilon}(\bm{a}) := \{ \e^{a_k / \epsilon}\colon k\in\mathcal{I} \} \in \R_{>0}^{\mathcal{I}}$ for any index set $\mathcal{I}$ and array $\bm{a}= \{ a_k\colon k\in\mathcal{I} \} \in \R^{\mathcal{I}}$ and for all $\epsilon>0$.
In definition~\eqref{eq:soWhittakerFn}, we change variables by setting $z_{i,j}\mapsto g_{\epsilon}(z_{i,j})$ for all $1\leq i< 2n$, $1\leq j\leq \ceil{i/2}$ and obtain
\[
\Psi^{\so_{2n+1}}_{\epsilon \bm{\alpha}}(g_{\epsilon}(\bm{x}))
= \int_{\spT{2n}{\R}(\bm{x})}
\prod_{k=1}^{n} \left(\frac{\gtype(g_{\epsilon}(\bm{z}))_{2k-1}}{\gtype(g_{\epsilon}(\bm{z}))_{2k}}\right)^{\epsilon\alpha_k}
\exp\left\{-\spEn(g_{\epsilon}(\bm{z}))\right\}
\prod_{\substack{1\leq i<2n \\ 1\leq j\leq \ceil{i/2}}} \!\!\! \frac{\diff z_{i,j}}{\epsilon} \, .
\]
Similarly to the proof of Proposition~\ref{prop:glWhittakerRescaling}, one can verify that $\gtype(g_{\epsilon}(\bm{z}))_k
= g_{\epsilon}(\type(\bm{z})_k)$ for $1\leq k\leq 2n$ and
\[
\exp\left\{-\spEn(g_{\epsilon}(\bm{z}))\right\}
\xrightarrow{\epsilon\downarrow 0}
\1_{\spGT{2n}{\R}(\bm{x})}(\bm{z})
\]
for a.e.\ $\bm{z}\in \spT{2n}{\R}(\bm{x})$.
Notice in particular that, in the latter limit, the potential associated to an inhomogeneous addend of~\eqref{eq:soEnergy} of the form $1/z_{2k-1,k}$ converts into the ``boundary condition at the wall'' $z_{2k-1,k}\geq 0$.
Since the Lebesgue measure of $\spGT{2n}{\R}(\bm{x})$ vanishes whenever the condition $x_1 > \dots > x_n >0$ fails to hold, we conclude by dominated convergence that
\[
\epsilon^{n^2} 
\Psi_{\epsilon\bm{\alpha}}^{\gl_n} (g_{\epsilon}(\bm{x}))
\xrightarrow{\epsilon\downarrow 0}
\1_{\{x_1 > \dots > x_n >0 \}}
\int_{\spGT{2n}{\R}(\bm{x})}
\prod_{k=1}^n
\e^{ \alpha_k \left(\type(\bm{z})_{2k-1} - \type(\bm{z})_{2k}\right)}
\prod_{\substack{1\leq i<2n \\ 1\leq j\leq \ceil{i/2}}}
\!\! \diff z_{i,j} \, .
\]
The latter integral defines the continuous symplectic Schur function on the right-hand side of~\eqref{eq:soWhittakerRescaling}.
\end{proof}

We have seen that any symplectic Schur function $\sp_{\bm{\lambda}}(\bm{a})$ is invariant under permutations and multiplicative inversion of its variables $a_i$'s.
Analogously, even though not obvious from this definition, any $\so_{2n+1}$-Whittaker function $\Psi^{\so_{2n+1}}_{\bm{\alpha}}(\cdot)$ is invariant under permutations and change of sign
of the parameters $(\alpha_1,\dots,\alpha_n)$.

Let us now mention the interpretation of orthogonal Whittaker functions in terms of the quantum Toda lattice, see~\eqref{eq:quantumToda}.
Since the simple roots of $\so_{2n+1}$ are $\bm{e}_i - \bm{e}_{i+1}$ for $1\leq i\leq n-1$ and $\bm{e}_n$, the quantum Toda hamiltonian associated to the Lie algebra $\so_{2n+1}$ (also called of type $B_n$) takes the form
\[
\mathcal{H}^{\so_{2n+1}} :=
\sum_{i=1}^n \frac{\partial^2}{\partial x_i^2}
-2\sum_{i=1}^{n-1} \e^{x_{i+1} - x_i}
-\e^{-x_n} \, .
\]
Each $\so_{2n+1}$-Whittaker function $\Psi^{\so_{2n+1}}_{\bm{\lambda}}(\e^{x_1},\dots,\e^{x_n})$ is then an eigenfunction of $\mathcal{H}^{\so_{2n+1}}$ with eigenvalue $\langle\bm{\lambda},\bm{\lambda}\rangle = \sum_{i=1}^n\lambda_i^2$.

We finally introduce an integral identity which pairs one $\so_{2n+1}$-Whittaker function with a $\gl_n$-Whittaker function and is associated with $L$-factors of automorphic $L$-functions on $\SO_{2n+1}(\R) \times \GL_n(\R)$.
It is of similar nature as the Bump-Stade identity~\eqref{eq:bumpStade}, and was proven by Ishii and Stade~\cite{ishiiStade13}.
It will also play an important role in our polymer analysis, in particular in section~\ref{sec:contourIntegrals}.
\begin{theorem}[Ishii-Stade identity]
\label{thm:ishiiStade}
Let $\bm{\alpha},\bm{\beta}, \in\C^n$, where $\Re(\beta_i \pm \alpha_j) >0$ for all $i,j$. Then
\begin{equation}
\label{eq:ishiiStade}
\int_{\R_{>0}^n}
\Psi_{\bm{\alpha}}^{\so_{2n+1}}(\bm{x})
\Psi_{-\bm{\beta}}^{\gl_n}(\bm{x})
\prod_{i=1}^n \frac{\diff x_i}{x_i}
= \frac{\prod_{1\leq i,j\leq n}
\Gamma(\beta_i + \alpha_j)
\Gamma(\beta_i - \alpha_j)}
{\prod_{1\leq i<j\leq n} \Gamma(\beta_i+\beta_j)} \, .
\end{equation}
\end{theorem}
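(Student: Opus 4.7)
The plan is to proceed by induction on $n$, mirroring the inductive structures of both $\Psi_{\bm{\alpha}}^{\so_{2n+1}}$ and $\Psi_{-\bm{\beta}}^{\gl_n}$ provided by \eqref{eq:soWhittakerRecurs} and \eqref{eq:glWhittakerRecurs}. For the base case $n=1$, I would substitute the explicit formulas $\Psi_{-\beta}^{\gl_1}(x) = x^{-\beta}$ and the integral representation \eqref{eq:so_3WhittakerFn} for $\Psi_{\alpha}^{\so_3}(x)$, interchange the order of integration, and evaluate the two resulting one-dimensional integrals via the substitutions $u = z/x$ (for the $x$-integral) and $w = 1/z$ (for the $z$-integral). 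Each substitution produces a standard gamma integral, and their product is $\Gamma(\beta + \alpha)\Gamma(\beta - \alpha)$, matching the right-hand side of \eqref{eq:ishiiStade} whose denominator is an empty product when $n = 1$.

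For the inductive step, I would unfold \eqref{eq:soWhittakerRecurs} inside the left-hand side of \eqref{eq:ishiiStade} and interchange the order of integration, reducing the problem to evaluating the inner integral
\[
J_n(\bm{u}) := \int_{\R_{>0}^n} \Psi_{-\bm{\beta}}^{\gl_n}(\bm{x})\, Q^{\so_{2n+1}}_{\alpha_n}(\bm{x},\bm{u}) \prod_{i=1}^n \frac{\diff x_i}{x_i}.
\]
The aim is to show that $J_n(\bm{u})$ factors as a gamma-valued constant $C(\bm{\alpha},\bm{\beta})$ times $\Psi_{-\bm{\beta}'}^{\gl_{n-1}}(\bm{u})$ for some appropriate shift $\bm{\beta}'$ of $\tilde{\bm{\beta}}$, so that pairing this against $\Psi_{\tilde{\bm{\alpha}}}^{\so_{2n-1}}(\bm{u})$ and invoking the induction hypothesis finishes the proof. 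To extract this factorisation, I would simultaneously unfold the $\gl_n$ recursion \eqref{eq:glWhittakerRecurs} for $\Psi_{-\bm{\beta}}^{\gl_n}$ and carry out the chain of Mellin-type integrations that the two nested kernels produce, collapsing them through Mellin--Barnes identities of Barnes-lemma type (or, equivalently, via successive changes of variables of the form $y \mapsto c/y$ that reduce each inner integral to a gamma integral).

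The main obstacle is the "boundary" factor $\exp(-1/v_n)$ appearing in $Q^{\so_{2n+1}}_{\alpha_n}$, which has no analog in $Q^{\gl_n}$: it is precisely this factor that accounts for the doubling of gamma functions on the right-hand side, with each pair $\Gamma(\beta_i + \alpha_n)\Gamma(\beta_i - \alpha_n)$ encoding the $\so_{2n+1}$-Weyl invariance $\alpha_n \mapsto -\alpha_n$. Careful bookkeeping of the extra $v_n$-integral, presumably via a substitution such as $v_n \mapsto 1/v_n$ at the appropriate moment, should generate these extra gammas. An alternative route sidestepping this bookkeeping is to pair both sides of \eqref{eq:ishiiStade} with an arbitrary $\gl_n$-Whittaker function and integrate in $\bm{x}$: the Plancherel theorem \ref{thm:plancherel} handles the left side, while the Bump--Stade identity \eqref{eq:bumpStade} handles the right side, reducing \eqref{eq:ishiiStade} to the equality of two meromorphic, $\Sym_n$-invariant expressions that can be matched by residue analysis. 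A useful sanity check at the zero-temperature scaling of Propositions \ref{prop:glWhittakerRescaling} and \ref{prop:soWhittakerRescaling} is that \eqref{eq:ishiiStade} degenerates to the symplectic continuous Cauchy identity \eqref{eq:cauchyIdentitySpCont}.
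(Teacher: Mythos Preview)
The paper does not prove this identity from scratch. It is quoted from Ishii--Stade~\cite{ishiiStade13}, and the only work the thesis does is in Appendix~\ref{appendix:Whittaker}: Propositions~\ref{prop:glWhittakerFnIshiiStade} and~\ref{prop:soWhittakerFnIshiiStade} establish, by induction on $n$ through the recursions~\eqref{eq:glWhittakerFnIshiiStade} and~\eqref{eq:soWhittakerFnIshiiStade}, that the Whittaker functions $\hat{W}^A_{n,\bm{a}}$ and $\hat{W}^B_{n,\bm{b}}$ of~\cite{ishiiStade13} coincide (up to explicit powers of $\pi$) with $\Psi^{\gl_n}_{-\bm{\alpha}}$ and $\Psi^{\so_{2n+1}}_{\bm{\beta}}$ under the change of variables~\eqref{eq:ishiiStadeChangeOfVars}. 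Substituting these into \cite[Thm.~3.2]{ishiiStade13} and setting $s=0$ gives~\eqref{eq:ishiiStade} immediately. So the ``proof'' in the thesis is a translation of parametrizations, not a derivation of the identity itself.

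Your proposal instead outlines a direct inductive proof of the identity, which is in spirit closer to what Ishii and Stade actually do in their paper. Your base case $n=1$ is correct. The inductive step, however, remains a plan rather than a proof: you assert that $J_n(\bm{u}) = \int \Psi_{-\bm{\beta}}^{\gl_n}(\bm{x})\, Q^{\so_{2n+1}}_{\alpha_n}(\bm{x},\bm{u}) \prod \diff x_i/x_i$ should factor as a gamma constant times $\Psi^{\gl_{n-1}}_{-\bm{\beta}'}(\bm{u})$, but this is precisely the nontrivial core of the argument, and neither the ``chain of Mellin-type integrations'' nor the handling of the boundary factor $\e^{-1/v_n}$ is carried out. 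The kernel $Q^{\so_{2n+1}}_{\alpha_n}$ carries an internal $n$-fold integration in $\bm{v}$, so $J_n$ is a $2n$-fold integral whose reduction to a $\gl_{n-1}$-Whittaker function requires real work; your alternative Plancherel route likewise stops at ``residue analysis'' without indicating which residues or why the two sides agree. If you wish to give a self-contained proof you will have to fill this in; otherwise, the thesis's approach of citing~\cite{ishiiStade13} and matching parametrizations is the honest and efficient choice.
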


Let us point out that the parametrization used for Whittaker functions in \cite{ishiiStade13} is different from ours.
In section~\ref{sec:ishiiStadeWhittaker} of the appendix we will show the relation between such different parametrizations and the equivalence between~\eqref{eq:ishiiStade} and the corresponding integral formula in~\cite{ishiiStade13}.

Since $\so_{2n+1}$-Whittaker functions scale to continuous symplectic Schur functions by Proposition~\ref{prop:soWhittakerRescaling}, it is easy to verify that an appropriate limit of~\eqref{eq:ishiiStade} yields~\eqref{eq:cauchyIdentitySpCont}.
Therefore, we may consider the Ishii-Stade identity as the analog of the symplectic Cauchy identity in the setting of Whittaker functions.
\chapter{Log-gamma polymer models}
\label{ch:polymer}

Recall that a \emph{polymer partition function} is a random variable of the form 
\[
Z
:= \sum_{\pi \in \Pi} \prod_{(i,j)\in\pi} W_{i,j} \, ,
\]
where $\Pi$ is a given set of nearest neighbor directed paths on a finite lattice $\mathcal{I}\subset\Z_{>0}^2$, and  $\bm{W}=\{W_{i,j} \colon (i,j)\in\mathcal{I} \}$ is an array of positive random weights.
As explained in the Introduction, $Z$ is indeed the partition function of the random directed polymer model associated to the given path geometry.

In this chapter, we apply the tools introduced in chapter~\ref{ch:preliminary} (in particular, the geometric $\RSK$ correspondence and Whittaker functions) to study the log-gamma polymer partition function in a few different path geometries.
In section~\ref{sec:WhittakerFormulas} we express the Laplace transforms of the point-to-line, the point-to-half-line and the restricted point-to-half-line log-gamma polymer partition functions in terms of Whittaker functions on both $\so_{2N+1}$ and $\gl_N$.
In section~\ref{sec:contourIntegrals} we obtain contour integral formulas involving gamma functions, in the point-to-line and point-to-half-line cases.

The techniques we are going to use are inspired by the analogous computations of Corwin, O'Connell, Sepp{\"{a}}l{\"{a}}inen and Zygouras~\cite{corwinOConnellSeppalainenZygouras14, oConnellSeppalainenZygouras14} for the point-to-point log-gamma polymer model.
We will then recall them here for convenience of the reader.
Denote by $Z_{n,n}$ the point-to-point polymer partition function associated with the set of directed lattice paths from $(1,1)$ to $(n,n)$ on the lattice $\{1,\dots,n\}^2$.
Proposition~\ref{prop:gRSKproperties}-\ref{prop:gRSKproperties_partitionFn} implies that, if we denote by $\bm{T}=\{T_{i,j}\colon 1\leq i,j\leq n\}$ the image of the weight array $\bm{W}=\{W_{i,j}\colon 1\leq i,j\leq n\}$ under $\gRSK$, the point-to-point partition function $Z_{n,n}$ coincides with the bottom-right entry $T_{n,n}$ of $\bm{T}$.
We then have that
\begin{equation}
\label{eq:pointToPointPartitionFnLaplace}
\E\left[\e^{-r Z_{n,n}}\right] = \int_{\R_{>0}^{n^2}} \e^{-r t_{n,n}} \P(\bm{T}\in \diff\bm{t}) \, .
\end{equation}
Assume now that the weights are independent and inverse-gamma distributed; more precisely, given parameters $\bm{\alpha},\bm{\beta}\in\R_{>0}^n$ such that $\alpha_i - \beta_j >0$ for all $i,j$, assume that $W_{i,j}^{-1} \sim {\rm Gamma}(\alpha_i - \beta_j,1)$ for $1\leq i,j\leq n$.
The joint distribution of $\bm{W}$ is then given by
\[
\begin{split}
&\quad\, \P(\bm{W}\in \diff\bm{w})
= \prod_{i,j=1}^n
w_{i,j}^{-\alpha_i + \beta_j} \e^{-\frac{1}{w_{i,j}}}
\frac{\1_{\R_{>0}}(w_{i,j})}{\Gamma(\alpha_i-\beta_j)}
\frac{\diff w_{i,j}}{w_{i,j}} \\
&= \left[ \prod_{i=1}^n \left(\prod_{j=1}^n w_{i,j}\right)^{-\alpha_i} \right]
\left[ \prod_{j=1}^n \left(\prod_{i=1}^n w_{i,j}\right)^{\beta_j} \right]
\exp\left\{ -\sum_{i,j=1}^n \frac{1}{w_{i,j}} \right\}
\prod_{i,j=1}^n
\frac{\1_{\R_{>0}}(w_{i,j})}{\Gamma(\alpha_i-\beta_j)}
\frac{\diff w_{i,j}}{w_{i,j}}  \, .
\end{split}
\]
Thanks to properties~\ref{prop:gRSKproperties_type}, \ref{prop:gRSKproperties_invWeights}, and~\ref{prop:gRSKproperties_Jacobian} of Proposition~\ref{prop:gRSKproperties}, we deduce from the above a formula for the joint distribution of $\bm{T}$:
\[
\begin{split}
\P(\bm{T} \in \diff \bm{t})
= \, &\prod_{i,j=1}^n \frac{1}{\Gamma(\alpha_i-\beta_j)}
\prod_{k=1}^n \left(\frac{\pi_{n-k}(\bm{t})}{\pi_{n-k+1}(\bm{t})} \right)^{-\alpha_k}
\left(\frac{\pi_{k-n}(\bm{t})}{\pi_{k-1-n}(\bm{t})} \right)^{\beta_k} \\
&\times \exp\left\{ -\frac{1}{t_{1,1}} - \sum_{i,j=1}^n \frac{t_{i-1,j}+t_{i,j-1}}{t_{i,j}} \right\}
\1_{\R_{>0}^{n^2}}(\bm{t})
\prod_{i,j=1}^n \frac{\diff t_{i,j}}{t_{i,j}} \, ,
\end{split}
\]
where $\pi_k(\bm{t})$ denotes the product of the $k$-th diagonal of $\bm{t}$ as in~\eqref{eq:prodDiagonal}, and $t_{i,j}:=0$ by convention if $(i,j)\notin\{1,\dots,n\}^2$.
We now plug this expression into~\eqref{eq:pointToPointPartitionFnLaplace}, integrating over the strictly upper and lower triangular parts of $\bm{t}$ first, and then over the diagonal:
\[
\begin{split}
\E\left[\e^{-r Z_{n,n}}\right]
= \, &\prod_{i,j=1}^n \frac{1}{\Gamma(\alpha_i-\beta_j)}
\int_{\R_{>0}^n} \prod_{i=1}^n \frac{\diff t_{i,i}}{t_{i,i}}
\e^{-r t_{n,n} - 1/t_{1,1}} \\
&\times \int_{\R_{>0}^{n(n-1)/2}}
\prod_{i<j} \frac{\diff t_{i,j}}{t_{i,j}}
\prod_{k=1}^n \left(\frac{\pi_{n-k}(\bm{t})}{\pi_{n-k+1}(\bm{t})} \right)^{-\alpha_k}
\exp\left\{ -\sum_{1<i\leq j} \frac{t_{i-1,j}}{t_{i,j}}
- \sum_{i<j} \frac{t_{i,j-1}}{t_{i,j}} \right\} \\
&\times \int_{\R_{>0}^{n(n-1)/2}}
\prod_{j<i} \frac{\diff t_{i,j}}{t_{i,j}}
\prod_{k=1}^n \left(\frac{\pi_{k-n}(\bm{t})}{\pi_{k-1-n}(\bm{t})} \right)^{\beta_k}
\exp\left\{ -\sum_{j<i} \frac{t_{i-1,j}}{t_{i,j}}
- \sum_{1<j\leq i} \frac{t_{i,j-1}}{t_{i,j}} \right\} \, .
\end{split}
\]
In the latter formula, the second and the third integral turn out to be $\gl_n$-Whittaker functions (see Definition~\ref{def:glWhittakerFn}) with parameters $-\bm{\alpha}$ and $\bm{\beta}$ respectively and common argument $(t_{n,n},\dots,t_{1,1})$.
Setting the latter vector equal to $\bm{x}=(x_1,\dots,x_n)$, we see that the Laplace transform of $Z_{n,n}$ can be expressed in terms of $\gl_n$-Whittaker functions as
\begin{equation}
\label{eq:pointToPointWhittakerFormula}
\E\left[\e^{-r Z_{n,n}}\right]
= \prod_{i,j=1}^n \frac{1}{\Gamma(\alpha_i-\beta_j)}
\int_{\R_{>0}^n} \e^{-rx_1-1/x_n} \Psi^{\gl_n}_{-\bm{\alpha}}(\bm{x}) \Psi^{\gl_n}_{\bm{\beta}}(\bm{x}) 
\prod_{i=1}^n \frac{\diff x_i}{x_i} \, ,
\end{equation}
as established in~\cite{corwinOConnellSeppalainenZygouras14, oConnellSeppalainenZygouras14}.

\begin{remark}
\label{rem:bumpStadeEquiv}
Taking $r\to 0$ in~\eqref{eq:pointToPointWhittakerFormula}, the left-hand side converges to $1$, hence we obtain
\begin{equation}
\label{eq:bumpStadeEquiv}
\int_{\R_{>0}^n} \e^{-1/x_n} \Psi^{\gl_n}_{-\bm{\alpha}}(\bm{x}) \Psi^{\gl_n}_{\bm{\beta}}(\bm{x}) 
\prod_{i=1}^n \frac{\diff x_i}{x_i}
= \prod_{i,j=1}^n \Gamma(\alpha_i-\beta_j) \, .
\end{equation}
Applying the change of variables $x_i \mapsto (rx_{n-i+1})^{-1}$ for $1\leq i\leq n$ and using properties~\eqref{eq:glWhittakerFnScalarMultiplication} and~\eqref{eq:glWhittakerFnSignChange} of $\gl_n$-Whittaker functions, one can show that~\eqref{eq:bumpStadeEquiv} is equivalent to the Bump-Stade identity~\eqref{eq:bumpStade}.
\end{remark}

Formula~\eqref{eq:pointToPointWhittakerFormula} can be turned into a contour integral formula~\cite{oConnellSeppalainenZygouras14}.
To do this, one first observes that the integral on the right-hand side is the $L^2(\R_{>0}^n,\prod_{i=1}^n \diff x_i/x_i)$-inner product\footnote{It can be checked that $f$ and $g$ indeed belong to $L^2(\R_{>0}^n,\prod_{i=1}^n \diff x_i/x_i)$.} of the functions
\[
f(\bm{x}) := \e^{-r x_1} \Psi^{\gl_n}_{\bm{\beta}}(\bm{x}) \, , \qquad\quad
g(\bm{x}) := \e^{-1/x_n} \Psi^{\gl_n}_{-\bm{\alpha}}(\bm{x}) \, .
\]
The aim is then to apply the $\gl_n$-Whittaker-Plancherel theorem~\ref{thm:plancherel}.
The $\gl_n$-Whittaker transforms $\hat{f}(\bm{\lambda})$ and $\hat{g}(\bm{\lambda})$ can be computed explicitly in terms of gamma functions using the Bump-Stade identity (or its equivalent version~\eqref{eq:bumpStadeEquiv} replacing $\bm{\beta}$ with $\bm{\lambda}$, in the case of $\hat{g}(\bm{\lambda})$).
The contour integral formula for the Laplace transform of $Z_{n,n}$ follows~\cite{oConnellSeppalainenZygouras14}:
\begin{equation}
\label{eq:pointToPointContourInt}
\E\left[\e^{-r Z_{n,n}}\right]
= \int_{\i\R^n} r^{-\sum_{i=1}^n (\lambda_i + \beta_i)}
\prod_{i,j=1}^n \frac{\Gamma(\lambda_i+\alpha_j) \Gamma(\lambda_i+\beta_j)}{\Gamma(\alpha_i-\beta_j)}
s_n(\bm{\lambda}) \diff\bm{\lambda} \, ,
\end{equation}
being $s_n(\bm{\lambda}) \diff\bm{\lambda}$ the Skylanin measure~\eqref{eq:sklyaninMeasure}.

\section{Point-to-line polymers and Whittaker functions}
\label{sec:WhittakerFormulas}

In this section we consider the log-gamma polymer in three path geometries: point-to-line, point-to-half-line and point-to-half-line restricted to a half-plane.
We compute the Laplace transforms of the  corresponding partition functions at any even time $2N$ as integrals of $\gl_N$ and $\so_{2N+1}$-Whittaker functions.

In each case, we will first need to compute the joint law of the point-to-point partition functions along a ``fixed time'' line (or half-line). This can be done by using the geometric Robinson-Schensted-Knuth correspondence ($\gRSK$) for polygonal arrays and its properties, discussed in subsection~\ref{subsec:gRSK}.

\subsection{Point-to-line polymer}
\label{subsec:flatPolymerWhittaker}

We define that the \emph{point-to-line polymer partition function} at time $N\in\Z_{>0}$ by
\begin{equation}
\label{eq:flatPartitionFn}
\fZ_{N}
:= \sum_{\pi \in \fPi_{N}} \prod_{(i,j)\in\pi} W_{i,j} \, ,
\end{equation}
where $\fPi_{N}$ is the set of directed paths from $(1,1)$ to the line $\{(m,n)\colon m+n=N+1\}$ (see Figure~\ref{subfig:flatPath}), and  $\bm{W}=\{W_{i,j} \colon (i,j)\in\fI_{N} \}$ is an array of positive random weights on the triangular lattice
\begin{equation}
\label{eq:flatLattice}
\fI_{N}:= \{(i,j)\in\Z_{>0}^2 \colon i+j\leq N+1\} \, .
\end{equation}
Notice that the point-to-line polymer partition function at time $N$ can be written as the sum of all point-to-point partition functions with endpoint on the line $\{m+n = N+1\}$:
\begin{equation}
\label{eq:flatPartitionFn=sumPointToPoint}
\fZ_N
= \sum_{m+n=N+1} Z_{m,n} \, .
\end{equation}

We will show that, when the weights are inverse-gamma distributed with a certain parametrization, the Laplace transform of $\fZ_{2N}$ can be essentially written as an integral of two orthogonal Whittaker functions.
We first provide a sketchy and intuitive argument, which may also work as a quick recap of the proof.
In principle, the Laplace transform of $\fZ_{2N}$ is an integral w.r.t.\ the density of $\bm{W} = \{W_{i,j}\colon (i,j)\in\fI_{2N}\}$.
Now, we know from Proposition~\ref{prop:gRSKproperties}-\ref{prop:gRSKproperties_partitionFn} that each point-to-point partition function $Z_{m,n}$ with $m+n=2N+1$ coincides with $T_{m,n}$, where $\bm{T}$ is the image of $\bm{W}$ under the $\gRSK$ correspondence.
Thanks to~\eqref{eq:flatPartitionFn=sumPointToPoint}, we will then express the Laplace transform of $\fZ_{2N}$ as an integral w.r.t.\ the new variables $\bm{t}=\{t_{i,j} \colon (i,j)\in\fI_{2N}\}$ (i.e.\ w.r.t.\ the joint density of $\bm{T}$, which can be computed using the properties of $\gRSK$, see the proof  of Lemma~\ref{lemma:flatP2PjointLaw}).
The integrand will contain a rational part and an exponential one.
The latter will be a function of the potential $\mathcal{E}(\bm{t})$ defined in~\eqref{eq:energy} and illustrated by the arrows of Figure~\ref{subfig:triangularArray}.
The further change of variables $t_{i,j} \mapsto 1/t_{i,j}$ for all $(i,j)$ will \emph{reverse} all such arrows.
Consider then the two ``half-triangular'' arrays that the triangular array of Figure~\ref{subfig:triangularArray} is divided into by the main diagonal $\{t_{i,i}\colon 1\leq i\leq N\}$, picturing to place a ``wall'' just below the antidiagonal line of outer variables $\{t_{m,n}\colon m+n=2N+1\}$: after reversing the arrows, they will essentially have the same arrow diagram\footnote{Figure~\ref{fig:spGTpattern} also shows arrows connecting the vertical wall with the array: these are missing in Figure~\ref{subfig:triangularArray}, but they will be ``constructed'' \emph{ad hoc} using another exponential function present in the integral, i.e.\ the one that defines the Laplace transform as a functional of $\fZ_{2N}$.
On the other hand, the extra arrow at the top-left corner of Figure~\ref{subfig:triangularArray} can be ignored for the moment; it will give rise to the extra exponential in the final integral~\eqref{eq:flatWhittakerFormula}.} as Figure~\ref{fig:spGTpattern}.
Rearranging the rational part of the integral, we will then recognize two orthogonal Whittaker functions corresponding to these two half-triangular arrays (see Definition~\ref{def:soWhittakerFn}).
The main diagonal of the array $\bm{t}$ will play the role of the common bottom row of the two half-triangular arrays, or equivalently the common argument of the two Whittaker functions: integrating it out essentially yields an integral formula involving two orthogonal Whittaker functions, see~\eqref{eq:flatWhittakerFormula}.

We adopt the following exactly solvable parametrization of the inverse-gamma variables for the point-to-line geometry:
\begin{definition}
\label{def:flatLogGammaMeasure}
Let $N\in\Z_{>0}$, $\bm{\alpha},\bm{\beta}\in\R_{>0}^N$, and $\gamma\in\R_{\geq 0}$.
We define the \emph{$(\bm{\alpha}, \bm{\beta},\gamma)$-log-gamma measure} on the lattice $\fI_{2N}= \{(i,j)\in\Z_{>0}^2 \colon i+j\leq 2N+1\}$ to be the law of a family of independent random variables $\{W_{i,j} \colon (i,j)\in\fI_{2N}\}$ such that:
\begin{equation}
\label{eq:flatLogGammaMeasure}
\frac{1}{W_{i,j}} \sim
\begin{cases}
\mathrm{ Gamma}(\alpha_i + \beta_j + \gamma,1) &1\leq i,j\leq N \, , \\
\mathrm{ Gamma}(\alpha_i + \alpha_{2N-j+1},1) &1\leq i\leq N\, , \,\, N < j\leq 2N-i+1 \, , \\
\mathrm{ Gamma}(\beta_{2N-i+1} + \beta_j,1) &1\leq j\leq N \, , \,\, N < i \leq 2N-j+1 \, .
\end{cases}
\end{equation}
\end{definition}

\begin{remark}
The choice of the parameters in Definition~\ref{def:flatLogGammaMeasure} is tailored so that it fits the link between Whittaker functions and $\gRSK$.
More specifically, this is due to property~\ref{prop:gRSKproperties_type} in Proposition~\ref{prop:gRSKproperties} and the presence of the geometric type in the integral representation of Whittaker functions, cf.\ Definitions~\ref{def:glWhittakerFn} and~\ref{def:soWhittakerFn}.
This will become clear in the proofs of Lemmas~\ref{lemma:flatP2PjointLaw} and~\ref{lemma:hFlatP2PjointLaw} below.
We have also included an extra parameter $\gamma$ in the distribution of the weights $W_{i,j}$ for $1\leq i,j\leq N$.
This might seem rather unnatural, but it will turn out to be useful in the proof of Theorem~\ref{thm:flatContourInt} to obtain contour integral formulas.
More specifically, the Plancherel theorem for $\gl_n$-Whittaker functions can be applied in~\eqref{eq:plancherel1step} thanks to estimation~\eqref{eq:estimationL2}, which in turn relies on the presence of the parameter $\gamma$ in~\eqref{eq:flatWhittakerFormula}.
\end{remark}

In next lemma, which is a modification of ~\cite[Thm~3.5]{nguyenZygouras17} that accommodates the extra parameter $\gamma$ in Definition~\ref{def:flatLogGammaMeasure}, we compute the joint law of all the point-to-point partition functions at a fixed time horizon.
Recall from subsection~\ref{subsec:gRSK} that $\pi_k(\bm{t})$ denotes the product of the $k$-th diagonal of a polygonal array $\bm{t}$, as in~\eqref{eq:prodDiagonal}.
\begin{lemma}
\label{lemma:flatP2PjointLaw} 
For the $(\bm{\alpha},\bm{\beta},\gamma)$-log-gamma polymer, the joint distribution of the point-to-point partition functions at time $2N$ is
\begin{equation}
\label{eq:flatP2PjointLaw}
\P(Z_{m,2N-m+1} \in \diff y_m \colon m=1,\dots,2N)
= \frac{1}{\fG_{\bm{\alpha},\bm{\beta},\gamma}}
\fPhi_{\bm{\alpha},\bm{\beta},\gamma}(\bm{y})
\1_{\R_{>0}^{2N}}(\bm{y}) \prod_{m=1}^{2N} \frac{\diff y_m}{y_m} \, .
\end{equation}
Here, the normalization constant $\fG_{\bm{\alpha},\bm{\beta},\gamma}$ and the function $\fPhi_{\bm{\alpha},\bm{\beta},\gamma}$ are given by
\begin{align}
\label{eq:flatPolymerNormalization}
\fG_{\bm{\alpha},\bm{\beta},\gamma}
:= &\prod_{1\leq i,j\leq N} \Gamma(\alpha_i + \beta_j + \gamma)
\prod_{1\leq i\leq j\leq N} \Gamma(\alpha_i + \alpha_j) \Gamma(\beta_i + \beta_j) \, , \\
\label{eq:flatP2PjointDensity}
\begin{split}
\fPhi_{\bm{\alpha},\bm{\beta},\gamma}(\bm{y})
:= &\! \int_{\fT_{\R_{>0}}(\bm{y})}
\prod_{k=1}^N
\left(\frac{\pi_{2N-2k+1}(\bm{t})^2}{\pi_{2N-2k+2}(\bm{t}) \pi_{2N-2k}(\bm{t})}\right)^{\!-\alpha_k} \!
\left(\frac{\pi_{-2N+2k-1}(\bm{t})^2}{\pi_{-2N+2k-2}(\bm{t}) \pi_{-2N+2k}(\bm{t})}\right)^{\!-\beta_k}
\!\!\!\pi_0(\bm{t})^{-\gamma} \\
&\times
\exp\left\{ -\frac{1}{t_{1,1}} - \sum_{(i,j)\in\fI_{2N}} \frac{t_{i-1,j}+t_{i,j-1}}{t_{i,j}} \right\}
\prod_{i+j\leq 2N} \frac{\diff t_{i,j}}{t_{i,j}} \, ,
\end{split}
\end{align}
using the convention that $t_{i,j}:=0$ when $(i,j)\notin \fI_{2N}$, and denoting by $\fT_{\R_{>0}}(\bm{y})$ the set of all arrays $\bm{t} = \{t_{i,j}\colon (i,j)\in\fI_{2N}\}$ with entries in $\R_{>0}$ such that $(t_{1,2N},t_{2,2N-1},\dots,t_{2N,1}) = (y_1,\dots,y_{2N}) =: \bm{y}$.
\end{lemma}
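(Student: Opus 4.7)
The plan is to transfer the problem from $\bm{W}$ to $\bm{T}:=\gRSK(\bm{W})$ and exploit the four properties of Proposition~\ref{prop:gRSKproperties}. Writing down the product of inverse-gamma densities dictated by~\eqref{eq:flatLogGammaMeasure}, one gets
\[
\P(\bm{W}\in\diff\bm{w})
= \prod_{(i,j)\in\fI_{2N}} \frac{1}{\Gamma(\alpha_{i,j})}\,
w_{i,j}^{-\alpha_{i,j}}\,\e^{-1/w_{i,j}}\,\frac{\diff w_{i,j}}{w_{i,j}},
\]
where $\alpha_{i,j}$ is the shape parameter specified in the three cases of~\eqref{eq:flatLogGammaMeasure}. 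Since $\gRSK$ is a bijection and~\eqref{eq:gRSKproperties_Jacobian} gives Jacobian $\pm 1$ in logarithmic variables, the factor $\prod_{(i,j)} \diff w_{i,j}/w_{i,j}$ can simply be replaced by $\prod_{(i,j)} \diff t_{i,j}/t_{i,j}$ after substituting $\bm{w}=\gRSK^{-1}(\bm{t})$.

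Next I would translate the factors $w_{i,j}^{-\alpha_{i,j}}$ and $\e^{-1/w_{i,j}}$ into quantities in $\bm{t}$. Property~\ref{prop:gRSKproperties_invWeights} immediately gives the exponential:
\[
\sum_{(i,j)\in\fI_{2N}} \frac{1}{w_{i,j}} = \mathcal{E}(\bm{t})
= \frac{1}{t_{1,1}} + \sum_{(i,j)\in\fI_{2N}} \frac{t_{i-1,j}+t_{i,j-1}}{t_{i,j}}.
\]
For the monomial part, I would collect the exponent of each parameter across all sites. The parameter $\alpha_k$ (with $k\le N$) appears once in every $w_{k,j}$ along row $k$ (for $j=1,\dots,2N-k+1$) and once in every $w_{i,2N-k+1}$ along column $2N-k+1$ (for $i=1,\dots,k$), with the corner $w_{k,2N-k+1}$ correctly counted twice because its parameter is $2\alpha_k$. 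Applying property~\ref{prop:gRSKproperties_type} at the border index $(k,2N-k+1)$ in both its row and column forms yields
\[
\prod_{j=1}^{2N-k+1} w_{k,j} = \frac{\pi_{2N-2k+1}(\bm{t})}{\pi_{2N-2k+2}(\bm{t})},\qquad
\prod_{i=1}^{k} w_{i,2N-k+1} = \frac{\pi_{2N-2k+1}(\bm{t})}{\pi_{2N-2k}(\bm{t})},
\]
whose product reproduces exactly the $\alpha_k$-factor in~\eqref{eq:flatP2PjointDensity}; an identical argument for $\beta_k$ (row $2N-k+1$ and column $k$) reproduces the $\beta_k$-factor. For the $\gamma$ contribution, which is supported on the $N\times N$ square, I would note that $(N,N)$ is a border index and invoke~\eqref{eq:gRSKprodSubarray} to obtain $\prod_{1\le i,j\le N} w_{i,j} = \pi_0(\bm{t})$, giving the $\pi_0(\bm{t})^{-\gamma}$ factor. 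The normalization $\fG_{\bm{\alpha},\bm{\beta},\gamma}$ is then just the reorganization of $\prod_{(i,j)}\Gamma(\alpha_{i,j})$ according to the three regions of~\eqref{eq:flatLogGammaMeasure}.

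Finally, property~\ref{prop:gRSKproperties_partitionFn} identifies $T_{m,2N-m+1}$ with the point-to-point partition function $Z_{m,2N-m+1}$ for each border index $(m,2N-m+1)$. Setting $y_m := t_{m,2N-m+1}$ and integrating out all remaining entries $\{t_{i,j}\colon i+j<2N+1\}$ produces precisely the claimed joint density~\eqref{eq:flatP2PjointLaw}, with the inner integral matching $\fPhi_{\bm{\alpha},\bm{\beta},\gamma}(\bm{y})$.

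The main bookkeeping obstacle is the correct identification of the diagonal-product ratios: one must verify that each weight $w_{i,j}$ gets assigned a total exponent equal to $-\alpha_{i,j}$ after summing the contributions arising from the $\alpha_k$, $\beta_k$ and $\gamma$ parts, paying particular attention to the diagonal sites where $i=2N-j+1$ (which carry doubled parameters) and to the interface $1\le i,j\le N$ where $\gamma$ and the $\alpha$'s/$\beta$'s overlap. The rest is a mechanical application of the properties of $\gRSK$ and a single Fubini step.
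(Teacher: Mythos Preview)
Your proposal is correct and follows essentially the same route as the paper: write the inverse-gamma density of $\bm{W}$, push it forward through $\gRSK$ using properties~\ref{prop:gRSKproperties_type}--\ref{prop:gRSKproperties_Jacobian} of Proposition~\ref{prop:gRSKproperties} (and~\eqref{eq:gRSKprodSubarray} for the $\gamma$ part), then use property~\ref{prop:gRSKproperties_partitionFn} to identify the border entries with the partition functions and integrate out the interior. The organization of the $\alpha_k$, $\beta_k$, $\gamma$ exponents via row/column products at the border indices $(k,2N-k+1)$ and $(2N-k+1,k)$ matches the paper's computation exactly.
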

\begin{proof}
The joint law of the triangular array $\bm{W} = \{ W_{i,j}\colon (i,j)\in\fI_{2N}\}$ for the $(\bm{\alpha},\bm{\beta},\gamma)$-log-gamma measure, according to Definition~\eqref{def:flatLogGammaMeasure}, is given by
\begin{equation}
\label{eq:flatLogGammaJointLaw}
\begin{split}
&\P(\bm{W}\in \diff\bm{w})
= \prod_{i,j = 1}^N \frac{w_{i,j}^{-\alpha_i-\beta_j-\gamma}}{\Gamma(\alpha_i + \beta_j+\gamma)}
\prod_{\substack{1\leq i\leq N \\ N < j \leq 2N-i+1}}
\!\!\!\!
\frac{w_{i,j}^{-\alpha_i - \alpha_{2N-j+1}}}{\Gamma(\alpha_i + \alpha_{2N-j+1})} \\
&\qquad\times
\prod_{\substack{1\leq j\leq N \\ N < i \leq 2N-j+1}}
\!\!\!\!
\frac{w_{i,j}^{-\beta_{2N-i+1} - \beta_j}}{\Gamma(\beta_{2N-i+1} + \beta_j)}
\exp\left\{-\sum_{(i,j)\in\fI_{2N}} \frac{1}{w_{i,j}}\right\}
\prod_{(i,j)\in\fI_{2N}} \!\!
\1_{\R_{>0}}(w_{i,j})
\frac{\diff w_{i,j}}{w_{i,j}} \, .
\end{split}
\end{equation}
We now rewrite the above density in terms of the image array $\bm{t}= \{ t_{i,j}\colon (i,j)\in\fI_{2N} \}$ of $\bm{w}$ under the $\gRSK$ bijection.
Property~\ref{prop:gRSKproperties_type} of Proposition~\ref{prop:gRSKproperties} yields
\[
\prod_{j=1}^{2N-k+1} \!\!\! w_{k,j} = \frac{\pi_{2N-2k+1}(\bm{t})}{\pi_{2N-2k+2}(\bm{t})} \, , \quad\qquad
\prod_{i=1}^{2N-k+1} \!\!\! w_{i,k} = \frac{\pi_{-2N+2k-1}(\bm{t})}{\pi_{-2N+2k-2}(\bm{t})}
\]
for $1\leq k\leq 2N$.
The product of all $w_{i,j}$'s that are raised to the power $-\alpha_k$ in~\eqref{eq:flatLogGammaJointLaw} can be written as
\[
 \prod_{j=1}^{2N-k+1} \!\!\! w_{k,j} \prod_{i=1}^k w_{i,2N-k+1}
= \frac{\pi_{2N-2k+1}(\bm{t})}{\pi_{2N-2k+2}(\bm{t})}
\frac{\pi_{2N-2k+1}(\bm{t})}{\pi_{2N-2k}(\bm{t})}
\, ,
\]
and similarly the product of all $w_{i,j}$'s that are raised to the power $-\beta_k$ can be written as
\[
 \prod_{i=1}^{2N-k+1} \!\!\! w_{i,k} \prod_{j=1}^k w_{2N-k+1,j}
= \frac{\pi_{-2N+2k-1}(\bm{t})}{\pi_{-2N+2k-2}(\bm{t})}
\frac{\pi_{-2N+2k-1}(\bm{t})}{\pi_{-2N+2k}(\bm{t})}
\, .
\]
By formula~\eqref{eq:gRSKprodSubarray}, the product of $w_{i,j}$'s that are raised to power $-\gamma$ is $\prod_{i,j=1}^N w_{i,j} = \pi_0(\bm{t})$.
Using property~\ref{prop:gRSKproperties_invWeights} of Proposition~\ref{prop:gRSKproperties} for dealing with the exponential term in~\eqref{eq:flatLogGammaJointLaw}, and property~\ref{prop:gRSKproperties_Jacobian} regarding the volume preserving property of the differential form, we obtain:
\[
\begin{split}
\P(\bm{T}\in\diff\bm{t})
= \, & \frac{1}{\fG_{\bm{\alpha},\bm{\beta},\gamma}}
\prod_{k=1}^N
\left(\frac{\pi_{2N-2k+1}(\bm{t})^2}{\pi_{2N-2k+2}(\bm{t}) \pi_{2N-2k}(\bm{t})}\right)^{-\alpha_k}
\left(\frac{\pi_{-2N+2k-1}(\bm{t})^2}{\pi_{-2N+2k-2}(\bm{t}) \pi_{-2N+2k}(\bm{t})}\right)^{-\beta_k} \\
&\times \pi_0^{-\gamma}(\bm{t})
\exp\left\{ -\frac{1}{t_{1,1}} - \sum_{(i,j)\in\fI_{2N}} \frac{t_{i-1,j}+t_{i,j-1}}{t_{i,j}} \right\}
\prod_{(i,j)\in\fI_{2N}} \!\!
\1_{\R_{>0}}(t_{i,j})
\frac{\diff t_{i,j}}{t_{i,j}} \, .
\end{split}
\]
Finally, by property~\ref{prop:gRSKproperties_partitionFn} of Proposition~\ref{prop:gRSKproperties}, the point-to-point partition functions at time $2N$ coincide with the $\gRSK$ output variables indexed by outer indices, i.e.\ $Z_{m,n} = T_{m,n}$ for $m+n=2N +1$. 
Therefore, the joint density of $(Z_{1,2N},Z_{2,2N-1},\dots,Z_{2N,1})$ at $\bm{y}\in\R_{>0}^{2N}$ is obtained by integrating the above over $\fT_{\R_{>0}}(\bm{y})$: this yields~\eqref{eq:flatP2PjointLaw}.
\end{proof}

We can now derive the Whittaker integral formula for the Laplace transform of $\fZ_{2N}$.
\begin{theorem}
\label{thm:flatWhittakerFormula}
The Laplace transform of the point-to-line partition function $\fZ_{2N}$ for the $(\bm{\alpha},\bm{\beta},\gamma)$-log-gamma polymer can be written in terms of orthogonal Whittaker functions as
\begin{equation}
\label{eq:flatWhittakerFormula}
\E\left[\e^{- r \fZ_{2N}}\right] 
= \frac{r^{\sum_{k=1}^N (\alpha_k+\beta_k +\gamma)}}
{\fG_{\bm{\alpha},\bm{\beta},\gamma}}
\int_{\R_{>0}^N} 
\e^{-r x_1}
\Psi_{\bm{\alpha}}^{\so_{2N+1}}(\bm{x})
\Psi_{\bm{\beta}}^{\so_{2N+1}}(\bm{x})
\left(\prod_{i=1}^N x_i\right)^{\gamma}
\prod_{i=1}^N \frac{\diff x_i}{x_i}
\end{equation}
for all $r>0$, where $\fG_{\bm{\alpha},\bm{\beta},\gamma}$ is defined by~\eqref{eq:flatPolymerNormalization}.
\end{theorem}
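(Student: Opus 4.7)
The plan is to apply a single global change of variables on the triangular array underlying $\fPhi_{\bm{\alpha},\bm{\beta},\gamma}$ that simultaneously produces the $r$-prefactor, absorbs the Laplace weight into orthogonal-Whittaker wall contributions, and factorizes the integrand into two symplectic Gelfand--Tsetlin pattern integrals sharing a common bottom row. Combining $\fZ_{2N}=\sum_{m=1}^{2N}Z_{m,2N-m+1}$ with Lemma~\ref{lemma:flatP2PjointLaw} and unfolding the definition of $\fPhi_{\bm{\alpha},\bm{\beta},\gamma}$, I first write $\E[\e^{-r\fZ_{2N}}]$ as a single integral over $\bm{t}\in\R_{>0}^{\fI_{2N}}$ (with the antidiagonal freely integrated), whose integrand is the rational factor in~\eqref{eq:flatP2PjointDensity} times $\exp\{-\mathcal{E}(\bm{t})-r\sum_{m+n=2N+1}t_{m,n}\}$ against $\prod_{(i,j)}\diff t_{i,j}/t_{i,j}$.

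The main move is the substitution $t_{i,j}=1/(rs_{i,j})$ applied uniformly on $\fI_{2N}$. The invariant measure is preserved, while each diagonal product transforms as $\pi_K(\bm{t})=r^{-n_K}\pi_K(\bm{s})^{-1}$, where $n_K$ counts the lattice points of $\fI_{2N}$ on the $K$-th diagonal. Using $n_{2N-2k+1}=n_{2N-2k}=k$ and $n_{2N-2k+2}=k-1$, the three factors in each $k$-term of the $\alpha$-product of~\eqref{eq:flatP2PjointDensity} combine into $r^{\alpha_k}$ (similarly $r^{\beta_k}$ on the $\bm{\beta}$-side and $r^{N\gamma}$ from $\pi_0^{-\gamma}$), yielding altogether the prefactor $r^{\sum_k(\alpha_k+\beta_k+\gamma)}$; crucially, the $\bm{s}$-rational part is left with \emph{positive} exponents $\alpha_k,\beta_k,\gamma$. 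At the same time, $1/t_{1,1}\mapsto rs_{1,1}$, each ratio $t_{i-1,j}/t_{i,j}$ (or $t_{i,j-1}/t_{i,j}$) becomes $s_{i,j}/s_{i-1,j}$ (or $s_{i,j}/s_{i,j-1}$), and each Laplace factor $\e^{-ry_m}$ turns into $\e^{-1/s_{m,2N-m+1}}$.

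Next I would identify the upper half $\{s_{i,j}:i\leq j\}$ with a symplectic Gelfand--Tsetlin pattern of height $2N$ via $z^{(u)}_{i,j}:=s_{j,\,j+2N-i}$, under which the main diagonal $(s_{1,1},\dots,s_{N,N})$ becomes its bottom row. Two facts then need to be checked directly: (i) the row products satisfy $\prod_j z^{(u)}_{i,j}=\pi_{2N-i}(\bm{s})$, so the $\bm{s}$-rational $\alpha$-factor coincides with $\prod_k(\gtype(\bm{z}^{(u)})_{2k-1}/\gtype(\bm{z}^{(u)})_{2k})^{\alpha_k}$; (ii) each vertical ratio $s_{I+1,J}/s_{I,J}$ and each horizontal ratio $s_{I,J}/s_{I,J-1}$ with both entries in the upper half is exactly one non-wall summand of $\spEn(\bm{z}^{(u)})$, while the right-edge walls of $\spEn(\bm{z}^{(u)})$ assemble into $\sum_{m=1}^N 1/s_{m,2N-m+1}$. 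The transpose identification $z^{(\ell)}_{i,j}:=s_{j+2N-i,j}$ handles the lower half and the $\bm{\beta}$-factor, its walls accounting for $\sum_{m=N+1}^{2N}1/s_{m,2N-m+1}$ and thus exactly consuming the remaining antidiagonal terms of the Laplace weight. Integrating out the non-diagonal upper and lower $s$-variables against the defining integrals of $\Psi^{\so_{2N+1}}_{\bm{\alpha}}$ and $\Psi^{\so_{2N+1}}_{\bm{\beta}}$ with common argument $\bm{x}=(s_{1,1},\dots,s_{N,N})$, the leftover $rs_{1,1}=rx_1$ in the exponent together with $\pi_0^{\gamma}(\bm{s})=\prod_i x_i^{\gamma}$ reproduces~\eqref{eq:flatWhittakerFormula}.

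The main delicate point is step (ii): one must check that every ratio appearing in $\mathcal{E}(\bm{s})$ is picked up by exactly one of the two symplectic patterns, with the ratios straddling the main diagonal (namely $s_{j+1,j}/s_{j,j}$ and $s_{j,j+1}/s_{j,j}$) absorbed into the lower and upper halves respectively, and that the $2N$ walls of the two Whittaker energies line up precisely with the $2N$ antidiagonal summands provided by the Laplace weight. This finite bijective bookkeeping is the combinatorial heart of the theorem and embodies the ``remarkable coupling'' between orthogonal Whittaker functions and the point-to-line polymer alluded to in the Introduction.
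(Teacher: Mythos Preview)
Your proposal is correct and follows essentially the same approach as the paper's proof: the same global change of variables $t_{i,j}=1/(rs_{i,j})$, the same bookkeeping of diagonal lengths to extract the $r$-prefactor, and the same splitting of the transformed energy and rational factors into two half-triangular arrays identified with $\so_{2N+1}$-Whittaker integrands sharing the diagonal as common bottom row. The only difference is cosmetic: you make the symplectic Gelfand--Tsetlin identification explicit via the maps $z^{(u)}_{i,j}=s_{j,j+2N-i}$ and $z^{(\ell)}_{i,j}=s_{j+2N-i,j}$, whereas the paper simply writes out the split integrand and invokes Definition~\ref{def:soWhittakerFn} by inspection.
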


\begin{proof}
Since $\fZ_{2N}=\sum_{m=1}^{2N} Z_{m,2N-m+1}$, Lemma~\ref{lemma:flatP2PjointLaw} implies that our Laplace transform can be written as the following integral over the variables $t_{m,n}$'s for $m+n=2N+1$:
\[
\begin{split}
\E\left[\e^{- r \fZ_{2N}}\right] 
= \frac{1}{\fG_{\bm{\alpha},\bm{\beta},\gamma}}
\int_{\R_{>0}^{2N}} 
\e^{-r \sum_{m=1}^{2N} t_{m,2N-m+1}} \,
\fPhi_{\bm{\alpha},\bm{\beta},\gamma}\left(\left(t_{m,2N-m+1}\right)_{m=1}^{2N}\right)
\prod_{m=1}^{2N} \frac{\diff t_{m,2N-m+1}}{t_{m,2N-m+1}} \, .
\end{split}
\]
We next use~\eqref{eq:flatP2PjointDensity} to express the density function above as an integral over the variables $t_{i,j}$'s for $i+j\leq 2N$:
\[
\begin{split}
\E\left[\e^{- r \fZ_{2N}}\right]
= \, &\frac{1}{\fG_{\bm{\alpha},\bm{\beta},\gamma}} 
 \int_{\R_{>0}^{2N}}
\prod_{m=1}^{2N} \frac{\diff t_{m,2N-m+1}}{t_{m,2N-m+1}}
\exp\left\{-r \sum_{m = 1}^{2N} t_{m,2n-m+1}\right\} \\
&\times \int_{\R_{>0}^{N(2N-1)}}
\prod_{k=1}^n
\left(\frac{\pi_{2N-2k+1}(\bm{t})^2}{\pi_{2N-2k+2}(\bm{t}) \pi_{2N-2k}(\bm{t})}\right)^{-\alpha_k}
\left(\frac{\pi_{-2N+2k-1}(\bm{t})^2}{\pi_{-2N+2k-2}(\bm{t}) \pi_{-2N+2k}(\bm{t}) }\right)^{-\beta_k} \\
&\times \pi_0(\bm{t})^{-\gamma}
\exp\left\{-\frac{1}{t_{1,1}}
- \sum_{i>1,j} \frac{t_{i-1,j}}{t_{i,j}}
- \sum_{j>1,i} \frac{t_{i,j-1}}{t_{i,j}} \right\}
\prod_{i+j\leq 2N} \frac{\diff t_{i,j}}{t_{i,j}}
\, ,
\end{split}
\]
where the implicit range of indices is $(i,j)\in\fI_{2N}$.

We now define the new array of variables $\bm{s} = \{s_{i,j}\colon (i,j)\in\fI_{2N}\}$ by setting 
\begin{equation}
\label{eq:variablesInversion}
t_{i,j} = \frac{1}{r s_{i,j}}
\qquad \text{for all }  (i,j)\in\fI_{2N} \, .
\end{equation}
Visually, this change of variables reverses the arrows in Figure~\ref{subfig:triangularArray}. 
Recall that the summands $t_{i-1,j}/t_{i,j}$ and $t_{i,j-1} / t_{i,j}$ of the functional $\mathcal{E}(\bm{t})$ (as defined in~\eqref{eq:energy}) are represented as the arrows $t_{i-1,j}\to t_{i,j}$ and $t_{i,j-1}\to t_{i,j}$ respectively in the figure.
The change of variables~\eqref{eq:variablesInversion} then transforms these ratios into $s_{i,j}/s_{i-1,j}$ and $s_{i,j} / s_{i,j-1}$, which in turn can be represented, by the same convention, as the arrows $s_{i,j} \to s_{i-1,j}$ and $s_{i,j} \to s_{i,j-1}$.

Recalling~\eqref{eq:prodDiagonal} we obtain
\[
\begin{split}
&\left(\frac{\pi_{2N-2k+1}(\bm{t})^2}{\pi_{2N-2k+2}(\bm{t}) \pi_{2N-2k}(\bm{t}) }\right)^{-\alpha_k}
= \left(\frac{\prod_{j-i=2N-2k+1} t_{i,j}^2}
{\prod_{j-i=2N-2k+2} t_{i,j}
\prod_{j-i=2N-2k} t_{i,j}} \right)^{-\alpha_k} \\
= \,& r^{\alpha_k} \left(\frac{\prod_{j-i=2N-2k+1} s_{i,j}^2}
{\prod_{j-i=2N-2k+2} s_{i,j} \prod_{j-i=2N-2k} s_{i,j}} \right)^{\alpha_k}
= r^{\alpha_k} \left(\frac{\pi_{2N-2k+1}(\bm{s})^2}{\pi_{2N-2k+2}(\bm{s}) \pi_{2N-2k}(\bm{s})} \right)^{\alpha_k} \, .
\end{split}
\]
Similarly, we have that
\[
\left(\frac{\pi_{-2N+2k-1}(\bm{t})^2}{\pi_{-2N+2k-2}(\bm{t}) \pi_{-2N+2k}(\bm{t})}\right)^{-\beta_k}
= r^{\beta_k} \left(\frac{\pi_{-2N+2k-1}(\bm{s})^2}{\pi_{-2N+2k-2}(\bm{s}) \pi_{-2N+2k}(\bm{s})} \right)^{\beta_k}
\]
and $\pi_0(\bm{t})^{-\gamma}= r^{N\gamma} \pi_0(\bm{s})^{\gamma}$.
Moreover, the change of variables~\eqref{eq:variablesInversion} preserves the volume:
\[
\frac{\diff t_{i,j}}{t_{i,j}}
= \frac{\diff s_{i,j}}{s_{i,j}}
\qquad \text{for all }  (i,j)\in\fI_{2N} \, .
\]
We thus obtain
\[
\begin{split}
\E\left[e^{- r \fZ_{2N}}\right] 
= \, &\frac{r^{\sum_{k=1}^N(\alpha_k + \beta_k + \gamma)}  }{\fG_{\bm{\alpha},\bm{\beta},\gamma}}
\int_{\R_{>0}^{2N}}
\prod_{m=1}^{2N} \frac{\diff s_{m,2N-m+1}}{s_{m,2N-m+1}}
\exp\left\{- \sum_{m=1}^{2N} \frac{1}{s_{m,2N-m+1}}\right\} \\
&\times \int_{\R_{>0}^{N(2N-1)}}
\prod_{k=1}^N
\left(\frac{\pi_{2N-2k+1}(\bm{s})^2}{\pi_{2N-2k+2}(\bm{s}) \pi_{2N-2k}(\bm{s})}\right)^{\alpha_k}
\left(\frac{\pi_{-2N+2k-1}(\bm{s})^2}{\pi_{-2N+2k-2}(\bm{s}) \pi_{-2N+2k}(\bm{s})}\right)^{\beta_k} \\
&\times 
\pi_0(\bm{s})^{\gamma}
\exp\left\{ -r s_{1,1}
- \sum_{i>1,j} \frac{s_{i,j}}{s_{i-1,j}}
- \sum_{j>1,i} \frac{s_{i,j}}{s_{i,j-1}} \right\}
\prod_{i+j\leq 2N} \frac{\diff s_{i,j}}{s_{i,j}}
\, .
\end{split}
\]
We now change the order in which variables are integrated in the above expression: we first integrate over the two ``half-triangular'' arrays $\{s_{i,j}\colon i<j\}$ and $\{s_{i,j}\colon j<i\}$ into which the whole triangular shape (see Figure~\ref{subfig:triangularArray}) is divided by the main diagonal; next, we integrate over the diagonal variables $s_{1,1},\dots,s_{N,N}$.
This yields:
\[
\begin{split}
\E\left[\e^{- r \fZ_{2N}}\right]
= \,&\frac{r^{\sum_{k=1}^N(\alpha_k+\beta_k+\gamma)}}{\fG_{\bm{\alpha},\bm{\beta},\gamma}}
\int_{\R_{>0}^N}
\prod_{i=1}^N \frac{\diff s_{i,i}}{s_{i,i}}
\left(\prod_{i=1}^N s_{i,i}\right)^{\gamma}
\e^{-r s_{1,1} } \\
&\times \int_{\R_{>0}^{N^2}}
\prod_{i<j} \frac{\diff s_{i,j}}{s_{i,j}}
\prod_{k=1}^N
\left(\frac{\pi_{2N-2k+1}(\bm{s})^2}{\pi_{2N-2k+2}(\bm{s}) \pi_{2N-2k}(\bm{s})}\right)^{\alpha_k} \\
&\qquad\qquad\qquad \exp\left\{
- \sum_{m=1}^N \frac{1}{s_{m,2N-m+1}}
- \sum_{1<i\leq j} \frac{s_{i,j}}{s_{i-1,j}}
- \sum_{i<j} \frac{s_{i,j}}{s_{i,j-1}} \right\} \\
&\times \int_{\R_{>0}^{N^2}}
\prod_{j<i} \frac{\diff s_{i,j}}{s_{i,j}}
\prod_{k=1}^N
\left(\frac{\pi_{-2N+2k-1}(\bm{s})^2}{\pi_{-2N+2k-2}(\bm{s}) \pi_{-2N+2k}(\bm{s})}\right)^{\beta_k} \\
&\qquad\qquad\qquad \exp\left\{
- \sum_{m=N+1}^{2N} \frac{1}{s_{m,2N-m+1}}
- \sum_{j<i} \frac{s_{i,j}}{s_{i-1,j}}
- \sum_{1<j\leq i} \frac{s_{i,j}}{s_{i,j-1}} \right\} \, .
\end{split}
\]
Comparing with Definition~\ref{def:soWhittakerFn}, we identify the second and the third integral in the above formula as $\so_{2N+1}$-Whittaker functions with parameters $\bm{\alpha}$ and $\bm{\beta}$ respectively, both evaluated in $(s_{1,1},\dots,s_{N,N})$. Setting the latter vector equal to $\bm{x}$ concludes the proof of~\eqref{eq:flatWhittakerFormula}.
\end{proof}

\begin{remark}
Take the limit $r\to 0$ in~\eqref{eq:flatWhittakerFormula}: since $\E\left[\e^{-r \fZ_{2N}} \right] \to 1$ and $r^{\sum_{k=1}^N (\alpha_k+\beta_k+\gamma)}$ vanishes, we observe that the integral
\[
\int_{\R_{>0}^N}
\Psi_{\bm{\alpha}}^{\so_{2N+1}}(\bm{x})
\Psi_{\bm{\beta}}^{\so_{2N+1}}(\bm{x})
\left(\prod_{i=1}^N x_i\right)^{\gamma}
\prod_{i=1}^N \frac{\diff x_i}{x_i}
\]
diverges for $\bm{\alpha},\bm{\beta}\in\R_{>0}^N$ and $\gamma \geq 0$.
In particular, we cannot deduce any useful identity from~\eqref{eq:flatWhittakerFormula} in the $r\to 0$ limit, contrary to the point-to-point case (see Remark~\ref{rem:bumpStadeEquiv}).
\end{remark}

\subsection{Point-to-half-line polymer}
\label{subsec:hFlatPolymerWhittaker}

We define the \emph{point-to-half-line polymer partition function} at time $N\in\Z_{>0}$ by
\begin{equation}
\label{eq:hFlatPartitionFn}
\hZ_{N}
:= \sum_{\pi \in \hPi_{N}} \prod_{(i,j)\in\pi} W_{i,j} \, ,
\end{equation}
where $\hPi_{N}$ is the set of directed paths starting from $(1,1)$ and ending on the half-line $\{(m,n)\colon m+n=N+1, \,\, m\leq n \}$ (see Figure~\ref{subfig:hFlatPath}), and $\bm{W} = \{ W_{i,j} \colon (i,j)\in\hI_N \}$ is an array of positive random weights on the trapezoidal lattice
\begin{equation}
\label{eq:hFlatLattice}
\hI_{N}:= \left\{(i,j)\in\Z_{>0}^2 \colon i+j\leq N+1, \, i\leq \frac{N+1}{2} \right\} \, .
\end{equation}
The point-to-half-line polymer partition function at time $N$ turns out to be the sum of all point-to-point partition functions with endpoint on the half-line $\{m+n = N+1, \,\, m\leq n\}$:
\begin{equation}
\label{eq:hFlatPartitionFn=sumPointToPoint}
\hZ_N
= \sum_{\substack{m+n=N+1 \\ m\leq n}} Z_{m,n} \, .
\end{equation}

We will show that, when the weights are inverse-gamma distributed with a certain parametrization, the Laplace transform of $\hZ_{2N}$ can be essentially written as an integral of the product of two Whittaker functions associated to $\so_{2N+1}$ and $\gl_N$ respectively.
The idea behind the proof is the same as for the point-to-line partition function (see beginning of subsection~\ref{subsec:flatPolymerWhittaker}).
The main difference here is that the Laplace transform of $\hZ_{2N}$ is an integral over a \emph{trapezoidal} array of variables as in Figure~\ref{subfig:hFlatPath}, instead of a triangular array as in Figure~\ref{subfig:flatPath}.
Such a trapezoidal array will be decomposed into a ``half-triangular'' part (above the main diagonal) generating the $\so_{2N+1}$-Whittaker function, and a triangular part (below the diagonal) generating the $\gl_N$-Whittaker function.

Let us now fix the parametrization of the inverse-gamma weights.
It is clear from Definition~\eqref{eq:flatLogGammaMeasure} that the $(\bm{\alpha},\bm{\beta},\gamma)$-log-gamma measure, when restricted to the trapezoidal lattice $\hI_{2N}$, coincides with the $(\bm{\alpha},\bm{\beta}+\gamma,0)$-log-gamma measure.
For this reason, in this subsection, we will assume without loss of generality that $\gamma=0$, thus assigning the following measure to the weights:
\begin{definition}
\label{def:hFlatLogGammaMeasure}
Let $N\in\Z_{>0}$ and $\bm{\alpha},\bm{\beta}\in\R_{>0}^N$.
We define the \emph{$(\bm{\alpha}, \bm{\beta})$-log-gamma measure} on the lattice $\hI_{2N} = \{(i,j)\in\Z_{>0}^2 \colon i+j\leq 2N+1, \, i\leq N\}$ to be the law of a family of independent random variables $\{W_{i,j} \colon (i,j)\in\hI_{2N}\}$ such that:
\begin{equation}
\label{eq:hFlatLogGammaMeasure}
\frac{1}{W_{i,j}} \sim
\begin{cases}
\mathrm{ Gamma}(\alpha_i + \beta_j,1) &1\leq i,j\leq N \, , \\
\mathrm{ Gamma}(\alpha_i + \alpha_{2N-j+1},1) &1\leq i\leq N\, , \,\, N < j\leq 2N-i+1 \, .
\end{cases}
\end{equation}
\end{definition}

We first give an expression for the joint law of the point-to-point partition functions on a half-line.
Again, the proof will be based on Proposition~\ref{prop:gRSKproperties}.

\begin{lemma}
\label{lemma:hFlatP2PjointLaw} 
For the $(\bm{\alpha},\bm{\beta})$-log-gamma polymer, the joint distribution of the point-to-point partition functions on the half-line $\{i+j=2N+1,\, i\leq j\}$ is
\begin{equation}
\label{eq:hFlatP2PjointLaw}
\P(Z_{m,2N-m+1} \in \diff y_m \colon m=1,\dots,N)
= \frac{1}{\hG_{\bm{\alpha},\bm{\beta}}}
\hPhi_{\bm{\alpha},\bm{\beta}}(\bm{y})
\1_{\R_{>0}^{N}}(\bm{y}) \prod_{m=1}^{N} \frac{\diff y_m}{y_m} \, .
\end{equation}
Here, the normalization constant $\hG_{\bm{\alpha},\bm{\beta}}$ and the function $\hPhi_{\bm{\alpha},\bm{\beta}}$ are given by
\begin{align}
\label{eq:hFlatPolymerNormalization}
\hG_{\bm{\alpha},\bm{\beta}}
:= &\prod_{1\leq i,j\leq N} \!\! \Gamma(\alpha_i + \beta_j)
\prod_{1\leq i\leq j\leq N} \!\! \Gamma(\alpha_i + \alpha_j) \, , \\
\label{eq:hFlatP2PjointDensity}
\begin{split}
\hPhi_{\bm{\alpha},\bm{\beta}}(\bm{y})
:= & \int_{\hT_{\R_{>0}}(\bm{y})}
\prod_{k=1}^N
\left(\frac{\pi_{2N-2k+1}(\bm{t})^2}{\pi_{2N-2k+2}(\bm{t}) \pi_{2N-2k}(\bm{t})}\right)^{-\alpha_k}
\left(\frac{\pi_{k-N}(\bm{t})}{\pi_{k-1-N}(\bm{t})}\right)^{-\beta_k} \\
&\times \exp\left\{ -\frac{1}{t_{1,1}} - \sum_{(i,j)\in\hI_{2N}} \frac{t_{i-1,j}+t_{i,j-1}}{t_{i,j}} \right\}
\prod_{\substack{i+j\leq 2N \\ i\leq N}} \frac{\diff t_{i,j}}{t_{i,j}} \, ,
\end{split}
\end{align}
using the convention that $t_{i,j}:=0$ when $(i,j)\notin \hI_{2N}$, and denoting by $\hT_{\R_{>0}}(\bm{y})$ the set of all arrays $\bm{t} = \{t_{i,j}\colon (i,j)\in\hI_{2N}\}$ with entries in $\R_{>0}$ such that $(t_{1,2N},t_{2,2N-1},\dots,t_{N,N+1}) = (y_1,\dots,y_{N}) =: \bm{y}$.
\end{lemma}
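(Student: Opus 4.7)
The proof will closely follow the pattern of Lemma~\ref{lemma:flatP2PjointLaw}, adapting it to the trapezoidal lattice $\hI_{2N}$. The essential strategy is to (i) write the joint law of the weights $\bm{W}$ using Definition~\ref{def:hFlatLogGammaMeasure}, (ii) transform it into the joint law of the image array $\bm{T} := \gRSK(\bm{W})$ via Proposition~\ref{prop:gRSKproperties}, and (iii) marginalize over the non-antidiagonal variables to extract the density of $(Z_{1,2N}, \dots, Z_{N,N+1})$. The identification $Z_{m,2N-m+1} = T_{m,2N-m+1}$ comes directly from property~\ref{prop:gRSKproperties_partitionFn} of that proposition applied to the border indices on the half-antidiagonal.

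First I would write $\P(\bm{W} \in \diff\bm{w})$ as a product over $(i,j) \in \hI_{2N}$ of inverse-gamma densities, splitting the product according to the two cases in~\eqref{eq:hFlatLogGammaMeasure}. Then I would collect, for each parameter, the total power to which each $w_{i,j}$ is raised. The coefficient of $-\alpha_k$ picks up contributions both from the ``interior'' block $1\leq i,j\leq N$ (via the $i=k$ term) and from the ``upper'' block $N<j\leq 2N-i+1$ (via both $i=k$ and $2N-j+1=k$), producing precisely $\prod_{j=1}^{2N-k+1} w_{k,j} \cdot \prod_{i=1}^{k} w_{i,2N-k+1}$, with $w_{k,2N-k+1}$ correctly counted twice. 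The coefficient of $-\beta_k$, in contrast, receives contribution only from the interior block and yields $\prod_{i=1}^{N} w_{i,k}$.

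Second, I would apply Proposition~\ref{prop:gRSKproperties}-\ref{prop:gRSKproperties_type} to the relevant border indices of the trapezoid $\hI_{2N}$. The half-antidiagonal sites $(k, 2N-k+1)$ are border indices with both $(k, 2N-k+2) \notin \hI_{2N}$ and $(k+1, 2N-k+1) \notin \hI_{2N}$, so formulas~\eqref{eq:gRSKproperties_type1} and~\eqref{eq:gRSKproperties_type2} together convert the $\alpha_k$-factor into $\bigl(\pi_{2N-2k+1}^2 / (\pi_{2N-2k+2}\,\pi_{2N-2k})\bigr)^{-\alpha_k}$, exactly as in the flat case. The novelty here is the bottom edge: for $1\leq k\leq N$, the site $(N,k)$ is a border index with $(N+1, k) \notin \hI_{2N}$ (since $\hI_{2N}$ requires $i\leq N$), so~\eqref{eq:gRSKproperties_type2} gives $\prod_{i=1}^{N} w_{i,k} = \pi_{k-N}(\bm{t})/\pi_{k-1-N}(\bm{t})$, producing the single-ratio $\beta_k$-factor in~\eqref{eq:hFlatP2PjointDensity}. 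The exponential term is handled by property~\ref{prop:gRSKproperties_invWeights}, and the Jacobian is trivial by property~\ref{prop:gRSKproperties_Jacobian}.

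Finally, I would identify the boundary variables: by property~\ref{prop:gRSKproperties_partitionFn}, $T_{m,2N-m+1}$ equals the point-to-point partition function $Z_{m,2N-m+1}$ for each $m=1,\dots,N$, so the marginal density on the half-antidiagonal obtained by integrating $\bm{t}$ over $\hT_{\R_{>0}}(\bm{y})$ is exactly the joint density we seek. The main (mild) obstacle is purely bookkeeping: keeping careful track of which $w_{i,j}$ contribute to the coefficient of each $\alpha_k$ when the two cases in~\eqref{eq:hFlatLogGammaMeasure} overlap at $(k,2N-k+1)$, and verifying that $(N,k)$ for $1\leq k\leq N$ is indeed a border index of the trapezoid so that~\eqref{eq:gRSKproperties_type2} applies and yields the asymmetric $\beta_k$-factor (unsquared) that distinguishes the point-to-half-line density from its point-to-line counterpart.
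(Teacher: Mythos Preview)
Your proposal is correct and follows essentially the same approach as the paper's proof: write the inverse-gamma joint density on $\hI_{2N}$, push it forward through $\gRSK$ using properties~\ref{prop:gRSKproperties_type}--\ref{prop:gRSKproperties_Jacobian} of Proposition~\ref{prop:gRSKproperties}, and integrate out the non-antidiagonal variables after identifying $Z_{m,2N-m+1}=T_{m,2N-m+1}$ via property~\ref{prop:gRSKproperties_partitionFn}. Your treatment of the $\alpha_k$- and $\beta_k$-powers (including the double count at $(k,2N-k+1)$ and the use of the border index $(N,k)$ to obtain the unsquared ratio $\pi_{k-N}/\pi_{k-1-N}$) matches the paper exactly.
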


\begin{proof}
The joint law of the trapezoidal array $\bm{W} = \{ W_{i,j}\colon (i,j)\in\hI_{2N}\}$ for the $(\bm{\alpha},\bm{\beta})$-log-gamma measure, according to Definition~\eqref{def:hFlatLogGammaMeasure}, is given by
\begin{equation}
\label{eq:hFlatLogGammaJointLaw}
\begin{split}
\P(\bm{W}\in \diff\bm{w})
= &\prod_{i,j = 1}^N \frac{w_{i,j}^{-\alpha_i-\beta_j}}{\Gamma(\alpha_i + \beta_j)}
\prod_{\substack{1\leq i\leq N \\ N < j \leq 2N-i+1}}
\!\!\!\!
\frac{w_{i,j}^{-\alpha_i - \alpha_{2N-j+1}}}{\Gamma(\alpha_i + \alpha_{2N-j+1})} \\
&\times \exp\left\{-\sum_{(i,j)\in\hI_{2N}} \frac{1}{w_{i,j}}\right\}
\prod_{(i,j)\in\hI_{2N}} \!\!\!
\1_{\R_{>0}}(w_{i,j})
\frac{\diff w_{i,j}}{w_{i,j}} \, .
\end{split}
\end{equation}
We now rewrite the above density in terms of the image array $\bm{t}= \{ t_{i,j} \colon (i,j)\in\hI_{2N} \}$ of $\bm{w}$ under the $\gRSK$ bijection.
The powers of $w_{i,j}$'s are sorted out by noting that
\[
\prod_{j=1}^{2N-k+1} \!\!\! w_{k,j} = \frac{\pi_{2N-2k+1}(\bm{t})}{\pi_{2N-2k+2}(\bm{t})} \, ,
\qquad
\prod_{i=1}^{N} w_{i,k} = \frac{\pi_{k-N}(\bm{t})}{\pi_{k-1-N}(\bm{t})} \, ,
\qquad
\prod_{i=1}^{k} w_{i,2N-k+1} = \frac{\pi_{2N-2k+1}(\bm{t})}{\pi_{2N-2k}(\bm{t})}
\]
for $1\leq k\leq N$, thanks to property~\ref{prop:gRSKproperties_type} of Proposition~\ref{prop:gRSKproperties}.
Using property~\ref{prop:gRSKproperties_invWeights} for dealing with the exponential term in~\eqref{eq:hFlatLogGammaJointLaw}, and property~\ref{prop:gRSKproperties_Jacobian} for the differential form, we obtain:
\[
\begin{split}
\P(\bm{T}\in\diff\bm{t})
= \, &\frac{1}{\hG_{\bm{\alpha},\bm{\beta}}}
\prod_{k=1}^N
\left(\frac{\pi_{2N-2k+1}(\bm{t})^2}{\pi_{2N-2k+2}(\bm{t}) \pi_{2N-2k}(\bm{t})}\right)^{-\alpha_k}
\left(\frac{\pi_{k-N}(\bm{t})}{\pi_{k-1-N}(\bm{t})}\right)^{-\beta_k} \\
&\times \exp\left\{ -\frac{1}{t_{1,1}} - \sum_{(i,j)\in\hI_{2N}} \frac{t_{i-1,j}+t_{i,j-1}}{t_{i,j}} \right\}
\prod_{(i,j)\in\hI_{2N}} \!\!\!
\1_{\R_{>0}}(t_{i,j})
\frac{\diff t_{i,j}}{t_{i,j}} \, .
\end{split}
\]
Finally, by property~\ref{prop:gRSKproperties_partitionFn} of Proposition~\ref{prop:gRSKproperties}, the joint density of $(Z_{1,2N},Z_{2,2N-1},\dots,Z_{N,N+1})$ at $\bm{y}\in\R_{>0}^{N}$ is obtained by integrating the above over $\hT_{\R_{>0}}(\bm{y})$: this yields~\eqref{eq:hFlatP2PjointLaw}.
\end{proof}

An adaptation of the proof of the analogous Theorem~\ref{thm:flatWhittakerFormula} for the point-to-line case leads to the announced Whittaker functions formula for the point-to-half-line partition function.
\begin{theorem}
\label{thm:hFlatWhittakerFormula}
The Laplace transform of the point-to-half-line partition function $\hZ_{2N}$ for the $(\bm{\alpha},\bm{\beta})$-log-gamma polymer can be written in terms of Whittaker functions as
\begin{equation}
\label{eq:hFlatWhittakerFormula}
\E\left[\e^{- r \hZ_{2N}}\right] 
= \frac{r^{\sum_{k=1}^{N} (\alpha_k+\beta_k)}}
{\hG_{\bm{\alpha},\bm{\beta}}}
\int_{\R_{>0}^N} \
\e^{-r x_1}
\Psi_{\bm{\alpha}}^{\so_{2N+1}}(\bm{x})
\Psi_{\bm{\beta}}^{\gl_N}(\bm{x})
\prod_{i=1}^N \frac{\diff x_i}{x_i}
\end{equation}
for all $r>0$, where $\hG_{\bm{\alpha},\bm{\beta}}$ is given by~\eqref{eq:hFlatPolymerNormalization}.
\end{theorem}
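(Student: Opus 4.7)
The plan is to closely mirror the proof of Theorem~\ref{thm:flatWhittakerFormula}, tracking how the change of geometry from triangle to trapezoid replaces one of the two orthogonal Whittaker factors by a $\gl_{N}$-Whittaker factor. First, using Lemma~\ref{lemma:hFlatP2PjointLaw} together with the identity $\hZ_{2N}=\sum_{m=1}^{N} Z_{m,2N-m+1}$, I would write
\[
\E\left[\e^{-r\hZ_{2N}}\right]
= \frac{1}{\hG_{\bm{\alpha},\bm{\beta}}}
\int_{\R_{>0}^{\hI_{2N}}}
\e^{-r\sum_{m=1}^{N} t_{m,2N-m+1}}\,
\hPhi_{\bm{\alpha},\bm{\beta}}\!\left((t_{m,2N-m+1})_{m=1}^{N}\right)
\prod_{(i,j)\in\hI_{2N}}\!\frac{\diff t_{i,j}}{t_{i,j}} ,
\]
where the expression of $\hPhi_{\bm{\alpha},\bm{\beta}}$ in~\eqref{eq:hFlatP2PjointDensity} has already been used to absorb the antidiagonal variables into the global integration.

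Next, I would perform the inversion $t_{i,j}=1/(r s_{i,j})$ for every $(i,j)\in\hI_{2N}$, exactly as in~\eqref{eq:variablesInversion}. This substitution preserves the multiplicative measures $\diff t_{i,j}/t_{i,j}$, reverses every arrow in the potential (so that $t_{i-1,j}/t_{i,j}\mapsto s_{i,j}/s_{i-1,j}$ and similarly for the horizontal ratios), turns the corner term $-1/t_{1,1}$ into $-rs_{1,1}$, and turns each antidiagonal Laplace term $-rt_{m,2N-m+1}$ into a wall term $-1/s_{m,2N-m+1}$. A direct count of diagonal lengths on the trapezoid (the $d$-th diagonal has length $N+d$ for $d\leq 0$ and lengths $k,k-1,k$ respectively for the three diagonals $d=2N-2k,2N-2k+1,2N-2k+2$ with $1\leq k\leq N$) shows that each $\alpha_{k}$-factor contributes $r^{\alpha_{k}}$ and each $\beta_{k}$-factor contributes $r^{\beta_{k}}$, yielding the global prefactor $r^{\sum_{k}(\alpha_{k}+\beta_{k})}$ that appears in the statement.

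I would then split the remaining integration over the $s$-variables into three parts: the diagonal $\{s_{k,k}:1\leq k\leq N\}$, the upper half-triangle $\{(i,j)\in\hI_{2N}: i<j\}$, and the lower triangle $\{(i,j):1\leq j<i\leq N\}$. The upper half is structurally identical to one of the two halves treated in the flat proof: the wall exponentials $\e^{-1/s_{m,2N-m+1}}$, the $\alpha_{k}$-weights in terms of $\pi_{2N-2k+1}(\bm{s})^{2}/(\pi_{2N-2k+2}(\bm{s})\pi_{2N-2k}(\bm{s}))$, and the interior ratios combine, via Definition~\ref{def:soWhittakerFn}, to produce $\Psi^{\so_{2N+1}}_{\bm{\alpha}}(\bm{x})$ with $\bm{x}=(s_{1,1},\ldots,s_{N,N})$. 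The lower triangle is a genuine $\gl_{N}$-shaped pattern sitting on the diagonal as its bottom row: it carries \emph{no} wall exponentials (the antidiagonal lives entirely in the upper half), only interior ratios plus the weights $(\pi_{k-N}(\bm{s})/\pi_{k-1-N}(\bm{s}))^{\beta_{k}}$. Identifying this with Definition~\ref{def:glWhittakerFn} yields $\Psi^{\gl_{N}}_{\bm{\beta}}(\bm{x})$. Integrating last over the diagonal leaves the boundary exponential $\e^{-rs_{1,1}}$ and the multiplicative measure $\prod_{i}\diff x_{i}/x_{i}$, producing~\eqref{eq:hFlatWhittakerFormula}.

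The main obstacle is the explicit identification of the lower-triangle integral as $\Psi^{\gl_{N}}_{\bm{\beta}}(\bm{x})$: one has to exhibit a relabeling of $\{s_{i,j}:1\leq j<i\leq N\}\cup\{s_{i,i}\}$ as a Gelfand--Tsetlin-shaped array $\{z_{k,l}:1\leq l\leq k\leq N\}$ under which the bottom row $(z_{N,1},\ldots,z_{N,N})$ becomes $(s_{1,1},\ldots,s_{N,N})$, the interior arrows reproduce $\En(\bm{z})$, and the products $\pi_{k-N}(\bm{s})/\pi_{k-1-N}(\bm{s})$ become the geometric types $\gtype(\bm{z})_{k}$. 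Once this bookkeeping is set up, the rest of the argument runs on autopilot from the flat case, noting that the parameter $\gamma$ of Definition~\ref{def:flatLogGammaMeasure} is absent here because, as remarked before Definition~\ref{def:hFlatLogGammaMeasure}, the restriction of the point-to-line measure to $\hI_{2N}$ has already been absorbed into a redefinition of~$\bm{\beta}$.
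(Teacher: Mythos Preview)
Your proposal is correct and follows essentially the same route as the paper: start from Lemma~\ref{lemma:hFlatP2PjointLaw} and the decomposition $\hZ_{2N}=\sum_{m=1}^N Z_{m,2N-m+1}$, apply the inversion $t_{i,j}=(rs_{i,j})^{-1}$ to reverse the arrows and produce the prefactor $r^{\sum_k(\alpha_k+\beta_k)}$, then split the trapezoid along the main diagonal so that the upper half-triangle yields $\Psi^{\so_{2N+1}}_{\bm{\alpha}}$ and the lower triangle yields $\Psi^{\gl_N}_{\bm{\beta}}$, with the diagonal becoming the common argument~$\bm{x}$. Your parenthetical diagonal-length count has the lengths of the $(2N{-}2k{+}1)$- and $(2N{-}2k{+}2)$-diagonals swapped (they are $k$ and $k{-}1$, not $k{-}1$ and $k$), but your stated conclusion $r^{\alpha_k}$ is the correct one and the rest of the argument is unaffected.
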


\begin{proof}
Given that $\hZ_{2N}=\sum_{m=1}^N Z_{m,2n-m+1}$, we have via Lemma~\ref{lemma:hFlatP2PjointLaw} that
\[
\begin{split}
\E\left[\e^{- r \hZ_{2N}}\right] 
= \frac{1}{\hG_{\bm{\alpha},\bm{\beta}}}
\int_{\R_{>0}^N} 
\e^{-r \sum_{m=1}^{N} t_{m,2N-m+1}} \,
\hPhi_{\bm{\alpha},\bm{\beta}}\left(\left(t_{m,2N-m+1}\right)_{m=1}^{N}\right)
\prod_{m=1}^{N} \frac{\diff t_{m,2N-m+1}}{t_{m,2N-m+1}} \, .
\end{split}
\]
Using definition~\eqref{eq:hFlatP2PjointDensity} of $\hPhi_{\bm{\alpha},\bm{\beta}}$ and performing the same change of variables $t_{i,j} = (r s_{i,j})^{-1}$ for all $(i,j)\in\hI_{2N}$ as in the proof of Theorem~\ref{thm:flatWhittakerFormula}, we obtain
\[
\begin{split}
\E\left[e^{- r \hZ_{2N}}\right] 
= \, &\frac{r^{\sum_{k=1}^N(\alpha_k + \beta_k)}  }{\hG_{\bm{\alpha},\bm{\beta}}}
\int_{\R_{>0}^N}
\prod_{m=1}^{N} \frac{\diff s_{m,2N-m+1}}{s_{m,2N-m+1}}
\exp\left\{- \sum_{m=1}^N \frac{1}{s_{m,2N-m+1}}\right\} \\
&\times \int_{\R_{>0}^{(3N^2-N)/2}}
\prod_{k=1}^N
\left(\frac{\pi_{2N-2k+1}(\bm{s})^2}{\pi_{2N-2k+2}(\bm{s}) \pi_{2N-2k}(\bm{s})}\right)^{\alpha_k}
\left(\frac{\pi_{k-N}(\bm{s})}{\pi_{k-1-N}(\bm{s})}\right)^{\beta_k} \\
&\times 
\exp\left\{ -r s_{1,1}
- \sum_{i>1,j} \frac{s_{i,j}}{s_{i-1,j}}
- \sum_{j>1,i} \frac{s_{i,j}}{s_{i,j-1}} \right\}
\prod_{\substack{i+j\leq 2N \\ i\leq N}} \frac{\diff s_{i,j}}{s_{i,j}}
\, .
\end{split}
\]
We now change the order in which variables are integrated in the above expression: we first integrate over the ``half-triangular'' array $\{s_{i,j}\colon i<j\}$ and the triangular array $\{s_{i,j}\colon j<i\}$ into which the whole trapezoidal shape of Figure~\ref{subfig:hFlatPath} is divided by the main diagonal; next, we integrate w.r.t.\ the diagonal variables $s_{1,1},\dots,s_{N,N}$.
This yields:
\[
\begin{split}
\E\left[\e^{- r \hZ_{2N}}\right]
= \,& \frac{r^{\sum_{k=1}^N(\alpha_k+\beta_k)}}{\hG_{\bm{\alpha},\bm{\beta}}}
\int_{\R_{>0}^N}
\prod_{i=1}^N \frac{\diff s_{i,i}}{s_{i,i}}
\, \e^{-r s_{1,1} } \\
&\times \int_{\R_{>0}^{N^2}}
\prod_{i<j} \frac{\diff s_{i,j}}{s_{i,j}}
\prod_{k=1}^N
\left(\frac{\pi_{2N-2k+1}(\bm{s})^2}{\pi_{2N-2k+2}(\bm{s}) \pi_{2N-2k}(\bm{s})}\right)^{\alpha_k} \\
&\qquad\qquad\qquad \exp\left\{
- \sum_{m=1}^N \frac{1}{s_{m,2N-m+1}}
- \sum_{1<i\leq j} \frac{s_{i,j}}{s_{i-1,j}}
- \sum_{i<j} \frac{s_{i,j}}{s_{i,j-1}} \right\} \\
&\times \int_{\R_{>0}^{N(N-1)/2}}
\prod_{j<i} \frac{\diff s_{i,j}}{s_{i,j}}
\prod_{k=1}^N
\left(\frac{\pi_{k-N}(\bm{s})}{\pi_{k-1-N}(\bm{s})}\right)^{\beta_k}
\exp\left\{
- \sum_{j<i} \frac{s_{i,j}}{s_{i-1,j}}
- \sum_{1<j\leq i} \frac{s_{i,j}}{s_{i,j-1}} \right\} \, .
\end{split}
\]
Comparing with Definition~\ref{def:soWhittakerFn} and~\ref{def:glWhittakerFn}, we identify the second integral as an $\so_{2N+1}$-Whittaker function with parameter $\bm{\alpha}$, and the third integral as a $\gl_N$-Whittaker function with parameter $\bm{\beta}$, both evaluated in $(s_{1,1},\dots,s_{N,N})$.
Setting the latter vector equal to $\bm{x}$ concludes the proof of~\eqref{eq:hFlatWhittakerFormula}.
\end{proof}

\begin{remark}
Take the limit $r\to 0$ in~\eqref{eq:hFlatWhittakerFormula}: since $\E\left[\e^{-r \hZ_{2N}} \right] \to 1$ and $r^{\sum_{k=1}^N (\alpha_k+\beta_k)} \to 0$, we observe that the integral
\[
\int_{\R_{>0}^N} 
\Psi_{\bm{\alpha}}^{\so_{2N+1}}(\bm{x})
\Psi_{\bm{\beta}}^{\gl_N}(\bm{x})
\prod_{i=1}^N \frac{\diff x_i}{x_i}
\]
diverges for $\bm{\alpha},\bm{\beta}\in\R_{>0}^N$.
This does not contradict Ishii-Stade identity, as in~\eqref{eq:ishiiStade} the parameters of the $\gl_N$-Whittaker function are required to have negative real part.
\end{remark}

\subsection{Restricted and symmetric point-to-line polymers}
\label{subsec:rFlatPolymerWhittaker}

We now study the point-to-line polymer restricted to stay in a half-plane (in short \emph{restricted point-to-line polymer}), i.e.\ not allowed to go below the main diagonal.
The \emph{restricted point-to-line polymer partition function} at time $N\in\Z_{>0}$ is defined as
\begin{equation}
\label{eq:rFlatPartitionFn}
\rZ_{N}
:= \sum_{\pi \in \rPi_{N}} \prod_{(i,j)\in\pi} W_{i,j} \, ,
\end{equation}
where $\rPi_{N}$ is the set of directed paths starting from $(1,1)$ and ending on the half-line $\{(m,n)\colon m+n=N+1, \,\, m\leq n \}$ such that $i\leq j$ for all site $(i,j)$ of the path (see Figure~\ref{subfig:rFlatPath}), and $\bm{W} = \{ W_{i,j} \colon (i,j)\in\rI_n \}$ is an array of positive random weights on the triangular lattice
\begin{equation}
\label{eq:rFlatLattice}
\rI_{N}:= \{(i,j)\in\Z_{>0}^2 \colon i+j\leq N+1, \, i\leq j \} \, .
\end{equation}
For convenience's sake, let us also define the \emph{restricted point-to-point polymer partition function}:
\begin{equation}
\label{eq:rP2PPartitionFn}
\rZ_{m,n}
:= \sum_{\pi \in \rPi_{m,n}} \prod_{(i,j)\in\pi} W_{i,j}
\qquad \text{for } m\leq n
\, ,
\end{equation}
where $\rPi_{m,n}$ is the set of directed paths from $(1,1)$ to $(m,n)$ restricted to stay in the half-plane $\{(i,j)\colon i\leq j\}$.
It then turns out that the restricted point-to-line partition function is the sum of the restricted point-to-point partition functions with endpoint on the half-line $\{m+n=N+1, \,\, m\leq n \}$:
\begin{equation}
\label{eq:rFlatPartitionFn=sumPointToPoint}
\rZ_N
= \sum_{\substack{m+n=N+1 \\ m\leq n}} \rZ_{m,n} \, .
\end{equation}

We will show that, when the weights are inverse-gamma distributed with a certain parametrization, the Laplace transform of $\rZ_{2N}$ can be essentially written as an integral of an $\so_{2N+1}$-Whittaker function.
Since the lattice $\rI_{2N}$ is \emph{not} the index set of a Young diagram, we cannot apply the $\gRSK$ correspondence, as defined in subsection~\ref{subsec:gRSK}, directly to such an array of weights.
We will be working instead with a symmetric array, i.e.\ $\bm{W}=\{W_{i,j} \colon (i,j)\in\fI_{2N} \}$ satisfying $W_{i,j}=W_{j,i}$ for all $(i,j)$, as we will see that the restricted and the symmetric polymers are closely connected.
Indeed, one can easily convince oneself that for each time a given restricted polymer path touches the main diagonal $\{i=j\}$ (including the starting point $(1,1)$), the symmetric point-to-line polymer partition function counts the weight of that path twice\footnote{This exact statement holds for the restricted polymer at \emph{even} time $2N$ only.
At an odd time $2N+1$, it does not apply to the last element $(N+1,N+1)$ of the main diagonal.}.
Therefore, the \emph{symmetric point-to-line polymer partition function} can be expressed as a sum over restricted paths as:
\begin{equation}
\label{eq:sym-resFlatPartitionFns}
\sZ_{2N}
= \sum_{\pi\in\rPi_{2N}} 2^{\# \{i\colon (i,i)\in\pi\} } \prod_{(i,j)\in \pi} W_{i,j}
= \sum_{\pi\in\rPi_{2N}} \prod_{(i,j)\in \pi} (1+\delta_{i,j}) W_{i,j}
\, .
\end{equation}
A similar reasoning leads to the conclusion that each \emph{symmetric point-to-point polymer partition function} can be expressed as a sum over restricted paths as:
\begin{equation}
\label{eq:sym-resP2PPartitionFns}
\sZ_{m,n}
= \frac{1}{2} \sum_{\pi\in\rPi_{m,n}} \prod_{(i,j)\in \pi} (1+\delta_{i,j}) W_{i,j}
\qquad \text{for } m\leq n
\, .
\end{equation}
It follows from~\eqref{eq:sym-resFlatPartitionFns} and \eqref{eq:sym-resP2PPartitionFns} that the partition functions of the symmetric and restricted polymers are essentially the same, provided that the weights of the restricted polymer are doubled on the diagonal.
Let us clarify this in the following remark.
\begin{remark}
\label{rem:sym-resPolymer}
Consider a restricted polymer model on the lattice $\rI_{2N}$, and a symmetric polymer with the same weights above the diagonal ($i< j$) and \emph{halved} weights on the diagonal ($i=j$). Then:
\begin{enumerate}
\item
the partition functions of the restricted and symmetric point-to-line polymers coincide, i.e.\ $\rZ_{2N} = \sZ_{2N}$;
\item
the partition functions of the restricted and symmetric point-to-point polymers coincide \emph{up to a factor $2$}, i.e.\ $\rZ_{m,n} = 2 \sZ_{m,n}$ for $m\leq n$, $m+n=2N+1$.
\end{enumerate}
\end{remark}

We now see the exactly solvable distributions on restricted/symmetric arrays (one can be deduced from the other via Remark~\eqref{rem:sym-resPolymer}) that link to $\so_{2N+1}$-Whittaker functions.
\begin{definition}
\label{def:rFlatLogGammaMeasure}
Let $N\in\Z_{>0}$, $\bm{\alpha}\in\R_{>0}^N$ and $\gamma\in\R_{\geq 0}$.
We define the \emph{restricted $(\bm{\alpha}, \gamma)$-log-gamma measure} on the lattice $\rI_{2N}:= \{(i,j)\in\Z_{>0}^2 \colon i+j\leq 2N+1, \, i\leq j\}$ to be the law of a family of independent random variables $\{W_{i,j} \colon (i,j)\in\rI_{2N}\}$ such that
\begin{equation}
\label{eq:rFlatLogGammaMeasure}
\frac{1}{W_{i,j}} \sim
\begin{cases}
\mathrm{ Gamma}(\alpha_i + \gamma,1) &1\leq i=j \leq N \, , \\
\mathrm{ Gamma}(\alpha_i + \alpha_j + 2\gamma,1) &1\leq i<j\leq N \, , \\
\mathrm{ Gamma}(\alpha_i + \alpha_{2N-j+1},1) &1\leq i\leq N \, , \,\, N < j\leq 2N-i+1 \, .
\end{cases}
\end{equation}
We analogously define the \emph{symmetric $(\bm{\alpha},\gamma)$-log-gamma measure} on the lattice $\fI_{2N}:= \{(i,j)\in\Z_{>0}^2 \colon i+j\leq 2N+1 \}$ to be the law of a symmetric array $\{W_{i,j} \colon (i,j)\in\fI_{2N}\}$ such that the entries on and above the diagonal are independent and distributed as in~\eqref{eq:rFlatLogGammaMeasure}, except for the fact that the diagonal weights are halved, i.e.\ $W_{i,i}^{-1} \sim \mathrm{ Gamma}(\alpha_i + \gamma,1/2)$.
\end{definition}

In next lemma we obtain the joint law of the point-to-point restricted polymer partition functions at a fixed time.
The proof is based on the analysis of the corresponding symmetric polymer, and relies on the properties of $\gRSK$ acting on symmetric arrays - see Proposition~\ref{prop:symmetricgRSK}.

\begin{lemma}
\label{lemma:rFlatP2PjointLaw}
For the restricted $(\bm{\alpha},\gamma)$-log-gamma polymer, the joint distribution of the point-to-point partition functions at time $2N$ is
\begin{equation}
\label{eq:rFlatP2PjointLaw}
\P(\rZ_{m,2N-m+1} \in \diff y_m \colon m=1,\dots,N)
= \frac{1}{\rG_{\bm{\alpha},\gamma}}
\rPhi_{\bm{\alpha},\bm{\beta}}(\bm{y})
\1_{\R_{>0}^{N}}(\bm{y})
\prod_{m=1}^{N} \frac{\diff y_m}{y_m} \, .
\end{equation}
Here, the normalization constant $\rG_{\bm{\alpha},\gamma}$ and the function $\rPhi_{\bm{\alpha},\gamma}$ are given by
\begin{align}
\label{eq:rFlatPolymerNormalization}
\rG_{\bm{\alpha},\gamma}
:= &\prod_{i=1}^N \Gamma(\alpha_i + \gamma)
\prod_{1\leq i < j\leq N} \!\! \Gamma(\alpha_i + \alpha_j +2\gamma)
\prod_{1\leq i \leq j\leq N} \!\!
\Gamma(\alpha_i + \alpha_j) \, , \\
\label{eq:rFlatP2PjointDensity}
\begin{split}
\rPhi_{\bm{\alpha},\gamma}(\bm{y})
:= &\int_{\rT_{\R_{>0}}(\bm{y})}
\prod_{k=1}^N
\left(\frac{\pi_{2N-2k+1}(\bm{t})^2}{\pi_{2N-2k+2}(\bm{t}) \pi_{2N-2k}(\bm{t})}\right)^{-\alpha_k}
\pi_0(\bm{t})^{-\gamma} \\
&\times \exp\left\{-\frac{1}{t_{1,1}} -\sum_{(i,j)\in\rI_{2N}} \frac{t_{i-1,j} + t_{i,j-1}}{t_{i,j}} \right\}
\prod_{\substack{i+j\leq 2N \\ i\leq j}} \frac{\diff t_{i,j}}{t_{i,j}}
\, ,
\end{split}
\end{align}
using the convention that $t_{i,j}:=0$ when $(i,j)\notin \rI_{2N}$, and denoting by $\rT_{\R_{>0}}(\bm{y})$ the set of all arrays $\{ t_{i,j} \colon (i,j)\in\rI_{2N}\}$ with entries in $\R_{>0}$ such that $(t_{1,2N},t_{2,2N-1},\dots,t_{N,N+1}) = (y_1,\dots,y_{N}) =: \bm{y}$.
\end{lemma}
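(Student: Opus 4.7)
The plan is to mirror the strategy used in the proof of Lemma~\ref{lemma:flatP2PjointLaw}, but to work with the symmetric version of $\gRSK$ (Proposition~\ref{prop:symmetricgRSK}) after first reducing the restricted polymer to a symmetric one by means of Remark~\ref{rem:sym-resPolymer}. This is necessary because $\rI_{2N}$ is not the index set of a Young diagram, so the $\gRSK$ correspondence of subsection~\ref{subsec:gRSK} is not directly applicable; the symmetric polymer on $\fI_{2N}$ provides a way around this.

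First, I would introduce the auxiliary symmetric polymer on $\fI_{2N}$ whose weight array $\bm{W}^{\mathrm{sym}}$ is distributed according to the symmetric $(\bm{\alpha},\gamma)$-log-gamma measure of Definition~\ref{def:rFlatLogGammaMeasure}. By Remark~\ref{rem:sym-resPolymer}, the corresponding partition functions satisfy $\rZ_{m,n}=2\sZ_{m,n}$ on the antidiagonal $m+n=2N+1$. Applying $\gRSK$ to $\bm{W}^{\mathrm{sym}}$ produces an array $\bm{T}^{\mathrm{sym}}$ which is symmetric by Proposition~\ref{prop:symmetricgRSK}; property~\ref{prop:gRSKproperties_partitionFn} of Proposition~\ref{prop:gRSKproperties} identifies $T^{\mathrm{sym}}_{m,n}=\sZ_{m,n}$ on the antidiagonal, so $\rZ_{m,2N-m+1}=2\,T^{\mathrm{sym}}_{m,2N-m+1}$.

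Second, I would compute the joint law of the upper-triangular part of $\bm{T}^{\mathrm{sym}}$ along the lines of the proof of Lemma~\ref{lemma:flatP2PjointLaw}. The key step is to exploit the symmetry $w^{\mathrm{sym}}_{i,j}=w^{\mathrm{sym}}_{j,i}$ to rewrite the product $\prod_{i\leq j}(w^{\mathrm{sym}}_{i,j})^{-a_{i,j}}$ as a full product over $\fI_{2N}$, via the exponent splittings $\alpha_i+\alpha_j+2\gamma=(\alpha_i+\gamma)+(\alpha_j+\gamma)$ (for $i<j\leq N$) and $\alpha_i+\alpha_{2N-j+1}$ distributed between $(i,j)$ and $(j,i)$ (for $i\leq N<j$). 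Each $\alpha_k$-contribution then collects into a row product $\prod_{j=1}^{2N-k+1}w^{\mathrm{sym}}_{k,j}$, which by property~\ref{prop:gRSKproperties_type} of Proposition~\ref{prop:gRSKproperties} equals $\pi_{2N-2k+1}(\bm{t}^{\mathrm{sym}})/\pi_{2N-2k+2}(\bm{t}^{\mathrm{sym}})$. Combining contributions from $k\leq N$ with those from $k>N$ via $\pi_{-k}(\bm{t}^{\mathrm{sym}})=\pi_k(\bm{t}^{\mathrm{sym}})$ yields the symmetrized factor $\bigl(\pi_{2N-2k+1}^2/(\pi_{2N-2k+2}\pi_{2N-2k})\bigr)^{-\alpha_k}$ for $k=1,\dots,N$. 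The $\gamma$-contribution collects into $\pi_0(\bm{t}^{\mathrm{sym}})^{-\gamma}$ via~\eqref{eq:gRSKprodSubarray} applied to the upper-left $N\times N$ subarray. The exponential factor equals $\exp\{-\tfrac12\sum_i 1/w^{\mathrm{sym}}_{i,i}-\sum_{i<j}1/w^{\mathrm{sym}}_{i,j}\}=\exp\{-\tfrac12\sum_{(i,j)\in\fI_{2N}}1/w^{\mathrm{sym}}_{i,j}\}=\exp\{-\tfrac12\mathcal{E}(\bm{t}^{\mathrm{sym}})\}$ by property~\ref{prop:gRSKproperties_invWeights} of Proposition~\ref{prop:gRSKproperties}, the $1/2$ being produced by the halved rates on the diagonal. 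Proposition~\ref{prop:symmetricgRSK} guarantees that the logarithmic change of variables restricted to the upper-triangular entries has unit Jacobian.

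Finally, I would transfer this density to the $\rZ$-variables through the rescaling $s_{i,j}:=2\,t^{\mathrm{sym}}_{i,j}$ for $(i,j)\in\rI_{2N}$, which is trivially volume preserving in logarithmic coordinates and maps the outer variables to $\rZ_{m,2N-m+1}$. Exploiting the symmetry of $\bm{t}^{\mathrm{sym}}$ and of $\fI_{2N}$, I would verify
\[
\mathcal{E}(\bm{t}^{\mathrm{sym}})=\frac{1}{t^{\mathrm{sym}}_{1,1}}+2\sum_{(i,j)\in\rI_{2N}}\frac{t^{\mathrm{sym}}_{i-1,j}+t^{\mathrm{sym}}_{i,j-1}}{t^{\mathrm{sym}}_{i,j}}
\]
(with the restricted convention that $t_{i,j}:=0$ off $\rI_{2N}$), so that $(1/2)\mathcal{E}(\bm{t}^{\mathrm{sym}})$ rescales to exactly $1/s_{1,1}+\sum_{(i,j)\in\rI_{2N}}(s_{i-1,j}+s_{i,j-1})/s_{i,j}$. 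A short diagonal-length count gives that each $\pi$-ratio acquires a factor $1/2$ under the rescaling and $\pi_0(\bm{t}^{\mathrm{sym}})^{-\gamma}=2^{N\gamma}\pi_0(\bm{s})^{-\gamma}$. Integrating out the $N^2$ inner variables yields~\eqref{eq:rFlatP2PjointLaw}; the accumulated powers of $2$ from the rescaling cancel exactly against the factors $(1/2)^{\alpha_i+\gamma}$ in the Gamma-normalization of the halved diagonal rates of $\bm{W}^{\mathrm{sym}}$, producing the clean constant $\rG_{\bm{\alpha},\gamma}$.

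The main obstacle will be the careful bookkeeping of factors of $2$ coming from four distinct sources: the halved diagonal rates in the symmetric measure, the doubling $\rZ_{m,n}=2\,T^{\mathrm{sym}}_{m,n}$, the transformation of the $\pi_k$ diagonal products under the rescaling $s=2\,t^{\mathrm{sym}}$, and the factor $1/2$ multiplying $\mathcal{E}(\bm{t}^{\mathrm{sym}})$. Verifying that they cancel exactly to yield the unambiguous boundary term $1/s_{1,1}$ and the precise normalization $\rG_{\bm{\alpha},\gamma}$ stated in the lemma, with no residual powers of $2$, is the principal computational delicacy.
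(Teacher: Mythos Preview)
Your proposal is correct and follows essentially the same approach as the paper: reduce to the symmetric polymer via Remark~\ref{rem:sym-resPolymer}, apply the symmetric $\gRSK$ (Proposition~\ref{prop:symmetricgRSK}) together with properties~\ref{prop:gRSKproperties_type}--\ref{prop:gRSKproperties_invWeights} of Proposition~\ref{prop:gRSKproperties} to push the density forward, and then rescale by a factor of $2$ to absorb the powers of $2$ coming from the halved diagonal rates and the identity $\rZ_{m,n}=2\sZ_{m,n}$. The paper phrases the final rescaling as $t_{i,j}\mapsto t_{i,j}/2$ rather than $s_{i,j}:=2t^{\mathrm{sym}}_{i,j}$, and handles the $\alpha_k$-bookkeeping by directly grouping the full-row product $\prod_{j=1}^{2N-k+1}w_{k,j}\prod_{i=1}^k w_{i,2N-k+1}$ rather than via your exponent-splitting description, but these are cosmetic differences.
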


\begin{proof}
We work with the symmetric polymer.
If $\bm{W}= \{W_{i,j} \colon (i,j)\in\fI_{2N}\}$ is distributed according to the symmetric $(\bm{\alpha},\gamma)$-log-gamma measure (see Definition~\ref{def:rFlatLogGammaMeasure}), the joint law of its upper entries is
\begin{equation}
\label{eq:sFlatLogGammaJointLaw}
\begin{split}
&\P(W_{i,j}\in \diff w_{i,j} \colon i\leq j)
= \prod_{i=1}^N \frac{w_{i,i}^{-\alpha_i - \gamma}}
{2^{\alpha_i+\gamma} \Gamma(\alpha_i+\gamma)}
\prod_{1\leq i< j\leq N} \frac{w_{i,j}^{-\alpha_i-\alpha_j-2\gamma}}{\Gamma(\alpha_i + \alpha_j+2\gamma)} \\
&\qquad \times \prod_{\substack{1\leq i\leq N \\ N < j \leq 2N-i+1}} \!\!
\frac{w_{i,j}^{-\alpha_i - \alpha_{2N-j+1}}}{\Gamma(\alpha_i + \alpha_{2N-j+1})} 
 \exp\left\{-\sum_{i=1}^N \frac{1}{2w_{i,i}}
-\sum_{i<j} \frac{1}{w_{i,j}}\right\}
\prod_{i\leq j}
\1_{\R_{>0}}(w_{i,j})
\frac{\diff w_{i,j}}{w_{i,j}} \, .
\end{split}
\end{equation}
Let $\bm{t}$ be the image of $\bm{w}$ under gRSK, which is also symmetric by Proposition~\ref{prop:symmetricgRSK}.
Property~\ref{prop:gRSKproperties_type} of Proposition~\ref{prop:gRSKproperties} yields
\[
\prod_{j=1}^{2N-k+1} \!\! w_{k,j} = \frac{\pi_{2N-2k+1}(\bm{t})}{\pi_{2N-2k+2}(\bm{t})} \, , \quad\qquad
\prod_{i=1}^{k} w_{i,2N-k+1} = \frac{\pi_{2N-2k+1}(\bm{t})}{\pi_{2N-2k}(\bm{t})}
\]
for $1\leq k\leq N$.
Using this and the symmetry of $\bm{w}$, the product of all $w_{i,j}$'s raised to the $-\alpha_k$ in~\eqref{eq:sFlatLogGammaJointLaw} can be written as
\[
\prod_{j=k}^{2N-k+1} \!\! w_{k,j}
\prod_{i=1}^{k-1} w_{i,k}
\prod_{i=1}^k w_{i,2N-k+1}
= \prod_{j=1}^{2N-k+1} \!\! w_{k,j} \prod_{i=1}^k w_{i,2N-k+1}
= \frac{\pi_{2N-2k+1}(\bm{t})}{\pi_{2N-2k+2}(\bm{t})}
\frac{\pi_{2N-2k+1}(\bm{t})}{\pi_{2N-2k}(\bm{t})}
\, .
\]
By formula~\eqref{eq:gRSKprodSubarray}, the product of $w_{i,j}$'s raised to the power of $-\gamma$ in~\eqref{eq:sFlatLogGammaJointLaw} is 
\[
\prod_{i=1}^N w_{i,i} \prod_{1\leq i<j\leq N} \!\! w_{i,j}^2
=\prod_{i,j=1}^N w_{i,j} = \pi_0(\bm{t}) \, .
\]
For dealing with the exponential term in~\eqref{eq:sFlatLogGammaJointLaw}, we use the symmetry of $\bm{w}$ and $\bm{t}$ and property~\ref{prop:gRSKproperties_invWeights} of Proposition~\ref{prop:gRSKproperties} to see that
\[
\sum_{i=1}^N \frac{1}{2w_{i,i}}
+\sum_{i<j} \frac{1}{w_{i,j}}
= \frac{1}{2} \sum_{(i,j)\in\fI_{2N}} \frac{1}{w_{i,j}}
= \frac{1}{2} \mathcal{E}(\bm{t})
= \frac{1}{2t_{1,1}} +\sum_{1<i\leq j} \frac{t_{i-1,j}}{t_{i,j}} + \sum_{i<j} \frac{t_{i,j-1}}{t_{i,j}} \, .
\]
Using the volume preserving property of the symmetric gRSK (see Proposition~\ref{prop:symmetricgRSK}), we thus obtain:
\[
\begin{split}
\P(T_{i,j}\in \diff t_{i,j} \colon i\leq j)
= \, &\frac{2^{-N\gamma-\sum_{k=1}^N \alpha_k}}{\rG_{\bm{\alpha},\gamma}}
\prod_{k=1}^N
\left(\frac{\pi_{2N-2k+1}(\bm{t})^2}{\pi_{2N-2k+2}(\bm{t}) \pi_{2N-2k}(\bm{t})}\right)^{-\alpha_k}
\pi_0(\bm{t})^{-\gamma} \\
&\times 
\exp\left\{-\frac{1}{2t_{1,1}} -\sum_{1<i\leq j} \frac{t_{i-1,j}}{t_{i,j}} - \sum_{i<j} \frac{t_{i,j-1}}{t_{i,j}} \right\}
\prod_{i\leq j}
\1_{\R_{>0}}(t_{i,j})
\frac{\diff t_{i,j}}{t_{i,j}} \, .
\end{split}
\]
The change of variables $t_{i,j} \mapsto t_{i,j}/2$ for all $i\leq j$ cancels out the power of $2$ and replaces the term $1/(2t_{1,1})$ with $1/t_{1,1}$.
Rewriting the sums inside the exponential above as a single sum over $\rI_{2N}$ (with the convention that $t_{i,j}:=0$ when $(i,j)\notin\rI_{2N}$), we obtain:
\[
\begin{split}
\P(2 T_{i,j}\in \diff t_{i,j} \colon i\leq j)
= \, &\frac{1}{\rG_{\bm{\alpha},\gamma}}
\prod_{k=1}^N
\left(\frac{\pi_{2N-2k+1}(\bm{t})^2}{\pi_{2N-2k+2}(\bm{t}) \pi_{2N-2k}(\bm{t})}\right)^{-\alpha_k}
\pi_0(\bm{t})^{-\gamma} \\
&\times 
\exp\left\{-\frac{1}{t_{1,1}} -\sum_{(i,j)\in\rI_{2N}} \frac{t_{i-1,j} + t_{i,j-1}}{t_{i,j}} \right\}
\prod_{i\leq j}
\1_{\R_{>0}}(t_{i,j})
\frac{\diff t_{i,j}}{t_{i,j}} \, .
\end{split}
\]
By property~\ref{prop:gRSKproperties_partitionFn} of Proposition~\ref{prop:gRSKproperties}, the joint law of $(2\sZ_{1,2N}, 2\sZ_{2,2N-1},\dots,2\sZ_{N,N+1})$ at $\bm{y}\in\R_{>0}^N$ is obtained by integrating the resulting expression over $\rT_{\R_{>0}}(\bm{y})$.
Since $2\sZ_{m,n} = \rZ_{m,n}$ for $m\leq n$ by Remark~\ref{rem:sym-resPolymer}, our claim~\eqref{eq:rFlatP2PjointLaw} follows.
\end{proof}

Lemma~\ref{lemma:rFlatP2PjointLaw} allows us to obtain our Whittaker integral formula for the Laplace transform of $\rZ_{2N}$ (or equivalently of $\sZ_{2N}$, as they coincide by Remark~\ref{rem:sym-resPolymer}), which is stated in next theorem.
The proof is omitted as it follows the same steps as in Theorems~\ref{thm:flatWhittakerFormula} and~\ref{thm:hFlatWhittakerFormula}.
\begin{theorem}
\label{thm:rFlatWhittakerFormula}
The Laplace transform of the point-to-line partition function for the restricted $(\bm{\alpha},\gamma)$-log-gamma polymer can be written in terms of orthogonal Whittaker functions as
\begin{equation}
\label{eq:rFlatWhittakerFormula}
\E\left[\e^{- r \rZ_{2N}}\right]
= \frac{r^{\sum_{k=1}^N (\alpha_k +\gamma)}}
{\rG_{\bm{\alpha},\gamma}}
\int_{\R_{>0}^N} 
\left(\prod_{i=1}^N x_i\right)^{\gamma}
\e^{-r x_1}
\Psi_{\bm{\alpha}}^{\so_{2N+1}}(\bm{x})
\prod_{i=1}^N \frac{\diff x_i}{x_i}
\end{equation}
for all $r>0$, where $\rG_{\bm{\alpha},\gamma}$ is defined by~\eqref{eq:rFlatPolymerNormalization}.
\end{theorem}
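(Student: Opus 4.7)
The plan is to mirror the strategy used for Theorems~\ref{thm:flatWhittakerFormula} and~\ref{thm:hFlatWhittakerFormula}, taking advantage of the decomposition $\rZ_{2N}=\sum_{m=1}^N \rZ_{m,2N-m+1}$ from~\eqref{eq:rFlatPartitionFn=sumPointToPoint} together with the joint-law formula of Lemma~\ref{lemma:rFlatP2PjointLaw}. First I would use these two ingredients to express $\E[\e^{-r\rZ_{2N}}]$ as $1/\rG_{\bm{\alpha},\gamma}$ times an integral over the full family $\{t_{i,j}:(i,j)\in\rI_{2N}\}$ whose integrand is the density appearing in~\eqref{eq:rFlatP2PjointDensity} multiplied by $\e^{-r\sum_{m=1}^{N} t_{m,2N-m+1}}$ coming from the Laplace functional.

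Next I would perform the change of variables $t_{i,j}=1/(rs_{i,j})$ for every $(i,j)\in\rI_{2N}$, which is volume-preserving in logarithmic coordinates. As in the flat-case computation, this inverts each diagonal product $\pi_k(\bm{t})$, so that the rational prefactor $\prod_{k=1}^N\bigl(\pi_{2N-2k+1}(\bm{t})^2/(\pi_{2N-2k+2}(\bm{t})\pi_{2N-2k}(\bm{t}))\bigr)^{-\alpha_k}\pi_0(\bm{t})^{-\gamma}$ becomes $\prod_{k=1}^N\bigl(\pi_{2N-2k+1}(\bm{s})^2/(\pi_{2N-2k+2}(\bm{s})\pi_{2N-2k}(\bm{s}))\bigr)^{\alpha_k}\pi_0(\bm{s})^{\gamma}$, and simultaneously produces the overall prefactor $r^{\sum_{k=1}^N(\alpha_k+\gamma)}$ after one carefully counts diagonal sizes in $\rI_{2N}$. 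The potential transforms by reversing all arrows: the bulk term $(t_{i-1,j}+t_{i,j-1})/t_{i,j}$ becomes $s_{i,j}/s_{i-1,j}+s_{i,j}/s_{i,j-1}$, the term $1/t_{1,1}$ becomes $rs_{1,1}$, and the Laplace weight $\e^{-r\sum_m t_{m,2N-m+1}}$ becomes $\e^{-\sum_m 1/s_{m,2N-m+1}}$, which will serve as the ``wall'' terms in the orthogonal Whittaker integrand.

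Finally I would split the resulting integral by first integrating over the strictly-above-diagonal variables $\{s_{i,j}:(i,j)\in\rI_{2N},\, i<j\}$ with the diagonal $\bm{x}:=(s_{1,1},\dots,s_{N,N})$ frozen, and then over $\bm{x}\in\R_{>0}^N$. Comparing with Definition~\ref{def:soWhittakerFn}, the inner integral is precisely $\Psi^{\so_{2N+1}}_{\bm{\alpha}}(\bm{x})$: the reversed-arrow potential supplies $\spEn$, the factor $\pi_{2N-2k+1}^2/(\pi_{2N-2k+2}\pi_{2N-2k})$ matches $\gtype(\bm{z})_{2k-1}/\gtype(\bm{z})_{2k}$, and the wall contributions $\e^{-1/s_{m,2N-m+1}}$ match exactly the inhomogeneous addends $1/z_{2k-1,k}$ of $\spEn$. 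The surviving factor $\pi_0(\bm{s})^\gamma$ becomes $\bigl(\prod_i x_i\bigr)^{\gamma}$, and the residual $\e^{-rs_{1,1}}$ becomes $\e^{-rx_1}$, yielding~\eqref{eq:rFlatWhittakerFormula}.

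The main conceptual point to verify — and the place where care is needed — is that the geometry of $\rI_{2N}$ above its main diagonal, together with the antidiagonal boundary produced by the Laplace functional, is combinatorially isomorphic to the half-triangular index set of Definition~\ref{def:soWhittakerFn}, so that the reversed-arrow pattern and the inhomogeneous wall terms land in the correct positions. Since the proof of Theorem~\ref{thm:flatWhittakerFormula} already established this matching for each of the two half-triangular wings of the full triangle, the restricted case is essentially automatic: only one wing is present, no second Whittaker function appears, and the diagonal factor $\pi_0(\bm{s})^\gamma$ — previously absorbed into the $(\bm{\alpha},\bm{\beta},\gamma)$ density — now survives as the explicit weight $(\prod_i x_i)^\gamma$ in~\eqref{eq:rFlatWhittakerFormula}.
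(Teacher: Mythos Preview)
Your proposal is correct and matches exactly the approach the paper intends: the paper itself omits the proof, noting that it follows the same steps as Theorems~\ref{thm:flatWhittakerFormula} and~\ref{thm:hFlatWhittakerFormula}, which is precisely the route you outline. The details you sketch---starting from Lemma~\ref{lemma:rFlatP2PjointLaw} and~\eqref{eq:rFlatPartitionFn=sumPointToPoint}, applying the inversion $t_{i,j}=1/(rs_{i,j})$, verifying the power count $r^{\sum_k(\alpha_k+\gamma)}$, and identifying the single half-triangular block as $\Psi_{\bm{\alpha}}^{\so_{2N+1}}$ with $\pi_0(\bm{s})^\gamma$ surviving as $(\prod_i x_i)^\gamma$---are all correct.
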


\section{Point-to-line polymers and contour integrals}
\label{sec:contourIntegrals}

Formulas~\eqref{eq:flatWhittakerFormula} and~\eqref{eq:hFlatWhittakerFormula} express the Laplace transforms of the point-to-line and the point-to-half-line log-gamma polymer partitions functions, respectively, as integrals of Whittaker functions.
In this section we rewrite such integrals as contour integrals of gamma functions.

The integrals appearing in formulas~\eqref{eq:hFlatWhittakerFormula} and~\eqref{eq:flatWhittakerFormula} are analogous to the Bump-Stade identity~\eqref{eq:bumpStade}, where either one or both $\gl_N$-Whittaker functions are replaced with the corresponding orthogonal ones.
However, a closed formula in terms of products and ratios of gamma functions for our integrals does not appear in the literature.
We can still turn our integrals into contour integrals of gamma functions using the $\gl_N$-Whittaker-Plancherel theorem~\ref{thm:plancherel}, combined with the Bump-Stade~\eqref{eq:bumpStade} and  the Ishii-Stade~\eqref{eq:ishiiStade} identities.
The key tool is the following lemma.

\begin{lemma}
\label{lemma:WhittakerTransforms}
The $\gl_n$-Whittaker isometry between the spaces $L^2(\R_{>0}^n, \prod_{i=1}^n \diff x_i / x_i)$ and $L^2_{\sym}(\i\R^n, s_n(\bm{\lambda}) \diff\bm{\lambda})$ defined in Theorem~\ref{thm:plancherel} maps
\begin{enumerate}
\Item
\label{lemma:WhittakerTransforms_f}
\[
f(\bm{x}) := \e^{-r x_1} \Psi^{\gl_n}_{\bm{\alpha}}
\quad\longmapsto\quad
\hat{f}(\bm{\lambda}) := r^{-\sum_{i=1}^n (\lambda_i + \alpha_i)}
\prod_{1\leq i,j\leq n} \!\! \Gamma(\lambda_i + \alpha_j)
\]
for all $r>0$ and $\bm{\alpha}\in\C^n$ such that $\Re(\alpha_j) >0$ for all $j$;
\Item
\label{lemma:WhittakerTransforms_g}
\[
g(\bm{x}) := \left(\prod_{i=1}^n x_i \right)^{-s} \Psi^{\so_{2n+1}}_{\bm{\alpha}}(\bm{x})
\quad\longmapsto\quad
\hat{g}(\bm{\lambda})
:= \frac{\prod_{1\leq i,j\leq n} \Gamma(s-\lambda_i+\alpha_j) \Gamma(s-\lambda_i-\alpha_j)}{\prod_{1\leq i<j\leq n} \Gamma(2s-\lambda_i - \lambda_j)}
\]
for all $s\in\C$ and $\bm{\alpha}\in\C^n$ such that $\Re(s \pm \alpha_j) > 0$ for all $j$.
\end{enumerate}
\end{lemma}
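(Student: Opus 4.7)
The proof proposal is that both transforms follow almost immediately from the Whittaker integral identities of subsection~\ref{sec:Whittaker}, once $L^2$-membership of $f$ and $g$ is established so that the Whittaker transform defined in Theorem~\ref{thm:plancherel} makes sense. The first step for both parts is to verify that $f,g \in L^2(\R_{>0}^n, \prod_{i=1}^n \diff x_i/x_i)$; this is routine using the known asymptotic decay of $\Psi^{\gl_n}_{\bm{\alpha}}$ and $\Psi^{\so_{2n+1}}_{\bm{\alpha}}$ near $0$ and $\infty$, combined with the decay of $\e^{-r x_1}$ (for $f$) and the integrability near $\infty$ afforded by the constraint $\Re(s\pm\alpha_j)>0$ (for $g$, after absorbing the monomial factor into a Whittaker function via~\eqref{eq:glWhittakerFnTranslation}).

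For part~\ref{lemma:WhittakerTransforms_f}, I would write
\[
\hat f(\bm{\lambda})
= \int_{\R_{>0}^n} \e^{-rx_1}\, \Psi^{\gl_n}_{\bm{\alpha}}(\bm{x})\, \Psi^{\gl_n}_{\bm{\lambda}}(\bm{x}) \prod_{i=1}^n \frac{\diff x_i}{x_i}
\]
and apply the Bump-Stade identity~\eqref{eq:bumpStade} with $\bm{\beta}=\bm{\lambda}$. The hypothesis $\Re(\alpha_i+\lambda_j)>0$ required by Theorem~\ref{thm:bumpStade} is satisfied because $\bm{\lambda}\in\i\R^n$ and $\Re(\alpha_j)>0$ by assumption, so the result is immediate.

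For part~\ref{lemma:WhittakerTransforms_g}, the idea is to absorb the monomial prefactor into a shift of the $\gl_n$-Whittaker parameter. Using the translation property~\eqref{eq:glWhittakerFnTranslation} we write
\[
\Big(\prod_{i=1}^n x_i\Big)^{-s} \Psi^{\gl_n}_{\bm{\lambda}}(\bm{x})
= \Psi^{\gl_n}_{\bm{\lambda}-s}(\bm{x})
= \Psi^{\gl_n}_{-(s-\bm{\lambda})}(\bm{x}),
\]
so that
\[
\hat g(\bm{\lambda})
= \int_{\R_{>0}^n} \Psi^{\so_{2n+1}}_{\bm{\alpha}}(\bm{x})\, \Psi^{\gl_n}_{-(s-\bm{\lambda})}(\bm{x}) \prod_{i=1}^n \frac{\diff x_i}{x_i}.
\]
Then I would apply the Ishii-Stade identity~\eqref{eq:ishiiStade} with $\bm{\beta}=s-\bm{\lambda}$: the positivity condition $\Re(\beta_i\pm\alpha_j)>0$ becomes $\Re(s\pm\alpha_j)>0$ (using $\bm{\lambda}\in\i\R^n$), which is precisely the hypothesis. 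Substituting $\beta_i\pm\alpha_j=s-\lambda_i\pm\alpha_j$ and $\beta_i+\beta_j=2s-\lambda_i-\lambda_j$ into the right-hand side of~\eqref{eq:ishiiStade} yields exactly the claimed expression for $\hat g(\bm{\lambda})$.

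The main obstacle I anticipate is the preliminary $L^2$-check, since neither of the ambient estimates for Whittaker functions at $0$ and $\infty$ has been spelled out in the excerpt; I would either cite known bounds from \cite{stade02,ishiiStade13} or derive them by induction on $n$ through the recursions~\eqref{eq:glWhittakerRecurs} and~\eqref{eq:soWhittakerRecurs}. Once $f,g\in L^2$ is secured, the actual computation of the transforms is essentially a one-line invocation of Theorems~\ref{thm:bumpStade} and~\ref{thm:ishiiStade}, with the translation identity~\eqref{eq:glWhittakerFnTranslation} being the only manipulation of note.
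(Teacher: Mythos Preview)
Your computation of the transforms is exactly what the paper does: Bump--Stade for part~\ref{lemma:WhittakerTransforms_f}, and the translation identity~\eqref{eq:glWhittakerFnTranslation} followed by Ishii--Stade for part~\ref{lemma:WhittakerTransforms_g}. The only substantive difference is how the $L^2$-membership is verified. You propose to estimate $f$ and $g$ directly via asymptotic bounds on $\Psi^{\gl_n}_{\bm{\alpha}}$ and $\Psi^{\so_{2n+1}}_{\bm{\alpha}}$, which you correctly flag as requiring either external input or an inductive argument through the recursions. The paper instead works on the transform side: it first writes down the candidate $\hat f(\bm\lambda)$ (resp.\ $\hat g(\bm\lambda)$) via the integral identities, and then checks that $|\hat f(\bm\lambda)|^2 s_n(\bm\lambda)$ (resp.\ $|\hat g(\bm\lambda)|^2 s_n(\bm\lambda)$) is integrable on $\i\R^n$ using only Stirling's approximation~\eqref{eq:StirlingApprox} for the gamma function and the elementary estimate $\sum_{i<j}|\lambda_i\pm\lambda_j|\le (n-1)\sum_i|\lambda_i|$. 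Since the Whittaker transform is an isometry, $\hat f,\hat g\in L^2_{\sym}(\i\R^n,s_n(\bm\lambda)\diff\bm\lambda)$ is equivalent to $f,g\in L^2(\R_{>0}^n,\prod\diff x_i/x_i)$. This route is entirely self-contained (no Whittaker asymptotics needed) and reduces the analytic work to a few lines of gamma-function bookkeeping; your route would also work, but at the cost of importing or re-deriving decay estimates that the paper never states.
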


\begin{proof}
\begin{enumerate}
\item
Assuming that $f$ is square-integrable, the Bump-Stade identity~\eqref{eq:bumpStade} implies that $\hat{f}$ is indeed the $\gl_n$-Whittaker transform of $f$.
To prove that $f$ belongs to the space $L^2(\R_{>0}^n, \prod_{i=1}^n \diff x_i / x_i)$, we will show instead the equivalent statement that $\hat{f}$ is in $L^2_{\sym}(\i\R^n, s_n(\bm{\lambda}) \diff\bm{\lambda})$.
It is clear that $\hat{f}$ is a symmetric function, and has no poles, thanks to the assumption that each $\alpha_j$ has positive real part.
Recalling the Stirling approximation of the gamma function
\begin{equation}
\label{eq:StirlingApprox}
\abs{\Gamma(x+iy)}
\sim
\sqrt{2\pi} \abs{y}^{x-\frac{1}{2}} \e^{-\frac{\pi}{2} \abs{y}}
\qquad
\text{as } \abs{y} \to \infty \, ,
\end{equation}
we can compute the asymptotics of $\abs{\hat{f}(\bm{\lambda})}^2 s_n(\bm{\lambda})$ as $\abs{\lambda_i} \to\infty$ for $1\leq i\leq n$ and $\abs{\lambda_i - \lambda_j} \to\infty$ for all $i< j$ (which is when $s_n(\bm{\lambda})$ has the worst diverging behavior).
Denoting by the symbol $\sim$ asymptotic behavior up to multiplicative constants and powers, we have that
\[
\begin{split}
\abs{\hat{f}(\bm{\lambda})}^2 s_n(\bm{\lambda})
&= \frac{r^{-2\sum_{i} \Re(\alpha_i)}\prod_{i,j} \abs{\Gamma(\lambda_i + \alpha_j)}^2}{(2\pi)^n n! \prod_{i\neq j} \abs{\Gamma(\lambda_i-\lambda_j)}}
\sim \frac{\prod_{i,j}
\e^{-\pi\abs{\lambda_i}}}
{\prod_{i< j} \e^{-\pi\abs{\lambda_i-\lambda_j}}} \\
&= \exp\left\{-\pi n \sum_{i} \abs{\lambda_i} + \pi \sum_{i<j} \abs{\lambda_i - \lambda_j} \right\}
\leq \exp\left\{- \pi \sum_i \abs{\lambda_i} \right\} \, ,
\end{split}
\]
where for the last inequality we have used the following rough estimate:
\begin{equation}
\label{eq:roughEstimate}
\sum_{i<j} \abs{\lambda_i \pm \lambda_j}
\leq \sum_{i<j} \left(\abs{\lambda_i} + \abs{\lambda_j}\right)
= (n-1) \sum_i \abs{\lambda_i} \, .
\end{equation}
This proves that $\abs{\hat{f}(\bm{\lambda})}^2 s_n(\bm{\lambda})$ is integrable on $\i\R^n$.
\item
Assuming the integrability properties, the fact that $\hat{g}$ is indeed the $\gl_n$-Whittaker transform of $g$ follows from property~\eqref{eq:glWhittakerFnTranslation} and Ishii-Stade identity~\eqref{eq:ishiiStade}.
Therefore, we can just reduce ourselves to prove that $\hat{g}$ belongs to $L^2_{\sym}(\i\R^n, s_n(\bm{\lambda}) \diff\bm{\lambda})$.
Again, $\hat{g}$ is a symmetric function, and has no poles, thanks to the assumption that $\Re(s \pm \alpha_j) > 0$ for all $j$.
Using~\eqref{eq:StirlingApprox} and~\eqref{eq:roughEstimate}, we compute the asymptotics (up to multiplicative constants and powers) of $\abs{\hat{g}(\bm{\lambda})}^2 s_n(\bm{\lambda})$ as $\abs{\lambda_i} \to\infty$ for all $i$ and $\abs{\lambda_i \pm \lambda_j} \to\infty$ for all $i< j$:
\[
\begin{split}
&\quad \abs{\hat{g}(\bm{\lambda})}^2 s_n(\bm{\lambda}) \\
&= \frac{\prod_{i,j}
\abs{\Gamma(s-\lambda_i+\alpha_j)
\Gamma(s-\lambda_i-\alpha_j)}^2}
{(2\pi)^n n! \prod_{i<j}
\abs{\Gamma(2s-\lambda_i - \lambda_j)}^2
\prod_{i\neq j}
\abs{\Gamma(\lambda_i - \lambda_j)}}
\sim \frac{\prod_{i,j}
\e^{-2\pi\abs{\lambda_i}}}
{\prod_{i<j}
\e^{-\pi\abs{\lambda_i + \lambda_j}}
\e^{-\pi\abs{\lambda_i-\lambda_j} }} \\
&= \exp\left\{-2n\pi \sum_{i} \abs{\lambda_i} + \pi \sum_{i<j} \abs{\lambda_i + \lambda_j} + \pi \sum_{i<j} \abs{\lambda_i - \lambda_j} \right\}
\leq \exp\left\{-2 \pi \sum_i \abs{\lambda_i} \right\} \, ,
\end{split}
\]
which proves the integrability of $\abs{\hat{g}(\bm{\lambda})}^2 s_n(\bm{\lambda})$ on $\i \R^n$.
\qedhere
\end{enumerate}
\end{proof}

As a consequence of this lemma, we will prove the contour integral formulas for the point-to-line and point-to-half-line log-gamma polymer partition functions, in subsection~\ref{subsec:flatPolymerWhittaker} and~\ref{subsec:hFlatPolymerWhittaker} respectively.

\subsection{Point-to-line polymer}
\label{subsec:flatPolymerContour}

In the following theorem we derive a contour integral formula for the Laplace transform of the point-to-line log-gamma polymer partition function.
The proof consists of applying the Whittaker-Plancherel theorem~\ref{thm:plancherel}, in a two-step procedure, to the integral of Whittaker functions in~\eqref{eq:flatWhittakerFormula}.

\begin{theorem}
\label{thm:flatContourInt}
The Laplace transform of the point-to-line partition function $Z_{2N}$ for the
$(\bm{\alpha},\bm{\beta},\gamma)$-log-gamma polymer is given by
\begin{equation}
\label{eq:flatContourInt}
\begin{split}
\E\left[\e^{- r \fZ_{2N}}\right] 
= &\frac{1}
{\fG_{\bm{\alpha},\bm{\beta},\gamma}}
\int_{(\epsilon + \i\R)^N} 
s_N(\bm{\rho}) \diff \bm{\rho}
\int_{(\delta + \i\R)^N}
s_N(\bm{\lambda}) \diff \bm{\lambda} \,\,
r^{-\sum_{k=1}^N (\lambda_k +\rho_k -\alpha_k -\beta_k +\gamma)}  \\
&\times \frac{\prod_{1\leq i,j\leq N}
\Gamma(\lambda_i + \rho_j + \gamma)
\Gamma(\lambda_i + \alpha_j)
\Gamma(\lambda_i - \alpha_j)
\Gamma(\rho_i + \beta_j)
\Gamma(\rho_i - \beta_j)}
{\prod_{1\leq i<j\leq N}
\Gamma(\lambda_i+\lambda_j)
\Gamma(\rho_i + \rho_j) }
\end{split}
\end{equation}
for all $r>0$, where $\fG_{\bm{\alpha},\bm{\beta},\gamma}$ is the constant defined in~\eqref{eq:flatPolymerNormalization}, $s_N(\bm{\lambda}) \diff \bm{\lambda}$ is the Sklyanin measure as in~\eqref{eq:sklyaninMeasure}, and $\delta, \epsilon$ are chosen so that $\delta > \alpha_j$ and $\epsilon > \beta_j$ for all $j$.
Furthermore, the multiple contour integral in~\eqref{eq:flatContourInt} is absolutely convergent.
\end{theorem}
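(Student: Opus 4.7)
The strategy is to start from the orthogonal-Whittaker integral formula \eqref{eq:flatWhittakerFormula} and expand each of the two $\so_{2N+1}$-Whittaker functions as a contour integral of $\gl_N$-Whittaker functions, using the $\gl_N$-Plancherel inversion (Theorem~\ref{thm:plancherel}) together with the explicit Whittaker transforms computed in Lemma~\ref{lemma:WhittakerTransforms}(ii). The remaining $\bm x$-integral pairs two $\gl_N$-Whittaker functions against $\e^{-rx_1}$, and is evaluated in closed form by the Bump-Stade identity (Theorem~\ref{thm:bumpStade}).

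Concretely, I would split $\gamma=\gamma_1+\gamma_2$, with $\gamma_1$ sufficiently negative so that $-\gamma_1>\max_j\alpha_j$ and $\gamma_2$ chosen so that $-\gamma_2>\max_j\beta_j$, and set
\[
F(\bm x):=\Big(\prod_{i=1}^N x_i\Big)^{\gamma_1}\Psi^{\so_{2N+1}}_{\bm\alpha}(\bm x),\qquad G(\bm x):=\Big(\prod_{i=1}^N x_i\Big)^{\gamma_2}\Psi^{\so_{2N+1}}_{\bm\beta}(\bm x),
\]
so that the integrand of \eqref{eq:flatWhittakerFormula} equals $\e^{-rx_1}F(\bm x)G(\bm x)$. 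Lemma~\ref{lemma:WhittakerTransforms}(ii), applied with $s=-\gamma_1$ and $s=-\gamma_2$, then identifies $\hat F(\bm\rho)$ and $\hat G(\bm\lambda)$ as explicit ratios of gamma functions in $\bm\alpha$ and $\bm\beta$ respectively. Plancherel inversion yields $F(\bm x)=\int_{\epsilon+\i\R^N}\hat F(\bm\rho)\Psi^{\gl_N}_{\bm\rho}(\bm x)s_N(\bm\rho)\diff\bm\rho$ on a suitable vertical contour, and analogously for $G$ on a contour $\delta+\i\R^N$.

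Substituting both expansions into \eqref{eq:flatWhittakerFormula}, exchanging the order of integration (justified by the $L^2$-estimates carried out in the proof of Lemma~\ref{lemma:WhittakerTransforms}), and evaluating the resulting inner $\bm x$-integral
\[
\int_{\R_{>0}^N}\e^{-rx_1}\Psi^{\gl_N}_{\bm\rho}(\bm x)\Psi^{\gl_N}_{\bm\lambda}(\bm x)\prod_{i=1}^N\frac{\diff x_i}{x_i}=r^{-\sum_k(\rho_k+\lambda_k)}\prod_{i,j=1}^N\Gamma(\rho_i+\lambda_j)
\]
by Bump-Stade produces the double contour integral \eqref{eq:flatContourInt}, after collecting the various powers of $r$ and regrouping the resulting gamma-function factors. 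The shifts of the contours from the imaginary axis to $\epsilon+\i\R^N$ and $\delta+\i\R^N$ are legitimate because the conditions $\delta>\alpha_j$ and $\epsilon>\beta_j$ precisely guarantee that no pole of any numerator $\Gamma$-factor is crossed during the deformation.

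Absolute convergence of the final double contour integral will be verified by the same Stirling argument as in the proof of Lemma~\ref{lemma:WhittakerTransforms}: every numerator $\Gamma$-factor decays like $\exp(-\tfrac{\pi}{2}|\Im z|)$, producing a net exponential decay in $\sum_i(|\Im\lambda_i|+|\Im\rho_i|)$ which dominates the at most polynomial growth coming from the denominator $\Gamma$-factors and the Sklyanin measures. The main technical obstacle will be the rigorous justification of the Fubini exchanges and contour shifts in the two Plancherel inversions. The extra parameter $\gamma>0$ introduced in Definition~\ref{def:flatLogGammaMeasure} is precisely what makes this possible: it allows a splitting $\gamma=\gamma_1+\gamma_2$ for which both $F$ and $G$ satisfy the integrability hypotheses of Lemma~\ref{lemma:WhittakerTransforms}(ii), and without which the intermediate Whittaker transforms $\hat F$, $\hat G$ would fail to exist.
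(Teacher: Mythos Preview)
Your approach has a fatal arithmetic error in the splitting step. You require $-\gamma_1>\max_j\alpha_j$ and $-\gamma_2>\max_j\beta_j$; since $\alpha_j,\beta_j>0$, this forces both $\gamma_1<0$ and $\gamma_2<0$, hence $\gamma=\gamma_1+\gamma_2<0$. But in the model $\gamma\ge 0$ (Definition~\ref{def:flatLogGammaMeasure}), so no such splitting exists. Consequently at least one of $F,G$ fails the hypothesis $\Re(s\pm\alpha_j)>0$ of Lemma~\ref{lemma:WhittakerTransforms}\ref{lemma:WhittakerTransforms_g}, and its Whittaker transform is not defined. This is not a technicality that can be repaired by contour shifts after the fact: the function $(\prod_i x_i)^{-s}\Psi^{\so_{2N+1}}_{\bm\alpha}(\bm x)$ is simply not in $L^2$ unless $s$ dominates all the $\alpha_j$'s, so keeping $\e^{-rx_1}$ as a separate factor and trying to make both $F$ and $G$ square-integrable via powers of $\prod_i x_i$ alone cannot succeed.

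The paper's proof is asymmetric and hinges on a different idea. It groups the factor $\e^{-rx_1}$ with one of the orthogonal Whittaker functions, setting
\[
f(\bm x)=\Big(\prod_i x_i\Big)^{\gamma+\epsilon}\e^{-rx_1}\Psi^{\so_{2N+1}}_{\bm\alpha}(\bm x),
\qquad
g(\bm x)=\Big(\prod_i x_i\Big)^{-\epsilon}\Psi^{\so_{2N+1}}_{\bm\beta}(\bm x).
\]
The transform $\hat g$ is handled by Lemma~\ref{lemma:WhittakerTransforms}\ref{lemma:WhittakerTransforms_g} as you propose. For $f$ that lemma does not apply; instead the paper observes that $\|f\|_2^2$ is, up to a constant, the Laplace transform of the point-to-line $(\bm\alpha,\bm\alpha,2(\gamma+\epsilon))$-log-gamma polymer, hence finite by Theorem~\ref{thm:flatWhittakerFormula} itself. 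This self-referential $L^2$ estimate is the step you are missing, and it is precisely here that $\gamma\ge 0$ is used (to ensure $2(\gamma+\epsilon)\ge 0$ so that the auxiliary polymer is well-defined). With $f\in L^2$ established, the Plancherel isometry is applied once, and $\hat f(\bm\rho-\epsilon)$ is then computed by a \emph{second} Plancherel step using both parts of Lemma~\ref{lemma:WhittakerTransforms}; Bump--Stade enters only at this inner stage.

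Your sketch of the absolute-convergence argument is also inaccurate: the reciprocals $\Gamma(\lambda_i+\lambda_j)^{-1}$, $\Gamma(\rho_i+\rho_j)^{-1}$ and the Sklyanin factors $\Gamma(\lambda_i-\lambda_j)^{-1}$ grow \emph{exponentially} along vertical lines, not polynomially. The paper balances these against the numerator decay by exploiting symmetry in the $\lambda_i$'s and $\rho_i$'s to impose an ordering and then bounding $\sum_{i\ne j}|L_i-L_j|+\sum_{i\ne j}|R_i-R_j|$ by $2\sum_{i,j}|L_i+R_j|$, which is not the estimate~\eqref{eq:roughEstimate} alone.
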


\begin{proof}
The integral appearing in formula~\eqref{eq:flatWhittakerFormula} can be written as
\begin{equation}
\label{eq:plancherel1step}
\int_{\R_{>0}^N}
\e^{-r x_1}
\Psi_{\bm{\alpha}}^{\so_{2N+1}}(\bm{x})
\Psi_{\bm{\beta}}^{\so_{2N+1}}(\bm{x})
\left(\prod_{i=1}^N x_i\right)^{\gamma}
\prod_{i=1}^N \frac{\diff x_i}{x_i}
= \int_{\R_{>0}^N}
f(\bm{x}) g(\bm{x})
\prod_{i=1}^N \frac{\diff x_i}{x_i} \, ,
\end{equation}
where
\[
f(\bm{x}) :=
\left(\prod_{i=1}^N x_i\right)^{\gamma+\epsilon}
\e^{-r x_1}
\Psi_{\bm{\alpha}}^{\so_{2N+1}}(\bm{x}) \, ,
\qquad\quad
g(\bm{x}) :=
\left(\prod_{i=1}^N x_i\right)^{-\epsilon}
\Psi_{\bm{\beta}}^{\so_{2N+1}}(\bm{x}) \, .
\]
By Lemma~\ref{lemma:WhittakerTransforms}-\ref{lemma:WhittakerTransforms_g}, since $\epsilon > \beta_j > 0$ for all $j$, $g$ belongs to $L^2(\R_{>0}^N, \prod_{i=1}^N \diff x_i/x_i)$ and satisfies
\[
\overline{\hat{g}(\bm{\rho})}
= \frac{\prod_{1\leq i,j\leq N} \Gamma(\epsilon+\rho_i+\beta_j) \Gamma(\epsilon+\rho_i-\beta_j)}{\prod_{1\leq i<j\leq N} \Gamma(2\epsilon +\rho_i + \rho_j)}
\]
for all $\bm{\rho}\in\i\R^N$ (recall that $\overline{\Gamma(z)} = \Gamma(\overline{z})$).
On the other hand, applying Theorem~\ref{thm:flatWhittakerFormula} in the case where $\bm{\alpha}=\bm{\beta}$, $\gamma$ is replaced with $2(\gamma+\epsilon)$ and $r$ is replaced with $2r$, we obtain that 
\begin{equation}
\label{eq:estimationL2}
\int_{\R_{>0}^N} \abs{f(\bm{x})}^2 \prod_{i=1}^N \frac{\diff x_i}{x_i}
= \frac{\fG_{\bm{\alpha},\bm{\alpha},2(\gamma+\epsilon)}}
{(2r)^{2\sum_{k=1}^N (\alpha_k +\gamma + \epsilon)}}
\E\left[\e^{-2r \tilde{Z}_{2N}}\right]
< \infty \, ,
\end{equation}
where $\tilde{Z}_{2N}$ is the point-to-line partition function of the $(\bm{\alpha},\bm{\alpha},2(\gamma+\epsilon))$-log-gamma polymer.
This proves that $f$ also belongs to $L^2(\R_{>0}^N, \prod_{i=1}^N \diff x_i/x_i)$, so we can apply the $\gl_N$-Whittaker-Plancherel theorem to~\eqref{eq:plancherel1step} and obtain:
\begin{equation}
\begin{split}
\label{eq:plancherel2step}
&\int_{\R_{>0}^N} 
\left(\prod_{i=1}^N x_i\right)^{\gamma}
\e^{-r x_1}
\Psi_{\bm{\alpha}}^{\so_{2N+1}}(\bm{x})
\Psi_{\bm{\beta}}^{\so_{2N+1}}(\bm{x})
\prod_{i=1}^N \frac{\diff x_i}{x_i} \\
= &\int_{(\epsilon +\i\R)^N}
\hat{f}(\bm{\rho}-\epsilon)
\frac{\prod_{1\leq i,j\leq N}
\Gamma(\rho_i+\beta_j)
\Gamma(\rho_i-\beta_j)}
{\prod_{1\leq i<j\leq N}
\Gamma(\rho_i + \rho_j)}
s_N(\bm{\rho}) \diff \bm{\rho}
\, ,
\end{split}
\end{equation}
after the change of variables $\bm{\rho} \mapsto \bm{\rho}-\epsilon$ (notice that $s_N(\bm{\rho} - \epsilon) = s_N(\bm{\rho})$). To compute $\hat{f}(\bm{\rho}-\epsilon)$, we first notice that by property~\eqref{eq:glWhittakerFnTranslation}
\[
\begin{split}
\hat{f}(\bm{\rho}-\epsilon)
&= \int_{\R_{>0}^N}
\left(\prod_{i=1}^N x_i\right)^{\gamma+\epsilon}
\e^{-r x_1}
\Psi_{\bm{\alpha}}^{\so_{2N+1}}(\bm{x})
\Psi_{\bm{\rho}-\epsilon}^{\gl_N}(\bm{x})
\prod_{i=1}^N \frac{\diff x_i}{x_i} \\
&= \int_{\R_{>0}^N}
\left[ \e^{-r x_1}
\Psi_{\bm{\rho}+\gamma+\delta}^{\gl_N}(\bm{x}) \right]
\left[
\left(\prod_{i=1}^N x_i\right)^{-\delta}
\Psi_{\bm{\alpha}}^{\so_{2N+1}}(\bm{x}) \right]
\prod_{i=1}^N \frac{\diff x_i}{x_i} \, .
\end{split}
\]
By Lemma~\ref{lemma:WhittakerTransforms}, since $\gamma\geq 0$, $\delta > \alpha_j>0$ and $\Re(\rho_j)=\epsilon$ for all $j$, the two functions in the square brackets belong to $L^2(\R_{>0}^N, \prod_{i=1}^N \diff x_i/x_i)$, with $\gl_N$-Whittaker transforms given by the same lemma.
Applying the Whittaker-Plancherel theorem again, we then obtain
\[
\hat{f}(\bm{\rho}-\epsilon)
= \int_{(\delta+\i\R)^N}
r^{-\sum_{i=1}^N(\lambda_i + \rho_i + \gamma)} \frac{
\prod_{1\leq i,j\leq N}
\Gamma(\lambda_i + \rho_j + \gamma) \Gamma(\lambda_i + \alpha_j)
\Gamma(\lambda_i - \alpha_j)}
{\prod_{1\leq i<j\leq N}
\Gamma(\lambda_i + \lambda_j)}
s_N(\bm{\lambda}) \diff \bm{\lambda} \, ,
\]
after the change of variables $\bm{\lambda} \mapsto \bm{\lambda}-\delta$.
Plugging the latter formula into~\eqref{eq:plancherel2step} and combining with~\eqref{eq:flatWhittakerFormula} concludes the proof of~\eqref{eq:flatContourInt}.

Finally, we are going to show that the integral in \eqref{eq:flatContourInt} 
is absolutely convergent, so that the order of integration with respect to $\bm{\lambda}$ and $\bm{\rho}$ does not matter.
Note first that the integrand has no poles thanks to the choice of $\delta$ and $\epsilon$.
It is then sufficient to check the integrability as $\abs{L_i},\abs{R_i}\to\infty$ for all $i$ and $\abs{L_i \pm L_j}, \abs{R_i \pm R_j} \to\infty$ for all $i< j$, where $\bm{L}:=\Im(\bm{\lambda})$ and $\bm{R}:=\Im(\bm{\rho})$.
Using the asymptotics of the gamma function~\eqref{eq:StirlingApprox}, we may reduce ourselves to check that
\[
\frac{\prod_{i,j}
\e^{-\frac{\pi}{2} \abs{L_i +R_j}}
\e^{-\pi \abs{L_i}}
\e^{-\pi \abs{R_i}}}
{\prod_{i<j}
\e^{-\frac{\pi}{2} \abs{L_i+L_j}}
\e^{-\frac{\pi}{2} \abs{R_i+R_j}} }
\prod_{i \neq j}
\e^{\frac{\pi}{2} \abs{L_i - L_j}}
\e^{\frac{\pi}{2} \abs{R_i-R_j}}
\]
is integrable for $(\bm{L},\bm{R})\in\R^{2N}$.
The latter function equals $\e^{\pi/2}$ raised to
\begin{equation}
\label{eq:absConvergenceFlatContourIntegral}
-\sum_{i,j}
\left(\abs{L_i + R_j} + 2\abs{L_i} + 2\abs{R_i} \right)
+ \sum_{i<j}
\left(
\abs{L_i + L_j}
+ \abs{R_i + R_j} \right)
+\sum_{i \neq j}
\left(\abs{L_i - L_j}
+ \abs{R_i - R_j} \right) \, .
\end{equation} 
At this stage, since the above expression is symmetric both w.r.t.\ the variables $L_i$'s and w.r.t.\ the variables $R_j$'s, we may assume that
\[
L_1\geq L_2 \geq \dots \geq L_N 
\quad\qquad\text{and}\quad\qquad
R_1 \leq R_2 \leq \dots \leq R_N \, .
\]
This will then allow the bound
\[
\begin{split}
&\sum_{i \neq j} \left(\abs{L_i - L_j}
+ \abs{R_i - R_j} \right)
= 2 \sum_{i<j} (L_i - L_j + R_j - R_i )
= 2 \sum_{i<j} \abs{(L_i+R_j)
- (R_i+L_j)} \\
= &\sum_{i,j} \abs{(L_i+R_j)
- (R_i+L_j)}
\leq \sum_{i,j} \abs{L_i + R_j}
+ \sum_{i,j} \abs{R_i + L_j}
= 2 \sum_{i,j} \abs{L_i + R_j} \, .
\end{split}
\]
Using the latter estimate and the one given in~\eqref{eq:roughEstimate}, we obtain
\[
\begin{split}
\eqref{eq:absConvergenceFlatContourIntegral}
&\leq
-\sum_{i,j} \abs{L_i + R_j}
-2N \sum_i \left(\abs{L_i}
+\abs{R_i} \right)
+(N-1)\sum_i \left(\abs{L_i} + \abs{R_i} \right)
+2\sum_{i,j} \abs{L_i + R_j} \\
&= (-2N+N-1)\sum_i \left(\abs{L_i} + \abs{R_i} \right)
+ \sum_{i,j} \abs{L_i+R_j} \\
&\leq (-N-1)\sum_i \left(\abs{L_i} + \abs{R_i} \right)
+ N\sum_{i} \abs{L_i}
+ N\sum_{j} \abs{R_j} \\
&= -
\sum_i \left(\abs{L_i} + \abs{R_i}\right) \, ,
\end{split}
\]
hence $\e^{\pi/2}$ raised to~\eqref{eq:absConvergenceFlatContourIntegral} is integrable for $(\bm{L},\bm{R})\in\R^{2N}$ as desired.
\end{proof}

\subsection{Point-to-half-line polymer}
\label{subsec:hFlatPolymerContour}

The proof of the contour integral formula for the Laplace transform of the point-to-half-line partition function is simpler, as it requires to apply the Whittaker-Plancherel theorem only once.

\begin{theorem}
\label{thm:hFlatContourInt}
The Laplace transform of the point-to-half-line partition function $\hZ_{2N}$ for the $(\bm{\alpha},\bm{\beta})$-log-gamma polymer is given by
\begin{equation}
\label{eq:hFlatContourInt}
\begin{split}
\E\left[\e^{- r \hZ_{2N}}\right] 
= \, &\frac{1}
{\hG_{\bm{\alpha},\bm{\beta}}}
\int_{(\delta + \i\R)^N}
s_N(\bm{\lambda}) \diff \bm{\lambda}
\,\,
r^{-\sum_{k=1}^N (\lambda_k-\alpha_k)} \\
&\times \frac{\prod_{1\leq i,j\leq N}
\Gamma(\lambda_i + \alpha_j)
\Gamma(\lambda_i - \alpha_j)
\Gamma(\lambda_i + \beta_j)}
{\prod_{1\leq i<j\leq N}
\Gamma(\lambda_i+\lambda_j) }
\end{split}
\end{equation}
for all $r>0$, where $\hG_{\bm{\alpha},\bm{\beta}}$ is the constant defined in~\eqref{eq:hFlatPolymerNormalization}, $s_N(\bm{\lambda}) \diff \bm{\lambda}$ is the Sklyanin measure as in~\eqref{eq:sklyaninMeasure}, and $\delta$ is chosen so that $\delta > \alpha_j$ for all $j$.
\end{theorem}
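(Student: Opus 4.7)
The plan is to apply the $\gl_{N}$-Whittaker-Plancherel theorem (Theorem~\ref{thm:plancherel}) directly to the integral appearing in formula~\eqref{eq:hFlatWhittakerFormula}. Unlike the flat case, only a single application of the Plancherel identity will be needed here, since~\eqref{eq:hFlatWhittakerFormula} already contains one $\gl_{N}$-Whittaker function; the role of the Plancherel theorem is simply to convert the $\so_{2N+1}$-Whittaker function into gamma factors via the Ishii-Stade identity, which is exactly the content of Lemma~\ref{lemma:WhittakerTransforms}-\ref{lemma:WhittakerTransforms_g}.

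First I would split the integrand in~\eqref{eq:hFlatWhittakerFormula} as a product of two real functions
\[
f(\bm{x}) := \Big(\prod_{i=1}^N x_i\Big)^{\delta}\e^{-rx_1}\Psi_{\bm{\beta}}^{\gl_N}(\bm{x}),
\qquad
g(\bm{x}) := \Big(\prod_{i=1}^N x_i\Big)^{-\delta}\Psi_{\bm{\alpha}}^{\so_{2N+1}}(\bm{x}).
\]
By property~\eqref{eq:glWhittakerFnTranslation}, $f(\bm{x}) = \e^{-rx_1}\Psi_{\bm{\beta}+\delta}^{\gl_N}(\bm{x})$, so Lemma~\ref{lemma:WhittakerTransforms}-\ref{lemma:WhittakerTransforms_f} (with $\bm{\alpha}$ in the lemma replaced by $\bm{\beta}+\delta$, whose components have positive real parts) ensures $f\in L^2(\R_{>0}^N,\prod_i \diff x_i/x_i)$ and gives
\[
\hat{f}(\bm{\lambda}) = r^{-\sum_i(\lambda_i+\beta_i+\delta)}\prod_{i,j=1}^N\Gamma(\lambda_i+\beta_j+\delta).
\]
Analogously, since $\delta>\alpha_j>0$ for all $j$, Lemma~\ref{lemma:WhittakerTransforms}-\ref{lemma:WhittakerTransforms_g} (with $s=\delta$) ensures $g\in L^2$ and yields, for $\bm{\lambda}\in\i\R^N$ (so that $\overline{\lambda_i}=-\lambda_i$),
\[
\overline{\hat{g}(\bm{\lambda})}=\frac{\prod_{i,j=1}^N\Gamma(\delta+\lambda_i+\alpha_j)\Gamma(\delta+\lambda_i-\alpha_j)}{\prod_{1\le i<j\le N}\Gamma(2\delta+\lambda_i+\lambda_j)}.
\]

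The Plancherel identity of Theorem~\ref{thm:plancherel}, applied to the real functions $f$ and $g$, then gives
\[
\int_{\R_{>0}^N}\e^{-rx_1}\Psi_{\bm{\alpha}}^{\so_{2N+1}}(\bm{x})\Psi_{\bm{\beta}}^{\gl_N}(\bm{x})\prod_{i=1}^N\frac{\diff x_i}{x_i}
=\int_{\i\R^N}\hat{f}(\bm{\lambda})\,\overline{\hat{g}(\bm{\lambda})}\,s_N(\bm{\lambda})\diff\bm{\lambda}.
\]
The final step is the contour shift $\bm{\lambda}\mapsto\bm{\lambda}-\delta$, which moves the integration contour from $\i\R^N$ to $(\delta+\i\R)^N$ and uses the translation invariance of the Sklyanin measure $s_N$. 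After this shift, $\hat{f}(\bm{\lambda}-\delta)=r^{-\sum_i(\lambda_i+\beta_i)}\prod_{i,j}\Gamma(\lambda_i+\beta_j)$ and $\overline{\hat{g}(\bm{\lambda}-\delta)}$ produces the factor $\prod_{i,j}\Gamma(\lambda_i+\alpha_j)\Gamma(\lambda_i-\alpha_j)/\prod_{i<j}\Gamma(\lambda_i+\lambda_j)$. Multiplying by the prefactor $r^{\sum_k(\alpha_k+\beta_k)}/\hG_{\bm{\alpha},\bm{\beta}}$ from~\eqref{eq:hFlatWhittakerFormula} and combining the powers of $r$ into $r^{-\sum_k(\lambda_k-\alpha_k)}$ produces the announced formula~\eqref{eq:hFlatContourInt}.

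The main technical point, as in the proof of Theorem~\ref{thm:flatContourInt}, is the $L^2$-integrability of $f$. Whereas in the flat case this required invoking Theorem~\ref{thm:flatWhittakerFormula} recursively (see~\eqref{eq:estimationL2}), here it follows more directly: $\|f\|_{L^2}^2$ is precisely the integral computed by the Bump-Stade identity~\eqref{eq:bumpStade} with parameters $(\bm{\beta}+\delta,\bm{\beta}+\delta)$ and exponential rate $2r$, which is finite since $\Re(\beta_i+\beta_j+2\delta)>0$. The pole-free choice of $\delta>\alpha_j$ ensures that no residues are crossed during the contour shift, so the identification is valid throughout.
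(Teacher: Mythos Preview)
Your proof is correct and follows essentially the same approach as the paper: both split the integrand of~\eqref{eq:hFlatWhittakerFormula} into the same pair $f=\e^{-rx_1}\Psi^{\gl_N}_{\bm{\beta}+\delta}$ and $g=(\prod_i x_i)^{-\delta}\Psi^{\so_{2N+1}}_{\bm{\alpha}}$, invoke Lemma~\ref{lemma:WhittakerTransforms} for their Whittaker transforms, apply the Plancherel isometry, and then shift $\bm{\lambda}\mapsto\bm{\lambda}-\delta$. Your closing remark that $\|f\|_{L^2}^2$ is exactly a Bump--Stade integral is a nice alternative justification, though the paper simply relies on the $L^2$ membership already established in Lemma~\ref{lemma:WhittakerTransforms}-\ref{lemma:WhittakerTransforms_f}.
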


\begin{proof}
Multiplying and dividing the integrand by $(\prod_{i=1}^N x_i)^{\delta}$ and using property~\eqref{eq:glWhittakerFnTranslation}, we can write the integral appearing in formula~\eqref{eq:hFlatWhittakerFormula} as
\[
\int_{\R_{>0}^N} \e^{-r x_1}
\Psi_{\bm{\alpha}}^{\so_{2N+1}}(\bm{x})
\Psi_{\bm{\beta}}^{\gl_N}(\bm{x})
\prod_{i=1}^N \frac{\diff x_i}{x_i}
= \int_{\R_{>0}^N}
f(\bm{x}) g(\bm{x})
\prod_{i=1}^N \frac{\diff x_i}{x_i} \, ,
\]
where
\[
f(\bm{x}):= \e^{-r x_1}
\Psi_{\bm{\beta}+\delta}^{\gl_N}(\bm{x}) \, ,
\qquad\quad
g(\bm{x}):=
\left(\prod_{i=1}^N x_i\right)^{-\delta}
\Psi_{\bm{\alpha}}^{\so_{2N+1}}(\bm{x}) \, .
\]
Applying now the isometry of Theorem~\ref{thm:plancherel} to the $L^2$-inner product of $f$ and $g$, whose $\gl_N$-Whittaker transforms have been computed in Lemma~\ref{lemma:WhittakerTransforms}, we obtain:
\[
\begin{split}
&\quad\, \int_{\R_{>0}^N} \e^{-r x_1}
\Psi_{\bm{\alpha}}^{\so_{2N+1}}(\bm{x})
\Psi_{\bm{\beta}}^{\gl_N}(\bm{x})
\prod_{i=1}^N \frac{\diff x_i}{x_i} \\
&= \int_{\i\R^N}
r^{-\sum_{i=1}^N (\lambda_i+\beta_i+\delta)} 
\frac{\prod_{1\leq i,j\leq N}
\Gamma(\delta + \lambda_i + \alpha_j)
\Gamma(\delta + \lambda_i - \alpha_j)
\Gamma(\lambda_i + \beta_j + \delta)}
{\prod_{1\leq i<j\leq N}
\Gamma(2\delta + \lambda_i+\lambda_j) }
s_N(\bm{\lambda}) \diff \bm{\lambda} \, .
\end{split}
\]
Changing variables $\bm{\lambda} \mapsto \bm{\lambda}-\delta$ and combining with~\eqref{eq:hFlatWhittakerFormula}, we obtain~\eqref{eq:hFlatContourInt}.
\end{proof}

\chapter{Last passage percolation models}
\label{ch:LPP}

Recall from the Introduction that the \emph{last passage percolation} (LPP) time is defined as
\[
\tau
:= \max_{\pi \in \Pi} \sum_{(i,j)\in\pi} W_{i,j} \, ,
\]
where $\Pi$ is a given set of nearest neighbor directed paths on a finite lattice $\mathcal{I}\subset\Z_{>0}^2$, and  $\bm{W}=\{W_{i,j} \colon (i,j)\in\mathcal{I} \}$ is an array of random \emph{waiting times}.

In this chapter we study the LPP in the same point-to-line path geometries considered in chapter~\ref{ch:polymer} for the corresponding polymer models.
We consider two different distributions on the waiting times: geometric in section~\ref{sec:geomLPP} and exponential in section~\ref{sec:expLPP}.
Via the $\RSK$ correspondence we show that these models are exactly solvable, by expressing the distribution function of the LPP time in terms of (discrete or continuous, standard or symplectic) Schur functions.
Finally, in section~\ref{sec:LPPasymptotics} we study the scaling limit of the point-to-line and point-to-half-line exponential models.

The connection between geometric point-to-point LPP, $\RSK$ correspondence and Schur functions goes back to Johansson~\cite{johansson00} and Baik-Rains~\cite{baikRains01a}.
In the cited article, Johansson also proved that the point-to-point LPP with i.i.d.\ geometric waiting times converges, under appropriate rescaling, to the GUE Tracy-Widom distribution.
We then start by illustrating the point-to-point case, which will be a useful source of inspiration and term of comparison for our point-to-line models (as it was for the polymer models, see beginning of chapter~\ref{ch:polymer}).
Let $\tau_{n,n}$ be the point-to-point LPP associated with the set of directed paths starting at $(1,1)$ and ending at $(n,n)$.
Assume that the waiting times $\bm{W}=\{W_{i,j}\colon 1\leq i,j\leq n\}$ are non-negative integer-valued random variables.
Then, by Proposition~\ref{prop:RSKGelfandTsetlin}, the $\RSK$ correspondence will map $\bm{W}$ to a pair $(\bm{Z},\bm{Z}')$ of $\Z_{\geq 0}$-Gelfand-Tsetlin patterns with the same shape $\bm{\lambda}$.
Since $\tau_{n,n}$ coincides with the first part $\lambda_1$ of the partition $\bm{\lambda}$ by property~\ref{prop:RSKGelfandTsetlin_LPP} of the same proposition, we have that
\begin{equation}
\label{eq:geomPointToPointLPP}
\P(\tau_{n,n} \leq u)
= \sum_{\lambda_1 \leq u}
\P(\shape(\bm{Z}) = \shape(\bm{Z}') = \bm{\lambda})
\end{equation}
for $u\in\Z_{\geq 0}$.
Assume now that the waiting times are independent and $W_{i,j} \sim \Geom(q_i p_j)$ for all $i,j$, where $\bm{q}=(q_1,\dots,q_n)$ and $\bm{p}=(p_1,\dots,p_n)$ are sets of parameters in $(0,1)^n$. 
Namely, the joint distribution of the matrix $\bm{W}$ is given by
\[
\P(\bm{W} = \bm{w})
= \prod_{i,j=1}^n (1 - q_i p_j) (q_i p_j  )^{w_{i,j}}
= \prod_{i,j=1}^n (1 - q_i p_j)
\left(\prod_{i=1}^n q_i^{\sum_{j=1}^n w_{i,j}}\right)
\left(\prod_{j=1}^n p_j^{\sum_{i=1}^n w_{i,j}}\right)
\]
for all $\bm{w}\in\Z_{\geq 0}^{n\times n}$.
By property~\ref{prop:RSKGelfandTsetlin_type} of Proposition~\ref{prop:RSKGelfandTsetlin}, the distribution that $\bm{W}$ induces on the common shape of $\bm{Z}'$ and $\bm{Z}$ writes as
\[
\begin{split}
\P(\shape(\bm{Z}') = \shape(\bm{Z}) = \bm{\lambda})
&= \sum_{\bm{z},\bm{z}' \in\GT{n}{\Z}(\bm{\lambda})}
\P(\bm{Z}'=\bm{z}' \, , \,\, \bm{Z}=\bm{z}) \\
&= \prod_{i,j=1}^n (1 - q_i p_j)
\left(\sum_{\bm{z}' \in\GT{n}{\Z}(\bm{\lambda})} \prod_{i=1}^n q_i^{\type(\bm{z}')_i}\right)
\left(\sum_{\bm{z} \in\GT{n}{\Z}(\bm{\lambda})} \prod_{j=1}^n p_j^{\type(\bm{z})_j}\right) \\
&= \prod_{i,j=1}^n (1 - q_i p_j) \, 
\schur_{\bm{\lambda}}(\bm{q}) \,
\schur_{\bm{\lambda}}(\bm{p}) \, ,
\end{split}
\]
where $\schur_{\bm{\lambda}}$ is the Schur polynomial defined in~\eqref{def:schur}.
It follows from~\eqref{eq:geomPointToPointLPP} that
\begin{equation}
\label{eq:geomPointToPointLPPSchur}
\P(\tau_{n,n} \leq u)
= \prod_{i,j=1}^n (1 - q_i p_j) \sum_{\lambda_1 \leq u} 
\schur_{\bm{\lambda}}(\bm{q}) \,
\schur_{\bm{\lambda}}(\bm{p}) \, ,
\end{equation}
where the sum is over all integer partition $\bm{\lambda}$ of length at most $n$ and first part at most $u$.
Notice that~\eqref{eq:geomPointToPointLPPSchur} provides a (probabilistic) proof of the Cauchy-Littlewood identity~\eqref{eq:cauchyIdentity}: it suffices to take the limit $u\to\infty$ and observe that the left-hand side tends to $1$. 
A formula such as~\eqref{eq:geomPointToPointLPPSchur} is useful because, due to the determinantal structure of Schur functions, it can be turned into a Fredholm determinant amenable to asymptotic analysis.

\section{Geometric last passage percolation}
\label{sec:geomLPP}

In section~\ref{sec:WhittakerFormulas} we used the $\gRSK$ correspondence to express the distribution of polymer partition functions in terms of Whittaker functions.
Here we carry out an analogous task at a \emph{zero temperature} level: namely, we use the $\RSK$ correspondence to express the distribution of LPP times in terms of Schur functions.
As in this section we work with \emph{geometric} waiting times, the resulting formulas are \emph{discrete}, in the sense that they involve sums of discrete Schur functions.
Conversely, in the next section we will work with exponential waiting times, hence our formulas will involve integrals of continuous Schur functions.

\subsection{Point-to-line geometric model}
\label{subsec:flatGeomLPP}

The \emph{point-to-line LPP} is defined as
\begin{equation}
\label{eq:flatLPP}
\fTau_{N}
:= \max_{\pi \in \fPi_{N}} \sum_{(i,j)\in\pi} W_{i,j} \, ,
\end{equation}
where $\fPi_{N}$ is the usual set of point-to-line directed paths, and  $\bm{W}$ is a random array indexed by the lattice $\fI_{N}$ defined in~\eqref{eq:flatLattice} - see Figure~\ref{subfig:flatPath}.
Notice that the point-to-line LPP at time $N$ can be written as a maximum of point-to-point LPPs with endpoint on the line $\{m+n = N+1\}$:
\begin{equation}
\label{eq:flatLPP=maxPointToPointLPP}
\tau_N = \max_{m+n = N+1} \tau_{m,n} \, .
\end{equation}

We will show in Theorem~\ref{thm:flatGeomLPP} that, when the waiting times are geometrically distributed with the special parametrization of Definition~\ref{def:flatGeomMeasure}, the CDF of $\tau_{2N}$ can be expressed as a sum of two symplectic Schur functions.
\emph{Mutatis mutandis}, the argument is similar to the one used in positive temperature, see Lemma~\ref{lemma:flatP2PjointLaw} and Theorem~\ref{thm:flatWhittakerFormula}.
Namely, we use the $\RSK$ bijection and its properties, next we apply a further change of variables that reverses the arrows in Figure~\ref{subfig:triangularArray} (in this context, it means that the inequalities that the $\RSK$ output satisfies are reversed).
The resulting array can be seen as a gluing of a pair of symplectic Gelfand-Tsetlin patterns with common shape (being the main diagonal of the array), which in turn generate the symplectic Schur functions.

\begin{definition}
\label{def:flatGeomMeasure}
Let $N\in\Z_{>0}$, $\bm{q},\bm{p}\in(0,1)^N$.
We define the \emph{$(\bm{q}, \bm{p})$-geometric measure} on the lattice $\fI_{2N}$ to be the law of an array $\{W_{i,j} \colon (i,j)\in\fI_{2N}\}$ of independent random variables such that:
\begin{equation}
\label{eq:flatGeomMeasure}
W_{i,j} \sim
\begin{cases}
\Geom(q_i p_j) &1\leq i,j\leq N \, , \\
\Geom(q_i q_{2N-j+1}) &1\leq i\leq N\, , \,\, N < j\leq 2N-i+1 \, , \\
\Geom(p_{2N-i+1} p_j) &1\leq j\leq N \, , \,\, N < i \leq 2N-j+1 \, .
\end{cases}
\end{equation}
\end{definition}

\begin{theorem}
\label{thm:flatGeomLPP}
The distribution of the point-to-line $(\bm{q},\bm{p})$-geometric LPP is given by
\begin{equation}
\label{eq:flatGeomLPP}
\P(\fTau_{2N} \leq u)
= \frac{\left( \prod_{k=1}^N q_k p_k\right)^u}{\fc_{\bm{q},\bm{p}}}  \sum_{\lambda_1 \leq u} \sp_{\bm{\lambda}}(\bm{q}) \sp_{\bm{\lambda}}(\bm{p})  \, ,
\end{equation}
where $u\in\Z_{\geq 0}$, the sum is over all integer partition $\bm{\lambda}$ of length at most $N$ and first part at most $u$, and the normalization constant is
\begin{equation}
\label{eq:flatGeomLPPnormalization}
\fc_{\bm{q},\bm{p}} := \prod_{1\leq i,j\leq N} \frac{1}{1-q_i p_j}\prod_{1\leq i\leq j\leq N} \frac{1}{(1-q_i q_j)(1-p_i p_j)} \, .
\end{equation}
\end{theorem}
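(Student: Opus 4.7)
The plan is to adapt the strategy of Lemma~\ref{lemma:flatP2PjointLaw} and Theorem~\ref{thm:flatWhittakerFormula} from positive to zero temperature: use the combinatorial $\RSK$ of subsection~\ref{subsec:tropicalization&RSK} in place of $\gRSK$, and identify the output triangular array as a pair of symplectic Gelfand-Tsetlin patterns glued along a common shape, mirroring the two orthogonal Whittaker functions that arose in the polymer case.

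First I would apply $\RSK$ to the waiting-time array $\bm{W}$. Proposition~\ref{prop:RSKproperties}-\ref{prop:RSKproperties_LPP} gives $T_{i,j}=$ point-to-point LPP, and the monotone ordering~\ref{prop:RSKproperties_interlacing} yields $\fTau_{2N}=\max_{(i,j)\in\fI_{2N}}T_{i,j}$, so that $\{\fTau_{2N}\leq u\}=\{T_{i,j}\leq u$ for every $(i,j)\in\fI_{2N}\}$. The type identity~\ref{prop:RSKproperties_type} applied at the border indices $(k,2N-k+1)$ and $(2N-k+1,k)$ yields, exactly as in Lemma~\ref{lemma:flatP2PjointLaw}, that the exponent of $q_k$ in the weight of $\bm{T}=\bm{t}$ equals $2\sigma_{2N-2k+1}(\bm{t})-\sigma_{2N-2k+2}(\bm{t})-\sigma_{2N-2k}(\bm{t})$, involving only non-negative diagonals, with the analogous negative-diagonal expression for $p_k$. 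As the zero-temperature counterpart of the arrow reversal $t\mapsto 1/(rs)$ used in Theorem~\ref{thm:flatWhittakerFormula}, I would then set $s_{i,j}:=u-t_{i,j}$: the ordering reverses, the constraint becomes $0\leq s_{i,j}\leq u$, and $\sigma_m(\bm{t})=n_m u-\sigma_m(\bm{s})$ where $n_m=N-\lfloor|m|/2\rfloor$ is the number of sites on the $m$-th diagonal of $\fI_{2N}$. A short count gives $2n_{2N-2k+1}-n_{2N-2k+2}-n_{2N-2k}=1$ for every $k$, producing the prefactor $\bigl(\prod_{k=1}^N q_k p_k\bigr)^u$ in~\eqref{eq:flatGeomLPP} and leaving a sum over $\bm{s}$ in which $q_k$ carries the exponent $-(2\sigma_{2N-2k+1}(\bm{s})-\sigma_{2N-2k+2}(\bm{s})-\sigma_{2N-2k}(\bm{s}))$.

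The main step, and the one I expect to be the principal obstacle, is the combinatorial splitting along the main diagonal of $\bm{s}$. Denote $\bm{\lambda}:=(s_{1,1},\ldots,s_{N,N})$; the reversed monotonicity makes $\bm{\lambda}$ automatically a partition, and the constraint ``all entries $\leq u$'' reduces to $\lambda_1\leq u$. The key claim is that
\[
(i,j)\longmapsto(j,\,2N-i+j),\qquad 1\leq i\leq 2N,\,\,1\leq j\leq\lceil i/2\rceil,
\]
is a bijection from the indices of a symplectic Gelfand-Tsetlin pattern of height $2N$ onto the upper-half indices $\{(a,b)\in\fI_{2N}:a\leq b\}$, and that setting $z_{i,j}:=s_{j,2N-i+j}$ converts the reverse monotonicity of $\bm{s}$ into the symplectic interlacing~\eqref{eq:interlacingSp} (with the convention $z_{i,j}=0$ for $j>\lceil i/2\rceil$ corresponding to indices falling outside $\fI_{2N}$). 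Under this identification $\sigma_i(\bm{z})=\sigma_{2N-i}(\bm{s})$, so the $q_k$-exponent rewrites as $-(2\sigma_{2k-1}(\bm{z})-\sigma_{2k-2}(\bm{z})-\sigma_{2k}(\bm{z}))=-(\type(\bm{z})_{2k-1}-\type(\bm{z})_{2k})$, and comparison with Definition~\ref{def:spSchur} identifies the upper-half sum with $\sp_{\bm{\lambda}}(q_1^{-1},\ldots,q_N^{-1})=\sp_{\bm{\lambda}}(\bm{q})$, using the Weyl-group invariance of symplectic characters. The symmetric argument for the lower half of $\bm{s}$ yields $\sp_{\bm{\lambda}}(\bm{p})$; re-summing over partitions $\bm{\lambda}$ with $\lambda_1\leq u$ gives~\eqref{eq:flatGeomLPP}, with the normalization $\fc_{\bm{q},\bm{p}}^{-1}$ arising from the product $\prod_{(i,j)\in\fI_{2N}}(1-\mathrm{rate}_{i,j})$ of one-minus-rate factors in the geometric density.
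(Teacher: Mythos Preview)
Your proposal is correct and follows essentially the same route as the paper's proof: apply the combinatorial $\RSK$ of Proposition~\ref{prop:RSKproperties} to the triangular array, use the type identities to express the weight in terms of diagonal sums, perform the affine reflection $s_{i,j}=u-t_{i,j}$, and split the resulting array along the main diagonal into two symplectic Gelfand-Tsetlin patterns with common shape $\bm{\lambda}=u-(t_{1,1},\dots,t_{N,N})$, recognizing each half as $\sp_{\bm{\lambda}}$ evaluated at inverted parameters and invoking Weyl invariance. Your bijection $(i,j)\mapsto(j,2N-i+j)$ and the identification $z_{i,j}=u-t_{j,2N-i+j}$ coincide exactly with the paper's change of variables $z'_{i,j}:=u-t_{j,2N+j-i}$; the only cosmetic difference is that you separate the reflection $t\mapsto u-t$ from the reindexing and make the diagonal-length count $2n_{2N-2k+1}-n_{2N-2k+2}-n_{2N-2k}=1$ explicit, whereas the paper absorbs both into a single step.
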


\begin{proof}
If $\bm{W}=\{W_{i,j}\colon (i,j)\in\fI_{2N}\}$ is distributed according to the $(\bm{q},\bm{p})$-geometric measure, then
\[
\P(\bm{W} = \bm{w})
= \frac{1}{\fc_{\bm{q},\bm{p}}}
\prod_{i=1}^N q_i^{\sum_{j=1}^{2N-i+1} w_{i,j}}
p_i^{\sum_{j=1}^i w_{2N-i+1,j}}
\prod_{j=1}^N p_j^{\sum_{i=1}^{2N-j+1} w_{i,j}}
q_j^{\sum_{i=1}^j w_{i,2N-j+1}}
\]
for $\bm{w} \in \Z_{\geq 0}^{\fI_{2N}}$.
By Proposition~\ref{prop:RSKproperties}, $\RSK$ can be seen as a bijection between arrays indexed by $\fI_{2N}$ with non-negative integer entries, such that the output array satisfies the ordering~\eqref{eq:RSKordering}.
By property~\ref{prop:RSKproperties_type} of that proposition, the distribution that $\bm{W}$ induces on its $\RSK$ image $\bm{T}$ is then given by
\[
\begin{split}
\P(\bm{T} = \bm{t})
= \, &\frac{1}{\fc_{\bm{q},\bm{p}}}\prod_{i=1}^N
q_i^{\sigma_{2N-2i+1}(\bm{t}) - \sigma_{2N-2i+2}(\bm{t})}
p_i^{\sigma_{-(2N-2i+1)}(\bm{t})-\sigma_{-(2N-2i)}(\bm{t})} \\
&\times \prod_{j=1}^N
p_j^{\sigma_{-(2N-2j+1)}(\bm{t}) - \sigma_{-(2N-2j+2)}(\bm{t})}
q_j^{\sigma_{2N-2j+1}(\bm{t}) - \sigma_{2N-2j}(\bm{t})} \, ,
\end{split}
\]
where $\sigma_k(\bm{t})$ denotes the sum of the $k$-th diagonal of $\bm{t}$ as in~\eqref{eq:sumDiagonal}.
By equation~\eqref{eq:flatLPP=maxPointToPointLPP} and properties~\ref{prop:RSKproperties_LPP} and~\ref{prop:RSKproperties_interlacing} of Proposition~\ref{prop:RSKproperties}, our CDF is given by
\[
\P\left(\fTau_{2N} \leq u\right)
= \P\left(\fTau_{m,n} \leq u \colon m+n=2N+1 \right)
= \P(T_{i,j}\leq u \colon (i,j)\in\fI_{2N})
\]
and can be obtained by summing up $\P(\bm{T} = \bm{t})$ over all $\bm{t}\in\Z_{\geq 0}^{\fI_{2N}}$ satisfying the ordering~\eqref{eq:RSKordering} and the additional condition that $t_{i,j}\leq u$ for all $(i,j)\in\fI_{2N}$.

Let us now change variables, by setting $z_{i,j} := u - t_{2N+j-i,j}$ and $z'_{i,j} := u - t_{j,2N+j-i}$ for all $1\leq i\leq 2N$ and $1\leq j\leq \ceil{i/2}$.
Notice that the variables $z_{i,j}$'s and $z'_{i,j}$'s are bounded between $0$ and $u$, as all $t_{i,j}$'s are.
The arrays $\bm{z}$ and $\bm{z}'$ just defined turn thus out to be sympletic Gelfand-Tsetlin patterns of height $2N$ with a certain common shape $\bm{\lambda} = u-(t_{1,1},\dots,t_{N,N})$, such that $\lambda_1$ (and therefore all their entries) are not greater than $u$.
One may better visualize this by looking at Figure~\ref{subfig:triangularArray}: all the arrows are reversed, because the minus sign in the change of variables amounts to reversing all the inequalities; the lower triangular part and the upper triangular part of the array correspond to $\bm{z}$ and $\bm{z}'$ respectively, and the main diagonal turns into the common shape $\bm{\lambda}$.
Denoting by $\abs{\bm{z}_i} :=\sum_{j=1}^{\ceil{i/2}} z_{i,j}$ the sum of the $i$-th row of $\bm{z}$ (and analogously for $\bm{z}'$), we obtain that
\[
\begin{split}
& \fc_{\bm{q},\bm{p}} \cdot \P(\fTau_{2N} \leq u) \\
= \, &\sum_{\lambda_1\leq u}
\sum_{\bm{z}',\bm{z} \in \spGT{2N}{\Z}(\bm{\lambda})}
\prod_{i=1}^N
q_i^{u -\abs{\bm{z}'_{2i-1}} +\abs{\bm{z}'_{2i-2}}}
p_i^{-\abs{\bm{z}_{2i-1}} + \abs{\bm{z}_{2i}}}
\prod_{j=1}^N
p_j^{u -\abs{\bm{z}_{2j-1}} +\abs{\bm{z}_{2j-2}}}
q_j^{-\abs{\bm{z}'_{2j-1}} + \abs{\bm{z}'_{2j}}} \\
= \, & \left( \prod_{k=1}^N q_k p_k\right)^u
\sum_{\lambda_1 \leq u}
\sum_{\bm{z}'\in \spGT{2N}{\Z}(\bm{\lambda})}
\prod_{k=1}^N q_k^{-\type(\bm{z}')_{2k-1} + \type(\bm{z}')_{2k}}
\!\!\!\!
\sum_{\bm{z}\in \spGT{2N}{\Z}(\bm{\lambda})}
\prod_{k=1}^N p_k^{-\type(\bm{z})_{2k-1} + \type(\bm{z})_{2k}} \\
= \, & \left( \prod_{k=1}^N q_k p_k\right)^u
\sum_{\lambda_1 \leq u}
\sp_{\bm{\lambda}}(q_1^{-1},\dots,q_N^{-1}) 
\sp_{\bm{\lambda}}(p_1^{-1},\dots,p_N^{-1})\, .
\end{split}
\]
The latter two equalities follow from the definitions of type~\eqref{eq:typeSpGTpattern} and symplectic Schur function~\eqref{eq:spSchur}.
Recalling from subsection~\ref{subsec:symplecticSchur} that symplectic Schur functions are invariant under multiplicative inversion of their arguments, we obtain~\eqref{eq:flatGeomLPP}.
\end{proof}

We now make a comparison with the point-to-point LPP model with symmetry about the antidiagonal, as it turns out to be intimately connected to the point-to-line model.
Let us denote by $\antisymTau_{n,n}$ the point-to-point LPP from $(1,1)$ to $(n,n)$ with waiting times symmetric about the antidiagonal, i.e.\ $W_{i,j}=W_{n-j+1,n-i+1}$ for $1\leq i,j\leq n$.
Because of the symmetry constraint, at least one of the maximal paths\footnote{By maximal path we mean any of the allowed paths that maximizes the passage time. Notice that such a path does not need to be unique.} from $(1,1)$ to $(n,n)$ is symmetric about the antidiagonal; the waiting times collected along such a path will be all counted twice (once above and once below the antidiagonal), except the one on the antidiagonal itself.
It easily follows that the $\antisymTriangle[0.18]$-symmetric LPP  \emph{with doubled weights on the antidiagonal} coincides with twice the point-to-line LPP, i.e.\ $\antisymTau_{n,n} = 2\fTau_{n}$.

Baik-Rains~\cite{baikRains01a} and Forrester-Rains~\cite{forresterRains07} studied the distribution of the geometric point-to-point LPP where the matrix of waiting times is subject to certain symmetries, using the classical $\RSK$ acting on nonnegative integer matrices.
They proved in particular that, if the waiting times $\{W_{i,j}\colon i+j\leq n+1\}$ on and above the antidiagonal are independent and
\[
W_{i,j} \sim
\begin{cases}
\Geom(r_i r_{n-j+1}) &\text{if } i+j<n+1 \, , \\
2 \cdot \Geom(r_i^2) &\text{if } i+j=n+1
\end{cases}
\]
for a set of parameters $\bm{r}:=(r_1,\dots,r_n)\in (0,1)^n$, then
\begin{equation}
\label{eq:baikRainsAntisymLPP}
\P\left(\antisymTau_{n,n} \leq 2u\right)
= \prod_{1\leq i\leq j\leq n} (1-r_i r_j)
\sum_{\lambda_1 \leq u} \schur_{2\bm{\lambda}}(\bm{r})
\end{equation}
for all nonnegative integer $u$, being $2\bm{\lambda}=(2\lambda_1,\dots,2\lambda_n)$ any partition with even parts.
Thanks to the connection between $\antisymTau_{n,n}$ and $\fTau_n$ outlined above, the above also equals the distribution function $\P\left(\fTau_n \leq u\right)$ of the point-to-line LPP $\fTau_n$ with independent waiting times distributed as $W_{i,j} \sim \Geom(r_i r_{n-j+1})$ for $i+j\leq n+1$.
If we then take $n=2N$ to be even and replace $(r_1,\dots,r_{2N})$ with $(q_1,\dots,q_N,p_N,\dots,p_1)$, the distribution on the waiting times coincides with the $(\bm{q},\bm{p})$-geometric measure.
We thus find another formula for the point-to-line $(\bm{q},\bm{p})$-geometric LPP:
\begin{equation}
\label{eq:baikRainsFlatLPP}
\P\left(\fTau_{2N} \leq u\right)
= \frac{1}{\fc_{\bm{q},\bm{p}}}
\sum_{\lambda_1 \leq u} \schur_{2\bm{\lambda}}(q_1,\dots,q_N,p_N,\dots,p_1) \, ,
\end{equation}
where $\fc_{\bm{q},\bm{p}}$ is defined in~\eqref{eq:flatGeomLPPnormalization}.
Our~\eqref{eq:flatGeomLPP} involves a sum of the product between \emph{two symplectic} Schur functions, while Baik-Rains's~\eqref{eq:baikRainsFlatLPP} shows a sum of \emph{one standard} Schur function parametrized by even partitions: these formulas are essentially different, but still equivalent.
In particular, comparing them and recalling that Schur functions are symmetric in their variables, we deduce that
\begin{equation}
\label{eq:flatGeomLLP_comparison}
\sum_{\lambda_1 \leq u} \schur_{2\bm{\lambda}}(q_1,\dots,q_N,p_1,\dots,p_N)
= \left( \prod_{k=1}^N q_k p_k\right)^u
\sum_{\lambda_1 \leq u} \sp_{\bm{\lambda}}(q_1,\dots,q_N) \sp_{\bm{\lambda}}(p_1,\dots,p_N) \, .
\end{equation}
There are direct combinatorial/algebraic proofs of this nontrivial identity, which will be the subject of future work.

\subsection{Point-to-half-line geometric model}
\label{subsec:hFlatGeomLPP}

The \emph{point-to-half-line LPP} is defined as
\begin{equation}
\label{eq:hFlatLPP}
\hTau_{N}
:= \max_{\pi \in \hPi_{N}} \sum_{(i,j)\in\pi} W_{i,j} \, ,
\end{equation}
where $\hPi_{N}$ is the usual set of point-to-half-line directed paths, and $\bm{W}$ is a random array indexed by the lattice $\hI_{N}$ defined in~\eqref{eq:hFlatLattice} - see Figure~\ref{subfig:hFlatPath}.
Notice that the point-to-half-line LPP at time $N$ can be written as a maximum of point-to-point LPPs with endpoint on the half-line $\{m+n = N+1, \,\, m\leq n\}$:
\begin{equation}
\label{eq:hFlatLPP=maxPointToPointLPP}
\hTau_N = \max_{\substack{m+n = N+1 \\ m\leq n}} \tau_{m,n} \, .
\end{equation}

If the waiting times are distributed according to the $(\bm{q},\bm{p})$-geometric measure as in Definition~\ref{def:flatGeomMeasure} (restricted to the lattice $\hI_{2N}$), the CDF of $\hTau_{2N}$ is given by a sum of the product between a symplectic and a standard Schur function:

\begin{theorem}
\label{thm:hFlatGeomLPP}
The distribution of the point-to-half-line $(\bm{q},\bm{p})$-geometric LPP is given by
\begin{equation}
\label{eq:hFlatGeomLPP}
\P\left(\hTau_{2N} \leq u\right)
= \frac{\left( \prod_{k=1}^N q_k p_k\right)^u}{\hc_{\bm{q},\bm{p}}}  \sum_{\lambda_1 \leq u} \sp_{\bm{\lambda}}(q_1,\dots,q_N) \schur_{\bm{\lambda}}(p_1^{-1},\dots,p_N^{-1})  \, ,
\end{equation}
where $u\in\Z_{\geq 0}$ and the normalization constant is
\begin{equation}
\label{eq:hFlatGeomLPPnormalization}
\hc_{\bm{q},\bm{p}} := \prod_{1\leq i,j\leq N} \frac{1}{1-q_i p_j}\prod_{1\leq i\leq j\leq N} \frac{1}{1-q_i q_j} \, .
\end{equation}
\end{theorem}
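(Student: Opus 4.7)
The strategy is to mimic the proof of Theorem~\ref{thm:flatGeomLPP} on the trapezoidal lattice $\hI_{2N}$, exploiting the same diagonal-split decomposition that produced the $\so_{2N+1}\times\gl_N$ structure in the positive-temperature analog Theorem~\ref{thm:hFlatWhittakerFormula}.

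First, I would write out $\P(\bm{W}=\bm{w})$ under the $(\bm{q},\bm{p})$-geometric measure and collect exponents. Only the square block $1\le i,j\le N$ produces powers of $p_k$, giving exponent $\sum_{i=1}^N w_{i,k}$; both the square block and the upper strip produce powers of $q_k$, and after accounting for the doubling at the antidiagonal entry $(k,2N-k+1)$ where the weight is $q_k^{2}$, the total exponent of $q_k$ works out to $\sum_{j=1}^{2N-k+1} w_{k,j}+\sum_{i=1}^{k} w_{i,2N-k+1}$. Setting $\bm{t}:=\RSK(\bm{w})$, property~\ref{prop:RSKproperties_type} of Proposition~\ref{prop:RSKproperties}, applied at the border indices $(k,2N-k+1)$ and $(N,k)$, converts these row and column sums into differences of the diagonal sums $\sigma_m(\bm{t})$. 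Properties~\ref{prop:RSKproperties_LPP} and~\ref{prop:RSKproperties_interlacing} of the same proposition then reduce the CDF $\P(\hTau_{2N}\le u)$ to a sum of $\P(\bm{T}=\bm{t})$ over all $\bm{t}\in\Z_{\ge0}^{\hI_{2N}}$ satisfying the ordering~\eqref{eq:RSKordering} together with $t_{i,j}\le u$ everywhere.

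The central step is an inequality-reversing change of variables that simultaneously splits $\bm{t}$ into two Gelfand--Tsetlin patterns sharing a common shape $\bm{\lambda}:=(u-t_{1,1},\ldots,u-t_{N,N})$. On the upper half $\{(i,j)\in\hI_{2N}\colon i\le j\}$, I would set $z_{i,j}:=u-t_{j,\,2N+j-i}$ for $1\le i\le 2N$, $1\le j\le\lceil i/2\rceil$, in parallel to the substitution used in Theorem~\ref{thm:flatGeomLPP}; this turns that part into a symplectic Gelfand--Tsetlin pattern of height $2N$ with bottom row $\bm{\lambda}$, with the wall condition $z_{2k-1,k}\ge 0$ coming precisely from the bound $t_{k,2N-k+1}\le u$. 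On the strict lower half together with the diagonal, $\{(i,j)\in\hI_{2N}\colon j\le i\}$, I would set $z'_{i,j}:=u-t_{N-i+j,\,j}$ for $1\le j\le i\le N$, producing a standard Gelfand--Tsetlin pattern of height $N$ with the same shape $\bm{\lambda}$. In each case the interlacing conditions are exactly the $\RSK$ ordering~\eqref{eq:RSKordering} restricted to the relevant part of $\hI_{2N}$, so admissible $\bm{t}$ are in bijection with pairs $(\bm{z},\bm{z}')\in\spGT{2N}{\Z}(\bm{\lambda})\times\GT{N}{\Z}(\bm{\lambda})$ indexed by $\bm{\lambda}$ with $\lambda_1\le u$. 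Setting up this pair of parametrizations so that both a symplectic and a standard interlacing emerge simultaneously from the same $\RSK$ ordering is the only delicate point; everything else is bookkeeping.

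The last step is to express the exponents through $\bm{z}$ and $\bm{z}'$. From the identities $\sigma_m(\bm{t})=\lceil (2N-m)/2\rceil\,u-\sum_j z_{2N-m,j}$ for $m\ge 0$ and $\sigma_m(\bm{t})=(N+m)u-\sum_j z'_{N+m,j}$ for $m\le 0$, a short calculation gives $e_{q_k}=u-\type(\bm{z})_{2k-1}+\type(\bm{z})_{2k}$ and $e_{p_k}=u-\type(\bm{z}')_k$. Pulling out the factor $\bigl(\prod_{k=1}^N q_k p_k\bigr)^u$ and summing over $(\bm{z},\bm{z}')$ at fixed $\bm{\lambda}$, the inner sums collapse to $\sp_{\bm{\lambda}}(q_1^{-1},\ldots,q_N^{-1})=\sp_{\bm{\lambda}}(\bm{q})$ (by the inversion-invariance of symplectic Schur functions) and $\schur_{\bm{\lambda}}(p_1^{-1},\ldots,p_N^{-1})$, yielding~\eqref{eq:hFlatGeomLPP}.
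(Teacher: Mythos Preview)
Your proposal is correct and follows essentially the same approach as the paper's proof, which is only sketched there and refers back to Theorem~\ref{thm:flatGeomLPP}. Your substitutions $z_{i,j}=u-t_{j,2N+j-i}$ and $z'_{i,j}=u-t_{N-i+j,j}$ agree with the paper's (up to swapping the names $\bm{z}\leftrightarrow\bm{z}'$), and your explicit exponent computation via $\sigma_m(\bm{t})$ is a useful expansion of what the paper leaves implicit.
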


\begin{proof}
The proof is again based on Proposition~\ref{prop:RSKproperties} and follows the same steps as in Theorem~\ref{thm:flatGeomLPP}, so we only sketch it.
Let $\bm{W}=\{W_{i,j}\colon (i,j)\in\hI_{2N}\}$ be distributed according to the $(\bm{q},\bm{p})$-geometric measure, i.e.
\[
\P(\bm{W} = \bm{w})
= \frac{1}{\hc_{\bm{q},\bm{p}}}
\prod_{i=1}^N q_i^{\sum_{j=1}^{2N-i+1} w_{i,j}}
\prod_{j=1}^N p_j^{\sum_{i=1}^{N} w_{i,j}}
q_j^{\sum_{i=1}^j w_{i,2N-j+1}}
\]
for $\bm{w} \in \Z_{\geq 0}^{\hI_{2N}}$.
The distribution that $\bm{W}$ induces on its $\RSK$ image $\bm{T}$ is then given by
\[
\P(\bm{T} = \bm{t})
= \frac{1}{\hc_{\bm{q},\bm{p}}}\prod_{i=1}^N
q_i^{\sigma_{2N-2i+1}(\bm{t}) - \sigma_{2N-2i+2}(\bm{t})}
\prod_{j=1}^N
p_j^{\sigma_{-(N-j)}(\bm{t}) - \sigma_{-(N-j+1)}(\bm{t})}
q_j^{\sigma_{2N-2j+1}(\bm{t}) - \sigma_{2N-2j}(\bm{t})} \, .
\]

We can obtain $\P(\hTau_{2N} \leq u)$ by summing up the probabilities computed above over all $\bm{t}\in\Z_{\geq 0}^{\hI_{2N}}$ satisfying the ordering~\eqref{eq:RSKordering} and the additional condition that $t_{i,j}\leq u$ for all $(i,j)\in\hI_{2N}$.
Let us now change variables, by setting $z'_{i,j} := u - t_{j,2N+j-i}$ for all $1\leq i\leq 2N$, $1\leq j\leq \ceil{i/2}$, and $z_{i,j} := u - t_{N+j-i,j}$ for all $1\leq j\leq i\leq N$.
Thanks to the inequalities that the $t_{i,j}$'s satisfy, the arrays $\bm{z}'$ and $\bm{z}$ just defined are Gelfand-Tsetlin patterns (sympletic and standard respectively) with common shape $\bm{\lambda} = u-(t_{1,1},\dots,t_{N,N})$, such that $\lambda_1\leq u$.
We then obtain:
\[
\begin{split}
& \hc_{\bm{q},\bm{p}} \cdot \P(\hTau_{2N} \leq u) \\
= \, & \left( \prod_{k=1}^N q_k p_k\right)^u
\sum_{\lambda_1 \leq u}
\sum_{\bm{z}'\in \spGT{2N}{\Z}(\bm{\lambda})}
\prod_{k=1}^N q_k^{-\type(\bm{z}')_{2k-1}+ \type(\bm{z}')_{2k}}
\!\!\!\!
\sum_{\bm{z}\in \GT{N}{\Z}(\bm{\lambda})}
\prod_{k=1}^N p_k^{-\type(\bm{z})_{k}} \\
= \, & \left( \prod_{k=1}^N q_k p_k\right)^u
\sum_{\lambda_1 \leq u}
\sp_{\bm{\lambda}}(q_1^{-1},\dots,q_N^{-1}) 
\schur_{\bm{\lambda}}(p_1^{-1},\dots,p_N^{-1})\, .
\end{split}
\]
The claim follows from the fact that symplectic Schur functions are invariant under multiplicative inversion of their arguments.
\end{proof}

\subsection{Restricted point-to-line geometric model}
\label{subsec:rFlatGeomLPP}

The \emph{restricted point-to-line LPP} is defined as
\begin{equation}
\label{eq:rFlatLPP}
\rTau_{N}
:= \max_{\pi \in \rPi_{N}} \sum_{(i,j)\in\pi} W_{i,j} \, ,
\end{equation}
where $\rPi_{N}$ is the usual set of point-to-line directed paths restricted to stay in a half-plane, and $\bm{W}$ is a random array indexed by the lattice $\rI_{N}$ defined in~\eqref{eq:rFlatLattice} - see Figure~\ref{subfig:rFlatPath}.
Notice that the restricted point-to-line LPP at time $N$ can be written as a maximum of restricted point-to-point LPPs with endpoint on the half-line $\{m+n = N+1, \,\, m\leq n\}$:
\begin{equation}
\label{eq:rFlatLPP=maxPointToPointLPP}
\rTau_N = \max_{\substack{m+n = N+1 \\ m\leq n}} \rTau_{m,n} \, .
\end{equation}

The point-to-line restricted LPP model is perfectly equivalent to the corresponding symmetric model.
Namely, $\rTau_N = \sTau_N$, where $\sTau_N$ is the (usual, i.e.\ non-restricted) point-to-line LPP on the lattice $\fI_N$ with the same waiting times as $\rTau_N$ on and above the main diagonal and symmetric waiting times below the diagonal (so that $W_{i,j}=W_{j,i}$ for all $(i,j)\in\fI_N$).
Similarly, the point-to-point restricted LPP model is perfectly equivalent\footnote{Notice that, in zero temperature as opposed to the positive temperature setting (see subsection~\ref{subsec:rFlatPolymerWhittaker}), no modification of the weights on the diagonal nor multiplicative constants are required to pass from the restricted to the symmetric model.} to the corresponding symmetric model, i.e.\ $\rTau_{m,n} = \sTau_{m,n}$.
Via this symmetrization argument, we will show that, for a certain parametrization of the geometrically distributed environment, $\rTau_{2N}$ can be essentially expressed as a sum over partitions of one symplectic Schur function indexed by that partition.

\begin{definition}
\label{def:rFlatGeomMeasure}
Let $N\in\Z_{>0}$ and $\bm{q}\in(0,1)^N$.
We define the \emph{restricted $\bm{q}$-geometric measure} on the lattice $\rI_{2N}$ to be the law of an array $\{W_{i,j} \colon (i,j)\in\rI_{2N}\}$ of independent random variables such that:
\begin{equation}
\label{eq:rFlatGeomMeasure}
W_{i,j} \sim
\begin{cases}
\Geom(q_i) &1\leq i=j\leq N \, , \\
\Geom(q_i q_j) & 1 \leq i<j\leq N \, , \\
\Geom(q_i q_{2N-j+1}) &1\leq i\leq N\, , \,\, N < j\leq 2N-i+1 \, .
\end{cases}
\end{equation}
\end{definition}

\begin{theorem}
\label{thm:rFlatGeomLPP}
The distribution of the restricted point-to-line $\bm{q}$-geometric LPP is given by
\begin{equation}
\label{eq:rFlatGeomLPP}
\P\left(\rTau_{2N} \leq u\right)
= \frac{\left( \prod_{k=1}^N q_k \right)^u}{\rc_{\bm{q}}} 
\sum_{\lambda_1 \leq u} \sp_{\bm{\lambda}}(q_1,\dots,q_N) \, ,
\end{equation}
where $u\in\Z_{\geq 0}$ and the normalization constant is
\begin{equation}
\label{eq:rFlatGeomLPPnormalization}
\rc_{\bm{q}} := \prod_{1\leq i\leq N} \frac{1}{1-q_i}
\prod_{1\leq i<j\leq N} \frac{1}{1-q_i q_j}
\prod_{1\leq i\leq j\leq N} \frac{1}{1-q_i q_j} \, .
\end{equation}
\end{theorem}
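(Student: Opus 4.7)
The plan is to reduce the restricted LPP to the corresponding symmetric model via the identity $\rTau_{2N} = \sTau_{2N}$ observed at the start of the subsection, and then apply the combinatorial $\RSK$ of subsection~\ref{subsec:tropicalization&RSK} to the resulting symmetric weight array. Specifically, I would view $\bm{W}$ as defined on the full lattice $\fI_{2N}$ with $W_{j,i}=W_{i,j}$ and upper entries distributed as in Definition~\ref{def:rFlatGeomMeasure}, so that the joint mass function of $\{W_{i,j}\colon i\leq j\}$ is proportional to a product of powers of the $q_k$'s, with $\rc_{\bm{q}}^{-1}$ as normalization.

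By Proposition~\ref{prop:symmetricRSK}, the image $\bm{T} = \RSK(\bm{W})$ is symmetric and the correspondence restricts to a bijection between upper entries. The key step is to compute the total exponent of each $q_k$ in the pushforward density of $\{T_{i,j}\colon i\leq j\}$. Collecting all $W_{i,j}$'s whose parameter contains a factor $q_k$ (namely the $k$-th diagonal entry, the $k$-th row and column inside the $N\times N$ block, the tail of the $k$-th row past column $N$, and the anti-row at $j=2N-k+1$) and exploiting the symmetry $w_{i,j}=w_{j,i}$ to merge row and column contributions, this exponent collapses to the sum of the $k$-th full row sum $\sum_{j=1}^{2N-k+1} w_{k,j}$ and the partial column sum $\sum_{i=1}^{k} w_{i,2N-k+1}$. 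Property~\ref{prop:RSKproperties_type} of Proposition~\ref{prop:RSKproperties} then rewrites these as $\sigma_{2N-2k+1}(\bm{t}) - \sigma_{2N-2k+2}(\bm{t})$ and $\sigma_{2N-2k+1}(\bm{t}) - \sigma_{2N-2k}(\bm{t})$ respectively, giving total exponent $2\sigma_{2N-2k+1}(\bm{t}) - \sigma_{2N-2k+2}(\bm{t}) - \sigma_{2N-2k}(\bm{t})$. By property~\ref{prop:RSKproperties_LPP} combined with the ordering~\eqref{eq:RSKordering}, the event $\{\rTau_{2N}\leq u\}$ is equivalent to $t_{i,j}\leq u$ for all $i\leq j$.

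Finally I would carry out the change of variables $z_{i,j} := u - t_{j,2N+j-i}$ on the upper-triangular part of $\bm{t}$, mimicking the one used in the proof of Theorem~\ref{thm:flatGeomLPP} but restricted to a single half. This reverses every arrow of Figure~\ref{subfig:triangularArray} and turns the $\RSK$ ordering into the symplectic interlacing~\eqref{eq:interlacingSp}, so that $\bm{z}$ is a symplectic Gelfand-Tsetlin pattern of height $2N$ with shape $\bm{\lambda} = u - (t_{1,1},\dots,t_{N,N})$ satisfying $\lambda_1\leq u$. A short computation using~\eqref{eq:typeSpGTpattern} rewrites the exponent of $q_k$ as $u + \type(\bm{z})_{2k} - \type(\bm{z})_{2k-1}$, so that summing over $\bm{z}\in\spGT{2N}{\Z}(\bm{\lambda})$ produces $\sp_{\bm{\lambda}}(q_1^{-1},\dots,q_N^{-1}) = \sp_{\bm{\lambda}}(\bm{q})$ by invariance of symplectic Schur functions under multiplicative inversion, yielding~\eqref{eq:rFlatGeomLPP}.

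The main subtlety lies in the bookkeeping of the $q_k$ exponents in the middle step: in contrast with the non-symmetric case of Theorem~\ref{thm:flatGeomLPP}, here each upper weight is effectively counted twice through symmetry while the diagonal entries are counted only once and carry parameter $q_i$ rather than $q_i^2$. Verifying that these boundary contributions combine exactly to give the single symplectic Schur sum~\eqref{eq:rFlatGeomLPP}, and that the normalization $\rc_{\bm{q}}$ of Definition~\ref{def:rFlatGeomMeasure} precisely reproduces the product of the geometric normalizations, is where care is required.
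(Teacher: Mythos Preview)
Your proposal is correct and follows essentially the same approach as the paper: reduce to the symmetric model via $\rTau_{2N}=\sTau_{2N}$, push the symmetric geometric measure through $\RSK$ using Propositions~\ref{prop:RSKproperties} and~\ref{prop:symmetricRSK} to obtain the $q_k$-exponent $2\sigma_{2N-2k+1}(\bm{t})-\sigma_{2N-2k+2}(\bm{t})-\sigma_{2N-2k}(\bm{t})$, and then apply the reflecting change of variables $z_{i,j}=u-t_{j,2N+j-i}$ to produce a single symplectic Gelfand--Tsetlin pattern. The paper abbreviates the last step by referring back to the proof of Theorem~\ref{thm:flatGeomLPP}, whereas you spell out the type computation and the inversion invariance explicitly, but the argument is the same.
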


\begin{proof}
Since the restricted and symmetric LPP models are perfectly equivalent, we can work with the symmetric model and compute the CDF of $\sTau_{2N}$ instead of $\rTau_{2N}$.
The distribution of the symmetric array of waiting times $\bm{W}$ is given by
\[
\begin{split}
\P(\bm{W} = \bm{w})
&= \frac{1}{\rc_{\bm{q}}}
\prod_{i=1}^N q_i^{\sum_{j=i}^{2N-i+1} w_{i,j}}
\prod_{j=1}^N q_j^{\sum_{i=1}^{j-1} w_{i,j}}
q_j^{\sum_{i=1}^j w_{i,2N-j+1}} \\
&= \frac{1}{\rc_{\bm{q}}}
\prod_{i=1}^N q_i^{\sum_{j=1}^{2N-i+1} w_{i,j}}
\prod_{j=1}^N q_j^{\sum_{i=1}^j w_{i,2N-j+1}}
\end{split}
\]
for all symmetric $\bm{w}\in\Z_{\geq 0}^{\fI_{2N}}$.
By Proposition~\ref{prop:RSKproperties} and \ref{prop:symmetricRSK}, $\bm{W}$ induces on its $\RSK$ image $\bm{T}$ the distribution
\[
\P(\bm{T} = \bm{t})
= \frac{1}{\rc_{\bm{q}}}
\prod_{i=1}^N
q_i^{\sigma_{2N-2i+1}(\bm{t}) - \sigma_{2N-2i+2}(\bm{t})}
\prod_{j=1}^N
q_j^{\sigma_{2N-2j+1}(\bm{t}) - \sigma_{2N-2j}(\bm{t})}
\]
for all symmetric $\bm{t}\in\Z_{\geq 0}^{\fI_{2N}}$.
The rest of the proof proceeds as in Theorem~\ref{thm:flatGeomLPP}: the only difference is that, after the change of variables, we can identify one symplectic Gelfand-Tsetlin pattern (instead of two glued together), which generates a single symplectic Schur function.
\end{proof}

In the final discussion of subsection~\ref{subsec:flatGeomLPP} we compared Baik-Rains's formula for a $\antisymTriangle[0.18]$-symmetrized point-to-point LPP model with our point-to-line formula.
We address a similar task here, comparing the point-to-point LPP model with symmetry about both the antidiagonal and the diagonal with the intimately connected restricted point-to-line model.
Let us denote by $\doublesymTau_{2N,2N}$ the point-to-point LPP from $(1,1)$ to $(2N,2N)$ with waiting times symmetric about both the antidiagonal $\{i+j=2N+1\}$ and the diagonal $\{i=j\}$.
Reasoning similarly as in subsection~\ref{subsec:flatGeomLPP}, one can see that the $\doublesymTriangle[0.18]$-symmetric LPP \emph{with doubled weights on the antidiagonal} coincides with twice the restricted/symmetric point-to-line LPP, i.e.\ $\doublesymTau_{2N,2N} = 2\rTau_{2N}$.

Using the classical $\RSK$ acting on a doubly symmetric matrix, Baik-Rains~\cite{baikRains01a} and Forrester-Rains~\cite{forresterRains07} studied the $\doublesymTriangle[0.18]$-symmetric LPP with geometric waiting times.
The connection between $\doublesymTau_{2N,2N}$ and $\rTau_{2N}$ stated above permits rephrasing their result in terms of the restricted point-to-line LPP in the $\bm{q}$-geometric environment (see Definition~\ref{def:rFlatGeomMeasure}):
\begin{equation}
\label{eq:baikRainsrFlatLPP}
\P\left(\rTau_{2N} \leq u\right)
= \frac{1}{\rc_{\bm{q}}}
\sum_{\lambda_1 \leq u} \schur_{\bm{\lambda}}(q_1,\dots,q_N)
\schur_{\bm{\lambda}}(q_1,\dots,q_N,1)
\end{equation}
for all nonnegative integer $u$, where $\rc_{\bm{q}}$ is the same constant as in~\eqref{eq:rFlatGeomLPPnormalization}.
Comparing our formula~\eqref{eq:rFlatGeomLPP}, which involves one symplectic Schur function, with~\eqref{eq:baikRainsrFlatLPP}, which involves two standard Schur functions, we deduce the identity
\begin{equation}
\label{eq:rFlatGeomLLP_comparison}
\sum_{\lambda_1 \leq u} \schur_{\bm{\lambda}}(q_1,\dots,q_N)
\schur_{\bm{\lambda}}(q_1,\dots,q_N,1)
= \left( \prod_{k=1}^N q_k \right)^u
\sum_{\lambda_1 \leq u} \sp_{\bm{\lambda}}(q_1,\dots,q_N) \, .
\end{equation}
Again, it is possible to prove the latter identity via combinatorial/algebraic methods, without resorting to indirect arguments based on lattice path probabilistic models.
We plan to address this aspect in future work.

\section{Exponential last passage percolation}
\label{sec:expLPP}

In this section we analyze the same three LPP models as in section~\ref{sec:geomLPP}, but considering exponential distribution on the waiting times instead of geometric.
As mentioned in the Introduction, the exponential LPP models are the most closely related to the totally asymmetric simple exclusion process (TASEP); we now see explicitly how.
Recall that TASEP is a continuous time Markov process with state space $\{0,1\}^{\Z}$, interpreted as an interacting particle system.
Each $\eta\in\{0,1\}^{\Z}$ can be viewed as a configuration of particles and holes on the integer line: site $i\in\Z$ is either occupied or empty according to whether $\eta(i)=1$ or $\eta(i)=0$.
The dynamics works as follows: each particle independently, after a mean one exponential time, jumps to the site immediately to the right, provided that the latter site is vacant.
Given an enumeration $\{p_i\}_{i\in I}$ of the particles, we denote by $\mathcal{W}_{i,j}$ the time that particle $p_i$ needs to wait to perform its $j$-th jump, once the site to its right becomes vacant.
Let also $\mathcal{T}_{i,j}$ be the time when particle $p_i$ performs its $j$-th jump starting from the initial configuration, and set for convenience $\mathcal{T}_{i,j}=0$ if $i\notin I$ or $j<1$.
The $\{\mathcal{W}_{i,j}\}_{i\in I,j\geq 1}$ are i.i.d.\ mean one exponentially distributed variables, whereas the $\{\mathcal{T}_{i,j}\}_{i\in I,j\geq 1}$ can be expressed as (deterministic) functionals of the $\mathcal{W}_{i,j}$'s that depend on the initial conditions.
Such functionals can be defined via lattice paths and link the TASEP with a given initial condition to the LPP model with a corresponding path geometry.
More precisely, certain $\mathcal{T}_{i,j}$'s turn out to coincide with last passage times in an environment $\{W_{i,j}\}$ of mean one exponential waiting times, defined by just relabeling the $\mathcal{W}_{i,j}$'s.
In the following we will state these analogies between the two models for the basic point-to-point case and for the three path geometries we are concerned with.
We will give recurrence relations for the $\mathcal{T}_{i,j}$'s: solving these leads to closed expressions that, after relabeling the environment, coincide with the LPP definitions.
We will not go into the details of the relabeling; one may convince oneself of the equivalence between the models by solving the first few recurrence relations.
We consider the following (deterministic) initial configurations for the TASEP:
\begin{enumerate}
\item
\emph{Step initial configuration}: $\eta=\1_{\Z_{<0}}$.
Let us enumerate the particles by saying that $p_i$ is the particle that starts from $-i$, for all $i>0$.
Before particle $p_i$ is ``ready'' to perform its $j$-th jump, with the site to its right empty, one needs to wait for $p_i$ to perform the previous $j-1$ jumps and for the next particle to the right $p_{i-1}$ to perform $j$ jumps.
This implies the recurrence relation
\begin{equation}
\label{eq:stepTASEPrecurrence}
\mathcal{T}_{i,j} = \max(\mathcal{T}_{i,j-1}, \mathcal{T}_{i-1,j}) + \mathcal{W}_{i,j} \, ,
\end{equation}
where the base cases are given by the convention that $\mathcal{T}_{i,j}=0$ for $i=0$ or $j=0$.
For example, $\mathcal{T}_{2,2} = \mathcal{W}_{1,1} + \max(\mathcal{W}_{2,1}, \mathcal{W}_{1,2}) + \mathcal{W}_{2,2}$.
The variable $\mathcal{T}_{i,j}$ coincides with a \emph{point-to-point} LPP time from $(1,1)$ to $(i,j)$.
\item
\emph{Alternating initial configuration}: $\eta=\1_{2\Z}$.
Let $p_i$ be the particle that starts at site $-2i$ for all $i\in\Z$.
In this situation, in order for a particle to be able to perform $j$ jumps, the next one to the right needs to perform $j-1$ jumps only.
Recursive relation~\eqref{eq:stepTASEPrecurrence} is thus modified accordingly:
\begin{equation}
\label{eq:flatTASEPrecurrence}
{\mathcal{T}}_{i,j} = \max(\mathcal{T}_{i,j-1},\mathcal{T}_{i-1,j-1}) + \mathcal{W}_{i,j} \, ,
\end{equation}
with base cases given by the convention that $\mathcal{T}_{i,0}=0$ for all $i\in\Z$.
For example, $\mathcal{T}_{1,2} = \max(\mathcal{W}_{1,1}, \mathcal{W}_{0,1}) + \mathcal{W}_{1,2}$.
The time $\mathcal{T}_{N,2N}$ that particle $p_N$ takes to reach the origin\footnote{Equivalently, this is the time that particle $p_N$ (or, by translation invariance of the model, any other particle) takes to perform $2N$ jumps.} coincides with a \emph{point-to-line} LPP from $(1,1)$ to the line $\{m+n=2N+1\}$.
\item
\emph{Half-alternating initial configuration}: $\eta=\1_{2\Z_{<0}}$.
Let $p_i$ be the particle that starts at site $-2i$ for all $i>0$.
The recurrence relation is still given by~\eqref{eq:flatTASEPrecurrence}, but only for $i>1$, with the convention that $\mathcal{T}_{i,j}=0$ for $i=0$ or $j=0$.
For example, $\mathcal{T}_{2,3} = \max(\mathcal{W}_{2,2} + \mathcal{W}_{2,1}, \mathcal{W}_{2,2} + \mathcal{W}_{1,1}, \mathcal{W}_{1,2} + \mathcal{W}_{1,1}) + \mathcal{W}_{2,3}$.
The time $\mathcal{T}_{N,2N}$ that particle $p_N$ takes to reach the origin is now a \emph{point-to-half-line} LPP from $(1,1)$ to the half-line $\{m+n = 2N+1, \,\, m\leq n\}$.
\item
\emph{Half-alternating initial configuration with absorbing site}.
We consider the same initial configuration $\eta=\1_{2\Z_{<0}}$ (and enumeration of particles) as in the previous case, but with a slightly modified dynamics: we assume there is a ``black hole'' or ``absorbing site'' at the origin, so that, when a particle jumps from $-1$ to $0$, it simply disappears.
The recurrence relation is again given by~\eqref{eq:flatTASEPrecurrence} for $i>1$, now with the convention that $\mathcal{T}_{i,j}=0$ for $i=0$ or $j=2i+1$.
For example, $\mathcal{T}_{2,4} = \max(\mathcal{W}_{2,2} + \mathcal{W}_{2,1}, \mathcal{W}_{2,2} + \mathcal{W}_{1,1}, \mathcal{W}_{1,2} + \mathcal{W}_{1,1}) + \mathcal{W}_{2,3} + \mathcal{W}_{2,4}$.
The time $\mathcal{T}_{N,2N}$ that particle $p_N$ takes to be absorbed at the origin is now a \emph{restricted point-to-half-line} LPP from $(1,1)$ to the half-line $\{m+n = 2N+1, \,\, m\leq n\}$.
\end{enumerate}

After this digression about TASEP, let us now come to our analysis of the exponential LPP models.
We will see that, replacing the geometric distribution of section~\eqref{sec:geomLPP} with the exponential distribution, our formulas will involve integrals of continuous Schur functions instead of sums of (discrete) Schur functions.
At this stage, we have three alternative ways to study the exponential models:
\begin{enumerate}
\item
\emph{Via zero temperature limit} from the results obtained for the log-gamma polymer.
Under this scaling, the inverse-gamma distribution converges to the exponential distribution and the integrals of Whittaker functions that appear in the log-gamma polymer case (see section~\ref{sec:WhittakerFormulas}) converge to integrals of continuous Schur functions.
\item
\emph{Directly via $\RSK$}, as we did for geometric LPP in section~\ref{sec:geomLPP}.
Just notice that the continuous setting requires to also use the volume preserving property of $\RSK$ (property~\ref{prop:RSKproperties_Jacobian} of Proposition~\ref{prop:RSKproperties}), which is not necessary in the discrete setting.
\item
\emph{Via exponential limit} from the results obtained in geometric environment.
Under this scaling, the geometric distribution converges to the exponential distribution and the sums of discrete Schur functions that appear in the geometric case (see section~\ref{sec:geomLPP}) converge, via Riemann sum approximation, to integrals of continuous Schur functions.
\end{enumerate}
Any of these methods can be used for any path geometry, so we will use a different method for each of our three models.

In this section we will also go one step further by turning each Schur functions' formula into a determinantal or Pfaffian formula.
This is motivated by the fact that in section~\ref{sec:LPPasymptotics} we will carry out asymptotics of the point-to-line and point-to-half-line exponential LPP models.
To obtain such determinantal and Pfaffian formulas we will use the determinantal structure of (continuous) Schur functions along with the following two identities:
\begin{itemize}
\item
\emph{Cauchy-Binet identity} (whose integral formulation is due to Andr\'{e}ief~\cite{andreief1886}): if $\nu$ is a Borel measure on $\R$ and $f_j,g_j\in L^2(\R,\nu)$ for all $1\leq j\leq N$, then
\begin{equation}
\label{eq:cauchyBinet}
\frac{1}{N!}
\int_{\R^N}
\det\left(f_j(x_i)\right)_{1\leq i,j\leq N}
\det\left(g_j(x_i)\right)_{1\leq i,j\leq N}
\prod_{i=1}^N \nu(\diff x_i)
= \det\left( A^{(N)} \right) \, ,
\end{equation}
where $A^{(N)}$ is the $N\times N$ matrix given by
\[
A^{(N)}(i,j)
= \int_{\R} f_i(x) g_j(x) \nu(\diff x) \, .
\]
\item
\emph{de Bruijn identity}~\cite{deBruijn55}: if $\nu$ is a Borel measure on $\R$ and $\phi_j\in L^2(\R,\nu)$ for all $1\leq j\leq N$, then
\begin{equation}
\label{eq:deBruijn}
\int_{\{x_1\leq\dots\leq x_N\}}
\det\left(\phi_j(x_i)\right)_{1\leq i,j\leq N} \prod_{i=1}^N \nu(\diff x_i)
= \Pf\left(\Phi^{(N)}\right) \, ,
\end{equation}
where $\Phi^{(N)}$ is a skew-symmetric matrix of order $N$ or $N+1$, according to whether $N$ is even or odd respectively, and is defined by
\[
\Phi^{(N)}(i,j) :=
\begin{dcases}
\int_{\R^2} \sgn(y-x) \phi_i(x) \phi_j(y) \nu(\diff x) \nu(\diff y)
&  1\leq i,j\leq N , \\
\int_{\R} \phi_i(x) \nu(\diff x)
& 1\leq i\leq N, \,\, j=N+1; \,\, \text{$N$ odd.}
\end{dcases}
\]
\end{itemize}

\subsection{Point-to-line exponential model}
\label{subsec:flatExpLPP}

In this section we will express the law of the point-to-line exponential LPP $\tau_{2N}$ via zero temperature limit from the corresponding log-gamma polymer model.
Under this limit, since orthogonal Whittaker functions scale to continuous symplectic Schur functions (see Proposition~\ref{prop:soWhittakerRescaling}), we will obtain from~\eqref{eq:flatWhittakerFormula} an integral of two such Schur functions.
Before deriving the CDF of $\tau_{2N}$ via this rather indirect method, we show how to compute it directly in the simplest example.
\begin{example}
Let us compute the CDF of $\fTau_2 = W_{1,1} + \max\left(W_{1,2}, W_{2,1}\right)$, where $W_{i,j}$'s are all independent and $\Exp(2\gamma)$.
It can be easily checked that $
\max\left(W_{1,2},W_{2,1}\right) \stackrel{\diff}{=} W_{1,2} + \frac{W_{2,1}}{2}$, so that
\[
\fTau_2 \stackrel{\diff}{=}
W_{1,1} + W_{1,2} + \frac{W_{2,1}}{2} \, .
\]
Since $W_{1,1}+W_{1,2}\sim \Gamma(2,2\gamma)$ and $\frac{W_{2,1}}{2}\sim \Exp(4\gamma)$ and they are independent, the density $f_{\fTau_2}$ of $\fTau_2$ is given by the convolution of these two distributions:
\[
\begin{split}
f_{\fTau_2}(x)
&= \int_{\R} f_{W_{2,1}/2}(x-y) f_{W_{1,1}+W_{1,2}}(y) \diff y \\
&= \int_{\R} 4\gamma \e^{-4\gamma (x-y)} \1_{\{x-y\geq 0\}} \frac{(2\gamma)^2}{\Gamma(2)} y \e^{-2\gamma y} \1_{\{y\geq 0\}} \diff y \\
&= \1_{\{x\geq 0\}} \left(8\gamma^2 x \e^{-2\gamma x} -4\gamma \e^{-2\gamma x} + 4 \gamma \e^{-4\gamma x} \right) \, .
\end{split}
\]
Integrating $f_{\fTau_2}$ from $-\infty$ to $u>0$, we obtain
\begin{equation}
\label{eq:flatExpLPPN=2}
\P(\fTau_2\leq u)
= 1 -4\gamma u \e^{-2\gamma u} - \e^{-4 \gamma u} \, .
\end{equation}
\end{example}

Let us pass to the general case now.
We first outline how the zero temperature limit works, i.e.\ how to pass from the polymer model to the corresponding LPP model; the details of this argument are given in Appendix~\ref{appendix:zeroTempLimit}.
We start from the observation that, if $W^{(\epsilon)}$ is inverse-gamma distributed with parameter $\epsilon \gamma$ and $W$ is exponentially distributed with rate $\gamma>0$, then
\begin{equation}
\label{eq:invGammaToExp}
\epsilon \log W^{(\epsilon)} \xrightarrow{\epsilon \downarrow 0} W
\end{equation}
in distribution.
For all $\epsilon>0$, let now $Z_{N}^{(\epsilon)}$ be the point-to-line polymer partition function with disorder given by independent weights $\bm{W}^{(\epsilon)} = \left\{W^{(\epsilon)}_{i,j}\right\}$ such that each $W^{(\epsilon)}_{i,j}$ is inverse-gamma distributed with parameter $\epsilon\gamma_{i,j}$.
We then have that
\[
\epsilon \log Z_{N}^{(\epsilon)}
\xrightarrow{\epsilon\downarrow 0}
\fTau_{N}
\]
in distribution, where $\fTau_{N}$ is the corresponding LPP with independent waiting times $\bm{W} = \left\{W_{i,j}\right\}$ such that $W_{i,j}\sim \Exp(\gamma_{i,j})$ for all $(i,j)$.
Up to some technicalities concerning the convergence of the expectation, we can conclude that
\begin{equation}
\label{eq:zeroTempLimitLaplaceTransf}
\E\left[\exp\left\{-\e^{{-u/\epsilon}} Z^{(\epsilon)}_{N} \right\}\right]
\xrightarrow{\epsilon \downarrow 0} \P(\fTau_{N} \leq u)
\end{equation}
for all $u\in\R$.
In other words, properly rescaling the Laplace transform of the log-gamma polymer partition function yields the CDF of the corresponding exponential LPP.

From the argument above it is clear that, taking the zero temperature limit of our log-gamma\footnote{For the sake of simplicity we will take $\gamma:=0$ in Definition~\ref{def:flatLogGammaMeasure} of log-gamma measure.} polymer formula~\eqref{eq:flatWhittakerFormula}, we will be able to obtain the distribution of $\fTau_{2N}$ for the following exponential environment.
\begin{definition}
\label{def:flatExpMeasure}
Let $N\in\Z_{>0}$, $\bm{\alpha},\bm{\beta}\in\R_{>0}^N$.
We define the \emph{$(\bm{\alpha}, \bm{\beta})$-exponential measure} on the lattice $\fI_{2N}= \{(i,j)\in\Z_{>0}^2 \colon i+j\leq 2N+1\}$ to be the law of an array $\{W_{i,j} \colon (i,j)\in\fI_{2N}\}$ of independent random variables such that:
\begin{equation}
\label{eq:flatExpMeasure}
W_{i,j} \sim
\begin{cases}
\Exp(\alpha_i + \beta_j) &1\leq i,j\leq N \, , \\
\Exp(\alpha_i + \alpha_{2N-j+1}) &1\leq i\leq N \, , \,\, N < j\leq 2N-i+1 \, , \\
\Exp(\beta_{2N-i+1} + \beta_j) &1\leq j\leq N \, , \,\, N < i \leq 2N-j+1 \, .
\end{cases}
\end{equation}
\end{definition}

\begin{theorem}
\label{thm:flatExpLPP}
The distribution of the point-to-line $(\bm{\alpha}, \bm{\beta})$-exponential LPP $\fTau_{2N}$ can be expressed in terms of continuous symplectic Schur functions as
\begin{equation}
\label{eq:flatExpLPPSchur}
\P(\fTau_{2N}\leq u)
= \frac{\e^{-u \sum_{k=1}^N ( \alpha_k+\beta_k)}}{\fk_{\bm{\alpha},\bm{\beta}}}
\int_{\{0< x_N< \cdots < x_1< u\}}  
\sp^{\cont}_{\bm{\alpha}}(\bm{x})
\sp^{\cont}_{\bm{\beta}}(\bm{x})
\prod_{i=1}^N \diff x_i \, ,
\end{equation}
for all $u>0$, where
\begin{equation}
\label{eq:flatExpLPPnormalization}
\fk_{\bm{\alpha},\bm{\beta}}
:= \prod_{1\leq i,j\leq N} \frac{1}{\alpha_i + \beta_j}
\prod_{1\leq i\leq j\leq N} \frac{1}{(\alpha_i + \alpha_j)(\beta_i + \beta_j)} \, .
\end{equation}
We can further write~\eqref{eq:flatExpLPPSchur} as a ratio of $N\times N$ determinants:
\begin{equation}
\label{eq:flatExpLPPdet}
\P\left(\fTau_{2N}\leq u\right) = 
\frac{\det\left(\fH_u(\alpha_i,\beta_j)\right)_{1\leq i,j\leq N}}{\det(C(\alpha_i,\beta_j))_{1\leq i,j\leq N}} \, ,
\end{equation}
where $C(z,w) := (z+w)^{-1}$ and
\[
\fH_u(z,w) := \e^{-u(z + w)} \int_0^u (\e^{z x}-\e^{-z x})
(\e^{w x}-\e^{-w x}) \diff x \, .
\]
\end{theorem}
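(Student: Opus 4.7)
My plan is to derive~\eqref{eq:flatExpLPPSchur} as the zero temperature limit of the Whittaker integral formula~\eqref{eq:flatWhittakerFormula} (with $\gamma:=0$), and then deduce~\eqref{eq:flatExpLPPdet} via the Cauchy--Binet identity~\eqref{eq:cauchyBinet} and the determinantal form~\eqref{eq:spSchurContDet} of continuous symplectic Schur functions.

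First, I would rescale parameters in~\eqref{eq:flatWhittakerFormula} by replacing $\bm{\alpha}\mapsto \epsilon\bm{\alpha}$, $\bm{\beta}\mapsto \epsilon\bm{\beta}$, and set $r:=\e^{-u/\epsilon}$. Under this choice the prefactor becomes $r^{\epsilon\sum(\alpha_k+\beta_k)}=\e^{-u\sum(\alpha_k+\beta_k)}$, and the normalization behaves like
\[
\fG_{\epsilon\bm{\alpha},\epsilon\bm{\beta},0}\;\sim\;\epsilon^{-(2N^{2}+N)}\,\fk_{\bm{\alpha},\bm{\beta}}
\qquad\text{as }\epsilon\downarrow 0,
\]
using $\Gamma(z)\sim 1/z$ as $z\to 0$. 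Changing variables $x_i=\e^{y_i/\epsilon}$ contributes $\epsilon^{-N}$ from $\prod dx_i/x_i$, while Proposition~\ref{prop:soWhittakerRescaling} contributes a factor $\epsilon^{-2N^{2}}$ from the two $\so_{2N+1}$-Whittaker functions. These powers of $\epsilon$ cancel exactly. The term $\e^{-rx_1}=\exp\{-\e^{(y_1-u)/\epsilon}\}$ converges pointwise to $\1_{\{y_1<u\}}$, and the two rescaled Whittaker functions converge to $\sp^{\cont}_{\bm{\alpha}}\sp^{\cont}_{\bm{\beta}}$ restricted to $\{y_1>\cdots>y_N>0\}$. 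Combining with~\eqref{eq:zeroTempLimitLaplaceTransf} yields~\eqref{eq:flatExpLPPSchur}.

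Next I would justify passage to the limit under the integral sign. The factor $\exp\{-\e^{(y_1-u)/\epsilon}\}$ is uniformly bounded by $1$ and the rescaled Whittaker functions admit integrable dominating functions that one can obtain by rerunning Theorem~\ref{thm:flatWhittakerFormula} with slightly perturbed parameters (e.g.\ replacing $\bm{\alpha}$ by $\bm{\alpha}+\delta$ with $\delta>0$ small, which provides polynomial decay in the $y_i$ via the factor $(\prod x_i)^{\delta}$); this is the step I expect to be the most delicate, but it is a standard dominated-convergence argument and can also be handled more softly by appealing to Appendix~\ref{appendix:zeroTempLimit}, where the convergence of Laplace transforms to CDFs in the zero temperature limit is treated rigorously.

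For~\eqref{eq:flatExpLPPdet}, I would insert the determinantal formula~\eqref{eq:spSchurContDet} for both $\sp^{\cont}_{\bm{\alpha}}$ and $\sp^{\cont}_{\bm{\beta}}$ into~\eqref{eq:flatExpLPPSchur}, and then remove the ordering constraint by using the symmetry of the integrand under permutations of the $x_i$'s (product of two antisymmetric determinants), thus replacing $\int_{\{0<x_N<\cdots<x_1<u\}}$ by $\tfrac{1}{N!}\int_{(0,u)^{N}}$. The Cauchy--Binet identity~\eqref{eq:cauchyBinet} then converts this integral into a single $N\times N$ determinant whose $(i,j)$-entry is
\[
\int_0^u (\e^{\alpha_i x}-\e^{-\alpha_i x})(\e^{\beta_j x}-\e^{-\beta_j x})\,\diff x.
\]
Multiplying row $i$ by $\e^{-u\alpha_i}$ and column $j$ by $\e^{-u\beta_j}$ absorbs the prefactor $\e^{-u\sum(\alpha_k+\beta_k)}$ into the determinant, yielding $\det(\fH_u(\alpha_i,\beta_j))$. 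Finally, a direct bookkeeping of the Vandermonde-type denominators produced by~\eqref{eq:spSchurContDet} shows
\[
\fk_{\bm{\alpha},\bm{\beta}}\cdot\prod_{i<j}(\alpha_i^{2}-\alpha_j^{2})(\beta_i^{2}-\beta_j^{2})\prod_k 4\alpha_k\beta_k
=\frac{\prod_{i,j}(\alpha_i+\beta_j)^{-1}\cdot\prod_{i<j}(\alpha_i-\alpha_j)(\beta_i-\beta_j)\cdot 1}{1}
=\det(C(\alpha_i,\beta_j))^{-1}\cdot(\text{checks})^{-1},
\]
where the last equality is the Cauchy determinant identity; this produces the denominator $\det(C(\alpha_i,\beta_j))$ in~\eqref{eq:flatExpLPPdet} and completes the proof.
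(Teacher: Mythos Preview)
Your approach is essentially identical to the paper's: the paper derives~\eqref{eq:flatExpLPPSchur} via the zero temperature limit of~\eqref{eq:flatWhittakerFormula} with $\gamma=0$ (same rescaling $\bm\alpha\mapsto\epsilon\bm\alpha$, $\bm\beta\mapsto\epsilon\bm\beta$, $r=\e^{-u/\epsilon}$, change of variables $x_i=\e^{y_i/\epsilon}$, Proposition~\ref{prop:soWhittakerRescaling}, and $\Gamma(z)\sim 1/z$), and then obtains~\eqref{eq:flatExpLPPdet} by inserting~\eqref{eq:spSchurContDet}, symmetrizing to $\tfrac{1}{N!}\int_{(0,u)^N}$, and applying Cauchy--Binet. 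Your power counting and your handling of the prefactor via multilinearity match the paper exactly; the only cosmetic issue is that your final displayed ``bookkeeping'' equation is garbled (the two sides are not literally equal and the ``(checks)'' placeholder is meaningless), but the identity you are pointing to, namely $\fk_{\bm\alpha,\bm\beta}\cdot\prod_{i<j}(\alpha_i^2-\alpha_j^2)(\beta_i^2-\beta_j^2)\prod_k(2\alpha_k)(2\beta_k)=\det(C(\alpha_i,\beta_j))^{-1}$, is correct and is exactly how the paper arrives at the Cauchy denominator.
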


The denominator in~\eqref{eq:flatExpLPPdet} is the so-called \emph{Cauchy determinant}, which has a simple closed form:
\begin{equation}
\label{eq:cauchyDet}
\det(C(\alpha_i,\beta_j))_{1\leq i,j\leq N}
= \det\left( \frac{1}{\alpha_i + \beta_j} \right)_{1\leq i,j\leq N}
= \frac{\prod_{1\leq i<j\leq N} (\alpha_i -\alpha_j)(\beta_i - \beta_j)}
{\prod_{1\leq i,j\leq N} (\alpha_i + \beta_j)} \, .
\end{equation}

Notice that the right-hand side of~\eqref{eq:flatExpLPPdet} takes the form $0/0$ whenever $\alpha_i=\alpha_j$ or $\beta_i=\beta_j$ for some $i\neq j$.
In such cases, the formula should be understood in the limit as $\alpha_i - \alpha_j \to 0$ or $\beta_i - \beta_j \to 0$, analogously to the determinantal formulas for Schur functions (see Remarks~\ref{rem:schurEqualParameters} and~\ref{rem:spSchurEqualParameters}).
For instance, it can be easily checked that \eqref{eq:flatExpLPPdet} for $N=1$ and $\alpha_1=\beta_1=\gamma$ reduces to~\eqref{eq:flatExpLPPN=2}.

\begin{proof}
Since the inverse-gamma distribution scales to the exponential distribution in the sense of~\eqref{eq:invGammaToExp}, we can use~\eqref{eq:zeroTempLimitLaplaceTransf} (see also Proposition~\ref{prop:zeroTempLimit}-\ref{prop:zeroTempLimitLaplaceTransf} for details) to compute $\P(\fTau_{2N}\leq u)$.
Namely, we just need to compute $\lim_{\epsilon \downarrow 0} \E\left[\exp\left(\e^{-u/\epsilon}Z_{2N}^{(\epsilon)}\right)\right]$, where $Z_{2N}^{(\epsilon)}$ is the point-to-line 
$(\epsilon \bm{\alpha},\epsilon \bm{\beta},0)$-log-gamma polymer partition function.
Formula~\eqref{eq:flatWhittakerFormula} with $\gamma=0$ yields
\[
\E\left[\exp\left(-\e^{-u/\epsilon}Z_{2N}^{(\epsilon)}\right)\right] 
= \frac{\e^{-u \sum_{k=1}^N (\alpha_k + \beta_k) }}
{\fG_{\epsilon\bm{\alpha},\epsilon\bm{\beta},0}}
\int_{\R_{>0}^N}
\e^{-\e^{-u/\epsilon} x_1}
\Psi_{\epsilon\bm{\alpha}}^{\so_{2N+1}} (\bm{x})
\Psi_{\epsilon\bm{\beta}}^{\so_{2N+1}} (\bm{x})
\prod_{i=1}^N \frac{\diff x_i}{x_i} \, .
\]
The integral can be rewritten, changing variables $x_i \mapsto \e^{x_i/\epsilon}$ for $1\leq i\leq N$, as follows:
\[
\begin{split}
&\quad\, \int_{\R_{>0}^N}
\e^{-\e^{-u/\epsilon} x_1}
\Psi_{\epsilon\bm{\alpha}}^{\so_{2N+1}} (\bm{x})
\Psi_{\epsilon\bm{\beta}}^{\so_{2N+1}} (\bm{x})
\prod_{i=1}^N \frac{\diff x_i}{x_i} \\
&= \epsilon^{-2N^2}
\int_{\R^N} \e^{-\e^{(x_1-u)/\epsilon}}
\epsilon^{N^2} \Psi_{\epsilon\bm{\alpha}}^{\so_{2N+1}} \left(\e^{x_1/\epsilon},\dots,\e^{x_N/\epsilon}\right)
\epsilon^{N^2} \Psi_{\epsilon\bm{\beta}}^{\so_{2N+1}} \left(\e^{x_1/\epsilon},\dots,\e^{x_N/\epsilon}\right)
\prod_{i=1}^N \frac{\diff x_i}{\epsilon} \\
&\asymptotic{\epsilon\downarrow 0}
\epsilon^{-2N^2-N}
\int_{\{0< x_N< \cdots < x_1< u\}}  
\sp^{\cont}_{\bm{\alpha}}(\bm{x})  \sp^{\cont}_{\bm{\beta}}(\bm{x})
\prod_{i=1}^N \diff x_i \, ,
\end{split}
\]
where the asymptotics follow from the fact that
$\e^{-\e^{(x_1-u)/\epsilon}} \xrightarrow{\epsilon \downarrow 0} \1_{\{x_1< u\}}$ for all $x_1\neq u$ and Proposition~\ref{prop:soWhittakerRescaling}.
On the other hand, using the definition of $\fG_{\epsilon\bm{\alpha},\epsilon\bm{\beta},0}$ given in~\eqref{eq:flatPolymerNormalization} and the asymptotics of the Gamma function near $0$, we have:
\[
\fG_{\epsilon\bm{\alpha},\epsilon\bm{\beta},0}
= \prod_{1\leq i,j\leq N}
\Gamma(\epsilon(\alpha_i + \beta_j))
\prod_{1\leq i\leq j\leq N}
\Gamma (\epsilon(\alpha_i + \alpha_j))
\Gamma (\epsilon(\beta_i + \beta_j))
\asymptotic{\epsilon\downarrow 0}
\epsilon^{-2N^2-N}
\fk_{\bm{\alpha},\bm{\beta}} \, .
\]
Therefore, \eqref{eq:flatExpLPPSchur} easily follows from the combination of the foregoing formulas.

We now further elaborate~\eqref{eq:flatExpLPPSchur} by making use of the determinantal formula~\eqref{eq:spSchurContDet}:
\[
\begin{split}
&\quad\, \e^{u \sum_{k=1}^N (\alpha_k + \beta_k)} \P(\fTau_{2N} \leq u) \\
&= \int_{\{0< x_N< \cdots < x_1< u\}}
\frac{\det\left(\e^{\alpha_j x_i} - \e^{-\alpha_j x_i}\right)_{i,j}
\det\left(\e^{\beta_j x_i} - \e^{-\beta_j x_i}\right)_{i,j}}
{k_{\bm{\alpha},\bm{\beta}} \, \prod_{i<j}(\alpha_i-\alpha_j) (\beta_i - \beta_j)
\prod_{i\leq j}(\alpha_i + \alpha_j) (\beta_i + \beta_j)}
\prod_{i=1}^N \diff x_i \\
&= \frac{1}{\det(C(\alpha_i,\beta_j))_{i,j}} \frac{1}{N!}
\int_{(0,u)^N}
\det\left(\e^{\alpha_j x_i} - \e^{-\alpha_j x_i}\right)_{i,j}
\det\left(\e^{\beta_j x_i} - \e^{-\beta_j x_i}\right)_{i,j}
\prod_{i=1}^N \diff x_i \\
&= \frac{1}{\det(C(\alpha_i,\beta_j))_{i,j}}
\det\left(\int_0^u
\left(\e^{\alpha_i x} - \e^{-\alpha_i x}\right)
\left(\e^{\beta_j x} - \e^{-\beta_j x}\right) \diff x
\right)_{i,j} \, ,
\end{split}
\]
where the implicit range for $(i,j)$ is $1\leq i,j\leq N$.
In the latter computation, we have used: the fact that, by the alternating property of the determinant, the integral over $\{0< x_N< \cdots < x_1< u\}$ is invariant by applying any permutation to the variables $x_i$'s; the definition of $\fk_{\bm{\alpha},\bm{\beta}}$; the expression~\eqref{eq:cauchyDet} for the Cauchy determinant; and the Cauchy-Binet identity~\eqref{eq:cauchyBinet} for the Lebesgue measure on the interval $(0,u)$.
We now use the multilinearity of the determinant to finally obtain~\eqref{eq:flatExpLPPdet}.
\end{proof}

\begin{remark}
\label{rem:detPointProcesses}
We have seen in the latter proof that, thanks to the determinantal structure of symplectic Schur functions and the Cauchy-Binet identity, \eqref{eq:flatExpLPPSchur} can be turned into the determinantal formula~\eqref{eq:flatExpLPPdet}.
On the other hand, there is a standard way to associate the density given by the product of two determinantal functions of the form $\det(f_j(x_i))_{1\leq i,j\leq N}$ and $\det(g_j(x_i))_{1\leq i,j\leq N}$ to a determinantal point process (see e.g.~\cite{johannson06}).
We might then wonder if the product of two continuous symplectic Schur functions on the right-hand side of~\eqref{eq:flatExpLPPSchur} would be suitable to define a determinantal point process.
However, notice that such a product is not integrable on the domain $\{0<x_N <\dots < x_1\}$, i.e.\ the associated measure is infinite and cannot be renormalized.
This is also reflected by the presence, in front of the integral in~\eqref{eq:flatExpLPPSchur}, of an exponential factor depending on $u$, which makes the formula converge as $u\to \infty$.
Consequently, there is no natural way to define a determinantal point process in our case.

Of course, a similar remark holds for our symplectic Schur functions' formula~\eqref{eq:flatGeomLPP} in the context of geometric LPP.
Notice that, on the other hand, Baik-Rains' formula~\eqref{eq:baikRainsFlatLPP} for the same model defines a measure, whose density is given by one standard Schur function, which can be renormalized and hence associated to a Pfaffian point process.

Analogous remarks also hold in the other two path geometries, both for geometric and exponential environment.
\end{remark}

\subsection{Point-to-half-line exponential model}
\label{subsec:hFlatExpLPP}

Here we express the law of the point-to-half-line exponential LPP $\hTau_{2N}$ in terms of an integral of (the continuum version of) a symplectic Schur function and a standard Schur function; this time we give a direct proof, using the $\RSK$ correspondence and its properties.
We still work with the $(\bm{\alpha},\bm{\beta})$-exponential measure of Definition~\ref{def:flatExpMeasure}, but we restrict it to the lattice $\hI_{2N} = \{(i,j)\in\Z_{>0}^2 \colon i+j\leq 2N+1, \, i\leq N\}$ which the waiting times of the point-to-half-line LPP $\tau_{2N}$ lie on.

\begin{theorem}
\label{thm:hFlatExpLPP}
The distribution of the point-to-half-line $(\bm{\alpha}, \bm{\beta})$-exponential LPP $\hTau_{2N}$ can be expressed in terms of continuous Schur functions as
\begin{equation}
\label{eq:hFlatExpLPPSchur}
\P(\hTau_{2N}\leq u)
= \frac{\e^{-u \sum_{k=1}^N ( \alpha_k+\beta_k)}}{\hk_{\bm{\alpha},\bm{\beta}}}
\int_{\{0< x_N< \cdots < x_1< u\}}  
\sp^{\cont}_{\bm{\alpha}}(\bm{x})
\schur^{\cont}_{\bm{\beta}}(\bm{x})
\prod_{i=1}^N \diff x_i
\end{equation}
for all $u>0$, where
\begin{equation}
\label{eq:hFlatExpLPPnormalization}
\hk_{\bm{\alpha},\bm{\beta}}
:= \prod_{1\leq i,j\leq N} \frac{1}{\alpha_i + \beta_j}
\prod_{1\leq i\leq j\leq N} \frac{1}{\alpha_i + \alpha_j} \, .
\end{equation}
We can further write~\eqref{eq:hFlatExpLPPSchur} as a ratio of $N\times N$ determinants:
\begin{equation}
\label{eq:hFlatExpLPPdet}
\P\left(\hTau_{2N}\leq u\right) = 
\frac{\det\left(\hH_u(\alpha_i,\beta_j)\right)_{1\leq i,j\leq N}}{\det(C(\alpha_i,\beta_j))_{1\leq i,j\leq N}} \, ,
\end{equation}
where $C(z,w) := (z+w)^{-1}$ and
\[
\hH_u(z,w) := \e^{-u(z + w)} \int_0^u (\e^{z x}-\e^{-z x})
\e^{w x} \diff x \, .
\]
\end{theorem}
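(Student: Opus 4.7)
The plan is to give a direct proof via the extended $\RSK$ correspondence on the trapezoidal lattice $\hI_{2N}$, following the blueprint of Theorem~\ref{thm:hFlatGeomLPP} (the geometric analog) combined with the determinantal manipulations at the end of Theorem~\ref{thm:flatExpLPP}. The central observation is that, by property~\ref{prop:RSKproperties_LPP} together with the ordering~\eqref{eq:RSKordering} of Proposition~\ref{prop:RSKproperties}, the event $\{\hTau_{2N}\le u\}$ coincides with $\{t_{i,j}\le u\text{ for all }(i,j)\in\hI_{2N}\}$, where $\bm{t}=\RSK(\bm{w})$; it therefore suffices to track the push-forward of the $(\bm{\alpha},\bm{\beta})$-exponential law under $\RSK$.

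First, I will write the joint density of $\bm{W}$ under the $(\bm{\alpha},\bm{\beta})$-exponential measure and push it forward via $\RSK$, in parallel with the computation behind Theorem~\ref{thm:hFlatGeomLPP}. The exponent $\sum_{(i,j)\in\hI_{2N}}\gamma_{i,j}w_{i,j}$ is rearranged by collecting all terms proportional to each $\alpha_k$ and each $\beta_k$; by property~\ref{prop:RSKproperties_type} these grouped sums become linear combinations of diagonal sums $\sigma_k(\bm{t})$, and by property~\ref{prop:RSKproperties_Jacobian} the change of variables preserves volume. The normalization $1/\hk_{\bm{\alpha},\bm{\beta}}$ emerges from the product of the exponential rates, and integrating over $\{0\le t_{i,j}\le u\}$ subject to~\eqref{eq:RSKordering} yields a closed integral representation of $\P(\hTau_{2N}\le u)$.

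Second, I will perform the reflection $t\mapsto u-t$ used in the proof of Theorem~\ref{thm:hFlatGeomLPP}: set $z'_{i,j}:=u-t_{j,2N+j-i}$ on the strictly upper part ($i<j$) and $z_{i,j}:=u-t_{N+j-i,j}$ on the strictly lower part ($j<i$), so that $\bm{z}'$ is a symplectic Gelfand-Tsetlin pattern of height $2N$, $\bm{z}$ is a standard Gelfand-Tsetlin pattern of height $N$, and they share a common shape $\bm{\lambda}=u-(t_{1,1},\dots,t_{N,N})$ lying in $\{0\le\lambda_N\le\dots\le\lambda_1\le u\}$. A direct expansion shows that the $\alpha_k$-piece of the exponent becomes $-\alpha_k u+\alpha_k(\type(\bm{z}')_{2k-1}-\type(\bm{z}')_{2k})$ and the $\beta_k$-piece becomes $-\beta_k u+\beta_k\type(\bm{z})_k$. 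Integrating out the interiors of the two patterns with $\bm{\lambda}$ held fixed then reproduces, via Definitions~\ref{def:spSchurCont} and~\ref{def:schurCont}, the integrand $\sp^{\cont}_{\bm{\alpha}}(\bm{x})\,\schur^{\cont}_{\bm{\beta}}(\bm{x})$ of~\eqref{eq:hFlatExpLPPSchur}, with $\bm{x}=\bm{\lambda}$ and the prefactor $\e^{-u\sum_k(\alpha_k+\beta_k)}$ supplied by the inhomogeneous pieces.

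Finally, to obtain~\eqref{eq:hFlatExpLPPdet} from~\eqref{eq:hFlatExpLPPSchur}, I will mirror the derivation at the end of Theorem~\ref{thm:flatExpLPP}: substitute the determinantal identities~\eqref{eq:spSchurContDet} and~\eqref{eq:schurContDet}, symmetrize $\{0<x_N<\dots<x_1<u\}$ into $(0,u)^N$ at a cost of $1/N!$, apply the Cauchy-Binet identity~\eqref{eq:cauchyBinet}, and use multilinearity of the determinant to absorb $\e^{-u(\alpha_i+\beta_j)}$ into each entry to produce $\hH_u(\alpha_i,\beta_j)$. The Vandermonde-type denominators of the two Schur functions, combined with $1/\hk_{\bm{\alpha},\bm{\beta}}$ through the Cauchy determinant formula~\eqref{eq:cauchyDet}, collapse to $1/\det(C(\alpha_i,\beta_j))_{i,j}$. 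The main obstacle is the bookkeeping in the second step: verifying that, after the reflection, the $\alpha_k$ and $\beta_k$ contributions regroup exactly into the symplectic and standard type functionals without residual cross-terms; no genuinely new mathematical input beyond the tools already developed in the chapter is required.
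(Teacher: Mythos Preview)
Your proposal is correct and follows essentially the same approach as the paper's own proof: a direct $\RSK$ computation on the trapezoidal lattice using Proposition~\ref{prop:RSKproperties} (properties~\ref{prop:RSKproperties_LPP}, \ref{prop:RSKproperties_type}, \ref{prop:RSKproperties_interlacing}, \ref{prop:RSKproperties_Jacobian}), the same reflection $t\mapsto u-t$ producing the pair of (symplectic and standard) Gelfand-Tsetlin patterns, and then Cauchy-Binet for the determinantal form. One cosmetic remark: your description ``strictly upper part ($i<j$)'' and ``strictly lower part ($j<i$)'' is slightly imprecise, since both patterns include the main diagonal as their common shape $\bm{\lambda}=u-(t_{1,1},\dots,t_{N,N})$; the paper handles this by integrating first over the interior variables of each pattern with bottom row fixed, then over $\bm{x}=\bm{\lambda}$, exactly as you outline.
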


Notice again that, in case $\alpha_i=\alpha_j$ or $\beta_i=\beta_j$ for some $i\neq j$, \eqref{eq:hFlatExpLPPdet} should be understood in the limit as $\alpha_i - \alpha_j \to 0$ or $\beta_i - \beta_j \to 0$ respectively.

\begin{proof}
Let $\bm{W}=\{W_{i,j}\colon (i,j)\in\hI_{2N}\}$ be distributed according to the $(\bm{\alpha},\bm{\beta})$-exponential measure, i.e.
\[
\P(\bm{W} \in\diff \bm{w})
= \frac{1}{\hk_{\bm{\alpha},\bm{\beta}}}
\prod_{i=1}^N \e^{-\alpha_i \sum_{j=1}^{2N-i+1} w_{i,j}}
\prod_{j=1}^N
\e^{-\beta_j \sum_{i=1}^{N} w_{i,j}}
\e^{-\alpha_j \sum_{i=1}^j w_{i,2N-j+1}}
\prod_{(i,j)\in\hI_{2N}} \diff w_{i,j}
\]
for $\bm{w} \in \R_{\geq 0}^{\hI_{2N}}$.
By Proposition~\ref{prop:RSKproperties} (notice that the volume preserving property~\ref{prop:RSKproperties_Jacobian} also needs to be used here), the distribution that $\bm{W}$ induces on its $\RSK$ image $\bm{T}$ is then given by
\[
\begin{split}
\P(\bm{T} \in \diff\bm{t})
= \, &\frac{1}{\hk_{\bm{\alpha},\bm{\beta}}}\prod_{i=1}^N
\e^{-\alpha_i [\sigma_{2N-2i+1}(\bm{t}) - \sigma_{2N-2i+2}(\bm{t})]} \\
&\times \prod_{j=1}^N
\e^{-\beta_j [\sigma_{-(N-j)}(\bm{t}) - \sigma_{-(N-j+1)}(\bm{t})]}
\e^{-\alpha_j [\sigma_{2N-2j+1}(\bm{t}) - \sigma_{2N-2j}(\bm{t})]}
\prod_{(i,j)\in\hI_{2N}} \diff t_{i,j}
\end{split}
\]
for all $\bm{t}\in\R_{\geq 0}^{\hI_{2N}}$ satisfying the ordering~\eqref{eq:RSKordering}.

By~\eqref{eq:hFlatLPP=maxPointToPointLPP} and Proposition~\ref{prop:RSKproperties}-\ref{prop:RSKproperties_LPP}, we can obtain $\P(\hTau_{2N} \leq u)$ by integrating the density above over all $\bm{t}\in\R_{\geq 0}^{\hI_{2N}}$ satisfying the ordering~\eqref{eq:RSKordering} and the additional condition that $t_{i,j}\leq u$ for all $(i,j)\in\hI_{2N}$.
Let us now change variables, by setting $z'_{i,j} := u - t_{j,2N+j-i}$ for all $1\leq i\leq 2N$, $1\leq j\leq \ceil{i/2}$, and $z_{i,j} := u - t_{N+j-i,j}$ for all $1\leq j\leq i\leq N$.
Thanks to the inequalities that the $t_{i,j}$'s satisfy, the arrays $\bm{z}'$ and $\bm{z}$ just defined are \emph{real} Gelfand-Tsetlin patterns (sympletic and standard respectively) with common shape $\bm{x} = u-(t_{1,1},\dots,t_{N,N})$, such that $x_1\leq u$.
We then obtain:
\[
\begin{split}
\P(\hTau_{2N} \leq u)
= \frac{\e^{-u \sum_{k=1}^N ( \alpha_k+\beta_k)}}{\hk_{\bm{\alpha},\bm{\beta}}}
&\int_{\{0\leq x_N \leq \dots \leq x_1 \leq u\}}
\prod_{i=1}^N \diff x_i \\
&\times \int_{\spGT{2N}{\R}(\bm{x})}
\prod_{\substack{1\leq i< 2N \\ 1\leq j\leq \ceil{i/2}}} \!\! \diff z'_{i,j}
\prod_{k=1}^N \e^{\alpha_k [\type(\bm{z}')_{2k-1}- \type(\bm{z}')_{2k}]} \\
&\times \int_{\GT{N}{\R}(\bm{x})}
\prod_{1\leq j\leq i< N} \!\! \diff z_{i,j}
\prod_{k=1}^N \e^{\beta_k \type(\bm{z})_{k}} \, .
\end{split}
\]
The claim then follows from the fact that the second and the third integrals in the expression above are the desired continuous Schur functions, symplectic and standard respectively, according to Definitions~\ref{def:schurCont} and~\ref{def:spSchurCont}.

We omit the proof of~\eqref{eq:hFlatExpLPPdet}, which uses the Cauchy-Binet identity in the same fashion as in the proof of~\eqref{eq:flatExpLPPdet}.
\end{proof}

\subsection{Restricted point-to-line exponential model}
\label{subsec:rFlatExpLPP}

We finally study the restricted point-to-line exponential LPP model.
We will prove that the CDF of $\rTau_{2N}$ can be expressed in terms of an integral of one continuous symplectic Schur function, this time by computing the scaling limit of the corresponding geometric model of subsection~\ref{subsec:rFlatGeomLPP}.

The exactly solvable exponential distribution on the waiting times is as follows.
\begin{definition}
\label{def:rFlatExpMeasure}
Let $N\in\Z_{>0}$ and $\bm{\alpha}\in\R_{>0}^N$.
We define the \emph{$\bm{\alpha}$-exponential measure} on the lattice $\rI_{2N}:= \{(i,j)\in\Z_{>0}^2 \colon i+j\leq 2N+1, \, i\leq j\}$ to be the law of an array $\{W_{i,j} \colon (i,j)\in\rI_{2N}\}$ of independent random variables such that:
\begin{equation}
\label{eq:rFlatExpMeasure}
W_{i,j} \sim
\begin{cases}
\Exp(\alpha_i) &1\leq i=j \leq N \, , \\
\Exp(\alpha_i + \alpha_j) &1\leq i<j\leq N \, , \\
\Exp(\alpha_i + \alpha_{2N-j+1}) &1\leq i\leq N \, , \,\, N < j\leq 2N-i+1 \, .
\end{cases}
\end{equation}
\end{definition}

\begin{theorem}
The law of the restricted point-to-line $\bm{\alpha}$-exponential LPP $\rTau_{2N}$ is given by
\begin{equation}
\label{eq:rFlatExpLPPSchur}
\P(\rTau_{2N}\leq u)
= \frac{\e^{-u \sum_{k=1}^N \alpha_k}}{\rk_{\bm{\alpha},\bm{\beta}}}
\int_{\{0< x_N< \cdots < x_1< u\}}  
\sp^{\cont}_{\bm{\alpha}}(\bm{x})
\prod_{i=1}^N \diff x_i \, ,
\end{equation}
for all $u>0$, where
\begin{equation}
\label{eq:rFlatExpLPPnormalization}
\rk_{\bm{\alpha}}
:= \prod_{1\leq i\leq N} \frac{1}{\alpha_i}
\prod_{1\leq i<j\leq N} \frac{1}{\alpha_i + \alpha_j}
\prod_{1\leq i\leq j\leq N} \frac{1}{\alpha_i + \alpha_j} \, .
\end{equation}
We can further write~\eqref{eq:rFlatExpLPPSchur} in a Pfaffian form as
\begin{equation}
\label{eq:rFlatExpLPPpfaffian}
\P(\rTau_{2N}\leq u)
= \frac{\Pf\left(\Phi^{(N)}_u\right)}{\Pf\left(S^{(N)}\right)} \, ,
\end{equation}
where matrices $\Phi^{(N)}_u$ and $S^{(N)}$ are skew-symmetric of order $N$ or $N+1$, according to whether $N$ is even or odd respectively, and are defined by
\[
\Phi^{(N)}_u(i,j) :=
\begin{dcases}
\int_0^u \int_0^u \sgn(y-x) \phi_i(x) \phi_j(y) \diff x \diff y
& 1\leq i,j\leq N , \\
\int_0^u \phi_i(x) \diff x
& 1\leq i\leq N, \,\, j=N+1; \,\, \text{$N$ odd}
\end{dcases}
\]
having set $\phi_j(x) := \alpha_j \e^{-u \alpha_j} (\e^{\alpha_j x} - \e^{-\alpha_j x})$ for $1\leq j\leq N$, and
\[
S^{(N)}(i,j) :=
\begin{dcases}
\frac{\alpha_j - \alpha_i}{\alpha_j + \alpha_i}
& 1\leq i,j\leq N , \\
1
&1\leq i\leq N, \,\, j=N+1; \,\, \text{$N$ odd.}
\end{dcases}
\]
\end{theorem}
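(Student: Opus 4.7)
\textbf{Part 1: The Schur integral formula \eqref{eq:rFlatExpLPPSchur}.}
The plan is to take the scaling limit of the geometric formula \eqref{eq:rFlatGeomLPP} with the parametrization $q_k=\e^{-\epsilon\alpha_k}$. Since $\epsilon \cdot \Geom(1-\e^{-\epsilon\alpha})$ converges in distribution to $\Exp(\alpha)$ as $\epsilon\downarrow 0$, the rescaled restricted geometric LPP time $\epsilon\,\rTau_{2N}^{\rm geo}$ converges to the restricted $\bm{\alpha}$-exponential LPP $\rTau_{2N}$. I will compute $\P(\rTau_{2N}^{\rm geo}\leq \floor{u/\epsilon})$ using \eqref{eq:rFlatGeomLPP} and pass to the limit. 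The prefactor $(\prod_k q_k)^{\floor{u/\epsilon}}$ tends to $\e^{-u\sum_k\alpha_k}$; the normalization $\rc_{\bm{q}}$ behaves as $\epsilon^{-N^2-N}\rk_{\bm{\alpha}}$ (there are $N+N(N-1)/2+N(N+1)/2=N^2+N$ factors of the form $1-q_i\sim\epsilon\alpha_i$ or $1-q_iq_j\sim\epsilon(\alpha_i+\alpha_j)$); and the sum $\sum_{\lambda_1\leq\floor{u/\epsilon}}\sp_{\bm{\lambda}}(\e^{-\epsilon\bm{\alpha}})$ is, via the scaling $\sp_{\floor{\bm{x}/\epsilon}}(\e^{-\epsilon\bm{\alpha}})\sim\epsilon^{-N^2}\sp^{\cont}_{\bm{\alpha}}(\bm{x})$ recalled in subsection~\ref{subsec:symplecticSchur}, a Riemann sum for $\epsilon^{-N^2-N}\int_{\{0<x_N<\cdots<x_1<u\}}\sp^{\cont}_{\bm{\alpha}}(\bm{x})\prod_i\diff x_i$. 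Matching all three asymptotics yields \eqref{eq:rFlatExpLPPSchur}.

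\textbf{Part 2: The Pfaffian formula \eqref{eq:rFlatExpLPPpfaffian}.}
I will substitute the determinantal identity \eqref{eq:spSchurContDet} into \eqref{eq:rFlatExpLPPSchur}. Reversing the region $0<x_N<\cdots<x_1<u$ to $0<x_1<\cdots<x_N<u$ costs a sign $(-1)^{N(N-1)/2}$ since $\det(\e^{\alpha_j x_i}-\e^{-\alpha_j x_i})$ is antisymmetric in the $x_i$'s; then the de Bruijn identity \eqref{eq:deBruijn} converts the integral into $\Pf(\tilde\Phi_u^{(N)})$ built from $\tilde\phi_j(x)=\e^{\alpha_j x}-\e^{-\alpha_j x}$. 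Writing $\phi_j(x)=\alpha_j\e^{-u\alpha_j}\tilde\phi_j(x)$ and using $\Pf(DAD^{\top})=\det(D)\Pf(A)$ with $D=\diag(\alpha_1\e^{-u\alpha_1},\ldots,\alpha_N\e^{-u\alpha_N})$, augmented by $1$ in the $(N{+}1,N{+}1)$ entry when $N$ is odd, I factor out $\prod_i(\alpha_i\e^{-u\alpha_i})$ to express everything in terms of $\Pf(\Phi_u^{(N)})$. For the denominator I use the Schur Pfaffian identity $\Pf\bigl((\alpha_j-\alpha_i)/(\alpha_j+\alpha_i)\bigr)_{i,j=1}^N=\prod_{i<j}(\alpha_j-\alpha_i)/(\alpha_j+\alpha_i)$, valid both for even $N$ and, with the augmented row/column of $1$'s, for odd $N$, giving $\Pf(S^{(N)})$ explicitly. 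A final bookkeeping step: the factor $\e^{-u\sum_k\alpha_k}$ cancels $\prod_i\e^{-u\alpha_i}$; the $\prod_i\alpha_i$ and $\prod_k(2\alpha_k)$ arising from the Schur denominator and from $D$ collapse against $1/\rk_{\bm{\alpha}}=\prod_i\alpha_i\cdot\prod_{i<j}(\alpha_i+\alpha_j)^2\cdot\prod_i(2\alpha_i)$, and the remaining ratio $\prod_{i<j}(\alpha_i+\alpha_j)/\prod_{i<j}(\alpha_i-\alpha_j)$ together with the sign $(-1)^{N(N-1)/2}$ is exactly $1/\Pf(S^{(N)})$.

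\textbf{Main obstacle.} The probabilistic content of Part 1 is straightforward, but producing a uniform integrable bound on the rescaled summand $\epsilon^{N^2}\sp_{\bm{\lambda}}(\e^{-\epsilon\bm{\alpha}})$ to legitimate dominated convergence requires some care; one natural route is to keep $\epsilon$ fixed but shift to a slightly larger $\bm{\alpha}'<\bm{\alpha}$ so that the limiting integrand dominates. The algebraic part of Part 2 is the main computational obstacle: tracking the sign from the ordering reversal, the diagonal scaling in the Pfaffian (especially the odd-$N$ case with the augmented column), and cancelling the resulting tangle of $\alpha_i$-factors against $\rk_{\bm{\alpha}}$ and $\Pf(S^{(N)})$. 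Once the signs line up, the ratio form of \eqref{eq:rFlatExpLPPpfaffian} is automatic.
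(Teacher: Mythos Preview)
Your proposal is correct and follows essentially the same approach as the paper: Part~1 derives \eqref{eq:rFlatExpLPPSchur} by the exponential limit of the geometric formula \eqref{eq:rFlatGeomLPP} with $q_k=\e^{-\epsilon\alpha_k}$, matching the paper's normalization count $N^2+N$ and Riemann-sum argument; Part~2 derives \eqref{eq:rFlatExpLPPpfaffian} by inserting the determinantal form \eqref{eq:spSchurContDet}, reversing the order of the $x_i$'s, applying de~Bruijn \eqref{eq:deBruijn}, and invoking the Schur Pfaffian \eqref{eq:SchurPf}. Your treatment is slightly more explicit about the diagonal rescaling of the Pfaffian and the dominated-convergence justification in Part~1 (which the paper leaves implicit), but the route is the same.
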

The denominator $\Pf\left(S^{(N)}\right)$ in~\eqref{eq:rFlatExpLPPpfaffian} is known as Schur Pfaffian and, no matter the parity of $N$, it has the following explicit evaluation:
\begin{equation}
\label{eq:SchurPf}
\Pf\left(S^{(N)}\right)
= \prod_{1\leq i<j\leq N} \frac{\alpha_j -\alpha_i}{\alpha_j + \alpha_i} \, .
\end{equation}

Notice that the ratio on the right-hand side of~\eqref{eq:rFlatExpLPPpfaffian} takes the form $0/0$ whenever $\alpha_i = \alpha_j$ for some $i\neq j$; in this case, the formula still makes sense in the limit as $\alpha_i -\alpha_j \to 0$.

\begin{proof}
We will deduce~\eqref{eq:rFlatExpLPPSchur} from~\eqref{eq:rFlatGeomLPP} via exponential limit.
Recall that, if $X\sim \Exp(\gamma)$ and $X^{(\delta)} \sim \Geom(\e^{-\delta \gamma})$ for some fixed $\gamma>0$ and for all $\delta >0$, then $\delta X^{(\delta)} \xrightarrow{\delta \downarrow 0} X$ in law.
Therefore, if the law of $\bm{W}$ is $\bm{\alpha}$-exponential (see Definition~\ref{def:rFlatExpMeasure}) and the law of $\bm{W}^{(\delta)}$ is $\e^{-\delta \bm{\alpha}}$-geometric (see Definition~\ref{def:rFlatGeomMeasure}), where $\e^{-\delta \bm{\alpha}} := (\e^{-\delta\alpha_1},\dots,\e^{-\delta\alpha_N})$, then $\delta \bm{W}^{(\delta)} \xrightarrow{\delta \downarrow 0} \bm{W}$ in distribution.
It follows from this observation and from formula~\eqref{eq:rFlatGeomLPP} for the restricted point-to-line geometric LPP that
\[
\begin{split}
\P(\rTau_{2N}\leq u)
&= \lim_{\delta\downarrow 0} \P\left( \max_{\pi \in \rPi_{2N}} \sum_{(i,j)\in\pi} W^{(\delta)}_{i,j} \leq \frac{u}{\delta} \right) \\
&= \lim_{\delta\downarrow 0}
\frac{\e^{-u\sum_{k=1}^N \alpha_k}}{\rc_{\e^{-\delta \bm{\alpha}}}}
\sum_{\lambda_1 \leq u/\delta} \sp_{\bm{\lambda}}\left(\e^{-\delta\alpha_1},\dots,\e^{-\delta\alpha_N}\right) \, ,
\end{split}
\]
where the sum is over integer partitions $\bm{\lambda}$ of length not greater than $N$ and with first part not greater than $u/\delta$.
Recalling the definitions of the normalization constants~\eqref{eq:rFlatGeomLPPnormalization} and~\eqref{eq:rFlatExpLPPnormalization}, we see that
\[
\delta^{N^2+N} \rc_{\e^{-\delta \bm{\alpha}}} \xrightarrow{\delta\downarrow 0} \rk_{\bm{\alpha}} \, .
\]
On the other hand, using Definitions~\ref{def:spSchur} and~\ref{def:spSchurCont} of discrete and continuous symplectic Schur functions, one can easily prove via Riemann sum approximation that
\[
\delta^{N^2 + N} \sum_{\lambda_1 \leq u/\delta} \sp_{\bm{\lambda}}\left(\e^{-\delta\alpha_1},\dots,\e^{-\delta\alpha_N}\right)
\xrightarrow{\delta\downarrow 0}
\int_{\{0< x_N< \cdots < x_1< u\}}  
\sp^{\cont}_{-\bm{\alpha}}(\bm{x})
\prod_{i=1}^N \diff x_i \, .
\]
Formula~\eqref{eq:rFlatExpLPPSchur} follows from these observations and the fact that $\sp^{\cont}_{-\bm{\alpha}} = \sp^{\cont}_{\bm{\alpha}}$.

For the proof of~\eqref{eq:rFlatExpLPPpfaffian}, we first rewrite~\eqref{eq:rFlatExpLPPSchur} as
\[
\P(\rTau_{2N}\leq u)
= \frac{1}{\Pf\left(S^{(n)}_{\bm{\alpha}}\right)}
\int_{\{0\leq x_1\leq \dots\leq x_N \leq u\}} \det(\phi_j(x_i))_{1\leq i,j\leq N} \prod_{i=1}^N \diff x_i \, ,
\]
where $\phi_j(x) := \alpha_j \e^{-u \alpha_j} (\e^{\alpha_j x} - \e^{-\alpha_j x})$ for $1\leq j\leq N$.
Here we have used the determinantal form~\eqref{eq:spSchurContDet} of $\sp^{\cont}_{\bm{\alpha}}$, the change of variables $x_i \mapsto x_{N-i+1}$ for $1\leq i\leq N$, the multilinear and alternating properties of determinants, and the Schur Pfaffian expression~\eqref{eq:SchurPf}.
Applying de Bruijn identity~\eqref{eq:deBruijn} for the Lebesgue measure on $(0,u)$ to the integral above, we finally obtain the numerator Pfaffian in~\eqref{eq:rFlatExpLPPpfaffian}.
\end{proof}

\section{Scaling limits}
\label{sec:LPPasymptotics}

The goal of this section is to carry out the asymptotic analysis of the point-to-line and point-to-half-line exponential LPP models studied in subsections~\ref{subsec:flatExpLPP} and~\ref{subsec:hFlatExpLPP}.
Both models are characterized by KPZ fluctuations of order $N^{1/3}$.
In the scaling limit we obtain Sasamoto's formula~\cite{sasamoto05} for the GOE Tracy-Widom distribution and the one-point marginal of the $\rm{Airy}_{2\to 1}$ process~\cite{borodinFerrariSasamoto08}, respectively.

The procedure we follow is typical of KPZ and random matrix models.
We start from formulas~\eqref{eq:flatExpLPPdet} and~\eqref{eq:hFlatExpLPPdet}, which express the CDF of the corresponding LPP as a ratio of $N\times N$ determinants, the denominator being of Cauchy type.
The space $\R^N$ on which these determinants are taken is growing with $N$, so these formulas are not directly amenable to asymptotic analysis in the large $N$ limit.
We then use the so-called Sylvester's identity and the closed form for the inverse of a Cauchy-type matrix to turn such formulas into Fredholm determinants on $L^2(\R_{>0})$, with kernel expressed in the form of a contour integral.
This $L^2$ space is infinite-dimensional but fixed, in the sense that it does not depend on $N$.
One is thus reduced to compute the asymptotics of the kernel, which is a contour integral whose integrand depends on $N$, via the steepest descent method.
Finally, some standard estimates permit deducing the convergence of the Fredholm determinants from the convergence of their kernels.

In subsection~\ref{subsec:fredholm} we present a general scheme to turn a ratio of determinants into a Fredholm determinant.
In subsection~\ref{subsec:steepestDescent} we perform the steepest descent analysis of a central integral.
In subsections~\ref{subsec:flatLPPasymptotics} and~\ref{subsec:hFlatLPPasymptotics} we use these general tools to derive the scaling limits of the point-to-line and point-to-half-line LPP models respectively.

\subsection{From determinants to Fredholm determinants}
\label{subsec:fredholm}

In this subsection we present a general scheme to turn ratios of determinants, the denominator being of Cauchy type as~\eqref{eq:cauchyDet}, into a Fredholm determinant.
Such a scheme has been already used in a similar fashion for example in~\cite{johansson01, okounkov01, borodinGorin16}, but we adapt it here to our framework.
Let us start by briefly recalling the notion of a Fredholm determinant.
Given a measure space $(\mathcal{X},\nu)$, any linear operator $K\colon L^2(\mathcal{X},\nu) \to L^2(\mathcal{X},\nu)$ can be defined in terms of its integral kernel $K(x,y)$ by
\[
(Kh)(x) := \int_{\mathcal{X}} K(x,y)h(y) \nu(\diff y) \, ,\qquad h\in L^2(\mathcal{X},\nu) \, .
\]
The \emph{Fredholm determinant} of $K$ can then be defined through its series expansion:
\begin{equation}
\label{eq:fredholmDet}
\det(I + K)_{L^2(\mathcal{X})}
:= 1+\sum_{n=1}^{\infty} \frac{1}{n!} \int_{\mathcal{X}^n}
\det(K(x_i,x_j))_{i,j=1}^n \,
\nu(\diff x_1)\dots \nu(\diff x_n) \, ,
\end{equation}
whenever the series converges.
Denoting from now on by $\C_{>0} := \{z\in\C\colon \Re(z)>0\}$ the complex right half-plane, we can state:
\begin{theorem}
\label{thm:detToFredDet}
Let
\begin{equation}
\label{eq:Hbar}
H(z,w) := C(z,w) - \underline{H}(z,w) \, ,
\end{equation}
where $C$ is the function
\[
C(z,w)=\frac{1}{z+w}
\]
and $\underline{H}$ is a holomorphic function in the region $\C_{>0} \times \C_{>0}$.
For any choice of positive parameters $\alpha_1,\dots,\alpha_N,\beta_1,\dots,\beta_N$, define the operator $K_N$ on $L^2(\R_{>0})$ through the kernel
\begin{equation}
\label{eq:kernel}
K_N(\lambda,\xi) := \frac{1}{(2\pi\i)^2} \int_{\Gamma_1} \diff z \int_{\Gamma_2} \diff w \,
\e^{-\lambda z-\xi w}
\underline{H}(z,w) \,
\prod_{m=1}^N \frac{(z+\beta_m)(w+\alpha_m)}{(z-\alpha_m)(w-\beta_m)} \, ,
\end{equation}
where $\Gamma_1,\Gamma_2 \subset \C_{>0}$ are any positively oriented simple closed contours such that $\Gamma_1$ encloses $\alpha_1,\dots,\alpha_N$ and $\Gamma_2$ encloses $\beta_1,\dots,\beta_N$.
Then
\begin{equation}
\label{eq:detToFredDet}
\frac{\det(H(\alpha_i,\beta_j))_{1\leq i,j\leq N}}
{\det(C(\alpha_i,\beta_j))_{1\leq i,j\leq N}}
= \det(I - K_N)_{L^2(\R_{>0})} \, .
\end{equation}
\end{theorem}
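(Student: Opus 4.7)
The plan is to reduce both sides of~\eqref{eq:detToFredDet} to one and the same finite-dimensional determinant: on the left, via the explicit form of the inverse Cauchy matrix; on the right, via a residue calculation that exhibits $K_N$ as a finite-rank kernel of rank at most $N$. Throughout the proof, let $\mathcal{C}$ and $\mathcal{H}$ denote the $N\times N$ matrices with entries $\mathcal{C}_{ij}:=C(\alpha_i,\beta_j)$ and $\mathcal{H}_{ij}:=\underline{H}(\alpha_i,\beta_j)$, respectively.

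First I would observe that, since $\mathcal{C}$ is invertible for distinct positive parameters, the left-hand side of~\eqref{eq:detToFredDet} equals $\det(I_N-\mathcal{C}^{-1}\mathcal{H})$. The classical closed form for the inverse of a Cauchy matrix reads
\begin{equation*}
(\mathcal{C}^{-1})_{ji}=\frac{a_i\,b_j}{\alpha_i+\beta_j}\,,\qquad a_i:=\frac{\prod_{m=1}^N(\alpha_i+\beta_m)}{\prod_{m\neq i}(\alpha_i-\alpha_m)}\,,\quad b_j:=\frac{\prod_{m=1}^N(\alpha_m+\beta_j)}{\prod_{m\neq j}(\beta_j-\beta_m)}\,,
\end{equation*}
which rewrites compactly as $\mathcal{C}^{-1}=D_b\,\mathcal{C}^{\top}D_a$, with $D_a:=\operatorname{diag}(a_1,\ldots,a_N)$ and $D_b:=\operatorname{diag}(b_1,\ldots,b_N)$. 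The LHS of~\eqref{eq:detToFredDet} thus equals $\det(I_N-D_b\,\mathcal{C}^{\top}D_a\,\mathcal{H})$.

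Next I would use the holomorphicity of $\underline{H}$ on $\C_{>0}\times\C_{>0}$ to evaluate the double contour integral in~\eqref{eq:kernel} by residues: the only singularities of the integrand inside $\Gamma_1\times\Gamma_2$ are the simple poles at $z=\alpha_m$ and $w=\beta_m$ coming from the rational factor, and computing both residues yields
\begin{equation*}
K_N(\lambda,\xi)=\sum_{i,j=1}^N a_i\,b_j\,\underline{H}(\alpha_i,\beta_j)\,f_i(\lambda)\,g_j(\xi)\,,\qquad f_i(\lambda):=\e^{-\lambda\alpha_i}\,,\quad g_j(\xi):=\e^{-\xi\beta_j}\,.
\end{equation*}
Hence $K_N$ is an integral kernel of rank at most $N$.

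Finally, for such a finite-rank kernel, the Fredholm determinant on $L^2(\R_{>0})$ reduces to an $N\times N$ determinant. Inserting the expression above into~\eqref{eq:fredholmDet} and applying the Andr\'eief identity~\eqref{eq:cauchyBinet} term by term (or, equivalently, noting that $K_N$ leaves the finite-dimensional subspace $\operatorname{span}\{f_i\}$ invariant and computing its matrix in this basis), one obtains
\begin{equation*}
\det(I-K_N)_{L^2(\R_{>0})}=\det(I_N-A\,G)\,,
\end{equation*}
where $A:=D_a\,\mathcal{H}\,D_b$, with $A_{ij}=a_ib_j\,\underline{H}(\alpha_i,\beta_j)$, and $G_{jl}:=\langle g_j,f_l\rangle_{L^2(\R_{>0})}=1/(\alpha_l+\beta_j)=(\mathcal{C}^{\top})_{jl}$. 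The classical cyclic identity $\det(I-XY)=\det(I-YX)$ now converts $\det(I_N-D_a\mathcal{H}D_b\mathcal{C}^{\top})$ into $\det(I_N-D_b\mathcal{C}^{\top}D_a\mathcal{H})$, which coincides with the expression for the LHS, establishing~\eqref{eq:detToFredDet}. The only technically delicate step is the factorization $\mathcal{C}^{-1}=D_b\mathcal{C}^{\top}D_a$, a classical computation for the Cauchy matrix; the rest amounts to a clean residue evaluation and a standard cyclic manipulation, so I do not expect any substantial obstacle.
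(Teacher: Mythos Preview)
Your proposal is correct and takes essentially the same approach as the paper: both rely on the explicit inverse of the Cauchy matrix, the residue evaluation of the double contour integral as a finite double sum, and the Sylvester/cyclic identity $\det(I-XY)=\det(I-YX)$ to pass between the $N\times N$ and $L^2(\R_{>0})$ determinants. The only organizational difference is direction: the paper starts from the ratio, factors $\mathcal{C}^{-1}\underline{\mathcal{H}}=FG$ via $\frac{1}{\alpha_k+\beta_i}=\int_0^\infty \e^{-(\alpha_k+\beta_i)\lambda}\,\diff\lambda$, applies Sylvester to land in $L^2(\R_{>0})$, and then converts the resulting double-sum kernel into the contour integral by residues; you instead evaluate the contour integral by residues first, recognize the finite-rank structure, and reduce the Fredholm determinant back to $N\times N$ before matching. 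Your observation that the kernel is manifestly finite rank makes the convergence of the Fredholm series automatic (the expansion terminates at $n=N$), whereas the paper checks convergence separately via Hadamard's bound.
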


\begin{proof}
For convenience, let us denote by $\mathcal{C}$, $\mathcal{H}$ and $\mathcal{\underline{H}}$ the $N\times N$ matrices $(C(\alpha_i,\beta_j))_{1\leq i,j\leq N}$, $(H(\alpha_i,\beta_j))_{1\leq i,j\leq N}$ and $(\underline{H}(\alpha_i,\beta_j))_{1\leq i,j\leq N}$ respectively. We then have:
\begin{equation}
\label{eq:detToFredDet1}
\frac{\det(\mathcal{H})}{\det(\mathcal{C})}
= \det\left( \mathcal{C}^{-1}\left( \mathcal{C} - \mathcal{\underline{H}} \right) \right)
= \det\left( I - \mathcal{C}^{-1} \mathcal{\underline{H}} \right) \, ,
\end{equation}
where $I$ is the identity matrix of order $N$.
To invert $\mathcal{C}$, we use Cramer's formula:
\[
\mathcal{C}^{-1}(i,k) = (-1)^{i+k} \frac{\det\left(\mathcal{C}^{(k,i)}\right)}{\det(\mathcal{C})} \, ,
\]
where $\mathcal{C}^{(k,i)}$ is the matrix of order $N-1$ obtained from $\mathcal{C}$ by removing its $k$-th row and $i$-th column.
In our case, both determinants in the above formula are of Cauchy type:
\begin{align*}
\det(\mathcal{C})
&= \prod_{l<m} (\alpha_l -\alpha_m)\prod_{l<m} (\beta_l - \beta_m)
\prod_{l,m} (\alpha_l + \beta_m)^{-1} \\
\det\left(\mathcal{C}^{(k,i)}\right)
&= \prod_{\substack{l<m \\ l,m\neq k}} (\alpha_l -\alpha_m)
\prod_{\substack{l<m \\ l,m\neq i}}(\beta_l - \beta_m)
\prod_{\substack{l\neq k \\ m\neq i}} (\alpha_l + \beta_m)^{-1} \, ,
\end{align*}
where indices $l$ and $m$ run in $\{1,\dots,N\}$.
The inverse of $\mathcal{C}$ is thus readily computed: 
\[
\mathcal{C}^{-1}(i,k)
= \frac{\prod_{m=1}^N
(\alpha_k + \beta_m)
(\beta_i + \alpha_m)}
{(\alpha_k + \beta_i)
\prod_{m\neq k}(\alpha_k-\alpha_m)
\prod_{m\neq i} (\beta_i - \beta_m)} \, .
\]
Writing $\frac{1}{\alpha_k + \beta_i} = \int_0^{\infty} \e^{-(\alpha_k + \beta_i)\lambda} \diff \lambda $, we obtain:
\[
\left(\mathcal{C}^{-1} \mathcal{\underline{H}} \right)(i,j)
= \sum_{k=1}^N \mathcal{C}^{-1}(i,k) \mathcal{\underline{H}}(k,j)
= \int_0^{\infty} f(i,\lambda ) g(\lambda ,j) \diff \lambda  \, ,
\]
where for all $\lambda >0$
\[
f(i,\lambda ) := \e^{-\beta_i \lambda } \frac{\prod_{m=1}^N(\beta_i + \alpha_m)}
{\prod_{m\neq i}(\beta_i -\beta_m)} \, ,
\qquad
g(\lambda ,j) := \sum_{k=1}^N \e^{-\alpha_k \lambda } \frac{\prod_{m=1}^N (\alpha_k + \beta_m)}
{\prod_{m\neq k} (\alpha_k - \alpha_m)}
\underline{H}(\alpha_k,\beta_j) \, .
\]
This proves that matrix $\mathcal{C}^{-1} \mathcal{\underline{H}}$, viewed as a linear operator on $\R^N$, equals the composition $FG$, where $F$ and $G$ are the linear operators
\begin{align*}
&F \colon L^2(\R_{>0}) \to \R^N \, ,
&&\phi \mapsto \left[ \int_0^{\infty} f(i,\lambda ) \phi(\lambda ) \diff \lambda  \right]_{i=1}^N \, , \\
&G \colon \R^N \to L^2(\R_{>0}) \, ,
&&(a(j))_{j=1}^N \mapsto \sum_{j=1}^N g(\lambda ,j) a(j) \, .
\end{align*}
We note that these are well-defined operators, as $f(i, \cdot)$ and $g(\cdot,j)$ are square integrable functions on $\R_{>0}$, for all $i$ and $j$. 
We will next use Sylvester's identity:
\begin{equation}
\label{eq:sylvester}
\det(I + K_1 K_2)_{L^2(\mathcal{X}_2)}
= \det(I + K_2 K_1)_{L^2(\mathcal{X}_1)}
\end{equation}
for any operators $K_1\colon L^2(\mathcal{X}_1) \to L^2(\mathcal{X}_2)$ and $K_2\colon L^2(\mathcal{X}_2) \to L^2(\mathcal{X}_1)$ such that both sides converge.
By applying this identity, we obtain
\begin{equation}
\label{eq:detToFredDet2}
\det\left( I - \mathcal{C}^{-1} \mathcal{\underline{H}} \right)_{\R^N}
= \det(I - F G)_{\R^N}
= \det(I - K_N)_{L^2(\R_{>0})} \, ,
\end{equation}
where $K_N:= G F$ is the operator on $L^2(\R_{>0})$ defined through the kernel
\begin{equation}
\label{eq:kernelDoubleSum}
K_N(\lambda ,\xi ) :=
\sum_{i,k=1}^N \e^{-\alpha_k \lambda  - \beta_i \xi }
\underline{H}(\alpha_k, \beta_i)
\frac{\prod_{m=1}^N
(\alpha_k + \beta_m)
(\beta_i + \alpha_m)}
{\prod_{m\neq k}(\alpha_k-\alpha_m)
\prod_{m\neq i} (\beta_i - \beta_m)} \, .
\end{equation}
From the latter formula, it is clear that $|K_N(\lambda,\xi)| \leq c_1 \e^{- c_2 \lambda}$ for all $\lambda\in [0,\infty)$, where the positive constants $c_1$ and $c_2$ depend on $N$ and on the parameters.
Hadamard's bound then implies that
\[
\abs{ \det(K_N(\lambda_i, \lambda_j))_{i,j=1}^n }
\leq n^{n/2} \prod_{i=1}^n c_1 \e^{-c_2 \lambda_i } \, .
\]
It then follows from the series expansion~\eqref{eq:fredholmDet} that
\[
\begin{split}
\abs{ \det(I - K_N)_{L^2(\R_{>0})} }
\leq 1 + \sum_{n=1}^{\infty} \frac{n^{n/2}}{n!} \left( \int_{0}^{\infty} c_1 \e^{-c_2 \lambda} \diff \lambda \right)^n
< \infty \, ,
\end{split}
\]
hence the right-hand side of~\eqref{eq:detToFredDet2} is indeed a converging Fredholm determinant.
By applying the residue theorem (recalling the assumption that $\underline{H}(z,w)$ is holomorphic in $\C_{>0} \times \C_{>0}$), the double sum in~\eqref{eq:kernelDoubleSum} can be turned into a double contour integral, yielding representation~\eqref{eq:kernel} for the kernel.
By combining~\eqref{eq:detToFredDet1} and~\eqref{eq:detToFredDet2} we obtain~\eqref{eq:detToFredDet}.
\end{proof}

Thanks to Theorem~\ref{thm:detToFredDet}, the determinantal formulas~\eqref{eq:flatExpLPPdet} and~\eqref{eq:hFlatExpLPPdet} for the point-to-line and point-to-half-line exponential LPP models will be turned into Fredholm determinants.
We will see this in details in subsections~\ref{subsec:flatLPPasymptotics} and~\ref{subsec:hFlatLPPasymptotics}.

\subsection{Steepest descent analysis}
\label{subsec:steepestDescent}

As announced in the previous subsection, the CDF of the point-to-line and point-to-half-line LPP models will be expressed as Fredholm determinants on $L^2(\R_{>0})$ with kernels of type~\eqref{eq:kernel}.
In the limit $N\to\infty$ these kernels will converge, after rescaling, to expressions involving Airy functions.
In order to see this, one needs to perform the asymptotic analysis of a few contour integrals via steepest descent.
This procedure is very similar in all cases, as it always involves the same functions.
Therefore, we will carry it out in detail only for one of such contour integrals, arguably the most archetypal one as it just approximates the Airy function.
Other very similar steepest descent analyses are sketched where needed, specifically in the proof of Theorem~\ref{thm:hFlatLPPasymptotics}.

Let us first recall that the Airy function $\Ai$ has the following contour integral representation:
\begin{equation}
\label{eq:Airy}
\Ai(x) := \frac{1}{2\pi \i}
\int_{\e^{-\i\pi/3} \infty}^{\e^{\i\pi/3} \infty}
\exp\left\{\frac{z^3}{3} -xz \right\} \diff z \, ,
\end{equation}
where the integration path starts at infinity with argument $-\pi/3$ and ends at infinity with argument $\pi/3$ (see for example the red contour in figure~\ref{fig:steepestDescentPath}).

\begin{proposition}
\label{prop:steepestDescent}
For any fixed $\gamma>0$ and $f:=2/\gamma$, let us define
\begin{equation}
\label{eq:steepestDescentInt}
J_N(x) := -\frac{1}{2\pi\i} \int_{\Gamma}
\e^{-z(fN + x)}
\left[\frac{\gamma + z}{\gamma - z}\right]^N \diff z \, ,
\end{equation}
where $\Gamma\subset \C$ is a positively oriented contour enclosing $\gamma$.
Then, for all $x\in\R$,
\begin{equation}
\label{eq:convergence2Airy}
\tilde{J}_N(x)
:= \frac{\sqrt[3]{2N}}{\gamma} J_N\left( \frac{\sqrt[3]{2N}}{\gamma} x \right)
\xrightarrow{N\to\infty} \Ai(x) \, .
\end{equation}
\end{proposition}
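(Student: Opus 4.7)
First I would rewrite the integrand in the form $\exp\{N\phi(z) - (2N)^{1/3}xz/\gamma\}$, where $\phi(z) := -2z/\gamma + \log[(\gamma+z)/(\gamma-z)]$ captures all the $N$-dependence at leading order. A direct computation gives $\phi(0)=\phi'(0)=\phi''(0)=0$ and $\phi'''(0) = 4/\gamma^3$, so $z=0$ is a triple critical point with local expansion $\phi(z) = 2z^3/(3\gamma^3) + O(z^5)$; this is the Airy-type shape, with steepest-descent directions $\arg z \in \{\pm\pi/3,\pi\}$ where $\Re(z^3)$ is most negative. The rescaling $z = \gamma w/(2N)^{1/3}$ built into the definition of $\tilde J_N$ (which is exactly what the prefactor $(2N)^{1/3}/\gamma$ is designed to absorb) then converts $N\phi(z) - (2N)^{1/3}xz/\gamma$ into $w^3/3 - xw + O(N^{-2/3})$, uniformly on compact subsets of the $w$-plane.

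Next I would deform $\Gamma$ into a steepest-descent contour $\Gamma_{SD}$ that enters and leaves $z=0$ along the rays $\arg z = \pm \pi/3$ out to some radius $R>\gamma$, closed by a circular arc of radius $R$ around $\gamma$; since the integrand is holomorphic on $\C\setminus\{\gamma\}$, this deformation is legitimate and preserves the counterclockwise orientation. The analytic heart of the argument is the global estimate $\Re\phi(z) \le -c < 0$ on $\Gamma_{SD}$ outside any fixed neighborhood of $0$. On the closing arc this is easy, since $-\Re(2z/\gamma) \le -R/\gamma$ dominates the bounded log term as $R\to\infty$. Along the rays, parametrizing $z = te^{\pm i\pi/3}$, one reduces to the one-variable inequality $\mathrm{arctanh}(\gamma t/(\gamma^2+t^2)) \le t/\gamma$ for $t>0$, which I would establish by noting that the difference vanishes at $t=0$ and has non-negative derivative for $t\ge 0$; near $t=0$ the difference is indeed of the expected order $-2t^3/(3\gamma^3)$.

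Finally, I would perform the change of variables $z = \gamma w/(2N)^{1/3}$ and split the resulting integral at $|w|=W$ for a large but fixed $W$. On the inner part $|w|\le W$, the uniform expansion from the first step combined with dominated convergence (against an envelope of the form $\e^{\Re(w^3)/3 + |x||w|}$ along the limiting rays) yields, in the limit $N\to\infty$, the integral of $\e^{w^3/3 - xw}$ along the two rays $\arg w = \pm\pi/3$; on the outer part, the bound $\Re\phi \le -c$ transferred back to the $z$-variable produces a contribution of order $\e^{-cN}$, which vanishes. An orientation check (the counterclockwise loop around $\gamma$ becomes, in the $w$-limit, the standard Airy contour from $e^{-i\pi/3}\infty$ to $e^{i\pi/3}\infty$ traversed in reverse) combines with the overall minus sign in front of $J_N$ to produce exactly the contour-integral representation~\eqref{eq:Airy} of $\Ai(x)$. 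The main obstacle I anticipate is the global inequality on the steepest-descent rays; once this is in hand, the rest is a standard steepest-descent/dominated-convergence argument, with the slight subtlety that the contour closes at a pole rather than at infinity and so one must be careful to keep the arc at finite, large radius.
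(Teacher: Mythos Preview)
Your critical-point analysis, contour choice, $\mathrm{arctanh}$ inequality along the rays, and orientation bookkeeping are all correct and essentially match the paper's argument. The gap is in how you split the integral. You cut at a \emph{fixed} $|w|=W$ after rescaling and then claim the outer part is $O(\e^{-cN})$ via the bound $\Re\phi(z)\le -c$. But that bound holds only outside a \emph{fixed} $z$-neighbourhood of $0$, whereas $|w|>W$ corresponds to $|z|>\gamma W(2N)^{-1/3}$, a region that shrinks to $\{0\}$ as $N\to\infty$; on the mesoscopic part of this region $\Re\phi(z)=O(|z|^3)$ is nearly zero, so the $\e^{-cN}$ estimate fails. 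Indeed, if the outer part vanished for every fixed $W$, the limit of $\tilde J_N(x)$ would equal the truncated ray integral $\int_{|w|\le W}$ for every $W$, which is absurd.

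The paper resolves this by splitting in the $z$-variable at an $N$-dependent radius $\gamma N^{-\alpha}$ with $1/4<\alpha<1/3$: the inner region is wide enough that the rescaled $w$-integral extends to infinity in the limit, yet narrow enough that the Taylor remainder $R(z)N$ still vanishes; on the outer oblique segments one shows $\Re\phi$ is monotone in $|z|$ (your $\mathrm{arctanh}$ computation gives exactly this), so the maximum sits at the inner boundary and decays like $\e^{-cN^{1-3\alpha}}$. Alternatively, your own ray analysis already yields a cleaner fix: since $g(t):=t/\gamma-\mathrm{arctanh}\bigl(\gamma t/(\gamma^2+t^2)\bigr)$ satisfies $g(t)/t^3\to 2/(3\gamma^3)$ as $t\to 0$ and $g>0$ on $(0,R]$, one has $g(t)\ge c''t^3$ on $[0,R]$, hence $N\Re\phi(z)\le -c''\gamma^3|w|^3/2$ uniformly in $N$ along the ray portion. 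This is precisely the envelope you wrote down, and it licenses dominated convergence on the \emph{entire} ray integral without any splitting; the closing arc is then genuinely $O(\e^{-cN})$ since there $|z|$ is bounded away from $0$.
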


\begin{proof}
The proof is based on the steepest descent analysis of integral~\eqref{eq:steepestDescentInt}, which we rewrite as
\[
J_N(x)
= - \frac{1}{2\pi\i} \int_{\Gamma}
\exp\left\{ F(z) N - x z \right\} \diff z \, ,
\]
being $F(z):= \log(\gamma+z) - \log(\gamma - z) - fz$. 
We need to compute the critical points of the function $F$, whose first three derivatives are given by:
\begin{align*}
F'(z) &= \frac{1}{\gamma+z} + \frac{1}{\gamma-z} - f \, , \\
F''(z) &= -\frac{1}{(\gamma+z)^2} + \frac{1}{(\gamma-z)^2} \, , \\
F'''(z) &= \frac{2}{(\gamma+z)^3} + \frac{2}{(\gamma-z)^3} \, .
\end{align*}
The second derivative vanishes if and only if $z=0$.
As in the statement of the theorem, we then set $f:=2/\gamma$, which is the only value of $f$ such that the first derivative also vanishes at $z=0$.
The first non-vanishing derivative of $F$ at the critical point $z=0$ is then the third one.
In particular, we have that
\[
F(0)=F'(0)=F''(0)=0 \, , \qquad
F'''(0)=\frac{4}{\gamma^3} \, ,
\]
hence the Taylor expansion of $F$ near the critical point is
\begin{equation}
\label{eq:expansionCriticalPoint}
F(z) = \frac{2}{3\gamma^3} z^3 + R(z) \, ,
\end{equation}
where $R(z) = o(z^3)$ as $z\to 0$.
\begin{figure}[h!]
\begin{center}
\begin{tikzpicture}[scale=1.43]
\draw (0,-3.2) -- (0,3.2);
\draw (0,0) -- (2.2,0);

\node[label={[label distance=1pt]180:$0$},draw,circle,inner sep=1pt,fill] (origin) at (0,0) {};

\node[label={[label distance=1pt]0:$2a\e^{\i\pi/3}$},draw,circle,inner sep=1pt,fill] (V+) at (1,1.73) {};

\node[label={[label distance=1pt]0:$2a\e^{-\i\pi/3}$},draw,circle,inner sep=1pt,fill] (V-) at (1,-1.73) {};

\node[label={[label distance=1pt]315:$a$},draw,circle,inner sep=1pt,fill] (a) at (1,0) {};

\node[label={[label distance=1pt]270:$\gamma$},draw,circle,inner sep=1pt,fill] (gamma) at (0.7,0) {};

\node[label={[label distance=5pt]0:$T_a$}] (gamma) at (1,0.8) {};

\node[inner sep=0pt, label={[label distance=5pt]0:\red{$C$}}] (W+) at (1.515,2.595) {};
\node[inner sep=0pt] (W++) at (1.715,2.941) {};
\node[inner sep=0pt] (W-) at (1.515,-2.595) {};
\node[inner sep=0pt] (W--) at (1.715,-2.941) {};

\begin{scope}[thick,decoration={
    markings,
    mark=at position 0.5 with {\arrow[scale=2.5]{stealth}}}
    ] 
\draw[postaction={decorate}] (origin) -- (V+);
\draw[postaction={decorate}] (V-) -- (origin);
\end{scope}

\begin{scope}[thick,decoration={
    markings,
    mark=at position 0.35 with {\arrow[scale=2.5]{stealth}}}
    ] 
\draw[postaction={decorate}] (V+) -- (V-);
\end{scope}
    
\begin{scope}[thick,red]
\draw[postaction={decorate}, decoration={markings, mark=at position 0.15 with {\arrow{>}}}]
(W-) -- (0.015,0);
\draw[postaction={decorate}, decoration={markings, mark=at position 0.85 with {\arrow{>}}}]
(0.015,0) -- (W+);
\draw[dashed] (W+) -- (W++);
\draw[dashed] (W-) -- (W--);
\end{scope}

\end{tikzpicture}
\end{center}
\caption[Contours in the steepest descent analysis of an integral that converges to the Airy function]{The red path $C$ is involved in the integral representation of the Airy function. The black contour $T_a$ refers to the steepest descent analysis in the proof of Proposition~\ref{prop:steepestDescent}.}
\label{fig:steepestDescentPath}
\end{figure}
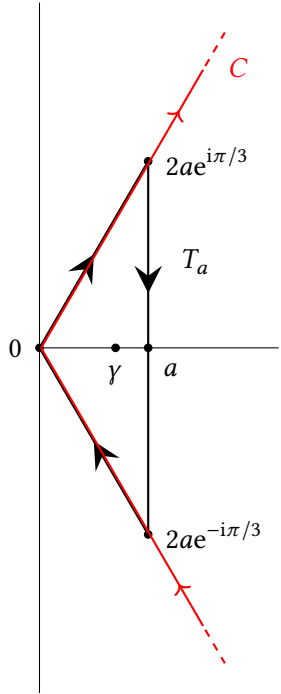
Since the directions of steepest descent of $F$ from $z=0$ correspond to the angles $\pm \pi/3$, we deform the \emph{positively} oriented contour $\Gamma$ into the \emph{negatively} oriented triangular path $T_a$ with vertices $0$, $2a\e^{\i\pi/3}$ and $2a\e^{-\i\pi/3}$ for some $a>\gamma$ (so that the pole $z=\gamma$ is still enclosed, see figure~\ref{fig:steepestDescentPath}).
This only implies a change of sign in the integral, corresponding to the change of orientation of the contour.
In fact, in order to obtain the right estimates in the proof of Corollary~\ref{coro:steepestDescentEstimate}, it is convenient to consider an infinitesimal shift of $T_a$, by setting the contour to be $T_a + \epsilon\gamma/\sqrt[3]{2N}$, where $\epsilon > 0$ is an arbitrary constant.
Moreover, we split the integral into two regions, i.e.\ a neighborhood of the critical point, where the main contribution of the integral is expected to come from, and its exterior (we choose the neighborhood to be a ball centered at $\epsilon\gamma / \sqrt[3]{2N}$ with radius $\gamma N^{-\alpha}$, where $\alpha>0$ will be suitably specified later on):
\[
J_N(x)
= J_N^{\rm in}(x) + J_N^{\rm ex}(x) \, ,
\]
where
\begin{align*}
J_N^{\rm in}(x)
&:= \frac{1}{2\pi\i} \int_{T_a + \frac{\epsilon\gamma}{\sqrt[3]{2N}}}
\exp\left\{ F(z) N - x z \right\}
\1_{\left\{\abs{z - \epsilon\gamma/\sqrt[3]{2N}} \leq \gamma N^{-\alpha}\right\}} \diff z \, , \\
J_N^{\rm ex}(x)
&:= \frac{1}{2\pi\i} \int_{T_a + \frac{\epsilon\gamma}{\sqrt[3]{2N}}}
\exp\left\{ F(z) N - x z \right\}
\1_{\left\{\abs{z - \epsilon\gamma/\sqrt[3]{2N}} \geq \gamma N^{-\alpha}\right\}} \diff z \, .
\end{align*}

Let us first focus on the former integral and denote by $C$ the piecewise linear path going from the point at infinity with argument $-\pi/3$ to the origin to the point at infinity with argument $\pi/3$ (see figure~\ref{fig:steepestDescentPath}).
We then have that
\[
J_N^{\rm in}(x)
= \frac{1}{2\pi\i} \int_{C + \frac{\epsilon\gamma}{\sqrt[3]{2N}}}
\exp\left\{ \frac{2N}{3\gamma^3} z^3 - x z + R(z) N \right\}
\1_{\left\{\abs{z - \epsilon\gamma/\sqrt[3]{2N}} \leq \gamma N^{-\alpha}\right\}} \diff z \, ,
\]
where $R(z)$ is defined by~\eqref{eq:expansionCriticalPoint}.
If we now rescale both the integration variable and the function  $J_N^{\rm in}$ by the factor $\sqrt[3]{2N}/\gamma$, by setting $\tilde{z} := z \sqrt[3]{2N}/\gamma$ and defining $\tilde{J}_N^{\rm in}$ as in~\eqref{eq:convergence2Airy}, we obtain:
\begin{equation}
\label{eq:steepestDescentIn}
\tilde{J}_N^{\rm in}(x)
= \frac{1}{2\pi\i} \int_{C + \epsilon}
\exp\left\{ \frac{\tilde{z}^3}{3} - x \tilde{z}
+  R\left( \frac{\gamma}{\sqrt[3]{2N}} \tilde{z} \right) N \right\}
\1_{\left\{\abs{\tilde{z} - \epsilon} \leq \sqrt[3]{2} N^{1/3-\alpha}\right\}}
\diff \tilde{z} \, .
\end{equation}
A standard estimate of the remainder in the Taylor expansion~\eqref{eq:expansionCriticalPoint} yields
\[
\abs{R\left( \frac{\gamma}{\sqrt[3]{2N}} \tilde{z} \right) N} 
\leq \frac{m}{4!} \abs{\frac{\gamma}{\sqrt[3]{2N}} \tilde{z}}^4 N
\leq \frac{m}{4!} \left(\gamma N^{-\alpha} + \frac{\epsilon\gamma}{\sqrt[3]{2N}} \right)^4 N
\]
for $\abs{\tilde{z} - \epsilon} \leq \sqrt[3]{2} N^{1/3-\alpha}$, where the constant $m$ is the maximum modulus of $F^{(4)}$ in some fixed neighborhood of the origin.
If we take $\alpha > 1/4$, the above expression vanishes as $N\to\infty$.
If we further choose $\alpha < 1/3$, the indicator function in~\eqref{eq:steepestDescentIn} converges to $1$, yielding
\begin{equation}
\label{eq:steepestDescentInConv}
\exp\left\{ R_N\left( \frac{\gamma}{\sqrt[3]{2N}} \tilde{z} \right) N \right\} \,
\1_{\left\{\abs{\tilde{z} - \epsilon} \leq \sqrt[3]{2} N^{1/3-\alpha}\right\}}
\xrightarrow{N\to\infty} 1 \, .
\end{equation}
Since the argument of the points of $C$ is $\pm \pi/3$, we have that
\[
\int_{C + \epsilon}
\abs{\exp\left\{\frac{\tilde{z}^3}{3} -x\tilde{z} \right\} } \abs{\diff \tilde{z}}
< \infty \, ,
\]
hence by dominated convergence
\[
\tilde{J}_N^{\rm in}(x)
\xrightarrow{N\to\infty} \int_{C + \epsilon}
\exp\left\{\frac{\tilde{z}^3}{3} -x\tilde{z} \right\} \diff \tilde{z}
= \Ai(x) \, .
\]
Observe that, varying $\epsilon$, we have different integral representations of the Airy function, which are all equivalent thanks to~\eqref{eq:Airy}.

To conclude the proof, it remains to show that
\[
\tilde{J}_N^{\rm ex}(x)
:= \frac{\sqrt[3]{2N}}{2\pi\i\gamma} \int_{T_a \cap \{\abs{z} \geq \gamma N^{-\alpha}\}}
\exp\left\{ F\left(z + \frac{\epsilon\gamma}{\sqrt[3]{2N}} \right) N - x \left(z + \frac{\epsilon\gamma}{\sqrt[3]{2N}} \right) \frac{\sqrt[3]{2N}}{\gamma} \right\}
\diff z
\xrightarrow{N\to\infty} 0 \, .
\]
We may decompose the integration domain as
\[
T_a \cap \{\abs{z} \geq \gamma N^{-\alpha}\}
= \mathcal{V}
\cup \mathcal{O}_N
\cup \overline{\mathcal{V}}
\cup \overline{\mathcal{O}}_N \, ,
\]
where $\mathcal{V}$ and $\mathcal{O}_N$ are the vertical and oblique segments respectively given by
\[
\mathcal{V} := \left\{\Re(z)=a \, , \,\, 0 \leq \arg(z) \leq \frac{\pi}{3} \right\} \, ,
\quad
\mathcal{O}_N := \left\{ \gamma N^{-\alpha} \leq \abs{z} \leq 2a \, , \,\, \arg(z) = \frac{\pi}{3} \right\} \, ,
\]
and $\overline{\mathcal{V}}$ and $\overline{\mathcal{O}}_N$ are their complex conjugates.
We thus estimate
\begin{equation}
\label{eq:steepestDescentExt}
\abs{\tilde{J}_N^{\rm ex}(x)}
\leq \frac{\mathcal{L}(T_a) \sqrt[3]{2N}}{2\pi\gamma} \exp\left\{ \max \left[ G_N\left(z + \frac{\epsilon\gamma}{\sqrt[3]{2N}} \right) \colon z\in \mathcal{V} \cup \mathcal{O}_N \cup \overline{\mathcal{V}} \cup \overline{\mathcal{O}}_N \right] \right\} \, ,
\end{equation}
where $\mathcal{L}(\cdot)$ denotes the length of a contour and
\[
G_N(z) := \Re[F(z)] N - x \, \Re(z) \frac{\sqrt[3]{2N}}{\gamma} \, .
\]
Since
\[
\Re[F(z)] = \log\abs{\frac{\gamma+z}{\gamma-z}} - \frac{2}{\gamma} \Re(z) \, ,
\]
it is clear that $G_N(\overline{z}) = G_N(z)$.
Therefore, it suffices to bound the maximum of $G_N$ over $\mathcal{V}$ and over $\mathcal{O}_N$.
Since $\Re(z)=a$ and $a\leq \abs{z}\leq 2a$ for $z\in\mathcal{V}$, we have that
\begin{equation}
\label{eq:steepestDescentExtVert}
\max_{z\in\mathcal{V}} G_N\left(z + \frac{\epsilon\gamma}{\sqrt[3]{2N}} \right)
\leq - c N - x a \frac{\sqrt[3]{2N}}{\gamma} \, ,
\end{equation}
where
\begin{equation}
\label{eq:steepestDescentExtVertConst}
c := \frac{2}{\gamma} a - \log\left(\frac{2a + \epsilon\gamma +\gamma}{a -\gamma}\right) \, .
\end{equation}
If we fix a large enough $a$ such that $c$ is positive, the maximum in~\eqref{eq:steepestDescentExtVert} is asymptotically bounded above by $-cN$ and diverges to $-\infty$.
On the other hand, for all $z$ such that $\arg(z)=\pi/3$, we have that
\[
G_N(z)
= \left[ \frac{1}{2} \log\frac{\gamma^2 + \gamma\abs{z} + \abs{z}^2}
{\gamma^2 - \gamma\abs{z} + \abs{z}^2}
- \frac{\abs{z}}{\gamma} \right] N
- x \frac{\abs{z}}{2} \frac{\sqrt[3]{2N}}{\gamma} \, ,
\]
whose derivative w.r.t.\ the modulus is
\[
\frac{\diff}{\diff \abs{z}} G_N(z)
= -\frac{\abs{z}^2(2\gamma^2+\abs{z}^2)}
{\gamma(\gamma^4+\gamma^2\abs{z}^2+\abs{z}^4)} N
- \frac{x \sqrt[3]{2N}}{2\gamma} \, .
\]
A trivial estimate then gives
\[
\frac{\diff}{\diff \abs{z}} G_N(z)
\leq - \frac{\gamma N^{1-2\alpha} (2\gamma^2 + \gamma^2 N^{-2\alpha})}
{(\gamma^4 + \gamma^2 (2a)^2 + (2a)^4)}
- \frac{x \sqrt[3]{2N}}{2\gamma}
\qquad
\text{for } z\in\mathcal{O}_N \, .
\]
Since $\alpha < 1/3$, no matter the sign of $x$, the above derivative is negative for $N$ large enough, so $G_N(z)$ is decreasing w.r.t.\ $\abs{z}$ in $\mathcal{O}_N$.
By continuity, for $N$ large, $G_N(z + \epsilon\gamma/\sqrt[3]{2N})$ is also decreasing w.r.t.\ $\abs{z}$ 
in $\mathcal{O}_N$, hence
\[
\begin{split}
&\max_{z\in \mathcal{O}_N} G_N\left(z + \frac{\epsilon\gamma}{\sqrt[3]{2N}} \right)
= G_N\left(\gamma N^{-\alpha} \e^{\i\pi/3} + \frac{\epsilon\gamma}{\sqrt[3]{2N}} \right) \\
= \, &\left[ \log\Bigg| \frac{1 + N^{-\alpha} \e^{\i\pi/3} + \epsilon(2N)^{-1/3}}
{1 - N^{-\alpha} \e^{\i\pi/3} - \epsilon(2N)^{-1/3}} \Bigg|
- N^{-\alpha} -\frac{2\epsilon}{\sqrt[3]{2N}} \right] N
- x\left(\frac{N^{-\alpha}}{2} + \frac{\epsilon}{\sqrt[3]{2N}} \right) \sqrt[3]{2N} \, .
\end{split}
\]
After a tedious computation, which uses the third order Taylor expansion of $\log(1+\delta)$ as $\delta \to 0$, we obtain that
\begin{equation}
\label{eq:steepestDescentExtObl}
\max_{z\in \mathcal{O}_N} G_N\left(z + \frac{\epsilon\gamma}{\sqrt[3]{2N}} \right)
= -\frac{2}{3} N^{1-3\alpha} + o(N^{1-3\alpha})
- x (2^{-2/3} N^{1/3-\alpha}
+ \epsilon) \, .
\end{equation}
Since $\alpha < 1/3$, the latter expression is asymptotic to $-(2/3)N^{1-3\alpha}$ and diverges to $-\infty$.
Thanks to estimates~\eqref{eq:steepestDescentExt}, \eqref{eq:steepestDescentExtVert} and~\eqref{eq:steepestDescentExtObl}, we thus conclude that $\tilde{J}_N^{\rm ex}(x)$ vanishes (at least choosing a large enough $a$).
\end{proof}

The proof of Proposition~\ref{prop:steepestDescent} directly provides a uniform bound on $\tilde{J}_N$, which will turn out to be useful in the next subsections.

\begin{corollary}
\label{coro:steepestDescentEstimate}
Let $\tilde{J}_N(x)$ be defined as in~\eqref{eq:convergence2Airy} and $s\in\R$.
Then, there exist two positive constants $c_1$ and $c_2$ such that
\[
\sup_{N\in\N} \big| \tilde{J}_N(x) \big| \leq c_1 \e^{-c_2 x} \quad\qquad
\forall x\in [s,\infty) \, .
\]
\end{corollary}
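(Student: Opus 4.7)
The strategy is to revisit the proof of Proposition~\ref{prop:steepestDescent} and track how the various bounds depend on $x$, keeping the same decomposition $\tilde{J}_N(x) = \tilde{J}_N^{\rm in}(x) + \tilde{J}_N^{\rm ex}(x)$ and the same choices of $a$, $\epsilon$, $\alpha\in(1/4,1/3)$. A preliminary reduction is worthwhile: since $e^{-c_2 x}\geq 1$ for $x<0$ and any $c_2>0$, it suffices to prove the exponential bound for $x\geq 0$ and, separately, a uniform (in $N$) boundedness on the compact interval $[s,0)$, which will come for free from the same calculations because every integrand below is dominated on compact $x$-ranges.

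For $\tilde{J}_N^{\rm in}(x)$ I will use that on the shifted contour $C+\epsilon$ we have $\Re(\tilde z)\geq \epsilon$, so for $x\geq 0$
\[
\big|\tilde J_N^{\rm in}(x)\big|\leq \frac{e^{-\epsilon x}}{2\pi}\int_{C+\epsilon}
e^{\Re(\tilde z^3/3)}\,\big|e^{R(\gamma \tilde z/\sqrt[3]{2N})\,N}\big|\,|d\tilde z|,
\]
where the integration is restricted to $\{|\tilde z-\epsilon|\leq \sqrt[3]{2}\,N^{1/3-\alpha}\}$. The remainder estimate from the proof of Proposition~\ref{prop:steepestDescent} (using $\alpha>1/4$) shows that $|R(\gamma\tilde z/\sqrt[3]{2N})N|\to 0$ uniformly on this truncated contour, hence the corresponding exponential is bounded by a constant independent of $N$. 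Since $\Re(\tilde z^3/3)\sim -r^3/3$ along $C$, the integral is finite, and I obtain $|\tilde J_N^{\rm in}(x)|\leq C_1 e^{-\epsilon x}$ uniformly in $N$.

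For $\tilde J_N^{\rm ex}(x)$ I will re-use the pointwise bounds~\eqref{eq:steepestDescentExtVert} and~\eqref{eq:steepestDescentExtObl} on $G_N$, keeping their $x$-dependence. On the vertical segment $\mathcal V$, $G_N\leq -cN-xa\sqrt[3]{2N}/\gamma$, so for $x\geq 0$ and $N\geq 1$ one has $e^{G_N}\leq e^{-cN}e^{-xa\sqrt[3]{2}/\gamma}$; the polynomial prefactor $\sqrt[3]{2N}/(2\pi\gamma)$ is absorbed by $e^{-cN}$, yielding a bound $\leq C_2 e^{-(a\sqrt[3]{2}/\gamma) x}$ uniform in $N$. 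On the oblique segment $\mathcal O_N$, $G_N\leq -\tfrac{2}{3}N^{1-3\alpha}(1+o(1))-x(2^{-2/3}N^{1/3-\alpha}+\epsilon)$, so for $x\geq 0$ one discards the positive $N$-dependent piece in the exponent involving $x$ and retains $e^{-\epsilon x}$, while the factor $e^{-(2/3)N^{1-3\alpha}(1+o(1))}$ (with $1-3\alpha>0$) absorbs the prefactor $\sqrt[3]{2N}$, giving $\leq C_3 e^{-\epsilon x}$ uniform in $N$. The complex conjugate segments produce identical bounds.

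Combining the three estimates, for $x\geq 0$ I get $|\tilde J_N(x)|\leq (C_1+2C_2+2C_3)\,e^{-c_2 x}$ with $c_2=\min(\epsilon,\,a\sqrt[3]{2}/\gamma)$. For $x\in[s,0)$ the same integrals produce only a larger but finite constant (replacing $e^{-x\Re(\tilde z)}$ by $e^{|s|\Re(\tilde z)}$ on $C+\epsilon$, which is still integrable against $e^{\Re(\tilde z^3/3)}$ thanks to the cubic decay; on the compact contour $T_a+\epsilon\gamma/\sqrt[3]{2N}$ the factor $e^{-x\Re(\tilde z)}$ is uniformly bounded). Enlarging $c_1$ to absorb this constant, together with $e^{-c_2 x}\geq 1$ on $[s,0)$, yields the global bound on $[s,\infty)$. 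The only real subtlety is the bookkeeping: one has to recognise that the worst $N$ in each $N$-dependent decay rate is $N=1$, so the uniform exponential rate is $c_2=\min(\epsilon,a\sqrt[3]{2}/\gamma)$ rather than something improving with $N$; once this is observed, the remaining work is routine.
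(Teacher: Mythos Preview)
Your proposal is correct and follows essentially the same route as the paper's proof: the same decomposition $\tilde J_N=\tilde J_N^{\rm in}+\tilde J_N^{\rm ex}$, the same use of $\Re(\tilde z)\geq\epsilon$ on $C+\epsilon$ to extract $e^{-\epsilon x}$ from the interior contribution, and the same recycling of the bounds~\eqref{eq:steepestDescentExtVert} and~\eqref{eq:steepestDescentExtObl} (with the worst case $N=1$) for the exterior contribution. The only cosmetic differences are that the paper combines the vertical and oblique contributions into a single $\min$ rather than treating them separately, and that for $x\in[s,0)$ the paper appeals in one line to uniform boundedness of the convergent sequence on compacts, whereas you redo the estimates directly; both arguments are valid and yield the same conclusion.
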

\begin{proof}
Since by continuity the converging sequence $\tilde{J}_N(x)$ is bounded uniformly in $N$ on any compact set, it suffices to prove the claim for $s=0$.
The proof is then a straightforward consequence of the estimates obtained in the proof of Proposition~\ref{prop:steepestDescent}.
Using the notation adopted there, we will show that the uniform exponential bound is valid for both $\tilde{J}^{\rm in}_N$ and $\tilde{J}^{\rm ex}_N$, i.e.\ the contributions near and away from the critical point respectively.
From~\eqref{eq:steepestDescentIn} and~\eqref{eq:steepestDescentInConv}, it follows that for all $x \in [0,\infty)$
\[
\sup_{N\in\N} \big| \tilde{J}^{\rm in}_N(x) \big|
\lesssim \e^{-\epsilon x}
\int_{C+\epsilon} \e^{\Re(\tilde{z}^3)/3} \abs{\diff \tilde{z}}
\, ,
\]
with $\epsilon$ chosen to be strictly positive.
By definition of the contour $C$, the above integral converges, providing the desired exponential bound for $\tilde{J}^{\rm in}_N$.
On the other hand, estimates~\eqref{eq:steepestDescentExt}, \eqref{eq:steepestDescentExtVert} and~\eqref{eq:steepestDescentExtObl} show that for all $N\in\N$ and $x\in [0,\infty)$
\[
\begin{split}
\big| \tilde{J}^{\rm ex}_N(x) \big|
&\leq
\left[ \frac{\mathcal{L}(T_a) \sqrt[3]{2N}}{2\pi\gamma} \e^{ - \min\{ cN, (2/3) N^{1-3\alpha} + o(N^{1-3\alpha}) \} } \right]
\e^{-x \min\{ a\sqrt[3]{2N}/\gamma, 2^{-2/3} N^{1/3-\alpha} + \epsilon \}} \\
&\leq c' \e^{-x \min\{ a\sqrt[3]{2}/\gamma, 2^{-2/3} + \epsilon \}} \, ,
\end{split}
\]
where $c$ is the constant (positive if $a$ is chosen large enough) defined in~\eqref{eq:steepestDescentExtVertConst}, and $c'$ is an upper bound for the vanishing sequence inside the square bracket above.
This provides the desired exponential bound for $\tilde{J}^{\rm ex}_N$.
\end{proof}

\subsection{Point-to-line and GOE Tracy-Widom}
\label{subsec:flatLPPasymptotics}

We will now specialize the results of sections~\ref{subsec:fredholm} and~\ref{subsec:steepestDescent} to study the scaling limits of the exponential LPP models.
We first analyze the point-to-line model, writing its CDF as a Fredholm determinant.
\begin{theorem}
\label{thm:flatExpLPPfredholm}
The distribution of the point-to-line $(\bm{\alpha}, \bm{\beta})$-exponential LPP $\fTau_{2N}$ can be given in terms of a Fredholm determinant as
\begin{equation}
\label{eq:flatExpLPPfredholm}
\P\left(\fTau_{2N}\leq u\right) = \det(I - \fK_{N,u})_{L^2(\R_{>0})} \, ,
\end{equation}
where $\fK_{N,u} \colon L^2(\R_{>0})\to L^2(\R_{>0})$ is the operator defined through the kernel
\begin{equation}
\label{eq:flatExpLPPkernel}
\fK_{N,u}(\lambda,\xi)
= \frac{1}{(2\pi\i)^2}
\int_{\Gamma_1} \diff z
\int_{\Gamma_2} \diff w \,
\e^{-\lambda z - \xi w} \,
\fHbar_u(z,w)
\prod_{m=1}^N \frac{(z+\beta_m)(w+\alpha_m)}{(z-\alpha_m)(w-\beta_m)} \, .
\end{equation}
Here, $\Gamma_1,\Gamma_2 \subset \C_{>0}$ are any positively oriented simple closed contours such that $\Gamma_1$ encloses $\alpha_1,\dots,\alpha_N$, $\Gamma_2$ encloses $\beta_1,\dots,\beta_N$ as well as the whole $\Gamma_1$, and
\begin{equation}
\label{eq:flatExpLPPHbar}
\fHbar_u(z,w)
= \frac{\e^{-2uz}}{w-z}
+ \frac{\e^{-2uw}}{z-w}
+ \frac{\e^{-2u(z+w)}}{z+w} \, .
\end{equation}
\end{theorem}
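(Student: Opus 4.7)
The strategy is to invoke the general machinery of Theorem~\ref{thm:detToFredDet} applied to the determinantal formula~\eqref{eq:flatExpLPPdet} of Theorem~\ref{thm:flatExpLPP}. The only real work is to (i) evaluate $\fH_u(z,w)$ explicitly, (ii) identify a splitting $\fH_u = C - \fHbar_u$ with $\fHbar_u$ holomorphic on $\C_{>0} \times \C_{>0}$, and (iii) read off~\eqref{eq:flatExpLPPkernel} from the general kernel~\eqref{eq:kernel}.

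First I would expand $(\e^{zx} - \e^{-zx})(\e^{wx} - \e^{-wx})$ inside the integral defining $\fH_u$ into four exponentials and integrate each from $0$ to $u$. After multiplying by the prefactor $\e^{-u(z+w)}$ and grouping terms with matching denominators, I expect to arrive at the compact form
\begin{equation*}
\fH_u(z,w) = \frac{1 - \e^{-2u(z+w)}}{z+w} + \frac{\e^{-2uz} - \e^{-2uw}}{z-w} = C(z,w) - \fHbar_u(z,w),
\end{equation*}
where the second equality holds with $\fHbar_u$ as in~\eqref{eq:flatExpLPPHbar} once the single $(z-w)^{-1}$ fraction is split in two.

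Next I would check that $\fHbar_u$ is holomorphic on $\C_{>0} \times \C_{>0}$. The third summand of~\eqref{eq:flatExpLPPHbar} is fine there because $\Re(z+w) > 0$. The first two summands carry an apparent pole at $z = w$, but they combine into $(\e^{-2uw} - \e^{-2uz})/(z-w)$, which by the standard divided-difference argument is an entire function of $(z,w)$; hence the pole is removable and $\fHbar_u$ is holomorphic on the required domain.

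With these two ingredients, Theorem~\ref{thm:detToFredDet} applies directly with $\underline{H} = \fHbar_u$, producing the Fredholm determinant~\eqref{eq:flatExpLPPfredholm} together with the kernel~\eqref{eq:flatExpLPPkernel}. The additional requirement that $\Gamma_2$ enclose $\Gamma_1$ is not strictly forced by Theorem~\ref{thm:detToFredDet} (there each contour only has to enclose the appropriate poles inside $\C_{>0}$); I would impose it here so that, for $z \in \Gamma_1$ and $w \in \Gamma_2$, the apparent $(z-w)^{-1}$ singularities of the individual summands of $\fHbar_u$ are kept well away from the integration contours, a configuration that is convenient for the subsequent steepest descent analysis. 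The only mildly nontrivial step is therefore the algebraic simplification in (i); everything else is essentially routine given the general scheme already in place.
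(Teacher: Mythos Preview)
Your proposal is correct and follows essentially the same route as the paper: apply Theorem~\ref{thm:detToFredDet} to formula~\eqref{eq:flatExpLPPdet} after computing $\fHbar_u = C - \fH_u$ by explicitly integrating. The paper is slightly terser about holomorphicity, simply imposing that $\Gamma_2$ enclose $\Gamma_1$ so that $z\neq w$ on the contours and the expression~\eqref{eq:flatExpLPPHbar} makes sense there; your observation that the $z=w$ singularity is actually removable (so $\fHbar_u$ is genuinely holomorphic on $\C_{>0}\times\C_{>0}$, as Theorem~\ref{thm:detToFredDet} requires) is a worthwhile addition.
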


\begin{proof}
The claim is an immediate consequence of the determinantal formula~\eqref{eq:flatExpLPPdet} and Theorem~\ref{thm:detToFredDet}.
According to~\eqref{eq:Hbar}, function $\fHbar_u$ in~\eqref{eq:flatExpLPPkernel} is defined through the relation $\fH_u = C - \fHbar_u$ (using the notation of Theorem~\ref{thm:flatExpLPP} for $\fH_u$),  i.e.
\[
\fHbar_u(z,w)
= \frac{1}{z+w} - \e^{-u(z + w)} \int_0^u (\e^{z x}-\e^{-z x})
(\e^{w x}-\e^{-w x}) \diff x \, .
\]
If we assume\footnote{Notice that, by symmetry, the reverse inclusion would lead to the same result.} that $\Gamma_2$ encloses $\Gamma_1$ (so that $z\neq w$ for all $z,w$), integrating the above expression yields~\eqref{eq:flatExpLPPHbar}.
\end{proof}

\begin{remark}
As discussed in Remark~\ref{rem:detPointProcesses}, our formula~\eqref{eq:flatExpLPPSchur} does not naturally lead to a determinantal point process.
Therefore, the kernel in~\eqref{eq:flatExpLPPfredholm} is not (naturally) associated to a determinantal point process, and it might not even be positive.
In fact, we will see in next theorem that the limiting kernel is given by the Airy function, which is not always positive.
\end{remark}

We are now ready to derive the scaling limit of the point-to-line LPP with exponential i.i.d.\ waiting times.
We will prove that the fluctuations of $\fTau_{2N}$ are of order $N^{1/3}$ and its the scaling limit is given by the GOE Tracy-Widom distribution.
In particular, we will find Sasamoto's Fredholm determinant formula~\cite{sasamoto05} for the CDF of such a distribution:
\begin{equation}
\label{eq:GOE}
F_1(s)
= \det(I - \mathcal{K}_1)_{L^2([s,\infty))}
\end{equation}
for $s\in\R$, where $\mathcal{K}_1$ is the operator on $L^2([s,\infty))$ defined through the kernel
\begin{equation}
\label{eq:GOEkernel}
\mathcal{K}_1(\lambda,\xi) := \frac{1}{2}\Ai\bigg(\frac{\lambda + \xi}{2}\bigg) \, .
\end{equation}

\begin{theorem}
\label{thm:flatLPPasymptotics}
If the waiting times are independent and exponentially distributed with rate $2\gamma$, the limiting distribution of the point-to-line LPP $\fTau_{2N}$ is given, for $r\in\R$, by
\begin{equation}
\label{eq:fAsympt}
\lim_{N\to\infty}\P\bigg(\fTau_{2N} \leq \frac{2N}{\gamma} + rN^{1/3} \bigg) 
= F_1\big(2^{1/3}\gamma r\big) \, .
\end{equation}
\end{theorem}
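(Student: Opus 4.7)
The strategy is to pass to the $N\to\infty$ limit in the Fredholm determinant representation of $\P(\fTau_{2N}\le u)$ supplied by Theorem~\ref{thm:flatExpLPPfredholm}, specialized to $\alpha_i=\beta_j=\gamma$. Setting $u=2N/\gamma+rN^{1/3}$ and rescaling the integration variables of the Fredholm series via $\lambda=cN^{1/3}\mu$, $\xi=cN^{1/3}\nu$ with $c=2^{-1/3}/\gamma$, one rewrites
\[
\P\bigl(\fTau_{2N}\le u\bigr)=\det\bigl(I-K^{(N)}\bigr)_{L^2(\R_{>0})},\qquad K^{(N)}(\mu,\nu):=cN^{1/3}\fK_{N,u}\bigl(cN^{1/3}\mu,\,cN^{1/3}\nu\bigr),
\]
and the whole analysis reduces to proving $K^{(N)}\to \mathcal{K}_1(\cdot+2^{1/3}\gamma r,\cdot+2^{1/3}\gamma r)$ pointwise, together with a uniform integrable bound.

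The key manipulation of the kernel exploits the identity
\[
\frac{\e^{-2uz}-\e^{-2uw}}{w-z}=2u\int_{0}^{1}\e^{-2u(tz+(1-t)w)}\,\diff t,
\]
which turns the first two pieces of $\fHbar_u$ in~\eqref{eq:flatExpLPPHbar} into a $t$-integral in which the $z$- and $w$-contour integrations decouple into the product of two single integrals of exactly the type handled in Proposition~\ref{prop:steepestDescent}. The coefficient of $Nz$ in the effective exponent is $4t/\gamma$, whose critical value $2/\gamma$ is attained precisely at $t=1/2$; only for this value does the relevant saddle sit at $z=0$ and produce the cubic Airy behaviour of~\eqref{eq:convergence2Airy}. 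For $t\neq 1/2$ the saddle moves off the origin into a regime where the real part of the exponent at the critical point is strictly negative, giving exponentially subdominant contributions, and similarly the third term of $\fHbar_u$ (with full factor $\e^{-2u(z+w)}$ and Gaussian saddles at $z_c=w_c=\gamma/\sqrt 2$ where $\Re F(z_c)<0$) is exponentially small. After substituting $t=1/2+N^{-2/3}\tau$, combining the Airy asymptotics of Proposition~\ref{prop:steepestDescent} with the explicit leading constants yields a convolution of the form
\[
K^{(N)}(\mu,\nu)\;\xrightarrow{N\to\infty}\;2\int_{\R}\Ai\bigl(A_\mu+c'\tau\bigr)\Ai\bigl(A_\nu-c'\tau\bigr)\,\diff\tau,
\]
with $A_\mu=\mu/2^{2/3}+2^{-1/3}\gamma r$, $A_\nu=\nu/2^{2/3}+2^{-1/3}\gamma r$, and $c'=2^{5/3}$. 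The Fourier-analytic identity
\[
\int_{\R}\Ai(a+\tau)\Ai(b-\tau)\,\diff\tau=2^{-1/3}\Ai\bigl((a+b)/2^{1/3}\bigr),
\]
which follows from $\widehat{\Ai}(k)=\e^{\i k^3/3}$, then collapses the right-hand side to $\tfrac12\Ai((\mu+\nu)/2+2^{1/3}\gamma r)=\mathcal{K}_1(\mu+2^{1/3}\gamma r,\nu+2^{1/3}\gamma r)$.

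Pointwise convergence is upgraded to convergence of Fredholm determinants by a standard dominated-convergence argument. The decoupling identity, together with Corollary~\ref{coro:steepestDescentEstimate} applied to each factor $I_z(t)$, $I_w(t)$, provides a uniform exponential bound $|K^{(N)}(\mu,\nu)|\le c_1\e^{-c_2(\mu+\nu)}$ valid on $\R_{>0}^2$ for all $N$. Hadamard's inequality then controls the $n$-th term of the Fredholm series~\eqref{eq:fredholmDet} by a summable bound, and dominated convergence gives
\[
\det(I-K^{(N)})_{L^2(\R_{>0})}\longrightarrow\det\!\bigl(I-\mathcal{K}_1(\cdot+2^{1/3}\gamma r,\cdot+2^{1/3}\gamma r)\bigr)_{L^2(\R_{>0})}=F_1(2^{1/3}\gamma r),
\]
where the last identification is the shift $\tilde\mu=\mu+2^{1/3}\gamma r$ in every integration variable of the limit series.

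The main technical obstacle will be the steepest descent step for the parametric family of single contour integrals produced by the decoupling identity: the contour $\Gamma_1$ must enclose the pole of order $N$ at $z=\gamma$ while simultaneously passing through the moving saddle $z_c(t)$ along the direction of steepest descent, and one needs uniform (in $t$) control of the subleading contributions in order to justify Laplace's method in $t$ around $t=1/2$. A secondary subtlety is to isolate the $t=1/2$ contribution from the boundary $t\in\{0,1\}$ pieces and from the third term of $\fHbar_u$, since all three are a priori exponentially small but the uniformity of this exponential smallness (and its dependence on the rescaled external variables $\mu,\nu$) has to be tracked carefully; this is also what ultimately distinguishes GOE, rather than GUE, Tracy--Widom statistics in the flat geometry.
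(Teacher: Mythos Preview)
Your route is genuinely different from the paper's and considerably more roundabout. The paper does not use your integral identity for $\frac{\e^{-2uz}-\e^{-2uw}}{w-z}$; instead it treats the three pieces of $\fHbar_u$ separately and, in the single term $\frac{\e^{-2uw}}{z-w}$, swaps the nested contours $\Gamma_1,\Gamma_2$. The residue picked up at $w=z$ is a \emph{single} contour integral
\[
\fKone_{N,u}(\lambda,\xi)=-\frac{1}{2\pi\i}\int_{\Gamma_1}\e^{-(2u+\lambda+\xi)z}\left[\frac{\gamma+z}{\gamma-z}\right]^{2N}\diff z,
\]
which is exactly $J_{2N}$ of Proposition~\ref{prop:steepestDescent} and converges, after rescaling by $(2N)^{1/3}/\gamma$, directly to $2^{-1/3}\Ai\bigl(2^{-1/3}(\lambda+\xi)+2^{1/3}\gamma r\bigr)$. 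No parametric family, no Laplace step in $t$, and no Airy convolution identity are needed. The three remaining double integrals $\fKtwo_{N,u},\fKthree_{N,u},\fKfour_{N,u}$ are then shown to vanish by crude $L^\infty$ bounds on explicit fixed contours (a small circle around $\gamma$ and a large semicircle), with no saddle-point analysis at all. So your decoupling identity is really a reformulation of the contour-swap residue computation, and what the paper gains is that it isolates the main term \emph{before} doing any asymptotics.

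There is also a real gap in your uniform-bound step. You write that Corollary~\ref{coro:steepestDescentEstimate} ``applied to each factor $I_z(t),I_w(t)$'' yields $|K^{(N)}(\mu,\nu)|\le c_1\e^{-c_2(\mu+\nu)}$. But Corollary~\ref{coro:steepestDescentEstimate} only bounds $\tilde J_N(x)$ for $x$ bounded \emph{below}; it says nothing for $x\to-\infty$. In your decoupled representation, for every $t\ne 1/2$ exactly one of the rescaled arguments tends to $-\infty$ like $N^{2/3}$ (e.g.\ for $t>1/2$ the argument of $I_w$ is $\approx -2N(2t-1)/\gamma$), and there $J_N$ actually \emph{blows up}: from the residue at $z=\gamma$ one has $J_N(x)=\e^{-2N-\gamma x}P_{N-1}(x)$ for a polynomial $P_{N-1}$, so $J_N(x)\sim\e^{\gamma|x|}$ as $x\to-\infty$. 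Thus you cannot bound the product by bounding each factor separately; you would need a genuine product estimate (e.g.\ via Laguerre-type asymptotics, or by going back to the double contour integral and bounding it as the paper does). The same issue undermines the justification of ``Laplace's method in $t$'': you need uniform control of $I_z(t)I_w(t)$ over the full range $t\in[0,1]$, not just near $t=1/2$, and the tools you cite do not provide it.
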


\begin{proof}
The starting point is the Fredholm determinant formula of Theorem~\ref{thm:flatExpLPPfredholm}. 
We will first show the pointwise convergence of the kernel after suitable rescaling, and next sketch the (standard) argument for the convergence of the Fredholm determinant.
Setting $\alpha_m=\beta_m=\gamma$ for all $m$, so that the waiting times are all exponential with rate $2\gamma$ (see Definition~\ref{def:flatExpMeasure}), kernel~\eqref{eq:flatExpLPPkernel} reads as
\[
\fK_{N,u}(\lambda,\xi)
= \frac{1}{(2\pi\i)^2}
\int_{\Gamma_1} \diff z
\int_{\Gamma_2} \diff w \,
\e^{-\lambda z - \xi w} \,
\fHbar_u(z,w)
\left[ \frac{(z+\gamma)(w+\gamma)}{(z-\gamma)(w-\gamma)} \right]^N \, ,
\]
where $\fHbar_u$ is given by formula~\eqref{eq:flatExpLPPHbar}.
Our kernel is thus a sum of three double contour integrals, each corresponding to one of the addends in~\eqref{eq:flatExpLPPHbar}.
In the second one \emph{only}, we swap the two contours taking into account the residue at the pole $w=z$.
We can then readily write the kernel as the sum of four terms:
\[
\fK_{N,u}
= \fKone_{N,u} + \fKtwo_{N,u} + \fKthree_{N,u} + \fKfour_{N,u} \, ,
\]
where the first one corresponds to the above mentioned residue:
\begin{align}
\label{eq:fKer1}
\fKone_{N,u}(\lambda,\xi)
&:= - \frac{1}{2\pi\i} \int_{\Gamma_1} \diff z \,
\e^{-(2u + \lambda+\xi) z}
\left[\frac{\gamma+z}{\gamma-z}\right]^{2N} \, ,
\end{align}
and the other three terms are
\begin{align}
\label{eq:fKer2}
\fKtwo_{N,u}(\lambda,\xi)
&:= \frac{1}{(2\pi\i)^2}
\int_{\Gamma_1} \diff z \,
\int_{\Gamma_2} \diff w \, \e^{-\lambda z - \xi w}
\frac{\e^{-2uz}}{w-z}
\left[ \frac{(z+\gamma)(w+\gamma)}{(z-\gamma)(w-\gamma)} \right]^N \, , \\
\label{eq:fKer3}
\fKthree_{N,u}(\lambda,\xi)
&:= \fKtwo_{N,u}(\xi,\lambda) \, , \\
\label{eq:fKer4}
\fKfour_{N,u}(\lambda,\xi)
&:= \frac{1}{(2\pi\i)^2}
\int_{\Gamma_1} \diff z \,
\int_{\Gamma_2} \diff w \, \e^{-\lambda z - \xi w}
\frac{\e^{-2u(z+w)}}{z+w}
\left[ \frac{(z+\gamma)(w+\gamma)}{(z-\gamma)(w-\gamma)} \right]^N \, .
\end{align}

{ \bfseries Step 1: Main contribution in the kernel.}
The Airy kernel emerges from a rescaling of $\fKone_{N,u}$ through Proposition~\ref{prop:steepestDescent}, whereas the other terms turn out to be negligible under the same rescaling, as we will see.
From now on, fixing $r\in\R$ once for all, we will take $u$ to be $u_N:=2N/\gamma+rN^{1/3}$, as in~\eqref{eq:fAsympt}.
Moreover, we denote by $\tilde{\Psi}$ the rescaling of any function $\Psi(\lambda,\xi)$ by the factor $\sqrt[3]{2N}/\gamma$:
\begin{equation}
\label{eq:rescaling}
\tilde{\Psi}(\lambda,\xi)
:= \frac{\sqrt[3]{2N}}{\gamma} \Psi\left( \frac{\sqrt[3]{2N}}{\gamma} \lambda, \frac{\sqrt[3]{2N}}{\gamma} \xi \right) \, .
\end{equation}
By Proposition~\ref{prop:steepestDescent}, $\fKonetilde_{N,u_N}$ has a non-trivial limit:
\[
\begin{split}
\fKonetilde_{N,u_N}(\lambda,\xi)
= &- \frac{\sqrt[3]{4N}}{\sqrt[3]{2}\gamma}
\frac{1}{2\pi\i} \int_{\Gamma_1} \diff z \,
\exp\left\{-z\left[\frac{2}{\gamma} 2N +\left(\frac{\lambda + \xi}{\sqrt[3]{2}} + 2^{1/3}\gamma r \right) \frac{\sqrt[3]{4N}}{\gamma} \right]\right\}
\left[\frac{\gamma+z}{\gamma-z}\right]^{2N} \\
\xrightarrow{N\to\infty} 
& \, 2^{-1/3} \Ai\left(2^{-1/3}(\lambda+\xi)+2^{1/3}\gamma r \right) \, .
\end{split}
\]
We thus need to replace our whole kernel with its rescaling by the factor $\sqrt[3]{2N}/\gamma$:
\[
\fKtilde_{N,u_N}
= \fKonetilde_{N,u_N} +\fKtwotilde_{N,u_N} +\fKthreetilde_{N,u_N} + \fKfourtilde_{N,u_N}
\, .
\]
This does not affect formula~\eqref{eq:flatExpLPPfredholm}, as it just amounts to a change of variables in the multiple integrals defining the Fredholm determinant expansion (see~\eqref{eq:fredholmDet}), so that:
\[
\P\left(\fTau_{2N}\leq u_N\right)
= \det(I - \fKtilde_{N,u_N})_{L^2(\R_{>0})}
\, .
\]

{ \bfseries Step 2: Vanishing terms in the kernel.}
We will now show that all the remaining terms $\tilde{K}^{(i)}_{N,u_N}(\lambda,\xi)$ for $i=2,3,4$ vanish, starting from $\fKtwotilde_{N,u_N}(\lambda,\xi)$.
For this purpose, we specify the contours appropriately.
We choose $\Gamma_1$ to be a circle of radius $\rho_1$ around $\gamma$, where $0<\rho_1<\gamma$.
Next, we choose $\Gamma_2$ to be a semicircle of radius $\rho_2$ centered at $\delta$, where $0<\delta < \gamma -\rho_1$, composed by concatenating the segment $\delta + \i [-\rho_2,\rho_2]$ and the arc parametrized by $\delta + \rho_2 \e^{\i \theta}$ for $\theta \in [-\pi/2,\pi/2]$.
It is clear that both contours lie in the right half-plane and, for $\rho_2$ large enough, $\Gamma_2$ encloses $\Gamma_1$.
Rescaling~\eqref{eq:fKer2}, setting $u:=u_N$, and using the fact that $\lambda,\xi> 0$ and $\delta \leq \Re(z),\Re(w) \leq \delta + \rho_2$ for $z\in \Gamma_1$ and $w\in \Gamma_2$, we estimate
\begin{equation}
\begin{split}
\label{eq:fKer2Estimate}
\abs{\fKtwotilde_{N,u_N}(\lambda,\xi)}
&\leq \frac{(2N)^{1/3} \mathcal{L}(\Gamma_1) \mathcal{L}(\Gamma_2)}{\gamma (2\pi)^2 \rm{dist}(\Gamma_1,\Gamma_2)}
\e^{- (\lambda +\xi) \delta \sqrt[3]{2N} / \gamma }
\e^{(m_1 + m_2) N + 2 \abs{r} (\delta + \rho_2) N^{1/3} } \\
&\leq c \, \e^{- (\lambda +\xi) \delta \sqrt[3]{2} / \gamma }
\exp\left\{(m_1 + m_2) N + 2 \abs{r} (\delta + \rho_2) N^{1/3} + \frac{1}{3} \log N \right\} \, .
\end{split}
\end{equation}
In the first inequality, we have denoted by $\mathcal{L}( \cdot)$ the length of a curve and by $\rm{dist}( \cdot, \cdot)$ the Euclidean distance in $\C$.
In the second inequality, $c$ is a constant depending on the parameters $\gamma$, $\delta$, $\rho_1$ and $\rho_2$ only, whereas $m_1$ and $m_2$ are defined by
\begin{align*}
m_1 := \max_{z \in \Gamma_1} \left\{ -\frac{4}{\gamma}\Re(z) + 
\log \abs{\frac{z + \gamma}{z - \gamma}} \right\}  \, , \qquad\quad
m_2 := \max_{w \in \Gamma_2} \log \abs{\frac{w + \gamma}{w - \gamma}} \, .
\end{align*}
A trivial estimate yields
\[
m_1 \leq -4\left(1-\frac{\rho_1}{\gamma} \right)
+ \log\left(1+ 2 \frac{\gamma}{\rho_1}\right) \, .
\]
Now, the function
\[
g(t) := -4(1-t) + \log\left(1 + \frac{2}{t}\right)
\]
attains its minimum for $t\in (0,1)$ at $t_0 :=\sqrt{3/2}-1$, with $g(t_0) <0$; hence, choosing $\rho_1 := t_0 \gamma$, we have that $m_1<0$.
On the other hand, we estimate
\[
\begin{split}
m_2
\leq \max_{\Re(w)=\delta} \log \abs{\frac{w + \gamma}{w - \gamma}}
+ \max_{\abs{w-\delta}=\rho_2} \log \abs{\frac{w + \gamma}{w - \gamma}}
\leq \log\frac{\gamma+\delta}{\gamma-\delta}
+ \log \frac{\rho_2 +\delta + \gamma}{\rho_2 +\delta - \gamma}  \, .
\end{split}
\]
We can now choose $\delta>0$ small enough and $\rho_2$ big enough such that  $m_2 < -m_1$.
It thus follows that, for certain values of $\rho_1$, $\rho_2$ and $\delta$, the quantity after the last inequality in~\eqref{eq:fKer2Estimate} decays exponentially with rate $N$, allowing us to conclude that $\fKtwotilde_{N,u_N}(\lambda,\xi)$ vanishes as $N\to\infty$, for all $\lambda,\xi\in\R_{>0}$.
We remark that, in~\eqref{eq:fKer2Estimate}, the  exponential containing variables $\lambda$ and $\xi$ has not played any role so far, but will provide a useful estimate in the next step.

Because of~\eqref{eq:fKer3}, we have that estimate~\eqref{eq:fKer2Estimate} is exactly valid for $\fKthreetilde_{N,u_N}(\lambda,\xi)$ too, so that this term also vanishes.

Finally, an estimate similar to~\eqref{eq:fKer2Estimate} holds for $\fKfourtilde_{N,u_N}(\lambda,\xi)$: to see this, we make the same contour choice as we made when showing that $\fKtwotilde_{N,u_N}(\lambda,\xi)$ vanishes.
Rescaling~\eqref{eq:fKer4} and setting $u:=u_N$, a generous estimate yields
\begin{equation}
\begin{split}
\label{eq:fKer4Estimate}
\abs{\fKfourtilde_{N,u_N}(\lambda,\xi)}
&\leq \frac{(2N)^{1/3} \mathcal{L}(\Gamma_1) \mathcal{L}(\Gamma_2)}{\gamma (2\pi)^2 \rm{dist}(\Gamma_1,-\Gamma_2)}
\e^{- (\lambda +\xi) \delta \sqrt[3]{2N} / \gamma }
\e^{(m_1 + m_2) N + 4 \abs{r} (\delta + \rho_2) N^{1/3} } \\
&\leq c' \e^{- (\lambda +\xi) \delta \sqrt[3]{2} / \gamma }
\exp\left\{(m_1 + m_2) N + 4 \abs{r} (\delta + \rho_2) N^{1/3} + \frac{1}{3} \log N \right\} \, ,
\end{split}
\end{equation}
where the constant $c'$ depends on $\gamma$, $\delta$, $\rho_1$ and $\rho_2$ only.
We have already proved that $m_1 + m_2 <0$ for a certain choice of $\rho_1$, $\rho_2$ and $\delta$, hence $\fKfourtilde_{N,u_N}(\lambda,\xi)$ also vanishes.

{ \bfseries Step 3: Convergence of Fredholm determinants.}
In the first two steps, we have proven that
\begin{equation}
\label{eq:fKerConvergence}
\lim_{N\to\infty} \fKtilde_{N,u_N}(\lambda,\xi)
= 2^{-1/3} \Ai\left(2^{-1/3}(\lambda+\xi)+ 2^{1/3}\gamma r \right)
\end{equation}
for all $\lambda,\xi\in\R_{>0}$.
We now need to show the convergence of the corresponding Fredholm determinants on $L^2(\R_{>0})$.
The argument is standard, and based on the series expansion~\eqref{eq:fredholmDet} of the Fredholm determinant.
Notice first that there exist two positive constant $c_1$ and $c_2$ such that
\[
\sup_{N\in\N} \abs{\fKtilde_{N,u_N}(\lambda,\xi)}
\leq c_1 \e^{-c_2 \lambda}
\]
for all $\lambda,\xi\in\R_{>0}$.
The exponential bound for $\fKonetilde_{N,u_N}$ comes from Corollary~\ref{coro:steepestDescentEstimate}, whereas the estimates for the remaining terms directly follow from~\eqref{eq:fKer2Estimate} and~\eqref{eq:fKer4Estimate}. Hadamard's bound then implies that
\[
\abs{ \det(\fKtilde_{N,u_N}(\lambda_i, \lambda_j))_{i,j=1}^n }
\leq n^{n/2} \prod_{i=1}^n c_1 \e^{-c_2 \lambda_i } \, .
\]
It follows that
\[
\begin{split}
\abs{ \det(I - \fKtilde_{N,u_N})_{L^2(\R_{>0})} }
&\leq 1 + \sum_{n=1}^{\infty} \frac{1}{n!} \int_0^{\infty} \dots \int_0^{\infty} \abs{ \det(K_N(\lambda_i, \lambda_j))_{i,j=1}^n } \diff \lambda_1 \cdots \lambda_n \\
& \leq 1 + \sum_{n=1}^{\infty} \frac{n^{n/2}}{n!} \left( \int_{0}^{\infty} c_1 \e^{-c_2 \lambda} \diff \lambda \right)^n
< \infty \, .
\end{split}
\]
These inequalities, apart from providing a further proof that the Fredholm determinants of our kernels are well-defined, allow us to conclude, by dominated convergence, that limit~\eqref{eq:fKerConvergence} still holds when passing to the corresponding Fredholm determinants on the space $L^2(\R_{>0})$. 
After a rescaling of the limiting kernel by a factor $2^{-2/3}$, one can see that the operator on $L^2(\R_{>0})$ defined through the kernel $(\lambda,\xi) \mapsto 2^{-1/3} \Ai\left(2^{-1/3}(\lambda+\xi)+s\right)$ has the same Fredholm determinant as the operator $\mathcal{K}_1$ on $L^2([s,\infty))$ defining the GOE Tracy-Widom distribution $F_1(s)$ as in~\eqref{eq:GOE}.
This concludes the proof.
\end{proof}

\subsection{Point-to-half-line and ${\rm Airy}_{2\to 1}$}
\label{subsec:hFlatLPPasymptotics}

We now pass to the point-to-half-line exponential LPP model, again starting with the Fredholm determinant formula.
\begin{theorem}
\label{thm:hFlatExpLPPfredholm}
The distribution of the point-to-half-line $(\bm{\alpha}, \bm{\beta})$-exponential LPP $\hTau_{2N}$ can be given in terms of a Fredholm determinant as
\begin{equation}
\label{eq:hFlatExpLPPfredholm}
\P\left(\hTau_{2N}\leq u\right) = \det\left(I - \hK_{N,u}\right)_{L^2(\R_{>0})} \, ,
\end{equation}
where $\hK_{N,u} \colon L^2(\R_{>0})\to L^2(\R_{>0})$ is the operator defined through the kernel
\begin{equation}
\label{eq:hFlatExpLPPkernel}
\hK_{N,u}(\lambda,\xi)
= \frac{1}{(2\pi\i)^2}
\int_{\Gamma_1} \diff z
\int_{\Gamma_2} \diff w \,
\e^{-\lambda z - \xi w} \,
\hHbar_u(z,w)
\prod_{m=1}^N \frac{(z+\beta_m)(w+\alpha_m)}{(z-\alpha_m)(w-\beta_m)} \, .
\end{equation}
Here, $\Gamma_1,\Gamma_2 \subset \C_{>0}$ are any positively oriented simple closed contours such that $\Gamma_1$ encloses $\alpha_1,\dots,\alpha_N$, $\Gamma_2$ encloses $\beta_1,\dots,\beta_N$ as well as the whole $\Gamma_1$, and
\begin{equation}
\label{eq:hHbar}
\hHbar_u(z,w)
= \frac{\e^{-u(z+w)}}{z+w}
+ \frac{\e^{-u(z+w)}}{z-w}
+ \frac{\e^{-2uz}}{w-z} \, .
\end{equation}
\end{theorem}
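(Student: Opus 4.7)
The proof follows essentially the same pattern as the proof of Theorem~\ref{thm:flatExpLPPfredholm} for the point-to-line case, since the only structural difference is that $\hH_u$ replaces $\fH_u$. The plan is therefore to start from the ratio-of-determinants formula~\eqref{eq:hFlatExpLPPdet} provided by Theorem~\ref{thm:hFlatExpLPP} and apply the general conversion result of Theorem~\ref{thm:detToFredDet} with $H := \hH_u$.

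First I will identify the function $\hHbar_u$ so that the decomposition $\hH_u(z,w) = C(z,w) - \hHbar_u(z,w)$ required by~\eqref{eq:Hbar} holds. A direct calculation of the elementary integral gives
\[
\e^{-u(z+w)} \int_0^u \bigl(\e^{zx}-\e^{-zx}\bigr)\e^{wx}\diff x
= \frac{1-\e^{-u(z+w)}}{z+w} - \frac{\e^{-2uz}-\e^{-u(z+w)}}{w-z} \, ,
\]
so that
\[
\hHbar_u(z,w)
= \frac{1}{z+w} - \hH_u(z,w)
= \frac{\e^{-u(z+w)}}{z+w} + \frac{\e^{-2uz}}{w-z} + \frac{\e^{-u(z+w)}}{z-w} \, ,
\]
which is exactly~\eqref{eq:hHbar}. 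Imposing that $\Gamma_2$ encloses $\Gamma_1$ ensures $z\neq w$ on the contours, so the pole of $\hHbar_u$ at $z=w$ causes no issue; the pole at $z=-w$ is automatically avoided since $\Gamma_1,\Gamma_2 \subset \C_{>0}$.

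With $\hHbar_u$ in hand, Theorem~\ref{thm:detToFredDet} applied to~\eqref{eq:hFlatExpLPPdet} gives directly
\[
\P\left(\hTau_{2N}\leq u\right)
= \frac{\det\left(\hH_u(\alpha_i,\beta_j)\right)_{1\leq i,j\leq N}}{\det(C(\alpha_i,\beta_j))_{1\leq i,j\leq N}}
= \det(I - \hK_{N,u})_{L^2(\R_{>0})} \, ,
\]
with kernel~\eqref{eq:hFlatExpLPPkernel}, as required.

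There is no serious obstacle: the calculation of $\hHbar_u$ is a one-line integration, and the machinery of Theorem~\ref{thm:detToFredDet} (Sylvester's identity applied to the Cauchy-like matrix $\mathcal{C}$, together with the residue theorem) transfers verbatim. The only point to be a little careful about is the contour configuration — one should specify at the outset that $\Gamma_2$ encloses $\Gamma_1$, both lying in $\C_{>0}$, so that the apparent singularities of $\hHbar_u(z,w)$ on the diagonal $z=w$ and the anti-diagonal $z=-w$ do not disturb the double contour integral defining $\hK_{N,u}$. Everything else is inherited directly from the general scheme.
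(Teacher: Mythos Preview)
Your proposal is correct and follows essentially the same approach as the paper: start from the ratio-of-determinants formula~\eqref{eq:hFlatExpLPPdet}, apply Theorem~\ref{thm:detToFredDet}, and compute $\hHbar_u = C - \hH_u$ by evaluating the elementary integral, noting that the contour condition $\Gamma_1 \subset \Gamma_2$ keeps $z \neq w$ so that the explicit expression~\eqref{eq:hHbar} is well-defined. Your write-up is in fact slightly more detailed than the paper's, which omits the intermediate step of the integration and the remark about the anti-diagonal $z=-w$.
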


\begin{proof}
The claim follows from the determinantal formula~\eqref{eq:hFlatExpLPPdet} and Theorem~\ref{thm:detToFredDet}.
Function $\hHbar_u$ in~\eqref{eq:hFlatExpLPPkernel} is defined through the relation $\hH_u = C - \hHbar_u$ (using the notation of Theorem~\ref{thm:hFlatExpLPP} for $\hH_u$), i.e.
\[
\hHbar_u(z,w)
= \frac{1}{z+w} - \e^{-u(z + w)} \int_0^u (\e^{z x}-\e^{-z x})
\e^{w x} \diff x \, .
\]
If we assume that $\Gamma_2$ encloses $\Gamma_1$ (so that $z\neq w$ for all $z,w$), integrating the above expression yields~\eqref{eq:hHbar}.
\end{proof}

Via the above Fredholm determinant formula, we can now derive the scaling limit of the point-to-half-line LPP with exponential i.i.d.\ waiting times.
We will prove that the fluctuations of $\hTau_{2N}$ are of order $N^{1/3}$ and its the scaling limit is given by the one-point marginal distribution $F_{2\to 1}$ of the $\rm{Airy}_{2\to 1}$ process.
The expression we will arrive at is the following~\cite{borodinFerrariSasamoto08}:
\begin{equation}
\label{eq:Airy21}
F_{2 \to 1}(s)
= \det(I - \mathcal{K}_{2\to 1})_{L^2([s,\infty))}
\end{equation}
for $s\in\R$, where $\mathcal{K}_{2\to 1}$ is the operator on $L^2([s,\infty))$ defined through the kernel
\begin{equation}
\label{eq:Airy21kernel}
\mathcal{K}_{2\to 1}(\lambda,\xi)
:= \int_0^{\infty} \Ai(\lambda+x) \Ai(\xi+x) \diff x
+ \int_0^{\infty} \Ai(\lambda+x) \Ai(\xi-x) \diff x
\, .
\end{equation}
Our result is consistent with the one obtained by Borodin-Ferrari-Sasamoto~\cite{borodinFerrariSasamoto08} in the analysis of the TASEP with half-alternating initial configuration, which is equivalent to the point-to-half-line exponential LPP, as seen in the introduction to this section.

\begin{theorem}
\label{thm:hFlatLPPasymptotics}
If the waiting times are independent and exponentially distributed with rate $2\gamma$, the limiting distribution of the point-to-half-line LPP $\hTau_{2N}$ is given, for $r\in\R$, by
\[
\lim_{N\to\infty}\P\bigg(\hTau_{2N} \leq \frac{2N}{\gamma} + rN^{1/3} \bigg) 
= F_{2\to 1}\big(2^{-1/3} \gamma r \big) \, .
\]
\end{theorem}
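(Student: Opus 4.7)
The plan is to adapt the strategy of Theorem~\ref{thm:flatLPPasymptotics}. Starting from the Fredholm determinant formula~\eqref{eq:hFlatExpLPPfredholm}, I first specialize $\alpha_m = \beta_m = \gamma$ for all $m$ and set $u = u_N := 2N/\gamma + rN^{1/3}$. Rescaling the kernel by the factor $\sqrt[3]{2N}/\gamma$ as in~\eqref{eq:rescaling} preserves the Fredholm determinant, so it suffices to analyze $\hKtilde_{N,u_N}$ as $N\to\infty$. The three-term structure of $\hHbar_u$ induces the splitting $\hK_{N,u} = \hKone_{N,u} + \hKtwo_{N,u} + \hKthree_{N,u}$, associated respectively to the factors $\frac{e^{-u(z+w)}}{z+w}$, $\frac{e^{-u(z+w)}}{z-w}$, and $\frac{e^{-2uz}}{w-z}$. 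Unlike the flat case, I expect \emph{two} terms to survive in the limit: $\hKone$ and $\hKtwo$ will produce, jointly, the kernel $\mathcal{K}_{2\to 1}$, while $\hKthree$ is asymptotically negligible.

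For $\hKonetilde_{N,u_N}$ and $\hKtwotilde_{N,u_N}$ I would perform a two-dimensional steepest descent analysis. The choice of $u_N$ places the critical point of $F(z) := \log\frac{\gamma+z}{\gamma-z} - \frac{2}{\gamma}z$ at $z=0$, with $F(0)=F'(0)=F''(0)=0$ and $F'''(0) = 4/\gamma^3$, so under the rescaling $z = \gamma y/\sqrt[3]{2N}$, $w = \gamma v/\sqrt[3]{2N}$ both $z$ and $w$ produce the cubic weight $e^{y^3/3 + v^3/3}$, while the perturbation $e^{-rN^{1/3}(z+w)}$ contributes an extra $e^{-s(y+v)}$ with $s := 2^{-1/3}\gamma r$. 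As in Proposition~\ref{prop:steepestDescent}, deform $\Gamma_1$ and $\Gamma_2$ to triangular contours enclosing $\gamma$ whose principal sides pass near $0$ at angles $\pm\pi/3$, positioning $\Gamma_1$ slightly outside $\Gamma_2$ so that the rescaled contours satisfy $\Re(y-v) > 0$ at infinity. Using the contour representation~\eqref{eq:Airy} of $\Ai$ together with the elementary identity $\int_0^\infty e^{-x(y\pm v)}\diff x = 1/(y\pm v)$, I obtain the pointwise limits
\begin{align*}
\hKonetilde_{N,u_N}(\tilde\lambda,\tilde\xi) &\longrightarrow \int_0^\infty \Ai(\tilde\lambda + s + x)\Ai(\tilde\xi + s + x)\,\diff x, \\
\hKtwotilde_{N,u_N}(\tilde\lambda,\tilde\xi) &\longrightarrow \int_0^\infty \Ai(\tilde\lambda + s + x)\Ai(\tilde\xi + s - x)\,\diff x,
\end{align*}
whose sum is exactly $\mathcal{K}_{2\to 1}(\tilde\lambda + s, \tilde\xi + s)$, by~\eqref{eq:Airy21kernel}.

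The main technical step is showing $\hKthreetilde_{N,u_N} \to 0$. Contrary to the first two terms, its $z$-exponent $e^{-2u_N z}[(\gamma+z)/(\gamma-z)]^N$ is controlled by $F_3(z) := \log\frac{\gamma+z}{\gamma-z} - \frac{4}{\gamma}z$, which is not critical at $z=0$; rather, its critical points are $z = \pm\gamma/\sqrt{2}$, where $F_3(\gamma/\sqrt{2}) = 2\log(\sqrt{2}+1) - 2\sqrt{2} < 0$. Rather than deforming $\Gamma_1$ onto this saddle (which would clash with the requirement that $\Gamma_1$ enclose the pole at $\gamma$), I plan to mimic the modulus estimates~\eqref{eq:fKer2Estimate}, \eqref{eq:fKer4Estimate} of the flat case: take $\Gamma_1$ a small circle around $\gamma$ of radius $\rho_1$ and $\Gamma_2$ a large semicircle centered at $\delta > 0$ of radius $\rho_2$, and establish
\[
|\hKthreetilde_{N,u_N}(\tilde\lambda,\tilde\xi)| \leq c\, e^{-c'(\tilde\lambda + \tilde\xi)} \exp\{(m_1 + m_2)N + O(N^{1/3})\},
\]
with $m_1 := \max_{z\in\Gamma_1}\{-(4/\gamma)\Re(z) + \log|(z+\gamma)/(z-\gamma)|\}$ and $m_2 := \max_{w\in\Gamma_2}\log|(w+\gamma)/(w-\gamma)|$. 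A minimization analogous to the one following~\eqref{eq:fKer2Estimate}, but with $f = 4/\gamma$ in place of $2/\gamma$, supplies $\rho_1$ with $m_1 < 0$; then $\delta$ small and $\rho_2$ large ensure $m_1 + m_2 < 0$. This exponential decay, combined with the analog of Corollary~\ref{coro:steepestDescentEstimate} for $\hKonetilde$ and $\hKtwotilde$, feeds through Hadamard's inequality and dominated convergence on the Fredholm expansion~\eqref{eq:fredholmDet}, yielding
\[
\lim_{N\to\infty} \det(I - \hKtilde_{N,u_N})_{L^2(\R_{>0})} = \det\!\left(I - \mathcal{K}_{2\to 1}(\,\cdot\, + s, \,\cdot\, + s)\right)_{L^2(\R_{>0})} = F_{2\to 1}(s).
\]

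The hardest part will be the asymmetric term $\hKthree_{N,u_N}$: since its $z$-exponent no longer saddles at $0$, Proposition~\ref{prop:steepestDescent} cannot be applied directly, and a careful tuning of $\rho_1, \rho_2, \delta$ is needed to secure $m_1 + m_2 < 0$ in the presence of the worse coefficient $f = 4/\gamma$. A secondary subtlety concerns the relative positioning of the contours in $\hKtwo_{N,u_N}$: the factor $1/(z-w)$ requires $\Re(z-w) > 0$ along the rescaled contours for the $1/(y-v)$-representation of the reflected Airy integral to be valid, which forces a deformation of $\Gamma_1$ to the exterior of $\Gamma_2$ and a corresponding check that no spurious residues are picked up along the way.
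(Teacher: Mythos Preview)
Your plan matches the paper's proof closely: same three-term decomposition, same two-variable steepest descent for $\hKone$ and $\hKtwo$, same Hadamard/dominated-convergence passage to the Fredholm determinant. Two points where the paper is simpler than you anticipate:

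First, $\hKthree_{N,u}$ is not a new difficulty at all---it is \emph{literally} the kernel $\fKtwo_{N,u}$ of~\eqref{eq:fKer2}, so its vanishing was already established in the flat proof. Your remark that the minimization needs ``$f=4/\gamma$ in place of $2/\gamma$'' is a misreading of that argument: the quantity $m_1$ in~\eqref{eq:fKer2Estimate} already carries the coefficient $-4/\gamma$ (coming from $e^{-2u_N z}$ with $u_N\sim 2N/\gamma$), so nothing changes. Second, for $\hKtwo$ the paper does \emph{not} move $\Gamma_1$ outside $\Gamma_2$; doing so would indeed pick up a residue at $z=w$. Instead it replaces $\Gamma_1,\Gamma_2$ by triangular contours $T_{R_1}+2\epsilon\gamma/\sqrt[3]{2N}$ and $T_{R_2}+\epsilon\gamma/\sqrt[3]{2N}$ with $R_1<R_2$, which keeps $\Gamma_1$ nested inside $\Gamma_2$ while ensuring that, after rescaling, $\tilde z\in C+2\epsilon$ and $\tilde w\in C+\epsilon$. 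In the \emph{limiting} integral the $\tilde w$-contour is then deformed to the vertical line $l_\epsilon$, giving $\Re(\tilde z-\tilde w)\ge\epsilon>0$ and validating the representation $1/(\tilde z-\tilde w)=\int_0^\infty e^{-x(\tilde z-\tilde w)}\diff x$.
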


\begin{proof}
In order to perform the asymptotics of formula~\eqref{eq:hFlatExpLPPfredholm} in the i.i.d.\ case, we set $\alpha_m=\beta_m=\gamma$ for all $m$ in the definition of $(\bm{\alpha},\bm{\beta})$-exponential measure.
Our kernel~\eqref{eq:hFlatExpLPPkernel} thus becomes
\[
\begin{split}
\hK_{N,u}
= \hKone_{N,u} + \hKtwo_{N,u} + \hKthree_{N,u} \, ,
\end{split}
\]
where
\begin{align*}
\hKone_{N,u}(\lambda,\xi)
&= \frac{1}{(2\pi\i)^2}
\int_{\Gamma_1} \diff z
\int_{\Gamma_2} \diff w \,
\e^{-\lambda z - \xi w}
\frac{\e^{-u(z+w)}}{z+w}
\left[ \frac{(z+\gamma)(w+\gamma)}{(z-\gamma)(w-\gamma)} \right]^N
\, , \\
\hKtwo_{N,u}(\lambda,\xi)
&= \frac{1}{(2\pi\i)^2}
\int_{\Gamma_1} \diff z
\int_{\Gamma_2} \diff w \,
\e^{-\lambda z - \xi w}
\frac{\e^{-u(z+w)}}{z-w}
\left[ \frac{(z+\gamma)(w+\gamma)}{(z-\gamma)(w-\gamma)} \right]^N \, , \\
\hKthree_{N,u}(\lambda,\xi)
&= \frac{1}{(2\pi\i)^2}
\int_{\Gamma_1} \diff z
\int_{\Gamma_2} \diff w \,
\e^{-\lambda z - \xi w}
\frac{\e^{-2uz}}{w-z}
\left[ \frac{(z+\gamma)(w+\gamma)}{(z-\gamma)(w-\gamma)} \right]^N \, .
\end{align*}

For the steepest descent analysis of the first two terms, we are going to adapt the proof of Proposition~\ref{prop:steepestDescent}, taking into account that we now have double contour integrals instead of single ones.
Noticing that $\hKone_{N,u}$ and $\hKtwo_{N,u}$ only differ for the sign in $(z\pm w)^{-1}$, we study both at the same time, denoting by $K^{\pm}_{N,u}$ either of them:
\[
K^{\pm}_{N,u}(\lambda,\xi)
:= \frac{1}{(2\pi\i)^2}
\int_{\Gamma_1} \diff z
\int_{\Gamma_2} \diff w \,
\e^{-\lambda z - \xi w}
\frac{\e^{-u(z+w)}}{z \pm w}
\left[ \frac{(z+\gamma)(w+\gamma)}{(z-\gamma)(w-\gamma)} \right]^N
\, .
\]
We replace the contour $\Gamma_1$ with $T_{R_1} + 2\epsilon\gamma/\sqrt[3]{2N}$ and the contour $\Gamma_2$ with $T_{R_2} + \epsilon\gamma/\sqrt[3]{2N}$, for some $\gamma < R_1 < R_2$ and $\epsilon>0$; here, as in the proof of Proposition~\ref{prop:steepestDescent}, $T_R$ is the negatively oriented triangular path with vertices $0$, $2R\e^{\i\pi/3}$ and $2R\e^{-\i\pi/3}$.
Notice that changing the orientation of both paths does not yield any change of sign in the double contour integral; moreover, the first contour is still enclosed by the second one, and the singularities at $(z\pm w)^{-1}$ are not crossed by the deformed contours (the infinitesimal shifts of $T_{R_1}$ and $T_{R_2}$ are also done for this sake). 
Set now $u=u_N:= 2N/\gamma + r N^{1/3}$ and denote by $\tilde{\Psi}$ the rescaling of any function $\Psi(\lambda,\xi)$ by the factor $\sqrt[3]{2N}/\gamma$, as in~\eqref{eq:rescaling}.
We can thus write
\[
\tilde{K}^{\pm}_{N,u_N}(\lambda, \xi)
= \frac{\sqrt[3]{2N}}{\gamma (2\pi\i)^2}
\int_{T_{R_1} + \frac{2\epsilon\gamma}{\sqrt[3]{2N}}} \diff z
\int_{T_{R_2} + \frac{\epsilon\gamma}{\sqrt[3]{2N}}} \diff w
\frac{ \e^{N F(z) - \lambda_r z \sqrt[3]{2N}/\gamma }
 \e^{ N F(w) - \xi_r w \sqrt[3]{2N}/\gamma} }{z \pm w} \, ,
\]
where $\lambda_r := \lambda + 2^{-1/3}\gamma r$, $\xi_r := \xi + 2^{-1/3}\gamma r$, and $F(z):= \log(\gamma+z) - \log(\gamma - z) - 2z/\gamma$.
Since the main contribution in the integral is expected to come from $z=w=0$, which is the critical point of $F$, we split the above integral into the following sum:
\[
\tilde{K}^{\pm}_{N,u_N}
= \tilde{K}^{\pm, \rm{in, in}}_{N,u_N}
+ \tilde{K}^{\pm, \rm{in, ex}}_{N,u_N}
+ \tilde{K}^{\pm, \rm{ex, in}}_{N,u_N}
+ \tilde{K}^{\pm, \rm{ex, ex}}_{N,u_N} \, .
\]
Here, the first superscript ``$\rm{in}$'' (``$\rm{ex}$'') indicates that the integration w.r.t.\ $z$ is performed only in the interior (exterior, respectively) of the ball $\{ |z - 2\epsilon\gamma/\sqrt[3]{2N}| \leq \gamma N^{-\alpha}\}$ for some exponent $\alpha>0$ to be specified later on, while the second superscript ``$\rm{in}$'' (``$\rm{ex}$'') indicates that the integration w.r.t.\ $w$ is performed only in the interior (exterior, respectively) of the ball $\{ |w - \epsilon\gamma/\sqrt[3]{2N}| \leq \gamma N^{-\alpha}\}$.
In~$\tilde{K}^{\pm, \rm{in, in}}_{N,u_N}$, after the changes of variables $\tilde{z} := z \sqrt[3]{2N}/\gamma$ and $\tilde{w} := w \sqrt[3]{2N}/\gamma$, we obtain
\[
\begin{split}
\tilde{K}^{\pm, \rm{in, in}}_{N,u_N}(\lambda,\xi)
= &\frac{1}{(2\pi\i)^2}
\int_{C + 2\epsilon} \diff \tilde{z}
\exp\left\{ \frac{\tilde{z}^3}{3} - \lambda_r \tilde{z} +  R\left( \frac{\gamma}{\sqrt[3]{2N}} \tilde{z}\right) N  \right\}
\1_{\left\{ \abs{ \tilde{z} - 2\epsilon} \leq \sqrt[3]{2} N^{1/3-\alpha} \right\}} \\
&\times \int_{C + \epsilon} \diff \tilde{w}
\exp\left\{ \frac{\tilde{w}^3}{3} - \xi_r \tilde{w} +  R\left( \frac{\gamma}{\sqrt[3]{2N}} \tilde{w}\right) N \right\}
\1_{\{\abs{\tilde{w} - \epsilon} \leq \sqrt[3]{2} N^{1/3-\alpha} \}}
\frac{1}{\tilde{z} \pm \tilde{w}} \, ,
\end{split}
\]
where $C$ is the piecewise linear path going from $\e^{-\i\pi/3}\infty$ to the origin to $\e^{\i\pi/3}\infty$, and $R$ is the remainder defined by~\eqref{eq:expansionCriticalPoint}.
The indicator functions clearly converge to $1$ if $\alpha < 1/3$.
As in the proof of Proposition~\ref{prop:steepestDescent}, one can also show that the remainders, even when multiplied by $N$, vanish uniformly for $\tilde{z},\tilde{w}$ in the support of the integrand, if we choose $1/4< \alpha < 1/3$.
Applying dominated convergence, one can see that
\[
\lim_{N\to\infty} \tilde{K}^{\pm, \rm{in, in}}_{N,u_N}(\lambda,\xi)
= \frac{1}{(2\pi\i)^2}
\int_{C + 2\epsilon} \diff \tilde{z}
\int_{C + \epsilon} \diff \tilde{w}
\frac{1}{\tilde{z} \pm \tilde{w}}
\exp\left\{ \frac{\tilde{z}^3}{3} - \lambda_r \tilde{z}
+ \frac{\tilde{w}^3}{3} - \xi_r \tilde{w} \right\} \, .
\]
Using similar arguments as in the proof of Proposition~\ref{prop:steepestDescent}, together with the bound $\abs{z \pm w}^{-1} \leq 2\sqrt[3]{2N}/(\sqrt{3}\epsilon\gamma)$, one can see that the other terms $\tilde{K}^{\pm, \rm{in, ex}}_{N,u_N}$, $\tilde{K}^{\pm, \rm{ex, in}}_{N,u_N}$, and $\tilde{K}^{\pm, \rm{ex, ex}}_{N,u_N}$ vanish exponentially fast in the limit $N\to\infty$.
We thus have:
\begin{align*}
\lim_{N\to\infty} \hKonetilde_{N,u_N}(\lambda,\xi)
&= \frac{1}{(2\pi\i)^2}
\int_{C + 2\epsilon} \diff \tilde{z}
\int_{C + \epsilon} \diff \tilde{w}
\frac{1}{\tilde{z} + \tilde{w}}
\e^{ \tilde{z}^3/3 - \lambda_r \tilde{z}}
\e^{\tilde{w}^3/3 - \xi_r \tilde{w}} \, , \\
\lim_{N\to\infty} \hKtwotilde_{N,u_N}(\lambda,\xi)
&= \frac{1}{(2\pi\i)^2}
\int_{C + 2\epsilon} \diff \tilde{z}
\int_{C + \epsilon} \diff \tilde{w}
\frac{1}{\tilde{z} - \tilde{w}}
\e^{ \tilde{z}^3/3 - \lambda_r \tilde{z}}
\e^{\tilde{w}^3/3 - \xi_r \tilde{w}} \, .
\end{align*}
We will now rewrite these expressions as integrals of Airy functions.
In the first one, since $\Re(\tilde{z}+\tilde{w}) >0$ for all $\tilde{z}$ and $\tilde{w}$, we can make the substitution $(\tilde{z}+\tilde{w})^{-1} = \int_0^{\infty} \e^{-(\tilde{z}+\tilde{w})x} \diff x$.
The resulting $\diff \tilde{z} \diff \tilde{w} \diff x$  integral is absolutely convergent, hence Fubini's Theorem can be applied to obtain:
\[
\begin{split}
\lim_{N\to\infty} \hKonetilde_{N,u_N}(\lambda,\xi)
&= \int_0^{\infty}
\left[ \frac{1}{2\pi\i}
\int_{C + 2\epsilon}
\e^{ \tilde{z}^3/3 - (\lambda_r +x) \tilde{z}} \diff \tilde{z} \right]
\left[ \frac{1}{2\pi\i}
\int_{C + \epsilon}
\e^{\tilde{w}^3/3 - (\xi_r +x) \tilde{w}} \diff \tilde{w} \right] \diff x \\
&= \int_0^{\infty} \Ai(\lambda_r + x) \Ai(\xi_r + x) \diff x \, ,
\end{split}
\]
according to definition~\ref{eq:Airy} of Airy function.
In $\hKtwotilde_{N,u_N}$, we deform\footnote{A standard argument justifies this contour deformation.} the contour $C + \epsilon$ into the straight line $l_{\epsilon}$ going from $\epsilon-\i\infty$ to $\epsilon+\i\infty$; since now $\Re(\tilde{z}-\tilde{w})\geq \epsilon >0$ for all $\tilde{z}$ and $\tilde{w}$, we can make the substitution $(\tilde{z}-\tilde{w})^{-1} = \int_0^{\infty} \e^{-(\tilde{z}-\tilde{w})x} \diff x$.
The resulting $\diff \tilde{z} \diff \tilde{w} \diff x$  integral is absolutely convergent, hence Fubini's Theorem can be applied to obtain:
\[
\begin{split}
\lim_{N\to\infty} \hKtwotilde_{N,u_N}(\lambda,\xi)
&= \int_0^{\infty}
\left[ \frac{1}{2\pi\i}
\int_{C + 2\epsilon}
\e^{ \tilde{z}^3/3 - (\lambda_r +x) \tilde{z}} \diff \tilde{z} \right]
\left[ \frac{1}{2\pi\i}
\int_{l_{\epsilon}}
\e^{\tilde{w}^3/3 - (\xi_r - x) \tilde{w}} \diff \tilde{w} \right] \diff x \\
&= \int_0^{\infty} \Ai(\lambda_r + x) \Ai(\xi_r - x) \diff x \, .
\end{split}
\]
We remark that the second square bracket above is an Airy function as well, since the path $l_{\epsilon}$ can be deformed back to a contour, like $C+\epsilon$, whose arguments at $\infty$ are $\pm\pi/3$.

We finally notice that $\hKthree_{N,u_N}(\lambda,\xi)$ equals exactly the term $\fKtwo_{N,u_N}(\lambda,\xi)$ defined in the proof of Theorem~\ref{thm:flatLPPasymptotics}.
Therefore, as we have already proved there, the rescaled version $\hKthreetilde_{N,u_N}(\lambda,\xi)$ vanishes as $N\to\infty$.

We conclude that, as a whole, our rescaled kernel has the following limit:
\[
\lim_{N\to\infty} \hKtilde_{N,u_N}(\lambda,\xi)
= \mathcal{K}_{2\to 1}(\lambda_r, \xi_r )
= \mathcal{K}_{2\to 1}(\lambda + 2^{-1/3}\gamma r, \xi + 2^{-1/3}\gamma r ) \, ,
\]
where $\mathcal{K}_{2\to 1}$ is defined in~\eqref{eq:Airy21kernel}.
Using the key fact that all contours are chosen to have positive distance from the imaginary axis (as in the analogous estimates obtained in Corollary~\ref{coro:steepestDescentEstimate} and in the proof of Theorem~\ref{thm:flatLPPasymptotics}), one can show that there exist two positive constants $c_1$ and $c_2$ such that, for all $\lambda,\xi\in\R_{>0}$,
\[
\sup_{N\in\N} \big| \hKtilde_{N,u_N}(\lambda,\xi) \big|
\leq c_1 \e^{-c_2 \lambda} \, .
\]
The latter bound provides, as in the third step of the proof of Theorem~\ref{thm:flatLPPasymptotics}, the right estimates for the series expansion of $\det\left(I - \hKtilde_{N,u_N}\right)_{L^2(\R_{>0})}$ in order to justify its convergence. 
The claim thus follows from the Fredholm determinant representation~\eqref{eq:Airy21} of $F_{2\to 1}$.
\end{proof}

\chapter*{Future plans and open problems}
\markboth{Future plans and open problems}{Future plans and open problems}
\addcontentsline{toc}{chapter}{Future plans and open problems}

The following is a nonexhaustive list of possible future working plans on open problems that can be inferred from this thesis:

\begin{enumerate}

\item
The \emph{asymptotic analysis of the log-gamma polymer partition functions in the point-to-line and point-to-half-line geometries}, starting from the contour integral formulas~\eqref{eq:flatContourInt} and~\eqref{eq:hFlatContourInt}.
By KPZ universality, it is expected that the limiting distributions are GOE Tracy-Widom and $\rm{Airy}_{2 \to 1}$ one-point marginal in the two geometries respectively, as already proved in the zero temperature case (see Theorems~\ref{thm:flatLPPasymptotics} and~\ref{thm:hFlatLPPasymptotics}).
However, in that context the task was facilitated by the fact that the formulas we obtained for the corresponding last passage percolation problems are integrals of (symplectic) Schur functions, which have a natural determinantal structure; thanks to this, it was possible to derive a Fredholm determinant formula, amenable to asymptotic analysis.
On the other hand, via a Bethe ansatz non-rigorous approach, Grange~\cite{grange17} obtained a conjectural Fredholm Pfaffian formula for the Laplace transform of the point-to-line log-gamma polymer, leading to the GOE Tracy-Widom asymptotics.
Our working plan is methodologically different from this and more similar to the one adopted in~\cite{borodinCorwinRemenik13} to study the point-to-point log-gamma polymer free energy limiting distribution.
Namely, comparing with the Fredholm determinant formulas~\eqref{eq:flatExpLPPfredholm} and~\eqref{eq:hFlatExpLPPfredholm} obtained in the zero temperature case, we could make an educated guess on Fredholm determinant formulas in positive temperature and show, by means of algebraic manipulations, that they are equivalent to our contour integral formulas.

\item
A more extensive study of \emph{polymer and last passage percolation models for point-to-half-line paths restricted to a half-plane}, generalizing the results obtained in sections~\ref{subsec:rFlatPolymerWhittaker}, \ref{subsec:rFlatGeomLPP} and \ref{subsec:rFlatExpLPP}.
In particular, we may allow a pinning parameter in the distribution of the weights lying on the diagonal ``hard wall'' that divides the two half-planes.
For the point-to-point case, this was already done in~\cite{oConnellSeppalainenZygouras14}.
Comparing with the analogous results that were obtained in~\cite{baikRains01b} in the setting of increasing subsequences of random involutions constructed via Poisson points, different fluctuations and limiting distributions are expected, according to the intensity of the pinning parameter: either diffusive fluctuations and Gaussian limiting distribution when the paths are almost forced to ``stick'' to the diagonal, or KPZ fluctuations with Tracy-Widom type limiting distributions (GOE squared or GUE) otherwise.
We already have some partial results in this direction, leading to apparently new determinantal formulas for the GOE squared and GUE Tracy-Widom distributions.

\item
The construction of a \emph{symplectic Schur process}.
The positivity of Schur functions and the classical Cauchy identity allow defining a Schur measure on partitions~\cite{okounkov01}, which weights a partition $\bm{\lambda}$ proportionally to the product of two Schur functions parametrized by $\bm{\lambda}$.
It is possible to generalize such a measure to a determinantal process on partitions, called Schur process~\cite{okounkovReshetikhin03}.
Inspired by our formula~\eqref{eq:flatGeomLPP} for the point-to-line geometric LPP in terms of symplectic Schur functions, one could aim at constructing a symplectic Schur process on partitions.
The first difficulty is that letting $u$ to infinity in~\eqref{eq:flatGeomLPP} does not provide a Cauchy-type identity, as the sum diverges and the prefactor vanishes.
In this direction, an alternative way to construct such a process could be to start from a Cauchy identity for symplectic Schur functions that can be found for example in~\cite{sepehri16, wheelerZinn-Justin16}.
If via both routes it is possible to extract a determinantal point process, then an interesting question would be if these are the same or not.
Furthermore, by analogy with Okounkov's Schur measure~\cite{okounkov01}, Betea~\cite{betea18} studied measures on partitions that are proportional to the product between one symplectic (or orthogonal) Schur function and one standard Schur function.
It would be interesting to compare our point-to-half-line geometric LPP formula~\eqref{eq:hFlatGeomLPP} with Betea's measures and find out if the latter have any interpretation in terms of LPP or related models.
\end{enumerate}

\appendix

\chapter{Zero temperature limit}
\label{appendix:zeroTempLimit}

We have briefly explained in the Introduction why the directed last passage percolation (LPP) model is considered to be the zero temperature version of the directed polymer partition function.
Moreover, in section~\ref{sec:expLPP} we have derived the CDF of certain LPPs in various point-to-line path geometries as a an appropriate scaling limit of the Laplace transform of the corresponding polymer partition functions studied in section~\ref{sec:WhittakerFormulas}.
In this appendix we give the complete argument necessary to justify this procedure, called \emph{zero temperature limit} and already outlined at the beginning of section~\ref{sec:expLPP}.

We first need the following technical lemma:
\begin{lemma}
\label{lemma:convEpsilon}
Let $\left\{A_{\delta}\right\}_{\delta>0}$ be a collection of closed subsets of a Borel space $\mathcal{X}$.
Let $\left\{X^{(\epsilon)}\right\}_{\epsilon>0}$ and $X$ be $\mathcal{X}$-valued random variables, and let $\{f_{\epsilon}\}_{\epsilon>0}$ and $f$ be measurable functions $\mathcal{X}\to \R$.
Assume that:
\begin{enumerate}
\item
\label{item:convEpsilon_weakConv}
$X^{(\epsilon)} \xrightarrow{\epsilon \downarrow 0} X$ in distribution;
\item
\label{item:convEpsilon_unifConv}
$f_{\epsilon} \xrightarrow{\epsilon\downarrow 0} f$ uniformly in $\mathcal{X}\setminus A_{\delta}$, for all $\delta>0$;
\item
\label{item:convEpsilon_probVanishing}
$\P\left(X\in A_{\delta}\right) \xrightarrow{\delta \downarrow 0} 0$;
\item
\label{item:convEpsilon_probDiscontinuitySet}
$\P(X \in D_f)=0$, where $D_f\subseteq \mathcal{X}$ is the set of discontinuity points of $f$.
\end{enumerate}
Then $f_{\epsilon}\left(X^{(\epsilon)}\right) \xrightarrow{\epsilon \downarrow 0} f(X)$ in distribution.
\end{lemma}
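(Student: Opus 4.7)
The plan is to combine Skorokhod's representation theorem with the uniform-convergence hypothesis, reducing the claim to an almost-sure statement on an auxiliary probability space. First, applying Skorokhod to hypothesis~\ref{item:convEpsilon_weakConv}, I would produce $\tilde X^{(\epsilon)}$ and $\tilde X$ on a common probability space with the same laws as $X^{(\epsilon)}$ and $X$, such that $\tilde X^{(\epsilon)} \to \tilde X$ pointwise almost surely. Since distributional convergence depends only on laws, it then suffices to prove that $f_\epsilon(\tilde X^{(\epsilon)}) \to f(\tilde X)$ almost surely, and for this I would fix a sample point $\omega$ in the ``good'' event
\[
G := \{\tilde X \notin D_f\} \cap \bigcup_{n\geq 1}\{\tilde X \notin A_{1/n}\}
\]
and show pointwise convergence there.

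Next I would carry out the triangle-inequality split. For $\omega \in G$ there is some $n$ with $\tilde X(\omega) \in \mathcal{X}\setminus A_{1/n}$; since $A_{1/n}$ is closed, its complement is open, so the a.s.\ convergence $\tilde X^{(\epsilon)}(\omega) \to \tilde X(\omega)$ forces $\tilde X^{(\epsilon)}(\omega) \in \mathcal{X}\setminus A_{1/n}$ for all sufficiently small $\epsilon$. I would then write
\[
\bigl|f_\epsilon(\tilde X^{(\epsilon)}(\omega)) - f(\tilde X(\omega))\bigr|
\leq \bigl|f_\epsilon(\tilde X^{(\epsilon)}(\omega)) - f(\tilde X^{(\epsilon)}(\omega))\bigr|
+ \bigl|f(\tilde X^{(\epsilon)}(\omega)) - f(\tilde X(\omega))\bigr|
\]
and dispose of the two summands separately: the first vanishes by hypothesis~\ref{item:convEpsilon_unifConv} applied with $\delta=1/n$, while the second vanishes because $\tilde X(\omega) \notin D_f$ and $\tilde X^{(\epsilon)}(\omega) \to \tilde X(\omega)$.

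Finally I would control the probability of the bad event. By hypothesis~\ref{item:convEpsilon_probDiscontinuitySet}, $\P(\tilde X \in D_f)=0$, and by hypothesis~\ref{item:convEpsilon_probVanishing} combined with monotone convergence applied to the increasing sequence $\{\tilde X \notin A_{1/n}\}$, one has $\P(G^c) = \P(\tilde X \in D_f) + \lim_{n\to\infty} \P(\tilde X \in A_{1/n}) = 0$. Hence $f_\epsilon(\tilde X^{(\epsilon)}) \to f(\tilde X)$ almost surely, which in particular gives convergence in distribution, and the conclusion transfers back to $f_\epsilon(X^{(\epsilon)})$ and $f(X)$ since these have the same laws as their tilded counterparts.

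The only real subtlety — and what I would flag as the main obstacle — is the invocation of Skorokhod's representation theorem, which requires $\mathcal{X}$ to be (at least) separable and metrizable; in our intended applications $\mathcal{X}$ will be Euclidean, so this poses no issue, but if one wished for the lemma to hold in the stated generality of a Borel space one would instead argue directly via the Portmanteau theorem, verifying $\liminf_{\epsilon\downarrow 0}\P(f_\epsilon(X^{(\epsilon)}) > t) \geq \P(f(X) > t)$ at continuity points $t$ by an analogous triangle-inequality argument on the open set $\{f > t\} \setminus A_{\delta}$ and then letting $\delta\downarrow 0$.
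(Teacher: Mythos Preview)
Your proof is correct and takes a genuinely different route from the paper's. The paper argues directly via Portmanteau: it fixes a bounded Lipschitz test function $g$ and bounds $\bigl|\E[g\circ f_\epsilon(X^{(\epsilon)})] - \E[g\circ f(X)]\bigr|$ by a three-term triangle inequality, splitting according to whether $X^{(\epsilon)}$ lies in $A_\delta$ or not, and using the continuous mapping theorem for the term $\E[g\circ f(X^{(\epsilon)})] - \E[g\circ f(X)]$. Your approach instead passes through Skorokhod's representation to reduce everything to an almost-sure pointwise argument, which is conceptually cleaner and cuts the triangle inequality down to two terms. The price, as you correctly flag, is the separability/metrizability hypothesis needed for Skorokhod; the paper's Portmanteau route avoids this, though in the applications (where $\mathcal{X}$ is Euclidean) the distinction is moot.

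One small repair: the sets $A_\delta$ are not assumed to be nested, so the sequence $\{\tilde X\notin A_{1/n}\}$ need not be increasing and your ``monotone convergence'' justification for $\P(G^c)=0$ does not apply as stated. The conclusion is still correct, since $\P\bigl(\bigcap_n\{\tilde X\in A_{1/n}\}\bigr)\leq \inf_n \P(\tilde X\in A_{1/n})=0$ by hypothesis~\ref{item:convEpsilon_probVanishing}; just replace the equality in your displayed formula for $\P(G^c)$ by this bound.
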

\begin{proof}
By the Portmanteau theorem, it suffices to show that
\begin{equation}
\label{eq:convDistrEpsilon}
\E\left[g \circ f_{\epsilon}\left(X^{(\epsilon)}\right)\right]
\xrightarrow{\epsilon \downarrow 0}
\E\left[g \circ f(X)\right]
\end{equation}
for all Lipschitz bounded function $g\colon \R \to \R$.
Using the triangle inequality, 
\[
\begin{split}
&\abs{\E\left[ g\circ f_{\epsilon}\left(X^{(\epsilon)}\right)\right] - \E\left[ g\circ f\left(X\right) \right]} \\
\leq\, & \E\left[ \abs{ g\circ f_{\epsilon} \left(X^{(\epsilon)}\right) - g\circ f\left(X^{(\epsilon)}\right) } \1_{\mathcal{X}\setminus A_{\delta}}\left(X^{(\epsilon)}\right) \right] \\
&+ \E\left[ \abs{ g\circ f_{\epsilon} \left(X^{(\epsilon)}\right) - g\circ f\left(X^{(\epsilon)}\right) } \1_{A_{\delta}}\left(X^{(\epsilon)}\right) \right]
+ \abs{\E\left[ g\circ f\left(X^{(\epsilon)}\right)\right] - \E\left[ g\circ f\left(X\right) \right]} \\
\leq\, & l_g \sup_{x\in \mathcal{X}\setminus A_{\delta}} \abs{f_{\epsilon}(x) - f(x)}
+ 2\, \norm{g}_{\infty}\, \P\left(X^{(\epsilon)} \in A_{\delta}\right)
+ \abs{\E\left[ g\circ f\left(X^{(\epsilon)}\right)\right] - \E\left[ g\circ f\left(X\right) \right]} \, ,
\end{split}
\]
where $l_g$ is the Lipschitz constant of $g$.
In the latter expression, we will now take the limits as $\epsilon\downarrow 0$ first, and as $\delta\downarrow 0$ next.
By assumption~\ref{item:convEpsilon_unifConv}, the first addend goes to $0$ as $\epsilon\downarrow 0$, for any fixed $\delta>0$.
By hypothesis~\ref{item:convEpsilon_weakConv} and the fact that $A_{\delta}$ is closed, the Portmanteau theorem implies that
\[
\limsup_{\epsilon\downarrow 0} \P\left(X^{(\epsilon)} \in A_{\delta}\right)
\leq \P\left(X \in A_{\delta}\right)
\]
for all $\delta>0$; taking now the limit as $\delta \downarrow 0$ and using~\ref{item:convEpsilon_probVanishing}, we see that the second addend goes to $0$ too.
Finally, the continuous mapping theorem, along with assumptions~\ref{item:convEpsilon_weakConv} and~\ref{item:convEpsilon_probDiscontinuitySet}, implies that $f\left(X^{(\epsilon)}\right) \xrightharpoonup{\epsilon\downarrow 0} f(X)$; by the Portmanteau theorem, the third addend goes to $0$ as $\epsilon\downarrow 0$.
Claim~\eqref{eq:convDistrEpsilon} follows.
\end{proof}

Consider now a polymer partition function $Z$ and a LPP $\tau$ on the same set of allowed paths (see definitions~\eqref{eq:polymerPartitionFn} and~\eqref{eq:LPP}).
Roughly speaking, next proposition states that, if the environment of $Z$ converges to the environment of $\tau$ under an appropriate scaling as the ``temperature'' parameter $\epsilon$ vanishes, then in the same limit: (i) a rescaled $Z$ converges to $\tau$ in distribution, and (ii) a rescaled Laplace transform of $Z$ converges to the CDF of $\tau$.

\begin{proposition}
\label{prop:zeroTempLimit}
Let $Z^{(\epsilon)}$ be any polymer partition function on a lattice $\mathcal{I}$ with disorder given by independent positive weights $\bm{W}^{(\epsilon)}=\{W_{i,j}^{(\epsilon)}\colon (i,j)\in\mathcal{I}\}$, whose distributions depend on a parameter $\epsilon>0$.
Let $\tau$ be the LPP in the same path geometry with independent waiting times $\bm{W}=\{W_{i,j}\colon (i,j)\in\mathcal{I} \}$.
Assume that
\begin{equation}
\label{eq:environmentConvergence}
\epsilon \log W_{i,j}^{(\epsilon)} \xrightarrow{\epsilon \downarrow 0} W_{i,j}
\end{equation}
in distribution for all $(i,j)$.
Then:
\begin{enumerate}
\item
\label{prop:zeroTempLimitWeakConv}
$\epsilon \log Z^{(\epsilon)}
\xrightarrow{\epsilon \downarrow 0}
\tau$ in distribution;
\item
\label{prop:zeroTempLimitLaplaceTransf}
$\E\left[\exp\left\{-\e^{{-u/\epsilon}} Z^{(\epsilon)} \right\}\right]
\xrightarrow{\epsilon \downarrow 0} \P(\tau \leq u)$ for all $u\in\R$ such that $\P(\tau=u)=0$.
\end{enumerate}
\end{proposition}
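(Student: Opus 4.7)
\textbf{Plan for Proposition~\ref{prop:zeroTempLimit}.}

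For part~\ref{prop:zeroTempLimitWeakConv}, my plan is to exploit the standard tropical sandwich. Setting $V_{i,j}^{(\epsilon)} := \epsilon \log W_{i,j}^{(\epsilon)}$, I would rewrite
\[
Z^{(\epsilon)} = \sum_{\pi\in\Pi} \exp\biggl\{\frac{1}{\epsilon}\sum_{(i,j)\in\pi} V_{i,j}^{(\epsilon)}\biggr\},
\]
so that, denoting $M^{(\epsilon)} := \max_{\pi\in\Pi}\sum_{(i,j)\in\pi} V_{i,j}^{(\epsilon)}$, one immediately has the deterministic bound
\[
M^{(\epsilon)} \;\leq\; \epsilon\log Z^{(\epsilon)} \;\leq\; M^{(\epsilon)} + \epsilon\log |\Pi|.
\]
Since $|\Pi|$ is a finite constant, the correction $\epsilon\log|\Pi|$ vanishes, so it suffices to prove that $M^{(\epsilon)} \to \tau$ in distribution. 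Hypothesis~\eqref{eq:environmentConvergence} gives $V_{i,j}^{(\epsilon)} \to W_{i,j}$ in distribution for each $(i,j)$ separately; since both the pre-limit and limit weights are independent across $(i,j)$, this upgrades to \emph{joint} convergence $(V_{i,j}^{(\epsilon)})_{(i,j)\in\mathcal{I}} \to (W_{i,j})_{(i,j)\in\mathcal{I}}$. The continuous mapping theorem applied to the continuous piecewise linear functional $\mathbf{v}\mapsto \max_{\pi\in\Pi}\sum_{(i,j)\in\pi} v_{i,j}$ then yields $M^{(\epsilon)}\to \tau$, concluding part~\ref{prop:zeroTempLimitWeakConv}.

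For part~\ref{prop:zeroTempLimitLaplaceTransf}, the idea is to rewrite
\[
\exp\bigl\{-\e^{-u/\epsilon} Z^{(\epsilon)}\bigr\}
= f_\epsilon\bigl(\epsilon \log Z^{(\epsilon)}\bigr),
\qquad
f_\epsilon(x) := \exp\bigl\{-\e^{(x-u)/\epsilon}\bigr\},
\]
and to observe that $f_\epsilon(x)\to f(x) := \1_{\{x<u\}}$ pointwise for every $x\neq u$, with the convergence being \emph{uniform} on each set $\{|x-u|\geq\delta\}$, $\delta>0$. I would then invoke Lemma~\ref{lemma:convEpsilon} with $X^{(\epsilon)}:=\epsilon\log Z^{(\epsilon)}$, $X:=\tau$, and closed neighborhoods $A_\delta := [u-\delta,u+\delta]$. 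The four hypotheses of that lemma are then straightforward to check: the weak convergence comes from part~\ref{prop:zeroTempLimitWeakConv}; the uniform convergence is the one just observed; $\P(\tau\in A_\delta)=\P(|\tau-u|\leq\delta)\downarrow\P(\tau=u)=0$ as $\delta\downarrow 0$; and the discontinuity set of $f$ is $\{u\}$, which is $\tau$-null by the standing assumption. Lemma~\ref{lemma:convEpsilon} then gives $f_\epsilon(X^{(\epsilon)})\to \1_{\{\tau<u\}}$ in distribution, and since all these variables take values in $[0,1]$ this implies convergence of expectations as well. A last use of $\P(\tau=u)=0$ identifies $\E\bigl[\1_{\{\tau<u\}}\bigr]=\P(\tau\leq u)$, which is the desired limit.

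The two delicate points, both essentially cosmetic, are (a) upgrading the marginal convergence~\eqref{eq:environmentConvergence} to joint convergence, for which independence is crucial and which is needed because $M^{(\epsilon)}$ is a functional of the \emph{entire} environment rather than of each weight separately; and (b) handling the boundary $x=u$ in the second part, which is precisely what motivates the hypothesis $\P(\tau=u)=0$ and the use of \emph{closed} neighborhoods $A_\delta$ of $u$ when applying Lemma~\ref{lemma:convEpsilon}.
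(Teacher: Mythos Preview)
Your proof is correct and, for part~\ref{prop:zeroTempLimitLaplaceTransf}, essentially identical to the paper's: same choices of $X^{(\epsilon)}$, $X$, $f_\epsilon$, $A_\delta=[u-\delta,u+\delta]$, and the same appeal to uniform integrability via boundedness in $[0,1]$. For part~\ref{prop:zeroTempLimitWeakConv} you argue via the deterministic sandwich $M^{(\epsilon)}\le \epsilon\log Z^{(\epsilon)}\le M^{(\epsilon)}+\epsilon\log|\Pi|$ together with the continuous mapping theorem, whereas the paper instead applies Lemma~\ref{lemma:convEpsilon} directly to the full environment array (with $A_\delta=\emptyset$), observing that the same sandwich gives \emph{uniform} convergence $f_\epsilon\to f$ on all of $\R^{\mathcal{I}}$; the two routes are really the same argument in different wrapping, yours being marginally more elementary since it avoids invoking the lemma for part~\ref{prop:zeroTempLimitWeakConv}.
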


Notice that hypothesis~\eqref{eq:environmentConvergence} is valid in particular if $W^{(\epsilon)}_{i,j} = \e^{W_{i,j}/\epsilon}$ for all $(i,j)$.
This trivial case of the zero temperature limit was already explained in the Introduction, see~\eqref{eq:zeroTempLimit}.

Notice also that, if the waiting times $W_{i,j}$'s are continuous random variables, besides being independent, then $\tau$ is continuous as well and~\ref{prop:zeroTempLimitLaplaceTransf} holds for all $u\in\R$.

\begin{proof}
Both claims follow from Lemma~\ref{lemma:convEpsilon}.
\begin{enumerate}
\item
Let $\mathcal{X}:=\R^{\mathcal{I}}$, $A_{\delta}:=\emptyset$ for all $\delta>0$, $X^{(\epsilon)}:=\epsilon\log \bm{W}^{(\epsilon)} = \{\epsilon \log W_{i,j}^{(\epsilon)}\colon (i,j)\in\mathcal{I}\}$, $X:=\bm{W}$ and $f_{\epsilon},f\colon \R^{\mathcal{I}} \to \R$ defined by
\[
f_{\epsilon}(\bm{x})
:= \epsilon \log\left( \sum_{\pi \in \Pi} \exp\left\{\frac{1}{\epsilon} \sum_{(i,j)\in\pi} x_{i,j} \right\}\right) \, , \qquad
f(\bm{x})
:= \max_{\pi\in\Pi} \sum_{(i,j) \in \pi} x_{i,j}
\]
for all $\bm{x} = \{x_{i,j}\colon (i,j)\in \mathcal{I} \} \in\R^{\mathcal{I}}$.
The whole array $\epsilon \log \bm{W}^{(\epsilon)}$, because of the hypotheses of convergence and independence of its entries, converges in distribution to $\bm{W}$; therefore, condition~\ref{item:convEpsilon_weakConv} of the lemma holds.
It is easy to prove that $f_{\epsilon}$ converges to $f$ uniformly in $\R^{\mathcal{I}}$ as $\epsilon\downarrow 0$, so condition~\ref{item:convEpsilon_unifConv} is also satisfied.
Condition~\ref{item:convEpsilon_probVanishing} trivially holds because $A_{\delta}=\emptyset$, and~\ref{item:convEpsilon_probDiscontinuitySet} follows from the continuity of $f$.
Lemma~\ref{lemma:convEpsilon} thus implies that
\[
\epsilon \log Z^{(\epsilon)} =
f_{\epsilon}\left(\epsilon \log \bm{W}^{(\epsilon)}\right)
\xrightarrow{\epsilon \downarrow 0}
f\left(\bm{W}\right)
= \tau
\]
in distribution.
\item
Let now $\mathcal{X}:=\R$, $A_{\delta}:=[u-\delta,u+\delta]$ for all $\delta>0$, $X^{(\epsilon)} := \epsilon\log Z^{(\epsilon)}$, $X:=\tau$ and $f_{\epsilon},f\colon \R \to \R$ defined by
\[
f_{\epsilon}(x) := \exp\left\{-\e^{(x-u)/\epsilon}\right\}
\, , \qquad
f(x) := \1_{(-\infty,u]}(x)
\]
for all $x\in\R$.
Condition~\ref{item:convEpsilon_weakConv} of the lemma follows from what we have just proved.
It is easy to show that, for all $\delta>0$, $f_{\epsilon}$ converges to $f$ uniformly in $\mathcal{X}\setminus A_{\delta} = \R\setminus [u-\delta,u+\delta]$, so that condition~\ref{item:convEpsilon_unifConv} holds.
Since the waiting times are independent and continuous, $\tau$ is continuous; it follows that
\[
\P\left(\tau \in [u-\delta,u+\delta]\right)\xrightarrow{\delta \downarrow 0}
\P(\tau = u) =0 \, ,
\]
showing that condition~\ref{item:convEpsilon_probVanishing} holds. Finally, $f$ is discontinuous in $u$, but $\P(\tau =u)=0$ again because $\tau$ is a continuous random variable, so condition~\ref{item:convEpsilon_probDiscontinuitySet} applies.
Lemma~\ref{lemma:convEpsilon} thus implies that
\[
\exp\left\{-\e^{{-u/\epsilon}} Z^{(\epsilon)} \right\}
= f_{\epsilon}\left(\epsilon\log Z^{(\epsilon)}\right)
\xrightarrow{\epsilon \downarrow 0} 
f(\tau)
= \1_{\{\tau \leq u\}}
\]
in distribution.
The collection of random variables $\left\{f_{\epsilon}\left(\epsilon\log Z^{(\epsilon)}\right)\right\}_{\epsilon>0}$ is uniformly bounded by $1$, hence uniformly integrable.
Convergence in distribution along with uniform integrability implies convergence of expectations, thus proving the claim.
\qedhere
\end{enumerate}
\end{proof}


\chapter{Whittaker functions in number theory}
\label{appendix:Whittaker}

Whittaker functions on the groups $\GL_n(\R)$ and $\SO_{2n+1}(\R)$ and certain integral identities involving them, such as the Bump-Stade identity~\eqref{eq:bumpStade} and the Ishii-Stade identity~\eqref{eq:ishiiStade}, play a major role in this thesis.
We have introduced these functions in section~\ref{sec:Whittaker} and used them in our polymer analysis of chapter~\ref{ch:polymer}.
In this appendix, we wish to study a few aspects of Whittaker functions from a number theoretic standpoint.
In section~\ref{sec:MaassForms&WhittakerFns} we motivate their appearance in number theory and the contextual study of the integral identities mentioned above, focusing on the case $\GL_2(\R)$.
In section~\ref{sec:ishiiStadeWhittaker} we show the connection between our parametrization for Whittaker functions, given in section~\ref{sec:Whittaker}, and the one usually adopted by number theorists (in particular by Ishii and Stade in~\cite{ishiiStade13}).

\section{Maass forms and Whittaker functions on $\GL_2(\R)$}
\label{sec:MaassForms&WhittakerFns}

In number theory, Whittaker functions appear in the series expansion of certain automorphic forms (more specifically, Maass forms).
For the sake of conciseness and clarity, we focus on $\GL_2(\R)$ and review the theory of automorphic forms for this group, following the exposition of~\cite{goldfeld06}.

An \emph{automorphic form} is a function that satisfies certain differential equations and invariance/periodicity conditions.
A familiar example is given by the complex exponential $f(x):=\e^{2\pi\i x}$, $x\in\R$.
It is an eigenfunction of the one-dimensional Laplacian, as it satisfies the differential equation $\frac{\diff^2 f}{\diff x^2} = -4\pi^2 f$.
It is also a periodic function, in the sense that  $f(x+n)=f(x)$ for all $x\in\R$ and $n\in\Z$.
Such a periodicity condition, in particular, may be rephrased as the invariance under the additive action of $\Z$ on $\R$.
The notion of automorphic form essentially arises when we instead consider the action of a general group on a general topological space.

In the case of automorphic forms for $\GL_2(\R)$, the topological space one works with is the complex upper-half plane $\H:= \{ z=x+\i y\colon x\in\R \, , \,\, y>0 \}$, which can also be realized as the set of $\GL_2(\R)$-matrices $z = \left[\begin{smallmatrix} y &x \\ 0 &1 \end{smallmatrix}\right]$ such that $y>0$ and $x\in\R$.
Indeed, the Iwasawa decomposition states that every $g\in\GL_2(\R)$ can be written as $g=z \cdot k \cdot d$, where $z$ is a (uniquely determined) matrix as before, $k$ is a $2\times 2$ orthogonal matrix, and $d$ is a nonzero multiple of the $2\times 2$ identity matrix (i.e.\ belongs to the center $\rm{Z}(\GL_2(\R))$ of $\GL_2(\R)$).
Therefore, we can identify
\[
\H \equiv \GL_2(\R) / \left[ \O_2(\R) \cdot \rm{Z}(\GL_2(\R)) \right] \, .
\]
Consider now the continuous action\footnote{One can check that the right-hand side of~\eqref{eq:actionOnUpperHalfPlane} is indeed an element of $\H$.} of the group $\SL_2(\Z)$ on $\H$ given by
\begin{equation}
\label{eq:actionOnUpperHalfPlane}
\begin{bmatrix}
a &b \\ c &d
\end{bmatrix}
z := \frac{az +b}{cz+d} \, .
\end{equation}
It can be verified that $\SL_2(\Z)$ acts equivalently by matrix multiplication on the elements of $\H$, viewed in their matrix representation as left cosets of $\O_2(\R) \cdot \rm{Z}(\GL_2(\R))$ in $\GL_2(\R)$.

A \emph{Maass form} for $\SL_2(\Z)$ is a smooth function $f\colon \H \to \C$ that:
\begin{enumerate}
\item
is invariant under the action of $\SL_2(\Z)$ on $\H$, i.e.\ $f(g\cdot z) = f(z)$ for all $g\in\SL_2(\Z)$ and $z\in\H$;
\item
is an eigenfunction of the hyperbolic Laplacian
\[
\Delta := -y^2 \left(\frac{\partial^2}{\partial x^2} + \frac{\partial^2}{\partial y^2}\right) \, ,
\]
in particular it satisfies $\Delta f = \nu(1-\nu)f$ for some $\nu\in\C$;
\item
has exponential decay as $y \to \infty$.
\end{enumerate}

The invariance by the action of $\left[\begin{smallmatrix} 1 &n \\ 0 &1 \end{smallmatrix}\right] \in \SL_2(\Z)$ guarantees that $f(z+n)=f(z)$ for all $n\in\Z$.
Namely, $f(x+\i y)$ is a periodic function of $x$ and can thus be expanded in the Fourier series with coefficients depending on $y$.
Such coefficients turn out to be uniquely determined up to a multiplicative constant (\emph{multiplicity one} property) and can be evaluated explicitly in terms of Bessel functions, yielding the series representation:
\begin{equation}
\label{eq:fourierWhittaker}
f(z) = \sum_{n \in \Z\setminus\{0\}} a(n) \cdot W(z,\nu,n) \, , \qquad
W(z,\nu,n) := \sqrt{2\pi y} \cdot K_{\nu - \frac{1}{2}}(2 \pi \abs{n} y) \cdot \e^{2\pi\i n x} \, .
\end{equation}
Here, $K_{\nu}$ is the \emph{Macdonald function} (or modified Bessel function of the second kind), defined by
\begin{equation}
\label{eq:macdonaldFn}
K_{\nu}(x)
:= \frac{1}{2} \int_0^{\infty} t^{\nu}
\exp\left\{ -\frac{x}{2}\left(t+\frac{1}{t}\right)\right\}
\frac{\diff t}{t} \, .
\end{equation}
The function $W(z,\nu,n)$ is called, in this context, \emph{$\GL_2(\R)$-Whittaker function} of type $\nu$ associated to the $n$-th additive character $\e^{2\pi\i n x}$.
Notice that the $\gl_2$-Whittaker function defined in subsection~\ref{subsec:glWhittaker} (see~\eqref{eq:gl_2WhittakerFn}) can also be written in terms of the Macdonald function as
\begin{equation}
\label{eq:gl_2WhittakerMacdonald}
\Psi^{\mathfrak{gl}_2}_{(\alpha_1,\alpha_2)} (x_1,x_2)
= 2 (x_1 x_2)^{(\alpha_1 + \alpha_2)/2}
K_{\alpha_2 - \alpha_1}\bigg(2\sqrt{\frac{x_2}{x_1}}\bigg) \, .
\end{equation}
It is then easy to check that the following equality links the two differently defined Whittaker functions:
\begin{equation}
\label{eq:equivalenceGoldfeldWhittaker}
W(\i y,\nu,1) =
\frac{\Psi^{\mathfrak{gl}_2}_{(\alpha_1,\alpha_2)} (x_1,x_2)}
{\sqrt{2 x_1^{\alpha_1 + \alpha_2 +\frac{1}{2}}
x_2^{\alpha_1 + \alpha_2 -\frac{1}{2}}}} \, ,
\end{equation}
if we set $y=\frac{1}{\pi}\sqrt{\frac{x_2}{x_1}}$ and $\nu-\frac{1}{2}= \alpha_2 -\alpha_1$.

The complex coefficients $a(n)$'s occurring in the Fourier-Whittaker expansion~\eqref{eq:fourierWhittaker} are conjectured to satisfy the growth condition $a(n) = O(d(n))$, where $d(n)$ denotes the number of divisors of $n$ (Ramanujan-Petersson conjecture).
Notice that the coefficient $a(0)$ must be zero because a Maass form is required to have exponential decay as $y\to\infty$.

Let us now define the $L$-function associated with a Maass form $f$ by
\begin{equation}
\label{eq:maassFormLfun}
L_f(s) := \sum_{n=1}^{\infty} \frac{a(n)}{n^{s}} \, .
\end{equation}
From now on, as in the latter definition, we will suppose that $\Re(s)$ is large enough so that all series and integrals converge absolutely.
Number theorists have been interested in studying this function especially because, under certain conditions, it has an Euler product representation, similar to the one of the Riemann zeta function:
\[
L_f(s)
= \prod_{p \text{ prime}} \left(1 - \frac{a(p)}{p^s} + \frac{1}{p^{2s}} \right)^{-1} \, .
\]
For simplicity's sake, let us suppose from now on that $f$ is an \emph{even} Maass form, i.e.\ the coefficients of its Fourier-Whittaker expansion satisfy $a(-n)=a(n)$ for all $n\in\Z$, so that
\begin{equation}
\label{eq:evenMaassForm}
f(\i y) = 2\sum_{n=1}^{\infty} a(n) \cdot W(\i y, \nu,n) 
= 2\sum_{n=1}^{\infty} a(n) \cdot \sqrt{2\pi y} \cdot K_{\nu - \frac{1}{2}}(2 \pi n y) \, .
\end{equation}
In this case, it turns out that $L_f$ satisfies the functional equation
\begin{equation}
\label{eq:funEqMaassFormLfn}
\begin{split}
\Lambda_f(s)
:= &\, \pi^{-s}
\Gamma\left( \frac{s+\frac{1}{2} - \nu}{2} \right)
\Gamma\left( \frac{s-\frac{1}{2} + \nu}{2} \right)
L_f(s) \\
= & \, \Lambda_f(1-s) \, .
\end{split}
\end{equation}
The proof of~\eqref{eq:funEqMaassFormLfn}, similar to Riemann's original proof of the functional equation for the zeta function, relies on the computation of the Mellin transform in $y$ of $f(\i y)$:
\[
\begin{split}
\int_0^{\infty} y^s f(\i y) \frac{\diff y}{y}
&= 2 \sum_{n=1}^{\infty} \frac{a(n)}{n^{s+\frac{1}{2}}}
\int_0^{\infty} y^s \cdot W(\i y,\nu,1) \frac{\diff y}{y} \\
&= 2^{-\frac{1}{2}} \pi^{-s}
L_f\left(s+\frac{1}{2}\right)
\Gamma\left( \frac{1+s-\nu}{2} \right)
\Gamma\left( \frac{s+\nu}{2} \right) \, .
\end{split}
\]
The first equality follows from the Fourier-Whittaker expansion~\eqref{eq:evenMaassForm}, the change of variables $y \mapsto y/n$ in the integral, and the fact that $W(\i y/n,\nu,n) = n^{-1/2} W(\i y,\nu,1)$; the second equality follows from definition~\eqref{eq:maassFormLfun} and an explicit formula for the Mellin transform of the Macdonald function in terms of Gamma functions.
On the other hand, by definition of Maass form, $f$ is invariant by the action of $\left[\begin{smallmatrix} 0 &-1 \\ 1 &0 \end{smallmatrix}\right] \in \SL_2(\Z)$, i.e.\ $f(\i y) = f(\i y^{-1})$.
This clearly implies that the Mellin transform of $f$ is invariant under the transformation $s \mapsto -s$, from which the functional equation~\eqref{eq:funEqMaassFormLfn} follows.
Notice that the function $\Lambda_f(s)$ defined in~\eqref{eq:funEqMaassFormLfn} is given by the $L$-function associated to $f$ multiplied by a prefactor, usually called $L$-factor.
As our computation shows, such an $L$-factor essentially corresponds to the Mellin transform of $W(\i y, \nu, 1)$, which can be computed explicitly in terms of Gamma functions.
This observation points out the importance of the existence of explicit integral identities involving Whittaker functions in the theory of $L$-functions associated to Maass forms.

The integral identities~\eqref{eq:bumpStade} and~\eqref{eq:ishiiStade} that we have used within this thesis involve \emph{two} distinct Whittaker functions instead of one.
In the number theoretic setting, these have arisen in the study of convolution $L$-functions of two Maass forms.
The convolution $L$-function associated with two Maass forms $f$ and $g$ for $\SL_2(\Z)$, of type $\nu$ and $\mu$ respectively and with Fourier-Whittaker coefficients $a(n)$'s and $b(n)$'s respectively, is defined by
\[
L_{f \times g}(s):= \zeta(2s) \sum_{n=1}^{\infty} \frac{a(n) \overline{b(n)}}{n^{s}} \, ,
\]
where $\zeta$ is the Riemann zeta function.
By integrating $f \overline{g}$ against a particular Maass form called Eisenstein series (whose functional equation was already known by other means), Rankin~\cite{rankin39a,rankin39b} and Selberg~\cite{selberg40} found a functional equation for $L_{f \times g}$:
\begin{equation}
\label{eq:funEqConvolutionLFn}
\begin{split}
\Lambda_{f\times g} (s)
:= & \, G_{\nu,\mu}(s) L_{f\times g}(s) \\
= & \, \Lambda_{f\times g} (1-s) \, .
\end{split}
\end{equation}
The $L$-factor $G_{\nu,\mu}(s)$ is essentially given by the Mellin transform of the product of two $\GL_2(\R)$-Whittaker function of type $\mu$ and $\nu$ respectively:
\[
G_{\nu,\mu}(s)
:= \pi^{-s} \Gamma(s)
\int_0^{\infty} y^{s-1} W(\i y,\nu,1) W(\i y,\mu,1) \frac{\diff y}{y} \, ,
\]
and can be evaluated explicitly in terms of Gamma functions.
Such a computation amounts to the Bump-Stade identity~\eqref{eq:bumpStade} in the case $n=2$.
To see this, it suffices to write down the definition of Gamma function as an integral w.r.t.\ $x_1$, change variable in the $y$-integral by setting $y=\frac{1}{\pi}\sqrt{\frac{x_2}{x_1}}$, and use~\eqref{eq:equivalenceGoldfeldWhittaker}:
\[
\begin{split}
G_{\nu,\mu}(s)
&= \pi^{-s}\int_0^{\infty} \frac{\diff x_1}{x_1} \e^{-x_1} x_1^s
\int_0^{\infty} \frac{1}{2} \frac{\diff x_2}{x_2} \left(\frac{1}{\pi} \sqrt{\frac{x_2}{x_1}} \right)^{s-1}
\frac{\Psi^{\mathfrak{gl}_2}_{(\alpha_1,\alpha_2)} (x_1,x_2)}
{\sqrt{2 x_1^{\frac{s+1}{2}}
x_2^{\frac{s-1}{2}}}}
\frac{\Psi^{\mathfrak{gl}_2}_{(\beta_1,\beta_2)} (x_1,x_2)}
{\sqrt{2 x_1^{\frac{s+1}{2}}
x_2^{\frac{s-1}{2}}}} \\
&= \frac{\pi^{1-2s}}{4}
\int_0^{\infty} \int_0^{\infty} \e^{-x_1}
\Psi^{\mathfrak{gl}_2}_{(\alpha_1,\alpha_2)} (x_1,x_2)
\Psi^{\mathfrak{gl}_2}_{(\beta_1,\beta_2)} (x_1,x_2)
\frac{\diff x_1}{x_1}
\frac{\diff x_2}{x_2} \\
&= \frac{\pi^{1-2s}}{4}
\Gamma(\alpha_1 + \beta_1)
\Gamma(\alpha_2 + \beta_1)
\Gamma(\alpha_1 + \beta_2)
\Gamma(\alpha_2 + \beta_2) \\
&= \frac{\pi^{1-2s}}{4}
\Gamma\left(\frac{s+1-\nu-\mu}{2}\right)
\Gamma\left(\frac{s+\nu-\mu}{2}\right)
\Gamma\left(\frac{s-\nu+\mu}{2}\right)
\Gamma\left(\frac{s-1+\nu+\mu}{2}\right) \, .
\end{split}
\]
Here, the choice of parameters $\alpha_i$ and $\beta_i$ ($i=1,2$) is such that $\alpha_1 + \alpha_2 = \beta_1 + \beta_2 = s/2$, $\alpha_2 - \alpha_1 = \nu - \frac{1}{2}$, and $\beta_1 - \beta_2 = \mu - \frac{1}{2}$.
The third equality indeed corresponds to the Bump-Stade identity for $n=2$.

Maass forms for $\SL_n(\Z)$ with $n>2$ are defined on a generalized upper half-plane, which only has a matrix realization but not a complex one: this makes their study considerably more difficult.
However, the $L$-factor of the convolution $L$-function always turns out to be essentially the Mellin transform of the product of two $\GL_n(\R)$-Whittaker functions.
This is essentially the number theoretic motivation for obtaining explicit integral identities involving Whittaker functions, such as~ \eqref{eq:bumpStade} and~\eqref{eq:ishiiStade}.

\section{Ishii-Stade parametrization}
\label{sec:ishiiStadeWhittaker}

The integral parametrization for Whittaker functions used in number theory and in particular by Ishhi and Stade~\cite{ishiiStade13} is different from ours: in this subsection, we explain their connection\footnote{In this setting, we strictly stick to the notation of~\cite{ishiiStade13}.
Accordingly, we remark that the hat in $\hat{W}^A_{n,\bm{a}}$ and $\hat{W}^B_{n,\bm{b}}$ does \emph{not} denote any transform here.}, and then show the equivalence between~\eqref{eq:ishiiStade} and the corresponding integral formula in~\cite{ishiiStade13}.
As it will become clear, for both $\mathfrak{gl}_n$ and $\mathfrak{so}_{2n+1}$ the two parametrizations are linked via the change of variables
\begin{equation}
\label{eq:ishiiStadeChangeOfVars}
y_1 := \frac{1}{\pi} \sqrt{\frac{x_2}{x_1}} 
\, , \quad \dots , \quad
y_{n-1} := \frac{1}{\pi} \sqrt{\frac{x_n}{x_{n-1}}} \, , \quad
y_n := \frac{1}{\pi\sqrt{{x_n}}} \, .
\end{equation}

For $\bm{a}\in\C^n$, set $\abs{a}:= a_1+\dots +a_n$.
The $\mathfrak{gl}_n$-Whittaker function $\hat{W}^A_{n,\bm{a}}$ indexed by $\bm{a}\in\C^n$, according to the integral representation~\cite[Prop.~1.2]{ishiiStade13}, is defined as follows.
For $n=2$,
\begin{equation}
\label{eq:gl_2WhittakerFnIshiiStade}
\hat{W}^A_{2,(a_1,a_2)}(y_1,y_2)
:= 2 y_1^{\abs{a}/2} y_2^{\abs{a}} K_{\frac{a_1 - a_2}{2}}(2\pi y_1) \, ,
\end{equation}
where $K$ is the Macdonald function defined in~\ref{eq:macdonaldFn}.
Recursively, for all $n\geq 3$,
\begin{equation}
\label{eq:glWhittakerFnIshiiStade}
\begin{split}
\hat{W}^A_{n,\bm{a}}(\bm{y})
:=\, &\pi^{-\abs{a}/2} \int_{\R_{+}^{n-1}} \hat{W}^A_{n-1,\tilde{\bm{a}}} \bigg(y_2\sqrt{\frac{t_2}{t_1}},\dots, y_{n-1}\sqrt{\frac{t_{n-1}}{t_{n-2}}}, y_n \frac{1}{\sqrt{t_{n-1}}} \bigg) \\
&\times \prod_{j=1}^{n-1} \exp\Big(- (\pi y_j)^2 t_j - \frac{1}{t_j} \Big) (\pi y_j)^{\frac{(n-j)a_1}{n-1}} t_j^{\frac{n a_1}{2(n-1)}} \frac{\diff t_j}{t_j} \, ,
\end{split}
\end{equation}
where $\tilde{\bm{a}}=(\tilde{a}_1,\dots,\tilde{a}_{n-1})$ is defined by $\tilde{a}_i := a_{i+1} + \frac{a_1}{n-1}$.

\begin{proposition}
\label{prop:glWhittakerFnIshiiStade}
If $a_i = 2 \alpha_{n-i+1}$ for $1\leq i\leq n$, and $\bm{x}$ and $\bm{y}$ satisfy~\eqref{eq:ishiiStadeChangeOfVars}, then
\[
\hat{W}^A_{n,\bm{a}}(\bm{y})
= \pi^{-(n+1)\abs{\bm{\alpha}}}
\Psi^{\mathfrak{gl}_n}_{-\bm{\alpha}} (\bm{x}) \, .
\]
\end{proposition}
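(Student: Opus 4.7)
The natural strategy is induction on $n$, matching the recursive definition~\eqref{eq:glWhittakerFnIshiiStade} of $\hat W^A_{n,\bm a}$ with the recursion~\eqref{eq:glWhittakerRecurs} for $\Psi^{\gl_n}_{-\bm\alpha}$. The base case $n=2$ is essentially a direct computation: using the Macdonald function representation~\eqref{eq:gl_2WhittakerMacdonald} together with~\eqref{eq:gl_2WhittakerFnIshiiStade}, the substitution $y_1=\pi^{-1}\sqrt{x_2/x_1}$, $y_2=\pi^{-1}x_2^{-1/2}$ turns $2\pi y_1$ into $2\sqrt{x_2/x_1}$ and the prefactor $y_1^{|a|/2}y_2^{|a|}$ into $\pi^{-3|\bm\alpha|}(x_1x_2)^{-|\bm\alpha|/2}$ (using $|a|=2|\bm\alpha|$ and $a_1-a_2=2(\alpha_2-\alpha_1)$ together with $K_\nu=K_{-\nu}$). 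This matches $\pi^{-3|\bm\alpha|}\Psi^{\gl_2}_{-\bm\alpha}(x_1,x_2)=\pi^{-(n+1)|\bm\alpha|}\Psi^{\gl_2}_{-\bm\alpha}(\bm x)$ for $n=2$.

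For the inductive step, the crucial move is the change of variables
\[
u_i := x_{i+1}\, t_i \qquad (1\le i\le n-1)
\]
in the integral~\eqref{eq:glWhittakerFnIshiiStade}. Under this substitution one checks directly that $\diff t_i/t_i = \diff u_i/u_i$ and, more importantly, that the arguments of $\hat W^A_{n-1,\tilde{\bm a}}$ become
\[
\Bigl(\tfrac1\pi\sqrt{u_2/u_1},\ \dots,\ \tfrac1\pi\sqrt{u_{n-1}/u_{n-2}},\ \tfrac1\pi u_{n-1}^{-1/2}\Bigr),
\]
which is exactly the image under the $(n-1)$-variable version of~\eqref{eq:ishiiStadeChangeOfVars} of the point $\bm u=(u_1,\dots,u_{n-1})$. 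Thus the inductive hypothesis turns $\hat W^A_{n-1,\tilde{\bm a}}$ into $\pi^{-n|\tilde{\bm\alpha}|}\Psi^{\gl_{n-1}}_{-\tilde{\bm\alpha}}(\bm u)$, where the dictionary $a_i=2\alpha_{n-i+1}$ and $\tilde a_i=a_{i+1}+a_1/(n-1)$ forces $\tilde\alpha_k=\alpha_k+\alpha_n/(n-1)$ for $1\le k\le n-1$; in particular $|\tilde{\bm\alpha}|=|\bm\alpha|$. The shift is then removed via the translation property~\eqref{eq:glWhittakerFnTranslation}, which introduces a compensating factor $\bigl(\prod_{i=1}^{n-1}u_i\bigr)^{-\alpha_n/(n-1)}$.

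The remaining task is bookkeeping of the algebraic factors. In the exponential, one uses $(\pi y_j)^2=x_{j+1}/x_j$ for $j<n$ and $(\pi y_n)^2=1/x_n$ to rewrite
\[
\sum_{j=1}^{n-1}\Bigl[(\pi y_j)^2 t_j+\tfrac1{t_j}\Bigr]
=\sum_{i=1}^{n-1}\Bigl[\tfrac{u_i}{x_i}+\tfrac{x_{i+1}}{u_i}\Bigr],
\]
which is precisely the exponential appearing in the kernel $Q^{\gl_n}_{-\alpha_n}(\bm x,\bm u)$. For the polynomial prefactor, assembling the contributions $(\pi y_j)^{(n-j)a_1/(n-1)}t_j^{na_1/(2(n-1))}$ with $a_1=2\alpha_n$ and the factor from the translation property, a telescoping computation (checking that the exponents of $x_1,\dots,x_n$ each collapse to $-\alpha_n$, and of $u_1,\dots,u_{n-1}$ each collapse to $\alpha_n$) shows the combined prefactor equals
\[
\pi^{-(n+1)|\bm\alpha|}\Bigl(\prod_{i=1}^n x_i\Big/\prod_{i=1}^{n-1}u_i\Bigr)^{-\alpha_n},
\]
which is exactly $\pi^{-(n+1)|\bm\alpha|}\,Q^{\gl_n}_{-\alpha_n}(\bm x,\bm u)$. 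Plugging everything back into the recursion~\eqref{eq:glWhittakerRecurs} yields $\hat W^A_{n,\bm a}(\bm y)=\pi^{-(n+1)|\bm\alpha|}\Psi^{\gl_n}_{-\bm\alpha}(\bm x)$, completing the induction.

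The main obstacle will be the prefactor bookkeeping in the inductive step: the Ishii--Stade recursion contains several fractional powers of $\pi y_j$ and $t_j$ whose exponents depend explicitly on $n$, $j$ and $a_1$, and one must show that, after translating by $\alpha_n/(n-1)$, all of them conspire to reproduce exactly the simple kernel $\bigl(\prod x_i/\prod u_i\bigr)^{-\alpha_n}$. Everything else (the change of variables, the invariance of the Haar-type measure $\diff u_i/u_i$, and the match of the exponential) is clean and essentially forced by the substitution $u_i=x_{i+1}t_i$.
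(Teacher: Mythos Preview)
Your proposal is correct and follows essentially the same approach as the paper: induction on $n$ with base case $n=2$ via the Macdonald function identity~\eqref{eq:gl_2WhittakerMacdonald}, and inductive step via the change of variables $u_i = x_{i+1} t_i$, the induction hypothesis, and the translation property~\eqref{eq:glWhittakerFnTranslation} to absorb the shift $\tilde\alpha_k = \alpha_k + \alpha_n/(n-1)$. The paper's write-up is terser on the prefactor bookkeeping you single out as the main obstacle, but the argument is the same.
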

\begin{proof}
For $n=2$, eq.~\eqref{eq:gl_2WhittakerFnIshiiStade} and the relations defining $\bm{y}$ and $\bm{a}$ in terms of $\bm{x}$ and $\bm{\alpha}$ yield
\[
\hat{W}^A_{2,(a_1,a_2)}(y_1,y_2)
= 2 \pi^{-3\abs{\bm{\alpha}}}
(x_1 x_2)^{-\abs{\bm{\alpha}}/2}
K_{\alpha_2 - \alpha_1}\bigg(2\sqrt{\frac{x_2}{x_1}}\bigg) \, .
\]
The desired identity for $n=2$ then follows from~\eqref{eq:gl_2WhittakerMacdonald}.

Assume now that the result holds for $n-1$.
In light of~\eqref{eq:ishiiStadeChangeOfVars} and after the change of variables $u_j = t_j x_{j+1}$ for $1\leq j\leq n-1$ in the integral~\eqref{eq:glWhittakerFnIshiiStade}, we obtain
\[
\begin{split}
\hat{W}^A_{n,\bm{a}}(\bm{y})
= \,& \pi^{-\abs{\bm{a}}/2} \int_{\R_{+}^{n-1}} \hat{W}^A_{n-1,\tilde{\bm{a}}} \bigg(\frac{1}{\pi}\sqrt{\frac{u_2}{u_1}},\dots, \frac{1}{\pi}\sqrt{\frac{u_{n-1}}{u_{n-2}}}, \frac{1}{\pi\sqrt{{u_{n-1}}}} \bigg) \\
&\times \prod_{i=1}^n x_i^{-\frac{a_1}{2}}
\prod_{j=1}^{n-1}
\exp\Big( -\frac{u_j}{x_j} - \frac{x_{j+1}}{u_j}\Big)
u_j^{\frac{n a_1}{2(n-1)}}
\frac{\diff u_j}{u_j} \, .
\end{split}
\]
Using the induction hypothesis and the property stated in~\eqref{eq:glWhittakerFnTranslation}, it is easy to see that
\[
\hat{W}^A_{n-1,\tilde{\bm{a}}} \bigg(\frac{1}{\pi}\sqrt{\frac{u_2}{u_1}},\dots, \frac{1}{\pi}\sqrt{\frac{u_{n-1}}{u_{n-2}}}, \frac{1}{\pi\sqrt{{u_{n-1}}}} \bigg)
= \pi^{-n \abs{\bm{\alpha}}}
\bigg(\prod_{j=1}^{n-1} u_j\bigg)^{-\frac{\alpha_n}{n-1}}
\Psi^{\mathfrak{gl}_{n-1}}_{-(\alpha_1,\dots,\alpha_{n-1})}(\bm{u}) \, .
\]
It follows that
\[
\begin{split}
\hat{W}^A_{n,\bm{a}}(y_1,\dots,y_n)
= \pi^{-(n+1)\abs{\bm{\alpha}}}
\int_{\R_{+}^{n-1}} &
\Psi^{\mathfrak{gl}_{n-1}}_{-(\alpha_1,\dots,\alpha_{n-1})}(\bm{u}) \\
&\times\bigg(\frac{\prod_{i=1}^n x_i}{\prod_{i=1}^{n-1}u_i}\bigg)^{-\alpha_n}
\prod_{j=1}^{n-1}
\exp\Big( -\frac{u_j}{x_j} - \frac{x_{j+1}}{u_j}\Big)
\frac{\diff u_j}{u_j} \, .
\end{split}
\]
Using the recursive relation~\eqref{eq:glWhittakerRecurs}, we get the desired identity for $n$.
\end{proof}

The $\mathfrak{so}_{2n+1}$-Whittaker function $\hat{W}^B_{n,\bm{b}}$ indexed by $\bm{b}\in\C^n$, according to the integral representation~\cite[Prop.~1.3]{ishiiStade13}, is defined as follows.
For $n=1$,
\begin{equation}
\label{eq:so_3WhittakerFnIshiiStade}
\hat{W}^B_{1,b_1}(y_1)
:= 2 K_{b_1}(2\pi y_1) \, .
\end{equation}
Recursively, for all $n\geq 2$,
\begin{equation}
\label{eq:soWhittakerFnIshiiStade}
\begin{split}
\hat{W}^B_{n,\bm{b}}(\bm{y})
:=\, &\int_{\R_{+}^n} \int_{\R_{+}^{n-1}}
\hat{W}^B_{n-1,\tilde{\bm{b}}} \bigg(y_2\sqrt{\frac{t_2 s_2}{t_3 s_1}},\dots, y_{n-1}\sqrt{\frac{t_{n-1} s_{n-1}}{t_n s_{n-2}}}, y_n \sqrt{\frac{t_n}{s_{n-1}}} \bigg) \\
&\times \prod_{j=1}^{n-1} \bigg[\exp\Big(- (\pi y_j)^2 \frac{t_j}{t_{j+1}} s_j - \frac{1}{s_j} \Big)
(t_{j+1} s_j)^{\frac{b_n}{2}} \frac{\diff s_j}{s_j} \bigg] \\
&\times t_1^{b_n} \prod_{j=1}^n \bigg[
\exp\Big(-(\pi y_j)^2 t_j - \frac{1}{t_j}\Big)
(\pi y_j)^{b_n} \frac{\diff t_j}{t_j} \bigg] \, ,
\end{split}
\end{equation}
where $\tilde{\bm{b}}=(b_1,\dots,b_{n-1})$.

\begin{proposition}
\label{prop:soWhittakerFnIshiiStade}
If $b_i = 2 \beta_i$ for $1\leq i\leq n$, and $\bm{x}, \bm{y}$ satisfy~\eqref{eq:ishiiStadeChangeOfVars}, then
\[
\hat{W}^B_{n,\bm{b}}(\bm{y})
= \Psi^{\mathfrak{so}_{2n+1}}_{\bm{\beta}} (\bm{x}) \, .
\]
\end{proposition}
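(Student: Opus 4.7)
Proof plan. I will prove Proposition~\ref{prop:soWhittakerFnIshiiStade} by induction on $n$, in the same spirit as Proposition~\ref{prop:glWhittakerFnIshiiStade}: starting from the Ishii–Stade recursion \eqref{eq:soWhittakerFnIshiiStade} and converting it, via a carefully chosen change of integration variables, into our recursion \eqref{eq:soWhittakerRecurs}.

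\emph{Base case} ($n=1$). The identity \eqref{eq:so_3WhittakerFnIshiiStade} with $y_1=1/(\pi\sqrt{x_1})$ and $b_1=2\beta_1$ gives $\hat{W}^B_{1,b_1}(y_1)=2K_{2\beta_1}(2/\sqrt{x_1})$. On the other hand, the substitution $z=\sqrt{x_1}\,t$ in \eqref{eq:so_3WhittakerFn} yields
\[
\Psi^{\so_3}_{\beta_1}(x_1)
=\int_{\R_{>0}} t^{2\beta_1}\exp\!\left\{-\frac{1}{\sqrt{x_1}}\!\left(t+\frac{1}{t}\right)\right\}\frac{\diff t}{t},
\]
which equals $2K_{2\beta_1}(2/\sqrt{x_1})$ by the integral representation \eqref{eq:macdonaldFn} of the Macdonald function.

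\emph{Inductive step.} Suppose the proposition holds for $n-1$. Apply it to the inner $\hat W^{B}_{n-1,\tilde{\bm b}}$ in \eqref{eq:soWhittakerFnIshiiStade}: its argument $\tilde{\bm y}$ must be matched, through \eqref{eq:ishiiStadeChangeOfVars} at level $n-1$, to a vector $\bm u\in\R_{>0}^{n-1}$ that will play the role of the argument of $\Psi^{\so_{2n-1}}_{(\beta_1,\dots,\beta_{n-1})}(\bm u)$. A direct computation of $(\pi\tilde y_j)^2$ shows that this identification forces
\[
u_j \;=\; \frac{x_{j+1}\,s_j}{t_{j+1}}\,,\qquad j=1,\dots,n-1.
\]
Hence by the inductive hypothesis $\hat W^{B}_{n-1,\tilde{\bm b}}(\tilde{\bm y})=\Psi^{\so_{2n-1}}_{(\beta_1,\dots,\beta_{n-1})}(\bm u)$. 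One is then reduced to finding additional coordinates $v_1,\dots,v_n$, expressed as functions of $(t_1,\dots,t_n,s_1,\dots,s_{n-1};\bm x)$, that complete $(u_1,\dots,u_{n-1})$ to a bijection of $\R_{>0}^{2n-1}$ and such that, after substitution, the remaining integrand becomes
\[
\left(\frac{\prod_i v_i^{2}}{\prod_i x_i\,\prod_i u_i}\right)^{\!\beta_n}\exp\!\left\{-\sum_{i=1}^{n-1}\!\left(\tfrac{v_{i+1}}{u_i}+\tfrac{u_i}{v_i}+\tfrac{x_{i+1}}{v_i}+\tfrac{v_i}{x_i}\right)-\tfrac{1}{v_n}-\tfrac{v_n}{x_n}\right\}\prod_{i=1}^n\frac{\diff v_i}{v_i}\prod_{j=1}^{n-1}\frac{\diff u_j}{u_j}.
\]
Combining this with the $\bm u$-integration of $\Psi^{\so_{2n-1}}_{(\beta_1,\dots,\beta_{n-1})}(\bm u)$ and comparing with \eqref{eq:soWhittakerRecurs}–$Q^{\so_{2n+1}}_{\beta_n}$ will close the induction.

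\emph{Main obstacle.} Unlike the $\gl_n$ case of Proposition~\ref{prop:glWhittakerFnIshiiStade}, where a diagonal change of variables $t_j\mapsto u_j/x_{j+1}$ did the job, in the $\so_{2n+1}$ case the correspondence between Ishii–Stade coordinates $(t_j,s_j)$ and our coordinates $(v_j,u_j)$ is not componentwise: the exponential potential in \eqref{eq:soWhittakerFnIshiiStade} mixes consecutive $t_j$'s and $s_j$'s through terms of the form $(\pi y_j)^2(t_j/t_{j+1})s_j$, whereas our kernel $Q^{\so_{2n+1}}_{\beta_n}$ has ``zig-zag'' interactions $u_i/v_i$ and $v_{i+1}/u_i$ reflecting the symplectic Gelfand–Tsetlin interlacing. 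The hard part of the proof will be to identify the correct nonlinear change of variables $v_j=v_j(t_1,\dots,t_n,s_1,\dots,s_{n-1};\bm x)$ that simultaneously (i) reproduces all six families of exponential terms of $Q^{\so_{2n+1}}_{\beta_n}$, (ii) matches the polynomial prefactor $t_1^{b_n}\prod(t_{j+1}s_j)^{b_n/2}\prod(\pi y_j)^{b_n}$ with $\big(\prod v_i^2/\prod x_i\prod u_i\big)^{\beta_n}$ after using $(\pi y_j)^2=x_{j+1}/x_j$ and $(\pi y_n)^2=1/x_n$, and (iii) has a Jacobian compatible with the multiplicative Haar measures $\prod\diff t/t\prod\diff s/s$ and $\prod\diff v/v\prod\diff u/u$. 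Once such a change of variables is exhibited and verified, the identity $\hat W^{B}_{n,\bm b}(\bm y)=\Psi^{\so_{2n+1}}_{\bm\beta}(\bm x)$ follows by substitution into the inductive expression.
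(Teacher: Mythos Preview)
Your approach is exactly the paper's, but you stop at the point where the argument actually begins: the ``hard part'' you flag --- identifying the $v_j$'s --- is in fact not hard, and your expectation that it must be a nonlinear map depending on all of $t_1,\dots,t_n,s_1,\dots,s_{n-1}$ is wrong. The change of variables that works is simply
\[
v_j \;=\; \frac{x_j}{t_j}\,,\qquad 1\leq j\leq n,
\]
diagonal in the $t_j$'s and independent of the $s_j$'s. Combined with your (correct) identification $u_j = x_{j+1}s_j/t_{j+1} = v_{j+1}s_j$, this is a triangular change of variables on $\R_{>0}^{2n-1}$ preserving the multiplicative Haar measure $\prod \diff t_j/t_j\,\prod \diff s_j/s_j = \prod \diff v_j/v_j\,\prod \diff u_j/u_j$. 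A direct check using $(\pi y_j)^2 = x_{j+1}/x_j$ for $j<n$ and $(\pi y_n)^2 = 1/x_n$ converts each family of exponential terms in \eqref{eq:soWhittakerFnIshiiStade} into one of $Q^{\so_{2n+1}}$: namely $(\pi y_j)^2 t_j = x_{j+1}/v_j$, $1/t_j = v_j/x_j$, $(\pi y_j)^2(t_j/t_{j+1})s_j = u_j/v_j$, and $1/s_j = v_{j+1}/u_j$ (with the $j=n$ terms giving $1/v_n$ and $v_n/x_n$).

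One point you miss is a sign. Collecting the polynomial prefactor yields
\[
t_1^{b_n}\prod_{j=1}^{n-1}(t_{j+1}s_j)^{b_n/2}\prod_{j=1}^n(\pi y_j)^{b_n}
\;=\;\left(\frac{\prod_{j=1}^n v_j^{2}}{\prod_{j=1}^n x_j\,\prod_{j=1}^{n-1} u_j}\right)^{-\beta_n},
\]
so the exponent is $-\beta_n$, not $+\beta_n$ as in your target expression. The induction therefore lands on $\Psi^{\so_{2n+1}}_{(\beta_1,\dots,\beta_{n-1},-\beta_n)}(\bm x)$, and to conclude you must invoke the invariance of $\Psi^{\so_{2n+1}}_{\bm\beta}$ under sign changes of the parameters (the Weyl group action mentioned in subsection~\ref{subsec:soWhittaker}), which the paper does explicitly.
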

\begin{proof}
For $n=1$, we have indeed
\[
\hat{W}^B_{1,b_1}(y_1)
= 2 K_{2\beta_1}\bigg(\frac{2}{\sqrt{x_1}}\bigg)
= \Psi^{\mathfrak{so}_3}_{\beta_1} (x_1) \, .
\]
Here, the first equality follows from~\eqref{eq:so_3WhittakerFnIshiiStade} and the relations defining $\bm{y}$ and $\bm{b}$ in terms of $\bm{x}$ and $\bm{\beta}$, whereas the second equality is deduced by combining \eqref{eq:so_3WhittakerFn} and~\eqref{eq:macdonaldFn}.

Assume now that the result holds for $n-1$.
In light of~\eqref{eq:ishiiStadeChangeOfVars} and after the changes of variables $v_j = x_j / t_j$ for $1\leq j\leq n$ and $u_j = v_{j+1} s_j$ for $1\leq j\leq n-1$ in the integral~\eqref{eq:soWhittakerFnIshiiStade}, we obtain
\[
\begin{split}
\hat{W}^B_{n,\bm{b}}(\bm{y})
= \int_{\R_{+}^n} \int_{\R_{+}^{n-1}}
&\hat{W}^B_{n-1,\tilde{\bm{b}}} \bigg(\frac{1}{\pi}\sqrt{\frac{u_2}{u_1}},\dots, \frac{1}{\pi}\sqrt{\frac{u_{n-1}}{u_{n-2}}}, \frac{1}{\pi\sqrt{{u_{n-1}}}} \bigg)  \,\bigg( \frac{\prod_{j=1}^n v_j^2}{\prod_{j=1}^n x_j \prod_{j=1}^{n-1} u_j} \bigg)^{-\frac{b_n}{2}}\\
&\times
\prod_{j=1}^{n-1} \bigg[\exp\Big(- \frac{u_j}{v_j} -\frac{v_{j+1}}{u_j} \Big) \frac{\diff u_j}{u_j} \bigg]
\prod_{j=1}^n \bigg[ \exp\Big(-\frac{x_{j+1}}{v_j} - \frac{v_j}{x_j}\Big)
\frac{\diff v_j}{v_j} \bigg]
 \, .
\end{split}
\]
Using the induction hypothesis and the fact that $\bm{b}=2\bm{\beta}$, we see that the latter expression coincides with $\Psi^{\mathfrak{so}_{2n+1}}_{(\beta_1,\dots,\beta_{n-1},-\beta_n)}(\bm{x})$ (see recursive formula~\eqref{eq:soWhittakerRecurs}), which in turn equals $\Psi^{\mathfrak{so}_{2n+1}}_{\bm{\beta}} (\bm{x})$ due to the invariance of $\so_{2n+1}$-Whittaker functions under change of sign of the parameters.
\end{proof}

Now, the integral formula we are interested in is stated in~\cite[Thm.~3.2]{ishiiStade13}:
\[
2^n \int_{\R_{+}^n} \bigg(\prod_{j=1}^n y_j^j\bigg)^s
\hat{W}^A_{n,\bm{a}}(\bm{y}) \hat{W}^B_{n,\bm{b}}(\bm{y})
\prod_{j=1}^n \frac{\diff y_j}{y_j}
= \frac{\prod_{1\leq i,j\leq n}
\Gamma_{\mathbf{R}}(s+a_i + b_j)
\Gamma_{\mathbf{R}}(s+a_i-b_j)}
{\prod_{1\leq i<j\leq n} \Gamma_{\mathbf{R}}(2s+a_i+a_j)} \, ,
\]
where $\Gamma_{\mathbf{R}}(z) := \pi^{-z/2} \Gamma(z/2)$.
Using the change of variables~\eqref{eq:ishiiStadeChangeOfVars} and Propositions~\ref{prop:glWhittakerFnIshiiStade} and~\ref{prop:soWhittakerFnIshiiStade}, the above formula can be easily rewritten as
\[
\int_{\R_{+}^n}
\bigg(\prod_{i=1}^n x_i\bigg)^{-s/2}
\Psi_{-\bm{\alpha}}^{\mathfrak{gl}_n}(\bm{x})
\Psi_{\bm{\beta}}^{\mathfrak{so}_{2n+1}}(\bm{x})
\prod_{i=1}^n \frac{\diff x_i}{x_i}
= \frac{\prod_{1\leq i,j\leq n}
\Gamma(s/2+\alpha_i + \beta_j)
\Gamma(s/2 + \alpha_i - \beta_j)}
{\prod_{1\leq i<j\leq n} \Gamma(s + \alpha_i+\alpha_j)} \, .
\]
Theorem~\ref{thm:ishiiStade} now follows by taking $s=0$.
Note that, in turn, the latter identity can be deduced by Theorem~\ref{thm:ishiiStade}: indeed, the term $(\prod_{i=1}^n x_i)^{-s/2}
\Psi_{-\bm{\alpha}}^{\mathfrak{gl}_n}(\bm{x})$
is itself a $\mathfrak{gl}_n$-Whittaker function as a whole, because of~\eqref{eq:glWhittakerFnTranslation}.

\cleardoublepage
\printbibliography[heading=bibintoc]

\end{document}